\documentclass{amsart}
\usepackage{amssymb}
\usepackage{amsthm}
\usepackage{xcolor}
\usepackage{tikz}
\usepackage{amsaddr}
\newtheorem{Theorem}{Theorem}[section]
\newtheorem{Lemma}[Theorem]{Lemma}

\newtheorem{Corollary}[Theorem]{Corollary}
\newtheorem{Observation}[Theorem]{Observation}
\theoremstyle{remark} \newtheorem*{sketch}{Sketch of Proof}
\theoremstyle{remark} 
\theoremstyle{remark} 
\newtheorem*{appsketch}{Sketch of Proof of Lemma~\ref{mainapps}}
\theoremstyle{remark} \newtheorem*{appfull}{Full Proof of Lemma~\ref{mainapps}}
\theoremstyle{remark} 
\newtheorem*{6sketch}{Sketch of Proof of Lemma~\ref{main6apps}}
\theoremstyle{remark} \newtheorem*{6full}{Full Proof of Lemma~\ref{main6apps}}
\theoremstyle{remark} \newtheorem*{cptproof}{Proof of Theorem~\ref{maincpt}}
\theoremstyle{plain} 
\theoremstyle{plain} 
\theoremstyle{plain} 
\theoremstyle{definition} \newtheorem{Definition}[Theorem]{Definition}
\allowdisplaybreaks
\begin{document}
\title{The evolution of random graphs on surfaces}
\author{Chris Dowden, Mihyun Kang, and Philipp Spr\"ussel}
\thanks{The authors are supported by
Austrian Science Fund (FWF): P27290.}
\thanks{An extended abstract of this paper has been published in the proceedings of the European Conference on Combinatorics, Graph Theory and Applications (EuroComb17), Electronic Notes in Discrete Mathematics 61 (2017), 367--373.}
\address{
Graz University of Technology,
Institute of Discrete Mathematics,
Steyrergasse 30,
8010 Graz, 
Austria}
\author{}
\email{\{dowden,kang,spruessel\}@math.tugraz.at} 
\setlength{\unitlength}{1cm}

\begin{abstract}
For integers $g,m \geq 0$ and $n>0$,
let $S_{g}(n,m)$ denote the graph taken uniformly at random
from the set of all graphs on $\{1,2, \ldots, n\}$
with exactly $m=m(n)$ edges and with genus at most $g$.
We use counting arguments to investigate the components, subgraphs,
maximum degree, and largest face size of $S_{g}(n,m)$,
finding that there is often different asymptotic behaviour 
depending on the ratio $\frac{m}{n}$.

In our main results,
we show that the probability that $S_{g}(n,m)$
contains any given non-planar component converges to $0$
as $n \to \infty$ for all $m(n)$;
the probability that $S_{g}(n,m)$ contains a copy of any given planar graph
converges to $1$ as $n \to \infty$ 
if $\liminf \frac{m}{n} > 1$;
the maximum degree of $S_{g}(n,m)$ is $\Theta (\ln n)$
with high probability if $\liminf \frac{m}{n} > 1$;
and the largest face size of $S_{g}(n,m)$ has a threshold around $\frac{m}{n}=1$
where it changes from $\Theta (n)$ to $\Theta (\ln n)$
with high probability.
\end{abstract}
\maketitle

\section{Introduction}

\subsection{Background and motivation} \label{background}

Random planar graphs have been the subject of much activity,
and many properties of the standard random planar graph $P(n)$
(taken uniformly at random from the set of all planar graphs
with vertex set $[n] := \{1,2, \ldots, n\}$) are now known.
For example,
asymptotic results have been obtained for the probability
that $P(n)$ will contain given components and subgraphs
\cite{gim,msw},
for the number of vertices of given degree
\cite{drm2},
and for the size of the maximum degree and largest face
\cite{drm,mcdr}.
In addition,
clever algorithms for generating and sampling planar graphs
have been designed
\cite{bri,fusy},
and random planar maps have been studied
\cite{drm3,gao1,gao2}.

The classical Erd\H{o}s-R\'enyi random graph $G(n,m)$
is taken uniformly at random from the set of all graphs
on $[n]$ with exactly $m$ edges.
Hence,
it is natural to also examine the planar analogue $P(n,m)$,
taken uniformly at random from the set of all planar graphs
on $[n]$ with exactly $m$ edges.
Note that the extra condition on the number of edges
typically makes $P(n,m)$ more challenging to study than $P(n)$,
but many exciting results have nevertheless been obtained
\cite{ben,bod,chap2,dow,dowejc,ger,gim,kangluc}.

Although the constraint on the number of edges
makes $P(n,m)$ more difficult to analyse,
it also has the effect of producing richer and more complex behaviour.
In particular, 
results are often found to feature thresholds,
meaning that the various probabilities change dramatically 
according to which `region' the ratio $\frac{m}{n}$ falls into.

It is well known that $P(n,m)$
behaves in the same way as $G(n,m)$
if $m < \frac{n}{2} - \omega (n^{2/3})$,
since the probability that $G(n,m)$ will be planar 
converges to $1$ as $n \to \infty$ for this range of $m$
(see, for example,~\cite{janluc}).
However,
different properties have been found to emerge when we are beyond this region
\cite{dow,dowejc,ger,gim,kangluc}.

In this paper,
we shall be interested in graphs with genus at most $g$.
A graph is said to have genus at most $g$
if it can be embedded without any crossing edges
on an orientable surface of genus $g$
(i.e.~a sphere to which $g$ handles have been attached).
Hence,
the simplest case when $g=0$ corresponds to planar graphs.

We shall let $S_{g}(n)$
denote the graph taken uniformly at random from the set of all graphs
on $[n]$ with genus at most $g$,
and we shall let $S_{g}(n,m)$
denote the graph taken uniformly at random from the set of all graphs
on $[n]$ with exactly $m$ edges
and with genus at most $g$
(it is known that this then implies that we must have $m \leq 3n-6+6g$).
Throughout the paper,
$m=m(n)$ will be a function of $n$,
while $g$ will be a constant independent of $n$.

Many of the results on $P(n)$ (i.e.~$S_{0}(n)$)
have now also been generalised to $S_{g}(n)$
\cite{chap,mcd,mcdr}.
However,
similar extensions have not yet been achieved for the full $S_{g}(n,m)$ case,
where the complexity of the general genus setting
is combined with the extra restriction on the number of edges.
As with the planar case,
one might expect to find even more interesting behaviour 
for $S_{g}(n,m)$ than $S_{g}(n)$,
and so it is the random graph $S_{g}(n,m)$
that is to be the subject of this paper.

We shall investigate the probability
that $S_{g}(n,m)$ will contain given components and subgraphs,
as well as the size of the maximum degree and largest face
(maximised over all embeddings with genus at most $g$).
We shall find that the restriction on the number of edges 
does indeed enrich the results,
by providing different behaviour depending
on the ratio $\frac{m}{n}$.
Hence, 
this change as $\frac{m}{n}$ varies
can be thought of as the `evolution' of random graphs on surfaces.

\subsection{Main results} \label{results}

We shall now identify the main results of the paper
(the proofs of which will be given later).

One of the most important aspects to consider
when building an understanding of how a random graph behaves
is to gain knowledge of the typical components and subgraphs.
For the $g=0$ case,
it is known 
that the probability that $P(n,m)$ will have any given planar component
is bounded away from $0$
if $\frac{m}{n}$ is in the region 
$1 < \liminf \frac{m}{n} \leq \limsup \frac{m}{n} < 3$
(see~\cite{dowejc}).
However,
for the general genus case,
we surprisingly discover that
the probability that $S_{g}(n,m)$ 
will have any given \emph{non-planar} component
actually converges to $0$ as $n \to \infty$
for \emph{every} function $m(n)$,
even for a component with genus at most $g$:

\begin{Theorem} \label{maincpt}
Let $H$ be a (fixed) connected non-planar graph,
let $g \geq 0$ be a constant,
and let $m=m(n)$ satisfy $m \leq 3n-6+6g$ for all $n$.
Then
\begin{displaymath}
\mathbb{P}
[S_{g}(n,m) \textrm{ will have a component isomorphic to } H] \to 0
\textrm{ as } n \to \infty.
\end{displaymath}
\end{Theorem}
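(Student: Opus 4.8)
The plan is a counting argument, carried out by induction on $g$. Write $\mathcal{S}_g(n,m)$ for the set of labelled graphs on $[n]$ with $m$ edges and genus at most $g$, and $\mathcal{A}\subseteq\mathcal{S}_g(n,m)$ for the subfamily having a component isomorphic to $H$; the goal is $|\mathcal{A}|=o(|\mathcal{S}_g(n,m)|)$. Put $h:=|V(H)|$, $e:=|E(H)|$, and let $g_H\ge 1$ be the genus of $H$; since $H$ is connected its cycle rank is at least $4$, so $h+3\le e\le 3h-6+6g_H$ (the upper bound is Euler's inequality for a minimum-genus, hence cellular, embedding). The base case $g=0$ is trivial, since a planar graph has no non-planar component at all. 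For the inductive step I would first bound $|\mathcal{A}|$ from above. If $G\in\mathcal{A}$ has exactly $k\ge 1$ components isomorphic to $H$ (let $\mathcal{A}_k$ be the set of such $G$), with union of vertex sets $W$, then $|W|=kh$ and, by additivity of genus over components (Battle--Harary--Kodama--Youngs), $G-W$ has $n-kh$ vertices, $m-ke$ edges and genus at most $g-kg_H$, so $k\le g/g_H$; choosing $W$, the copies of $H$ on it, and $G-W$ independently,
\[
|\mathcal{A}|\ \le\ \sum_{k\ge 1}\frac{n!}{(n-kh)!\,k!\,|\mathrm{Aut}(H)|^{k}}\,\bigl|\mathcal{S}_{g-kg_H}(n-kh,\,m-ke)\bigr|.
\]
(Realizability of $G-W$ forces $m\le 3n-6+6g-(3h+6g_H-e)\le 3n-12+6g$, outside which $\mathcal{A}=\emptyset$; so the terms on the right may be assumed nonzero.)

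Next I would lower-bound $|\mathcal{S}_g(n,m)|$. Fix a $2$-connected "gadget" $D$ on $h+1$ vertices with exactly $e$ edges and genus at most $g_H$, with one distinguished attachment vertex; such a $D$ exists since $h<e\le 3(h+1)-6+6g_H$ and $e\le\binom{h+1}{2}$. From each $Q\in\mathcal{S}_{g-g_H}(n-h,m-e)$ \emph{having no component isomorphic to $H$} — by the inductive hypothesis (applied with genus $g-g_H<g$) these are a $(1-o(1))$-fraction of $\mathcal{S}_{g-g_H}(n-h,m-e)$ — I build a graph $G^{\ast}$ on $[n]$: place $Q$ on an $(n-h)$-subset, choose a vertex $v$ of $Q$, and overlay $D$ on the remaining $h$ vertices together with $v$ (in the attachment role). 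Gluing at the single cut-vertex $v$ keeps the edge count at $m$ and, by additivity of genus over a cut-vertex, the genus at most $(g-g_H)+g_H=g$; and $G^{\ast}\notin\mathcal{A}$, since the component through $v$ has more than $h$ vertices while every other component is a component of $Q$. Counting the $\binom nh$ placements of $Q$, the $\ge n-h$ choices of $v$, and the $\Theta(1)$ overlays of $D$, and dividing by the construction's multiplicity $\mu:=\max_{G^{\ast}}\mu(G^{\ast})$, one gets $|\mathcal{S}_g(n,m)|\ge\mu^{-1}\binom nh(n-h)\,\Theta(1)\,\bigl|\mathcal{S}_{g-g_H}(n-h,m-e)\bigr|$. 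Comparing with the $k=1$ term above, the binomial coefficient and the factor $h!$ cancel, leaving
\[
\frac{|\mathcal{A}_{1}|}{|\mathcal{S}_g(n,m)|}\ =\ O\!\left(\frac{\mu}{n}\right);
\]
iterating the construction with $k$ disjoint gadgets disposes of the $k\ge 2$ terms the same way. So everything reduces to showing $\mu=o(n)$.

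Here the genus hypothesis does the work. Because $D$ is $2$-connected and attached at a cut-vertex, a pendant copy of $D$ in $G^{\ast}$ is a \emph{block} of $G^{\ast}$; so if $D$ is chosen non-planar, additivity of genus over blocks gives $G^{\ast}$ at most $g$ such blocks, hence $\mu\le g=O(1)$, which closes the argument whenever $e>h+3$ (for then a $2$-connected non-planar graph on $h+1$ vertices with $e$ edges exists). The remaining case $e=h+3$ — equivalently $H$ has cycle rank exactly $4$, e.g.\ $H=K_{3,3}$ or a subdivision thereof with trees attached — is where any $(h+1)$-vertex, $e$-edge gadget is forced to be planar and could in principle occur in $\Omega(n)$ vertex-disjoint pendant copies; this is the point I expect to be the real obstacle. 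Resolving it should need either a sharper count of pendant copies of a fixed planar graph inside a graph of bounded genus and bounded density, or a modified construction that makes a \emph{non-planar} block appear in a recognizable way (for instance re-routing one extra edge through a canonically chosen bridge of $Q$, so that the planted structure is genus-carrying and hence rare by the same block-additivity principle). The genus bookkeeping of the first paragraph, the existence of $D$, and the $k\ge 2$ iteration are all routine by comparison.
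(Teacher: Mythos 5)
Your construction has a genuine gap, and it is exactly where you suspected. When $H$ has cycle rank exactly $4$ (i.e.\ $e(H)=|H|+3$; for example $K_{3,3}$, any subdivision of $K_{3,3}$, or any such graph with trees hung off it), the gadget $D$ is forced to have $|H|+1$ vertices and $e(H)$ edges, hence cycle rank $e(H)-|H|=3$; but every graph of cycle rank at most $3$ is series--parallel and in particular planar, so a non-planar $2$-connected $D$ of the required size does not exist. Without a non-planar gadget, the block-additivity bound $\mu\le g$ vanishes: a genus-$g$ graph with $O(n)$ edges can have $\Omega(n)$ vertex-disjoint pendant copies of a fixed \emph{planar} $2$-connected graph, so $\mu$ can be $\Theta(n)$ and the final estimate degenerates to $|\mathcal{A}_1|/|\mathcal{S}^g(n,m)|=O(1)$. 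Since cycle rank $4$ is the minimum possible for a non-planar connected $H$, this is not an exotic corner case but the first case one must handle. There is also a smaller unverified assertion that a suitable $D$ exists with genus at most $g_H$ when $e(H)>|H|+3$, but that is secondary.

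The paper sidesteps the whole issue by not trying to replace the entire $H$-component at once. Its double-counting move is local: insert a single edge from the $H$-component to an arbitrary outside vertex, then delete a pendant edge elsewhere to restore the edge count (creating a new isolated vertex). Controlling the double-count then needs only (a) the same non-planarity observation you use --- a genus-$\le g$ graph has at most $g$ vertex-disjoint non-planar subgraphs, hence at most $|H|g$ pendant copies of $H$ by Observation~\ref{appinter} --- applied directly to pendant copies of $H$ itself (which is non-planar by hypothesis, so no gadget is needed), and (b) the auxiliary facts that whp there are $\Omega(n)$ pendant edges (Lemma~\ref{pendant}) and $o(n)$ isolated vertices (Lemma~\ref{fewisolated}); the extreme regimes $\limsup m/n<1$ and $m/n\to 3$ are handed off to Theorems~\ref{cptmulti} and~\ref{conn1}. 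This avoids induction on $g$, avoids any gadget-existence lemma, and in particular has no blind spot at cycle rank $4$. If you want to rescue your route, the most promising direction is your own second suggestion: modify the construction so the planted structure is recognisably genus-carrying, in effect ensuring the replacement block is non-planar even when the edge budget is $|H|+3$ --- but as it stands, the proof is incomplete precisely for the sparsest non-planar $H$.
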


One of the key tools in our proofs will be the use 
of `appearances' and `triangulated appearances',
through which we are able to derive many interesting results.
One of the most fundamental is to show that the probability 
that $S_{g}(n,m)$ will contain a copy of any given \emph{planar} subgraph
converges to $1$ as $n \to \infty$
as long as $\liminf \frac{m}{n} > 1$:

\begin{Theorem} \label{mainsub}
Let $H$ be a (fixed) connected planar graph,
let $g \geq 0$ be a constant,
and let $m=m(n)$ satisfy
$\liminf \frac{m}{n} > 1$
and $m \leq 3n-6+6g$ for all $n$.
Then
\begin{displaymath}
\mathbb{P}
[S_{g}(n,m) \textrm{ will have a copy of } H] \to 1
\textrm{ as } n \to \infty.
\end{displaymath}
\end{Theorem}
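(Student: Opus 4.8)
The plan is to deduce the theorem from the stronger statement, which I would isolate as a lemma, that if $\liminf \frac{m}{n} > 1$ then there is a constant $\alpha = \alpha(H,g) > 0$ such that, with high probability, $S_g(n,m)$ contains at least $\alpha n$ \emph{appearances} of $H$: vertex sets $W \subseteq [n]$ with $|W| = |V(H)|$ and a distinguished root $r \in W$ such that $G[W]$ is isomorphic to $H$ by an isomorphism carrying $r$ to a fixed root of $H$, and such that no vertex of $W \setminus \{r\}$ has a neighbour outside $W$. Since an appearance of $H$ yields a copy of $H$, this implies the theorem at once; the point of passing to appearances is that, being a rigid local configuration, they are far easier to count than arbitrary copies. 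Writing $\mathcal{S}$ for the set of all graphs on $[n]$ with exactly $m$ edges and genus at most $g$, the lemma amounts to showing that the number of $G \in \mathcal{S}$ having fewer than $\alpha n$ appearances of $H$ is $o(|\mathcal{S}|)$.

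To prove this I would use a local-surgery argument. Given $G \in \mathcal{S}$ with few appearances, one produces many essentially independent modifications of $G$, each lying again in $\mathcal{S}$ (same vertices, same number of edges, genus still at most $g$) and each carrying a new appearance of $H$, in such a way that $G$ can be recovered from the modified graph together with only a bounded amount of side information; iterating this and applying a routine compression estimate then bounds the number of such $G$. The surgery uses $\liminf \frac{m}{n} > 1$ in the form that the excess $m - n + c$ of $G$, where $c$ is its number of components, is linear in $n$, so the $2$-core of $G$ has $\Omega(n)$ vertices and linear excess, providing $\Omega(n)$ disjoint bounded-size regions in which to operate. In one such region one deletes all edges meeting a suitable set $W$ of $|V(H)|$ vertices, plants a fresh copy of $H$ on $W$ (which, being planar, has genus $0$), and --- unless $W$ is to become its own component --- reconnects $W$ to the rest of $G$ by a single edge at the image of $r$, routed alongside one of the just-deleted edges so that the genus does not grow. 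The edge count is restored to $m$ either by choosing $W$ to meet exactly $e(H)+1$ edges of $G$ to begin with, or, when $m$ is smaller than the maximum $3n-6+6g$ by a linear amount, by compensating in a small reservoir elsewhere in the $2$-core; and when $m$ is instead close to $3n-6+6g$ one works with \emph{triangulated} appearances, where the local edge-maximality forced by the embedding makes this bookkeeping automatic.

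The main obstacle is keeping the surgery close to injective. To invert it one must know which edges of $G$ were destroyed when the copy of $H$ was inserted, so the modification can only be afforded at a location where $G$ already looks almost like an appearance of $H$, so that just a bounded number of edges change; guaranteeing $\Omega(n)$ disjoint such cheap locations, and treating the triangulated and non-triangulated edge regimes separately, is the delicate heart of the proof, and is precisely where one needs $\liminf \frac{m}{n} > 1$ rather than only $m \le 3n-6+6g$. Finally, once this mechanism shows that the expected number of appearances of $H$ is $\Omega(n)$, a second-moment computation --- appearances at vertex-disjoint locations interact only weakly --- upgrades the bound to the required with-high-probability statement, and hence to the theorem.
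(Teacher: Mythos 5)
Your reduction of the theorem to the assertion that $S_g(n,m)$ contains $\Omega(n)$ appearances of $H$ with high probability, with ordinary appearances when $\limsup\frac{m}{n}<3$ and triangulated appearances when $\frac{m}{n}\to 3$, is exactly the paper's reduction via Lemma~\ref{main6apps} and Theorem~\ref{sub1to3}. Where you diverge, substantially, is in how you propose to prove that counting lemma, and this is where the argument as written has real gaps.

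The paper proves Lemmas~\ref{mainapps} and~\ref{main6apps} by a global blow-up argument rather than a local switching one. Assuming for contradiction that $\mathcal{G}_k\subseteq\mathcal{S}^g(k,m(k))$, the set of graphs with fewer than $\alpha k$ vertex-disjoint (or totally edge-disjoint) appearances, is large, the paper attaches $\lceil\alpha k\rceil$ new appearances of two carefully chosen graphs $H_1,H_2$ (with $e(H_2)=e(H_1)+1$, so the total edge count can be tuned to hit $\lfloor q(1+\delta)k\rfloor$) on $\delta k$ fresh vertices. This produces so many graphs in $\mathcal{S}^g((1+\delta)k,\lfloor q(1+\delta)k\rfloor)$ that it contradicts the upper bound supplied by the growth constant $\gamma(q)$ (Lemmas~\ref{uniform} and~\ref{upper}). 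Injectivity is easy precisely because the construction adds new vertices: almost all appearances in the constructed graph involve fresh vertices (the original graph had few), so they can be recognised and stripped off with only $(xe)^{\lceil\alpha k\rceil}$ overcounting. This is an essentially different mechanism from your in-place surgery.

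Your local surgery on the fixed vertex set $[n]$ runs into the injectivity obstacle that you correctly identify as "the delicate heart of the proof" but do not resolve, and I think it is more serious than your sketch suggests. To invert the surgery one must single out, in $G'$, the planted copy of $H$ among all copies, and then reconstruct the original edges at $W$; you observe this is only cheap "at a location where $G$ already looks almost like an appearance of $H$", but that is circular: if $G$ already nearly contains an appearance at $W$, the surgery buys nothing, and a graph with few appearances may have no such near-appearance locations at all. The claim that linear excess of the $2$-core yields "$\Omega(n)$ disjoint bounded-size regions in which to operate" is asserted but not argued, and is not a formal consequence of the excess alone; likewise "compensating in a small reservoir elsewhere" and "routed alongside one of the just-deleted edges" (the embedding of $G$ minus all edges at $W$ is not canonically inherited, so "alongside" is undefined) are each non-trivial points left unaddressed. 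Finally, the closing appeal to a second-moment computation is misplaced: a compression argument of the kind you describe, if it worked, would bound $|\mathcal{G}_n|/|\mathcal{S}^g(n,m)|$ directly and need no second moment, while if it only yielded a first-moment bound on the number of appearances, a second-moment upgrade would require an independent correlation analysis that the sketch does not supply. So while the reduction matches the paper, the proof of the key lemma is both a different approach and, as presented, incomplete.
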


The topic of maximum degree
(which we denote throughout by $\Delta$)
is one of the core areas in the study of random graphs,
both as a matter of great interest in its own right
and for its connections to colouring algorithms.
However,
this issue has not previously been explored even for the $g=0$ case 
of the random planar graph $P(n,m)$.
In this paper,
we derive bounds for the full general genus case,
observing that intriguingly there is different behaviour
depending on the ratio of $m$ to $n$
(see Definition~\ref{orderdef} 
for details of the terminology and notation used):

\begin{Theorem} \label{mainmaxdeg}
Let $g \geq 0$ be a constant,
and let $m=m(n)$ satisfy $m \leq 3n-6+6g$ for all $n$.
Then with high probability\footnote{meaning 
with probability tending to $1$ as $n \to \infty$ ---
see Definition~\ref{orderdef}.}
\begin{displaymath}
\Delta (S_{g}(n,m)) =
\left\{ 
{\renewcommand{\arraystretch}{1.5}
\begin{array}{lll}
\Theta \left( \frac{\ln n}{\ln \ln n} \right)
& \textrm{for }
0 < \liminf \frac{m}{n} \leq \limsup \frac{m}{n} < \frac{1}{2}, \\
O(\ln n) 
& \textrm{for } 
\frac{1}{2} \leq \liminf \frac{m}{n} \leq \limsup \frac{m}{n} \leq 1, \\
\Theta (\ln n) 
& \textrm{for }
1 < \liminf \frac{m}{n}.
\end{array}} \right. 
\end{displaymath}
\end{Theorem}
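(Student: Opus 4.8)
The plan is to treat the three ranges of $\frac{m}{n}$ separately, since they require genuinely different arguments; throughout, write $\mathcal{S}_g(n,m)$ for the set of graphs on $[n]$ with $m$ edges and genus at most $g$, so that $S_g(n,m)$ is uniform on $\mathcal{S}_g(n,m)$. For the range $0 < \liminf \frac{m}{n} \le \limsup \frac{m}{n} < \frac{1}{2}$ I would compare with the Erd\H{o}s--R\'enyi graph $G(n,m)$: since $\frac{m}{n}$ stays bounded away from $\frac{1}{2}$, $G(n,m)$ is strictly subcritical, so with high probability each of its components is a tree or is unicyclic, hence $G(n,m)$ is planar and in particular has genus at most $g$ with high probability. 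Now $S_g(n,m)$ is $G(n,m)$ conditioned on having genus at most $g$, an event of probability $1-o(1)$ here, so the two have the same limiting behaviour, and the classical estimate $\Delta(G(n,m)) = \Theta\!\left(\frac{\ln n}{\ln\ln n}\right)$ (with the lower bound using $\liminf \frac{m}{n}>0$) settles this case; note it also yields $\Delta = O(\ln n)$ there.

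For the upper bound $\Delta(S_g(n,m)) = O(\ln n)$ in the other two ranges I would use a first-moment argument over vertices: it suffices to find a constant $C$ with $n\cdot\mathbb{P}[\deg_{S_g(n,m)}(1) \ge C\ln n] \to 0$, i.e.\ to bound $|\{G \in \mathcal{S}_g(n,m) : \deg_G(1) = k\}|$ for $k \ge C\ln n$. Deleting vertex $1$ exhibits such a $G$ as a graph of $\mathcal{S}_g(n-1,m-k)$ together with a choice of the neighbourhood $N$ of vertex $1$, subject to the whole graph still having genus at most $g$; the crucial point is that this last constraint must be used, the naive bound $\binom{n-1}{k}|\mathcal{S}_g(n-1,m-k)|$ being far too lossy for super-constant $k$. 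Instead, in a genus-$\le g$ embedding of $G$ the vertex $1$ lies in a single face once deleted, so re-inserting a degree-$k$ vertex into a given member of $\mathcal{S}_g(n-1,m-k)$ is ``rare''; combining this with the known asymptotic enumeration of genus-$\le g$ graphs with a prescribed number of edges (to pin down $|\mathcal{S}_g(n,m)|$ and its relatives to the correct exponential order) yields decay in $k$ fast enough that summing over $k \ge C\ln n$ gives $o(1/n)$.

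For the lower bound $\Delta(S_g(n,m)) = \Omega(\ln n)$ when $\liminf \frac{m}{n} > 1$, the plan is a quantitative form of the triangulated-appearances method behind Theorem~\ref{mainsub}, applied now to a \emph{growing} target. Fix a small $c>0$ and let $H_n$ be a planar graph on $h = \lceil c\ln n\rceil + O(1)$ vertices containing a vertex of degree at least $c\ln n$, having at most $\mathrm{poly}(h) = n^{o(1)}$ automorphisms (e.g.\ a suitably triangulated ``fan''), and serving as the core of a triangulated appearance --- one in which its $h$ vertices span $\approx 3h$ edges and are attached to the rest along a short path of triangles, so that the reduced instance on $n-h$ vertices with $m - \Theta(h)$ edges remains feasible \emph{even when $\frac{m}{n}$ is near $3$}; here a near-triangulation rather than a plain fan is essential, and the extreme case $m = 3n-6+6g$ can alternatively be handled by citing that uniformly random genus-$\le g$ triangulations have maximum degree $\Theta(\ln n)$ with high probability. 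Any such appearance contains a vertex of degree at least $c\ln n$, so it suffices that $S_g(n,m)$ have one with high probability. I would estimate the expected number of appearances, of order $\frac{n!}{(n-h)!\,|\mathrm{Aut}(H_n)|}\cdot\frac{|\mathcal{S}_g(n-h,\,m-\Theta(h))|}{|\mathcal{S}_g(n,m)|}$: since the attachment does not change the genus and passing to $n-h$ vertices and $m-\Theta(h)$ edges moves the density by only $O(\ln n/n)$, the enumeration estimates make this $n^{1-o(1)}$ provided $c$ is small enough that the relevant exponential growth constant to the power $h$ is $n^{o(1)}$; a second-moment computation (appearances on disjoint vertex sets being essentially negatively correlated) then produces an appearance with high probability.

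The main obstacle is this last step: the expected-appearance estimate must control $|\mathcal{S}_g(n-h,m')|/|\mathcal{S}_g(n,m)|$ sharply enough to survive multiplication by $\frac{n!}{(n-h)!} = n^{\Theta(\ln n)}$ --- exactly the strength the triangulated-appearances framework provides for fixed targets, but which here must be established \emph{uniformly} in the growing parameter $h = h(n)$, with the second-moment step likewise made uniform in $h$. A lesser difficulty is the middle range $\frac{m}{n} \le 1$ of the upper bound, where the genus constraint hardly restricts $N$ and the entire saving must come from the exponential-order comparison of $|\mathcal{S}_g(n,m)|$ with $|\mathcal{S}_g(n-1,m-k)|$.
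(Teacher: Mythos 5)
Your treatment of the range $0 < \liminf \frac{m}{n} \le \limsup \frac{m}{n} < \frac{1}{2}$ matches the paper: condition $G(n,m)$ on the high-probability event of having genus at most $g$, then use the classical $\Theta(\ln n/\ln\ln n)$ bound. For the other two ranges, however, you are taking a genuinely different route, and both of the obstacles you flag at the end are, in my view, fatal to the plan as stated rather than mere technicalities.

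For the upper bound you propose a first-moment argument: delete vertex $1$ and compare $|\mathcal{S}_g(n-1,m-k)|$ with $|\mathcal{S}_g(n,m)|$ "to the correct exponential order''. But the growth-constant machinery in this setting (Lemma~\ref{uniform}, Theorem~\ref{newuniform}, the function $\gamma(q)$) is only available for $q\in(1,3)$. In the middle range $\frac{1}{2}\le \frac{m}{n}\le 1$ --- precisely the range you identify as where the entire saving would have to come from the enumeration --- there is no such estimate, and this window sits on the phase transition where the counts are most delicate. Moreover, the "rarity'' of re-inserting a degree-$k$ vertex is not quantified: after deleting vertex $1$ you do not retain a canonical embedding, the merged region around the removed vertex need not be a single face unless the graph is $2$-connected there, and the number of ways to re-insert a degree-$k$ vertex into a face depends on face sizes of $G'$, which can be as large as $\Theta(n)$ in exactly the range $\frac{m}{n}\le 1$ (cf.\ Theorem~\ref{face0to1}). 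The paper side-steps all of this by never invoking enumeration for the upper bound: Theorem~\ref{maxdeg} is a direct double-counting argument that trades $\Theta(\ln n)$ edges at the high-degree vertex against the same number of pendant edges (Lemma~\ref{pendant}), or against degree-$3$ apices of triangulated appearances of $K_4$ (Lemma~\ref{main6apps}) when $\frac{m}{n}\to 3$, and the over-counting is controlled combinatorially rather than by asymptotic enumeration.

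For the lower bound when $\liminf\frac{m}{n}>1$, your plan is a first/second-moment count of triangulated appearances of a \emph{growing} graph $H_n$ of order $h\approx c\ln n$. As you note, this requires controlling $|\mathcal{S}_g(n-h,m')|/|\mathcal{S}_g(n,m)|$ to within a factor $n^{o(\ln n)}$, uniformly in $h=h(n)$, and then a second moment with the same uniformity. That is a substantially stronger enumerative statement than anything established in the paper (or, to my knowledge, in the literature for graphs of positive genus with a prescribed edge count); Lemma~\ref{mainapps} and Lemma~\ref{main6apps} are proved only for \emph{fixed} target graphs, and their proofs exploit that fixedness (they can afford a multiplicative loss that is exponential in $|H|$, which becomes ruinous once $|H|=\Theta(\ln n)$). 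The paper's Theorem~\ref{maxdeg2} avoids enumeration entirely: starting from a graph with $\Omega(n)$ edge-disjoint triangulated appearances of $K_4$ but maximum degree below $h$, it transplants the degree-$3$ apices of $h+1$ such appearances into a wheel centred at $u_1$, giving $u_1$ degree exactly $h$. The reconstruction is then almost unique ($u_1$ is the only vertex of degree $h$, $u_2,u_3$ are its unique neighbours of degree $6$ and $5$, the rest follow from the wheel's cyclic order), and the over-count is dominated by guessing the original attachment triangles. This double-counting wins by a polynomial factor without any enumerative input. Finally, your fall-back for $m=3n-6+6g$ --- citing a $\Theta(\ln n)$ maximum-degree law for uniformly random genus-$\le g$ \emph{labelled simple} triangulations --- is not a result I would treat as available; the known results in that direction (e.g.\ Gao--Wormald) are for random planar \emph{maps}, a different model.
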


The size of the largest face is another exciting topic
that has not previously been investigated
for $P(n,m)$.
We let $F(S_{g}(n,m))$
denote the size of the largest face of $S_{g}(n,m)$
(maximised over all embeddings with genus at most $g$),
and we again deal with the full general genus case,
finding that there is an interesting threshold around $\frac{m}{n}=1$:

\begin{Theorem} \label{mainface}
Let $g \geq 0$ be a constant,
and let $m=m(n)$ satisfy $m \leq 3n-6+6g$ for all $n$.
Then with high probability
\begin{displaymath}
F(S_{g}(n,m)) =
\left\{ 
{\renewcommand{\arraystretch}{1.5}
\begin{array}{ll}
\Theta (n)
& \textrm{for }
0 < \liminf \frac{m}{n} \leq \limsup \frac{m}{n} < 1, \\
\Theta (\ln n) 
& \textrm{for }
1 < \liminf \frac{m}{n} \leq \limsup \frac{m}{n} < 3.
\end{array}} \right. 
\end{displaymath} 
\end{Theorem}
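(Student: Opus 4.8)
I would treat the two ranges of $\frac{m}{n}$ separately, proving matching upper and lower bounds in each, with the upper bounds coming from counting arguments (bounding the number of genus-at-most-$g$ graphs on $n$ vertices with $m$ edges that admit a large face) and the lower bounds from substructures of $S_{g}(n,m)$ that force a large face in a suitable embedding. For $0<\liminf\frac{m}{n}\le\limsup\frac{m}{n}<1$ the upper bound $F=O(n)$ is immediate, since the face sizes sum to $2m<2n$. For the matching lower bound, the key point is that whp a linear number of edges of $S_{g}(n,m)$ lie in components that are trees; granting this, I would draw all of these tree components in the plane so that they share one common face (with the remaining components drawn in a disjoint region), so that this common face has size at least twice the number of such edges, i.e.\ $\Omega(n)$. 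That a linear number of edges lie in tree components is a structural fact about $S_{g}(n,m)$ when $\limsup\frac{m}{n}<1$ --- for $\frac{m}{n}<\frac{1}{2}$ it reduces to the classical picture for $G(n,m)$, and in general I would extract it from the same counting ideas used for appearances, since such edges can be ``relocated'' essentially freely.

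\smallskip
\noindent For $1<\liminf\frac{m}{n}\le\limsup\frac{m}{n}<3$, the lower bound $F=\Omega(\ln n)$ should follow from the fact that whp $S_{g}(n,m)$ contains a \emph{pendant} tree with $\Theta(\ln n)$ edges (for instance a pendant star $K_{1,d}$ with $d=\Theta(\ln n)$), which the appearance machinery behind Theorems~\ref{mainsub} and~\ref{mainmaxdeg} provides: such a tree drawn on its own has a face of size $2d$, and being pendant it can be inserted into any face of the rest of the graph without shrinking that face. For the upper bound $F=O(\ln n)$, I would argue as follows. Suppose $G=S_{g}(n,m)$ has an embedding of genus at most $g$ with a face $\phi$ of size $s$; the boundary walk of $\phi$ spans a connected subgraph whose vertex set $U$ lies on the single empty face $\phi$, and since that subgraph has at least $s/2$ edges and genus at most $g$ we get $|U|\ge s/6-O(g)$. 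Then either $\Delta(G)\ge s/6-O(g)$, which the argument behind Theorem~\ref{mainmaxdeg} rules out whp once $s=C\ln n$ for $C$ large; or I may add a new vertex $z$ inside $\phi$ joined to all of $U$, producing a genus-at-most-$g$ graph $G^{+}$ on $n+1$ vertices with $m+|U|$ edges in which $z$ is essentially the unique vertex of degree $|U|\ge s/6-O(g)$, so that $G=G^{+}-z$ is recovered from $G^{+}$. Counting, for each $d\ge s/6-O(g)$, the number of such $G^{+}$ against the number of genus-at-most-$g$ graphs on $n$ vertices with $m$ edges, and summing over $d$, should give $F(S_{g}(n,m))=O(\ln n)$ whp.

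\smallskip
\noindent The hard part, I expect, will be this final estimate. What has to be controlled is the ratio of the number of genus-at-most-$g$ graphs on about $n$ vertices with $m+d$ edges to the number with $m$ edges; when $1<\frac{m}{n}$ is below the typical edge density of $S_{g}(n)$ this ratio grows with $d$, so the bound can only close by also using that the graphs $G^{+}$ are highly atypical --- they contain a vertex of degree $d=\Theta(\ln n)$, well beyond the typical maximum degree --- which forces one to combine the counting estimate with the exponential upper tail of the degree sequence of $S_{g}(n,m)$. Making these fit together uniformly over $1<\frac{m}{n}<3$ is where most of the technical effort will go; near the upper end $\frac{m}{n}\to3$, where $G$ is nearly a triangulation, the bound should be comparatively routine.
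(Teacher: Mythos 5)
Your treatment of the first regime $0 < \liminf\frac{m}{n}\le\limsup\frac{m}{n}<1$ matches the paper's: the $O(n)$ upper bound is immediate from $\sum(\text{face sizes})=2m$, and the lower bound comes from linearly many tree components (Theorem~\ref{ntree}) drawn so as to share a common face (Theorem~\ref{face0to1}). For the lower bound $\Omega(\ln n)$ in the second regime, your idea that a pendant subtree on $\Theta(\ln n)$ vertices forces a large face is the right one, but you assert that ``the appearance machinery behind Theorems~\ref{mainsub} and~\ref{mainmaxdeg} provides'' such a pendant tree, and it does not: Lemma~\ref{mainapps} gives linearly many appearances of a \emph{fixed} $H$, and Theorem~\ref{mainmaxdeg} gives a vertex of degree $\Theta(\ln n)$, not a \emph{pendant} star. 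The missing piece has to be proved separately; the paper's Theorem~\ref{face1to3} does this with a fresh double-count showing that whp there is a pendant copy of $P_{h}$ with $h=\lceil c\ln n\rceil$ (convert $h$ pendant edges into a pendant path, and bound the number of ``straight'' pendant copies of $P_h$ in any constructed graph by $O(h)$).

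The upper bound $F=O(\ln n)$ is where the proposal genuinely breaks. Coning the large face by a new vertex $z$ produces $G^{+}\in\mathcal{S}^{g}(n+1,m+|U|)$, and even granting that $z$ can be identified and $G$ recovered, the counting cannot close with the available estimates. The only control on $|\mathcal{S}^{g}(n',m')|$ across different $n'$ is via growth-constant asymptotics (Theorem~\ref{newuniform}, Lemmas~\ref{lower} and~\ref{upper}), which pin down $(|\mathcal{S}^g(n,m)|/n!)^{1/n}$ only to within an additive $\eta$, i.e.\ $|\mathcal{S}^g(n,m)|$ only up to an $(1\pm\eta)^{n}$ multiplicative error. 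Comparing $n$ to $n+1$ therefore incurs an uncontrolled factor of order $e^{\Theta(\eta n)}$. The ``atypicality'' you invoke --- a vertex of degree $d=\Theta(\ln n)$ --- only buys a polynomial factor: the tail bound implicit in the proof of Theorem~\ref{maxdeg} has the shape $n^{1-c(C)}$, which is swamped by the exponential-in-$n$ uncertainty. There is no exponential upper tail on $\Delta$ in the paper, and obtaining one would be a substantially harder problem than the theorem you are trying to prove. The paper's Theorem~\ref{faceat3} is structured precisely to avoid any cross-$n$ comparison: from a graph with a face on $d\ge C\ln n$ vertices one chooses $a=\lfloor\alpha C\ln n/6\rfloor$ of them ($\binom{d}{a}\ge(6/\alpha)^{a}$ choices), destroys $a+1$ totally edge-disjoint triangulated appearances of $K_{4}$ (Lemma~\ref{main6apps}), and rewires those degree-$3$ vertices into a wheel attached to the face, \emph{keeping $n$ and $m$ fixed}. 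The exponential gain $(6/\alpha)^{a}$ beats the polynomial double-counting $(\tfrac{7}{3}n)^{a+1}n$ for $C$ large, giving the contradiction. Without a mechanism like this (same-$(n,m)$ rewiring plus triangulated appearances to rebalance the edge count), your upper-bound argument cannot be completed.
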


\subsection{Techniques and outline of the paper} \label{techniques}

Many of our proofs rely on the technique of double-counting.
For example,
suppose that we wish to relate the number of graphs in two sets 
$\mathcal{G}_{n}$ and $\mathcal{G}_{n}^{\prime}$
(e.g.~$\mathcal{G}_{n}^{\prime}$
may be the set of all graphs on $[n]$
with exactly $m$ edges and genus at most $g$,
and $\mathcal{G}_{n}$
may be the subset of $\mathcal{G}_{n}^{\prime}$
consisting of those graphs 
that have a particular property).
For each graph in $\mathcal{G}_{n}$,
we would aim to construct many graphs in $\mathcal{G}_{n}^{\prime}$
by making various alterations
(e.g.~adding/deleting edges in suitable places),
and we would then try to show that each graph in $\mathcal{G}_{n}^{\prime}$
is not constructed too many times.

If, say, 
each graph in $\mathcal{G}_{n}$
can be used to construct $f_{1}(n) = \Omega(n)$ 
graphs in $\mathcal{G}_{n}^{\prime}$,
and each graph in $\mathcal{G}_{n}^{\prime}$
is only constructed $f_{2}(n) = o(n)$ times in total,
then this would imply that
$|\mathcal{G}_{n}^{\prime}| = \frac{\Omega (n) |\mathcal{G}_{n}|}{o(n)}$,
and so 
$\frac{|\mathcal{G}_{n}|}{|\mathcal{G}_{n}^{\prime}|} 
= \frac{o(n)}{\Omega(n)} \to 0$ 
as $n \to \infty$.
Hence,
if $\mathcal{G}_{n}$ is the set of graphs in $\mathcal{G}_{n}^{\prime}$
with a particular property,
then we would conclude that the probability that a random graph in
$\mathcal{G}_{n}^{\prime}$
has this property must converge to $0$ as $n \to \infty$.

The challenge when creating such a proof 
lies in finding a successful way to construct many graphs of the desired type 
without introducing a large amount of double-counting.
Hence,
the alterations used in the construction process
need to be carefully controlled,
in order to allow some way of bounding the number of possibilities 
for the original graph.

Our restriction on the total number of edges in the graph
provides a further serious complication,
as any inserted edges need to be exactly balanced by deleted edges
without losing control of the double-counting.
For graphs on surfaces,
an additional major difficulty arises when inserting edges,
as it is crucial to ensure that any conditions on the genus are not violated.

Thus, 
to obtain workable double-counting arguments,
we shall typically require information about the number of choices we have
for where to add/delete certain types of edges.
In particular,
we shall find that
it will often be useful to have many ways 
(e.g.~linear in the number of vertices)
to insert an edge 
without increasing the genus of the graph,
or to know that there are many `pendant' edges
or many `appearances' of certain subgraphs
(see Definitions~\ref{pendantdefn} and~\ref{defapps}).

The paper is consequently structured as follows:
in Section~\ref{prelim},
we provide the necessary definitions and state the various key lemmas
that will be used during our counting arguments;
in Section~\ref{cptsect},
we collect together results 
on the probability that $S_{g}(n,m)$
will contain given components (including Theorem~\ref{maincpt});
in Section~\ref{subsect},
we will do likewise for subgraphs (including Theorem~\ref{mainsub});
in Section~\ref{maxdegsect},
we will look at the maximum degree of $S_{g}(n,m)$
(obtaining Theorem~\ref{mainmaxdeg});
in Section~\ref{facesize},
we shall investigate the size of the largest face of $S_{g}(n,m)$
(obtaining Theorem~\ref{mainface});
in Sections~\ref{appsection} and~\ref{6appsection},
we will prove two important results from Section~\ref{prelim}
on `appearances' (Lemma~\ref{mainapps})
and `triangulated appearances' (Lemma~\ref{main6apps});
and then in Section~\ref{discussion},
we shall discuss various questions that remain unanswered.

Let us note that some existing results for the planar case $P(n,m)$ can immediately be carried over to $S_{g}(n,m)$.
In particular, this is true
if the relevant proofs only utilise basic properties such as being able to delete an edge or insert an edge between components without increasing the genus.
When we meet such cases,
we shall consequently just state our results without repeating full details of the proofs.

\section{Preliminaries} \label{prelim}

In this section,
we shall provide details of some preliminary matters
that will be of importance to us later.
We begin (in Subsection~\ref{notation})
by stating the notation and definitions that will be used,
and then (in Subsection~\ref{lemmas})
we introduce various lemmas 
that will be essential for many of our proofs.

\subsection{Notation and definitions} \label{notation}

Throughout this paper,
we shall always take $g$, $n$ and $m=m(n)$
to be integers satisfying $g \geq 0$, $n>0$
and $0 \leq m \leq 3n-6+6g$,
even if this is not always explicitly stated.

We start with the notation for our random graph:

\begin{Definition}
We shall let 
\emph{$\mathcal{S}^{g}(n,m)$}
denote the set of all labelled graphs 
on the vertex set $[n] := \{1,2, \ldots, n\}$
with exactly $m$ edges and with genus at most $g$,
and we shall let 
\emph{$S_{g}(n,m)$}
denote a graph taken uniformly at random from $\mathcal{S}^{g}(n,m)$.
\end{Definition}

Next,
we provide details of the order notation 
that will be used throughout this paper:

\begin{Definition} \label{orderdef}
Given non-negative functions $f(n)$ and $h(n)$, 
we shall use the following notation:
\begin{itemize}
\item
$f(n) = \Omega (h(n))$ \\
means there exists a constant $c>0$
such that $f(n) \geq c h(n)$ for all large $n$;
\item $f(n) = O(h(n))$ \\
means there exists a constant $C$
such that $f(n) \leq C h(n)$ for all large $n$;
\item $f(n) = \omega (h(n))$ \\
means $\frac{f(n)}{h(n)} \to \infty$ as $n \to \infty$;
\item $f(n) = o(h(n))$ \\
means $\frac{f(n)}{h(n)} \to 0$ as $n \to \infty$.
\end{itemize}

We shall say that a random event $X_{n}$ happens
\emph{with high probability (whp)}
if
$\mathbb{P}(X_{n}) \to 1$ as $n \to \infty$.
Given a non-negative random variable $f(n)$ 
and a non-negative function $h(n)$,
we shall use the following notation:
\begin{itemize}
\item $f(n) = \Omega (h(n))$ whp \\
means there exists a constant $c>0$
such that $f(n) \geq c h(n)$ whp;
\item $f(n) = O(h(n))$ whp \\
means there exists a constant $C$
such that $f(n) \leq C h(n)$ whp;
\item $f(n) = \omega (h(n))$ whp \\
means that,
given any constant $K$,
we have
$\frac{f(n)}{h(n)} > K$ whp;
\item $f(n) = o(h(n))$ whp \\
means that,
given any constant $\epsilon > 0$,
we have
$\frac{f(n)}{h(n)} < \epsilon$ whp.
\end{itemize}

Throughout this paper,
we shall always take all asymptotics to be as $n \to \infty$,
even if this is not always explicitly stated.
\end{Definition}

We shall often obtain different results
for different types of subgraphs,
and consequently we shall find it convenient to use the following definitions:

\begin{Definition}
We shall say that a connected graph $H$ is \emph{unicyclic}
if $e(H)=|H|$,
and \emph{multicyclic}
(also known as `complex',
see e.g.~\cite{janluc,kangluc})
if $e(H)>|H|$.
\end{Definition}

Many of our counting arguments
will rely heavily on the use of `pendant' edges and copies:

\begin{Definition} \label{pendantdefn}
We shall use \emph{pendant vertex}
to mean a vertex of degree $1$,
and \emph{pendant edge}
to mean an edge incident to such a vertex.

Given a (small) graph $H$ and a (large) graph $G$,
we shall use the term \emph{copy} of $H$
to mean any subgraph of $G$ isomorphic to $H$.
If $H$ is connected,
we shall use \emph{pendant copy} of $H$
to mean an induced copy of $H$
that is joined to the rest of $G$ by exactly one edge.
\end{Definition}

Since a graph $H$ can only have at most $|H|-1$ cut-edges,
we may make the following useful observation:

\begin{Observation} \label{appinter}
A pendant copy of a connected graph $H$
can only have a vertex in common 
with at most $|H|-1$ other pendant copies of $H$.
\end{Observation}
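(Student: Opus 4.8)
The statement to prove is Observation~\ref{appinter}: a pendant copy of a connected graph $H$ can share a vertex with at most $|H|-1$ other pendant copies of $H$.

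\medskip

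The plan is to exploit the fact, already noted in the text, that a connected graph on $|H|$ vertices has at most $|H|-1$ cut-edges (in fact exactly $|H|-1$ edges in total in the extreme case, but the relevant bound is on cut-edges). First I would fix a pendant copy $C$ of $H$ in the host graph $G$, say joined to the rest of $G$ by the single edge $e$, and let $C'$ be any other pendant copy of $H$ that shares at least one vertex with $C$. The key structural claim is that the unique edge joining $C'$ to the rest of $G$ must lie inside $C$; more precisely, I would argue that the connecting edge of $C'$ is one of the edges of the subgraph $C$ (viewed as an induced subgraph of $G$), and moreover it must be a cut-edge of $C$.

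\medskip

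To see this, take a vertex $v \in V(C) \cap V(C')$. Since $C'$ is an \emph{induced} copy joined to the rest of $G$ by exactly one edge, $C'$ cannot be entirely contained in $C$ (as $C$ and $C'$ are distinct induced subgraphs on the same number of vertices, one cannot properly contain the other, and equality is excluded), so $V(C') \not\subseteq V(C)$; hence there is a vertex $w \in V(C') \setminus V(C)$. Walking along a path in $C'$ from $v$ to $w$, there is a first edge $f$ that leaves $V(C)$. I claim $f$ is the unique connecting edge of $C'$: any edge of $C'$ with exactly one endpoint outside $V(C')$ — wait, rather, I should track which edges leave $C$. The cleaner route: every edge of $C'$ with both endpoints in $V(C)$ is, by inducedness of $C$, an edge of $C$; and an edge of $C'$ with one endpoint in $V(C)$ and one outside is an edge of $G$ incident to $C$ but not in $C$, hence — since $C$ is pendant — must be the single connecting edge $e$ of $C$. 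But $C'$ has only one connecting edge, and it cannot be $e$ and also have $w$ reachable\ldots one must be careful. I would instead phrase it as: the connecting edge of $C'$ separates $V(C')$ from $V(G)\setminus V(C')$; since $v \in V(C)\cap V(C')$ and $C$ is connected with at most one edge ($e$) leaving it, the part of $C$ outside $C'$ attaches to $C'$ only through that connecting edge, forcing the connecting edge of $C'$ to be an edge of $C$ that is a cut-edge of $C$ (its removal disconnects the vertices of $V(C)\cap V(C')$ from the rest of $C$). Distinct pendant copies $C'$ yield distinct such cut-edges of $C$, because the connecting edge together with inducedness determines $V(C')$ and hence $C'$.

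\medskip

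Having established an injection from the set of other pendant copies meeting $C$ into the set of cut-edges of $C$, the conclusion follows immediately: $C \cong H$ has at most $|H|-1$ edges that are cut-edges, so there are at most $|H|-1$ such copies $C'$. The main obstacle, as the discussion above indicates, is the bookkeeping needed to pin down exactly where the connecting edge of $C'$ sits and to verify the injectivity — i.e.\ ruling out the degenerate possibilities (one copy contained in another, two copies on the same vertex set, the connecting edge of $C'$ coinciding with $e$) using the hypotheses that the copies are \emph{induced} and \emph{pendant}. Once those edge cases are handled, the cut-edge counting bound does the rest with no computation.
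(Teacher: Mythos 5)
Your proof is correct and takes the same route as the paper, which derives the observation in a single preliminary remark from the fact that a graph on $|H|$ vertices has at most $|H|-1$ cut-edges --- exactly the injection (other pendant copy $C' \mapsto$ its connecting edge, shown to be a cut-edge of $C$) that you construct. The one detail you leave as ``bookkeeping,'' namely that the cut-edge determines $C'$ uniquely, does go through: $V(C')$ must be the side of that cut-edge containing the attachment vertex of $C$'s own connecting edge, since the other side is a component of $G$ minus the cut-edge lying entirely inside $V(C)$ and therefore has fewer than $|H|$ vertices.
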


One particular type of pendant copy
that will be extremely important to us is an `appearance':

\begin{Definition} \label{defapps}
Let $H$ be a connected graph on the vertex set $[|H|]$,
and let $G$ be a graph on the vertex set $[n]$,
where $n>|H|$.
Let $W \subset V(G)$ with $|W|=|H|$,
and let the `root' $r_{W}$ denote the smallest element in $W$.
We say that $H$ \emph{appears} at $W$ in $G$ if
(a) the increasing bijection from $[|H|]$ to $W$ gives
an isomorphism between $H$ and the induced subgraph $G[W]$ of $G$;
and (b) there is exactly one edge in $G$ between $W$ and the rest of $G$,
and this edge is incident with the root $r_{W}$.
See Figure~\ref{appfig}
(and note that,
in the particular example shown,
any non-trivial permutation of the labels $\{2,4,5,7\}$
would violate the definition of an appearance).

We say that two appearances at $W_{1}$ and $W_{2}$
are \emph{vertex-disjoint} if $W_{1} \cap W_{2} =~\emptyset$.
\end{Definition}

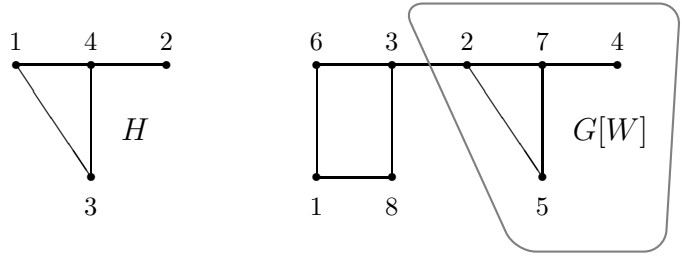
\begin{figure} [ht]
\setlength{\unitlength}{1cm}
\begin{picture}(10,3.3)(1.375,-1)

\put(3.4,0.5){\Large{$H$}}
\put(9.4,0.5){\Large{$G[W]$}}

\put(2,1.5){\line(1,0){2}}
\put(1.91,1.7){$1$}
\put(2,1.5){\circle*{0.1}}
\put(2.91,1.7){$4$}
\put(3,1.5){\circle*{0.1}}
\put(3.91,1.7){$2$}
\put(4,1.5){\circle*{0.1}}
\put(3,1.5){\line(0,-1){1.5}}
\put(2,1.5){\line(2,-3){1}}
\put(2.91,-0.5){$3$}
\put(3,0){\circle*{0.1}}

\put(6,1.5){\line(1,0){4}}
\put(6,0){\line(1,0){1}}
\put(6,1.5){\line(0,-1){1.5}}
\put(7,1.5){\line(0,-1){1.5}}
\put(5.91,1.7){$6$}
\put(6,1.5){\circle*{0.1}}
\put(6.91,1.7){$3$}
\put(7,1.5){\circle*{0.1}}
\put(5.91,-0.5){$1$}
\put(6,0){\circle*{0.1}}
\put(6.91,-0.5){$8$}
\put(7,0){\circle*{0.1}}

\put(7.91,1.7){$2$}
\put(8,1.5){\circle*{0.1}}
\put(8.91,1.7){$7$}
\put(9,1.5){\circle*{0.1}}
\put(9.91,1.7){$4$}
\put(10,1.5){\circle*{0.1}}
\put(9,1.5){\line(0,-1){1.5}}
\put(8,1.5){\line(2,-3){1}}
\put(8.91,-0.5){$5$}
\put(9,0){\circle*{0.1}}

\put(7,-1){
\begin{tikzpicture} 
\draw[gray, thick,rounded corners=8pt]
(0,0) -- (2,0) -- (2.2,3.3) -- (-1.5,3.3) -- cycle;
\end{tikzpicture}
}

\end{picture}
\caption{A graph $H$ and an appearance of $H$.} \label{appfig}
\end{figure}

When $m$ is close to $3n$,
we will find appearances rather scarce
(since each involves a cut-edge),
and so we will instead often employ the concept of `triangulated appearances'
(a modified version of `$6$-appearances' from~\cite{dow}):

\begin{Definition} \label{6apps}
We say that a connected graph $H$ has a
\emph{triangulated appearance} at $W \subset V(G)$ if
(a) the increasing bijection from $V(H)$ to $W$
gives an isomorphism between $H$ and the induced subgraph $G[W]$ of $G$;
and (b) there are exactly six edges in $G$ between $W$ and the rest of $G$,
and these are of the form 
$E_{W} 
=\{ r_{1}v_{1}, v_{1}r_{2}, r_{2}v_{2}, v_{2}r_{3}, r_{3}v_{3}, v_{3}r_{1} \}$,
where $\{ r_{1}, r_{2}, r_{3} \} \subset W$ 
and $\{ v_{1}, v_{2}, v_{3} \} \subset V(G) \setminus W$,
and where $G[\{ v_{1}, v_{2}, v_{3} \}]$ is a triangle.
See Figure~\ref{6appfig}.

We shall call $E(G[W]) \cup E_{W}$ 
the \emph{total edge set} of the triangulated appearance,
and we shall say that two triangulated appearances are 
\emph{totally edge-disjoint}
if the two total edge sets do not share a common edge.
We say that two triangulated appearances at $W_{1}$ and $W_{2}$
are \emph{vertex-disjoint} if $W_{1} \cap W_{2} = \emptyset$.
Note that total edge-disjointness is consequently
a stronger condition than vertex-disjointness.

We say that a triangulated appearance is \emph{rooted}
if $r_{1}, r_{2}$ and $r_{3}$ are the three lowest labelled vertices in $W$. 
\end{Definition}

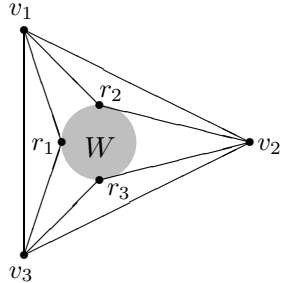
\begin{figure} [ht]
\setlength{\unitlength}{1cm}
\begin{picture}(10,3.6)(-3.5,-0.3)

\put(0.375,1){
\begin{tikzpicture}
\fill[lightgray] (0,0) circle (0.5);
\end{tikzpicture}
}

\put(0,0){\line(0,1){3}}
\put(0,0){\line(2,1){3}}
\put(0,3){\line(2,-1){3}}
\put(0,0){\line(1,3){0.5}}
\put(0,0){\line(1,1){1}}
\put(0,3){\line(1,-3){0.5}}
\put(0,3){\line(1,-1){1}}
\put(1,1){\line(4,1){2}}
\put(1,2){\line(4,-1){2}}
\put(0,0){\circle*{0.1}}
\put(0,3){\circle*{0.1}}
\put(3,1.5){\circle*{0.1}}
\put(0.5,1.5){\circle*{0.1}}
\put(1,1){\circle*{0.1}}
\put(1,2){\circle*{0.1}}
\put(0.8,1.3){\large{$W$}}
\put(-0.2,-0.3){$v_{3}$}
\put(-0.2,3.2){$v_{1}$}
\put(3.1,1.4){$v_{2}$}
\put(1.1,0.8){$r_{3}$}
\put(0.1,1.4){$r_{1}$}
\put(1,2.1){$r_{2}$}

\end{picture}
\caption{A triangulated appearance at $W$.} \label{6appfig}
\end{figure}

Another critical ingredient in many of our proofs
is the number of choices for where to insert an edge:

\begin{Definition}
Given any fixed $g \geq 0$
and any fixed graph $G$ with genus at most $g$,
we call a non-edge $e$ \emph{g-addable} in $G$
if the graph $G+e$ obtained by adding $e$ as an edge 
still has genus at most $g$,
and we let \emph{add$_{g}(G)$}
denote the set of g-addable non-edges of $G$
(note that the graph obtained by adding two edges in add$_{g}(G)$ 
may well have genus greater than $g$).
\end{Definition}

Observe that the set of $g$-addable edges always includes any edge 
between two vertices in different components
(see~\cite{add,bal,chap3,kangpan,msw2}
for many interesting results on the related topic of `bridge-addable' classes).

We now conclude this subsection 
by stating our definition for the `size' of a face
(which will be the topic of Section~\ref{facesize}):

\begin{Definition}
Given a particular embedding of a graph,
we shall use the \emph{size} of a face 
to mean the number of edges with a side in the face,
counting an edge twice if both sides are in the face.
\end{Definition}

\subsection{Key lemmas} \label{lemmas}

As mentioned,
we will now collect together various key lemmas
that will be used to prove our main results.

We start by stating two very useful ingredients
(Lemma~\ref{mainapps} and Lemma~\ref{main6apps})
concerning the number of appearances and triangulated appearances
(the proofs of which will be given in 
Sections~\ref{appsection} and~\ref{6appsection}):

\begin{Lemma} \label{mainapps}
Let $H$ be a (fixed) connected planar graph on $[|H|]$,
let $g \geq 0$ be a constant,
and let $m=m(n)$ satisfy 
$1 < \liminf \frac{m}{n} \leq \limsup \frac{m}{n} < 3$.
Then there exist $\alpha > 0$ and $N$ such that
\begin{equation*}
{\renewcommand{\arraystretch}{1.5}
\begin{array}{c}
\mathbb{P}
[S_{g}(n,m) 
\textrm{ will have at least $\alpha n$ vertex-disjoint appearances of $H$}] 
> 1 - e^{- \alpha n} 
\end{array}}
\end{equation*}
for all $n \geq N$.
\end{Lemma}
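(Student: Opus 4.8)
The plan is to prove Lemma~\ref{mainapps} via a two-stage argument: first establish that the \emph{expected} number of vertex-disjoint appearances of $H$ is $\Omega(n)$, and then boost this to a high-probability statement using a concentration inequality. For the first stage, I would use the double-counting philosophy described in Section~\ref{techniques}. Fix a graph $G \in \mathcal{S}^g(n,m)$ that has few (say, fewer than $\beta n$) vertex-disjoint appearances of $H$ for a small constant $\beta>0$. The goal is to show such $G$ are rare in $\mathcal{S}^g(n,m)$. The key structural input is that since $\limsup \frac{m}{n} < 3$, the graph $G$ cannot be ``too dense'' — more precisely, a positive fraction of vertices must have small degree, and one can locate $\Omega(n)$ places where a small planar gadget can be built. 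Concretely, since $m < (3-\epsilon)n$, Euler's formula forces many vertices of degree at most some constant $d_0$; removing a maximal set of vertex-disjoint appearances of $H$ (fewer than $\beta n$ of them, each touching $O(1)$ vertices and, by Observation~\ref{appinter}, blocking only $O(1)$ others) still leaves $\Omega(n)$ ``free'' low-degree vertices.

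The construction step is where the genus constraint must be handled carefully. For a free low-degree vertex $v$, I would delete the $O(1)$ edges at $v$ and the edges of a few nearby vertices to free up $|H|$ vertices and a budget of edges, then re-insert edges to plant a copy of $H$ on those vertices together with exactly one cut-edge joining the root to $v$'s old neighbourhood — keeping the edge count exactly $m$. Planting $H$ plus a pendant edge on a set of vertices that were just made isolated (or nearly so) does not increase the genus, since we are adding a planar connected chunk attached by a single edge — this is exactly the kind of ``basic'' genus-preserving operation flagged at the end of Section~\ref{techniques}. This produces $\Omega(n)$ graphs in $\mathcal{S}^g(n,m)$ from each ``bad'' $G$, each with at least $\beta n$ vertex-disjoint appearances of $H$ (the planted one plus the surviving ones). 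The double-counting bound comes from checking that a given target graph $G'$ with many appearances is constructed only $o(n)$ times: the reverse operation is ``identify the planted appearance and undo it'', and the number of appearances of $H$ in $G'$ that could have been the planted one is at most $O(1)$ per choice of the $O(1)$ deleted/reinserted edges, so the multiplicity is $n^{O(1)}$ — wait, that is too large, so the right bound must instead count more carefully, charging against the $\Omega(n)$ appearances in $G'$ and using that each was produced in essentially one way. This gives $|\{\text{bad }G\}| = o(|\mathcal{S}^g(n,m)|)$, hence $\mathbb{E}[\#\text{vertex-disjoint appearances}] = \Omega(n)$.

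For the second stage — upgrading to probability $1 - e^{-\alpha n}$ — I would appeal to a martingale/Azuma-type concentration argument, or more precisely to the standard fact (used for $P(n,m)$ in~\cite{dowejc}) that in a uniform random graph from a suitable class, a quantity like the number of vertex-disjoint appearances that is bounded in expectation below by $\Omega(n)$ and changes by $O(1)$ under local edge modifications is concentrated. The cleanest route is probably a direct counting argument: show that the number of graphs in $\mathcal{S}^g(n,m)$ with fewer than $\alpha n$ vertex-disjoint appearances is at most $e^{-\alpha n} |\mathcal{S}^g(n,m)|$, by iterating the planting construction — each bad graph spawns exponentially many good graphs because one has $\Omega(n)$ independent choices of where to plant at each of a linear number of stages, while the double-counting multiplicity stays subexponential. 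This ``exponential surplus'' yields the $e^{-\alpha n}$ tail directly, matching the form of the statement.

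The main obstacle I expect is controlling the double-counting in the presence of the genus constraint and the exact edge-count constraint simultaneously: when we delete $O(1)$ edges to make room and then re-insert $O(1)$ edges to plant $H$ with its cut-edge, we must (i) guarantee the genus does not rise above $g$ — which requires that the deleted edges and the vertices we plant on are genuinely ``local'' and that planting a planar pendant piece is safe — and (ii) ensure that in the reverse direction, given a constructed graph $G'$, the number of ways it could have arisen is subexponential, which forces the planting recipe to be rigid enough (e.g.\ always planting at the lowest-labelled eligible vertex, always on a canonically chosen vertex set) that the ``undo'' map is almost injective. Balancing rigidity (for small multiplicity) against abundance (for the $\Omega(n)$, and ultimately $e^{\Omega(n)}$, lower bound on the number of constructed graphs) is the delicate part, and is presumably why the full proof is deferred to Section~\ref{appsection}.
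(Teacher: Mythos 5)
Your proposal takes a genuinely different route from the paper's, and the difference matters: the paper does \emph{not} perform a delete/re-insert double-count within $\mathcal{S}^{g}(n,m)$ for fixed $n$. Instead it argues by contradiction via the growth constant $\gamma(q)$ and the asymptotic enumeration $\left(|\mathcal{S}^{g}(n,\lfloor qn\rfloor)|/n!\right)^{1/n}\to\gamma(q)$. Supposing more than an $e^{-\alpha k}$ fraction of $\mathcal{S}^{g}(k,M)$ is ``bad'' (few vertex-disjoint appearances of $H$), it attaches $\lceil\alpha k\rceil$ new appearances of two gadgets $H_{1}$, $H_{2}$ (both containing appearances of $H$, with $e(H_{2})=e(H_{1})+1$ so the edge total can be tuned exactly) to a set of $\delta k$ \emph{freshly added} vertices, producing graphs in $\mathcal{S}^{g}((1+\delta)k,\lfloor q_{j}^{*}(1+\delta)k\rfloor)$. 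The double-counting is controlled precisely because the starting graphs had few appearances: the special vertices of a constructed graph are confined to $O(\alpha k)$ candidate appearances, giving a multiplicity of only $(xe)^{\lceil\alpha k\rceil}$. Balancing this against factorials and the growth constant forces the constructed count to exceed $|\mathcal{S}^{g}((1+\delta)k,\lfloor q_{j}^{*}(1+\delta)k\rfloor)|$ itself, a contradiction. The exponential tail $e^{-\alpha n}$ falls out directly from this arithmetic, not from a separate concentration step.

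The gap in your proposal is the one you yourself flag mid-paragraph and do not resolve. A single-planting double-count inside a fixed $\mathcal{S}^{g}(n,m)$ gives a ratio of polynomial size in both directions; that technique yields bounds like $\liminf \mathbb{P}>0$ or $\limsup\mathbb{P}<1$ (as in Theorems~\ref{unicpt} and~\ref{cpt1to3}), not $1-e^{-\alpha n}$. To obtain exponential surplus within $\mathcal{S}^{g}(n,m)$ you would need to plant $\Theta(n)$ appearances simultaneously while keeping $n$, $m$, and the genus fixed and the backward multiplicity subexponential, and your sketch does not supply a mechanism that does all three. The martingale/Azuma fallback is also not adequate as stated: $S_{g}(n,m)$ has no convenient product structure, and there is no exposure martingale given for this model, so ``bounded differences under local edge modifications'' has no off-the-shelf concentration inequality attached to it here. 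Your Euler-formula observation that $m<(3-\epsilon)n$ forces many low-degree vertices is true but is not how the paper finds room to plant: the paper sidesteps the problem of ``making room'' entirely by enlarging the vertex set, which is the idea you are missing, and it needs the enumeration asymptotics (Lemma~\ref{uniform} and its genus-$g$ lower bound via $|\mathcal{S}^{g}|\geq|\mathcal{S}^{0}|$) to close the contradiction.
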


\begin{Lemma} \label{main6apps}
Let $T$ be a (fixed) planar triangulation,
let $g \geq 0$ be a constant,
and let $m=m(n)$ satisfy
$\liminf \frac{m}{n} > 1$.
Then there exist $\alpha > 0$ and $N$ such that 
\begin{equation*}
{\renewcommand{\arraystretch}{1.5}
\begin{array}{r}
\mathbb{P}
[S_{g}(n,m) 
\textrm
{ will have at least $\alpha n$ 
totally edge-disjoint triangulated appearances of $T$}] \\
> 1 - e^{- \alpha n} \textrm{ for all } n \geq N.
\end{array}}
\end{equation*}
\end{Lemma}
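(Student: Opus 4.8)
The plan is to prove Lemma~\ref{main6apps} from Lemma~\ref{mainapps} when the graph is sparse and from a separate double-counting argument when it is dense. Write $h:=|T|$ and fix once and for all a graph $T'$ on the vertex set $[h+3]$, defined as follows: put a copy of $T$ on the vertices $\{4,\ldots,h+3\}$ (via $k\mapsto k-3$); put a triangle on the three vertices $\{1,2,3\}=:\{v_1,v_2,v_3\}$; and add six edges joining the outer triangle $r_1r_2r_3$ of the copy of $T$ (so $r_1,r_2,r_3\in\{4,\ldots,h+3\}$) to $v_1v_2v_3$ in the cyclic pattern $r_1v_1,v_1r_2,r_2v_2,v_2r_3,r_3v_3,v_3r_1$. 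Then $T'$ is a connected planar graph --- indeed a planar triangulation on $h+3$ vertices with $3h+3$ edges, the six new edges triangulating the annulus between $r_1r_2r_3$ and $v_1v_2v_3$ --- and the labels are chosen precisely so that in any graph $G$ an appearance of $T'$ at a vertex set $W_0$ is automatically a triangulated appearance of $T$ at the set $W$ consisting of the $h$ largest-labelled vertices of $W_0$: the cut-edge of the appearance is incident with the root of $W_0$, which is the image of the label $1$ and hence one of the $v_i$, so the only edges between $W$ and the rest of $G$ are the six prescribed ones, and the total edge set $E(G[W])\cup E_W$ lies entirely inside $W_0$. Consequently, vertex-disjoint appearances of $T'$ are totally edge-disjoint triangulated appearances of $T$.

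It follows immediately that, whenever $\limsup\frac{m}{n}<3$, Lemma~\ref{mainapps} applied to $T'$ provides constants $\alpha>0$ and $N$ such that $S_g(n,m)$ has at least $\alpha n$ totally edge-disjoint triangulated appearances of $T$ with probability exceeding $1-e^{-\alpha n}$ for all $n\ge N$, which is exactly what is wanted. For a general sequence $m(n)$ with $\liminf\frac{m}{n}>1$ we split, for some small fixed $\epsilon>0$, into the indices with $m(n)\le(3-\epsilon)n$, handled this way, and those with $m(n)>(3-\epsilon)n$; so it remains to treat the dense range $\frac{m}{n}\to 3$.

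In the dense range, $S_g(n,m)$ has, by Euler's formula, at most $3n-6+6g-m=o(n)$ faces that fail to be triangles in any genus-$g$ embedding realising it, so cut-edges --- and with them ordinary appearances --- become scarce, but triangular faces are abundant, and this is exactly what triangulated appearances are designed to exploit. Here I would argue by double counting based on the surgery that deletes the $3h$ edges of the total edge set $E(G[W])\cup E_W$ of a rooted triangulated appearance of $T$ at $W$: in a genus-$g$ embedding this leaves the $h$ vertices of $W$ as isolated vertices inside a region whose boundary is the triangle $v_1v_2v_3$, so after discarding those vertices one obtains a graph in $\mathcal{S}^g(n-h,m-3h)$ together with a marked triangular face. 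The reverse operation --- insert $h$ labelled vertices carrying a copy of $T$ and the six boundary edges into a chosen triangular face --- never raises the genus, since the inserted gadget is planar and fits inside a single face; counting the preimages of this operation and running it in the deletion-method framework, while removing a linear number of totally edge-disjoint triangulated appearances, should give that the set of graphs in $\mathcal{S}^g(n,m)$ with fewer than $\alpha n$ totally edge-disjoint triangulated appearances of $T$ has size $e^{-\Omega(n)}\,|\mathcal{S}^g(n,m)|$, as required. This parallels the treatment of triangulated appearances in the planar case~\cite{dow}.

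I expect the main obstacle to be the bookkeeping that upgrades this surgery to an \emph{exponentially} small failure probability: as usual, creating triangulated appearances one at a time yields only an $o(1)$ bound, so one must instead bound the number of graphs with few triangulated appearances by performing the insertion simultaneously at a linear number of disjoint triangular faces, keeping the total number of edges equal to $m$ throughout and keeping the whole construction reversible with only polynomially many choices (for instance via a concentration argument over disjoint blocks of vertices). The accompanying subtleties --- controlling the relabelling of vertices when passing between $\mathcal{S}^g(n,m)$ and $\mathcal{S}^g(n-h,m-3h)$, and checking that the triangular faces and low-degree pieces used by the surgery genuinely are available in a near-triangulation of genus $g$ --- are routine in spirit but are where the argument becomes long; the genus constraint itself causes little extra trouble, precisely because every move involved (deleting edges, inserting a planar gadget inside a face) preserves the property of having genus at most $g$.
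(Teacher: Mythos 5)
Your reduction of the case $\limsup \frac{m}{n} < 3$ to Lemma~\ref{mainapps} is correct and is essentially the same step the paper takes (the paper invokes Lemma~\ref{mainapps} for ``any graph containing a triangulated appearance of $T$ that doesn't involve vertex~$1$''); your explicit construction of $T'$ on $[|T|+3]$, with $\{1,2,3\}$ carrying the outer triangle so that the root of any appearance lands on a $v_i$, is a clean way to make that reduction precise, and your check that vertex-disjoint appearances of $T'$ give totally edge-disjoint triangulated appearances of $T$ is right. The splitting of $m(n)$ into a subsequence with $m \le (3-\epsilon)n$ and a subsequence with $\frac{m}{n} \to 3$ is also fine, since the conclusion is a for-all-large-$n$ bound and the constants from the two regimes can be combined.

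The dense case $\frac{m}{n} \to 3$, however, is where the real work lies and your proposal leaves a genuine gap there. The central difficulty is not the bookkeeping you flag (relabelling, locating triangular faces, preserving genus); it is that the surgery you describe---inserting a planar gadget of $h$ new vertices and $3h$ new edges into a facial triangle---necessarily changes both the vertex count and the edge count, so it cannot ``keep the total number of edges equal to $m$ throughout,'' and one cannot double-count inside a single $\mathcal{S}^g(n,m)$. The paper resolves this by comparing $|\mathcal{S}^g(k,M)|$ against $|\mathcal{S}^g((1+\delta)k, \lfloor q(1+\delta)k\rfloor)|$ for an auxiliary density $q$ slightly below $3$, and this comparison rests on two quantitative ingredients you do not supply: matching upper and lower bounds of the form $(1\pm\epsilon)^n n!\,\gamma(3)^n$ on $|\mathcal{S}^g(n,m)|$ near density $3$ (the paper's Lemmas~\ref{lower} and~\ref{upper}, where the lower bound for general $g$ requires its own inductive surgery gluing two triangulations across a handle), and the two-gadget device of attaching $k_1$ copies of $T_1$ and $k_2$ copies of $T_2$, where $T_1,T_2$ differ by one edge, in order to tune the final edge count to exactly $\lfloor q(1+\delta)k\rfloor$. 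Without a concrete estimate relating the sizes of the two graph classes that your insertion passes between, the ``deletion-method framework'' you gesture at has nothing to close against, so the argument as written does not yield the claimed $e^{-\alpha n}$ bound. In short: the sparse half is complete and matches the paper; the dense half has the right geometric surgery but is missing the growth-constant machinery that is the actual engine of that half of the proof.
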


The following result,
recently given in~\cite{kmsfull} (see also~\cite{kms}),
will be of use to us as well
(note that the brace notation used here is to be understood as meaning that both conditions must be satisfied simultaneously):

\begin{Lemma} (\cite{kmsfull}, Theorem 5.2) \label{kmslemma}
Let $g \geq 0$ be a constant.
Then whp
the total number of vertices of $S_{g}(n,m)$ in non-multicyclic components 
and the total number of edges of $S_{g}(n,m)$ in non-multicyclic components 
are both
\begin{eqnarray*}
\Theta (n-m)
& \textrm{for} &
0 < n-m = 
\left\{ \begin{array}{ll}
\omega (n^{3/5}) \\
o (n),
\end{array} \right. \\
\Theta (n^{3/5})
& \textrm{for} &
|m-n| = O(n^{3/5}), \\
\textrm{and }
\Theta \left( \left( \frac{n}{m-n} \right) ^{3/2} \right)
& \textrm{for} &
0 < m-n =
\left\{ \begin{array}{ll}
\omega (n^{3/5}) \\
o \left( \frac{n}{(\log n)^{2/3}} \right).
\end{array} \right. \\
\end{eqnarray*} 
\end{Lemma}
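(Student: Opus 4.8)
This is Theorem~5.2 of~\cite{kmsfull}; the proof is analytic-combinatorial, and I sketch the strategy. Since the orientable genus is additive over connected components, every graph of genus at most $g$ decomposes uniquely as an unordered union of a forest, a set of unicyclic components, and a set of multicyclic (complex) components whose genera sum to at most $g$. Marking vertices by $x$ and edges by $y$, this gives a product formula for the bivariate exponential generating function $W_g(x,y) = \sum_{n,m} |\mathcal{S}^g(n,m)| \, \frac{x^n}{n!}\, y^m$, namely $W_g = F(x,y) \, U(x,y) \, K_{\le g}(x,y)$, where $F$ enumerates forests, $U$ enumerates sets of unicyclic components, and $K_{\le g}$ enumerates sets of complex components of total genus at most $g$. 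The two random variables in the statement --- the number of vertices, and the number of edges, lying in non-multicyclic components --- are obtained by inserting a further marking variable into the factor $F \, U$ and differentiating once; so everything reduces to extracting $[x^n y^m]$ from $W_g$ and from these marked variants.

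\textbf{Coefficient extraction.} Substituting $z = xy$, the factors $F$ and $U$ become explicit in the rooted-tree series $T(z) = z\, e^{T(z)}$, whose dominant singularity is at $z = e^{-1}$ (where $T = 1$); the factor $K_{\le g}$ contributes an extra algebraic-logarithmic singular term whose exponent is controlled by the generating functions of connected complex graphs of each genus $h \le g$ (for $g = 0$ this is the Gim\'enez--Noy series for connected planar graphs, and for $h \ge 1$ one uses its higher-genus counterparts). One then applies singularity analysis together with the saddle-point method in the variable tracking the signed excess $m - n$, exactly as in the analysis of the second critical period of random planar graphs in~\cite{kangluc}. The position of the saddle relative to $z = e^{-1}$ yields the three regimes: for $m$ slightly below $n$ the ratio of the marked coefficient to $W_g$ is of order $n - m$; for $|m - n| = O(n^{3/5})$ the saddle sits at the singularity and the scaling $n^{3/5}$ emerges; and for $m$ slightly above $n$ the complex factor has absorbed essentially all of the excess, leaving $\Theta\big((n/(m-n))^{3/2}\big)$ vertices and edges outside it. Running the same computation with two marks gives the second moments, after which a standard argument upgrades the mean estimates to the claimed whp $\Theta$-bounds (alternatively, a vertex-exposure martingale can be used, since changing one vertex alters these statistics boundedly).

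\textbf{Main obstacle.} The heart of the matter is the analytic control of $K_{\le g}(x,y)$: one needs the precise singular expansion of the exponential generating function of connected multicyclic graphs of genus exactly $h$ --- equivalently, of their $2$-cores --- for each $h \le g$, uniformly as the edge density varies across the window around $m = n$; and one must check that combining the finitely many genus profiles with $\sum h \le g$ does not shift the dominant singularity away from the planar ($h = 0$) term. A secondary difficulty is to carry out a single saddle-point analysis whose error terms remain valid out to the stated boundaries $n - m = o(n)$ and $m - n = o(n / (\log n)^{2/3})$, while tracking the vertex-count and edge-count marks simultaneously.
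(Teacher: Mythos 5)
You should first note that the paper does not prove this statement at all: Lemma~\ref{kmslemma} is imported verbatim from Theorem~5.2 of~\cite{kmsfull}, so there is no internal proof to compare against. Your sketch is therefore being judged as a free-standing outline of how the cited result is established. In spirit it points in the right direction --- the cited work does rest on the additivity of orientable genus over components, the decomposition into a complex part of total genus at most $g$ and a planar non-complex part (forest plus unicyclic components), and a singularity/saddle-point analysis whose three regimes are governed by the position of the relevant saddle relative to the tree singularity at $xy=e^{-1}$. The actual argument in~\cite{kmsfull} is organised somewhat differently, as an asymptotic analysis of the convolution sum $\sum_{k,l}\binom{n}{k}C_g(k,l)\,Q(n-k,m-l)$ (with $C_g$ counting complex parts via the core--kernel decomposition and the enumeration of cubic multigraphs embeddable on surfaces), rather than a single two-variable coefficient extraction from $F\cdot U\cdot K_{\le g}$, but these are two presentations of essentially the same method.

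That said, as a proof your text has two genuine gaps. First, the entire analytic content --- the uniform singular expansion of $K_{\le g}(x,y)$ across the critical window, and the verification that the finitely many genus profiles with $\sum h\le g$ do not shift the dominant singularity --- is deferred; you name it as ``the main obstacle'' but do not resolve it, and without it none of the three $\Theta$-estimates follows. Second, the concentration step is not standard in the form you assert. A uniform element of $\mathcal{S}^g(n,m)$ has no product structure, so there is no vertex-exposure filtration with bounded differences to which Azuma's inequality applies; ``changing one vertex'' is not even a well-defined operation preserving the measure. The moment route does work, but it requires actually computing the second (factorial) moment and showing the variance is $o(\mathrm{mean}^2)$ uniformly in each regime --- in the critical window the quantities concentrate around $\Theta(n^{3/5})=o(n)$, so this is a genuine computation with the doubly-marked generating function, not a formality one can wave at. Until both of these are carried out, what you have is a correct plan rather than a proof.
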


We note that it is not known
whether the $o \left( \frac{n}{(\log n)^{2/3}} \right)$ condition in Lemma~\ref{kmslemma} can be amended to $o(n)$.

Information on the number of pendant edges
will be important in many of our proofs,
and we will often utilise the following result:

\begin{Lemma} \label{pendant}
Let $g \geq 0$ be a constant,
and let $m=m(n)$ satisfy
$0 < \liminf \frac{m}{n} \leq \limsup \frac{m}{n} < 3$.
Then there exist $\alpha > 0$ and $N$ such that
\begin{equation*}
{\renewcommand{\arraystretch}{1.5}
\begin{array}{c}
\mathbb{P}
[S_{g}(n,m) 
\textrm
{ will have at least $\alpha n$ pendant edges}] 
> 1 - e^{- \alpha n} 
\textrm{ for all } n \geq N.
\end{array}}
\end{equation*}
\end{Lemma}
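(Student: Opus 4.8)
The plan is to prove this by a double-counting argument of the type described in Section~\ref{techniques}, combined with a standard subadditivity/concentration step to boost an ``expectation-type'' bound into a ``with high probability'' bound with exponentially small failure probability. Fix $g$ and suppose $0 < \liminf \frac{m}{n} \leq \limsup \frac{m}{n} < 3$. Let $\mathcal{G}_n \subseteq \mathcal{S}^g(n,m)$ be the set of graphs in $\mathcal{S}^g(n,m)$ that have \emph{fewer} than $\beta n$ pendant edges, for a small constant $\beta>0$ to be chosen. The goal is to show $|\mathcal{G}_n| / |\mathcal{S}^g(n,m)|$ is exponentially small; the lemma then follows with $\alpha$ a suitable function of $\beta$.

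First I would set up the construction. Given $G \in \mathcal{G}_n$, I want to produce many graphs in $\mathcal{S}^g(n,m)$ that \emph{do} have many pendant edges, by a bounded local surgery that keeps the edge count equal to $m$ and does not increase the genus. The natural move: pick a vertex $v$ of degree at least $2$ (there are at least $n - \beta n$ vertices that are not already pendant, and since $m < 3n$ a positive fraction have bounded degree), delete one edge $vw$ incident to $v$, and re-insert an edge from $w$ (or from the now-isolated endpoint, if $w$ became isolated) to a fresh pendant-creating location — more carefully, one standard trick is: take an edge $e=xy$ lying on a cycle (deleting it keeps genus $\le g$ and keeps connectivity of that component), delete it, and add the edge $xy'$ where $y'$ is a vertex that we then ``peel off'' to be pendant; even simpler, relocate an edge incident to a high-degree vertex so that one endpoint becomes a pendant vertex attached elsewhere. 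Deleting an edge never increases genus, and inserting an edge incident to a degree-$1$ vertex (equivalently, subdividing/attaching in a planar region, or attaching to a vertex on the outer boundary of its component's embedding) also never increases genus — this is exactly the ``basic property'' the paper flags as freely transferable from the planar case. Since there are $\Omega(n)$ choices of where to perform this surgery (linearly many eligible vertices/edges when $\liminf \frac mn>0$ and $\limsup \frac mn<3$), each $G \in \mathcal{G}_n$ yields $f_1(n) = \Omega(n)$ graphs in $\mathcal{S}^g(n,m)$.

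Next I would bound the double-counting. Given a constructed graph $G' \in \mathcal{S}^g(n,m)$, I must bound the number of pairs $(G,\text{move})$ with $G \in \mathcal{G}_n$ producing $G'$. The inverse move is determined by naming a bounded amount of local data in $G'$: which edge was inserted (it is a pendant edge of $G'$, or an edge incident to a low-degree vertex — but $G'$ might have many pendant edges, which is the wrong direction), and which edge/endpoint was deleted. To keep this controlled I would instead design the forward move so that the \emph{inverse} is cheap: e.g.\ always create the new pendant vertex as ``the lowest-labelled vertex not currently pendant'' or attach it via a canonically-chosen edge, so that from $G'$ there are only $O(1)$ (or at worst $o(n)$) candidates for the original edge that was deleted, once we observe the distinguished inserted pendant edge. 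Then $f_2(n) = o(n)$, and the argument of Section~\ref{techniques} gives $|\mathcal{G}_n|/|\mathcal{S}^g(n,m)| = o(1)$. To upgrade $o(1)$ to $e^{-\alpha n}$: partition $[n]$ into $\Theta(\sqrt n)$ blocks (or use a martingale/switching concentration argument), perform the surgery independently within $\Theta(n)$ disjoint local regions, and conclude that the probability of having $< \beta n$ pendant edges in \emph{all} of them simultaneously is exponentially small; alternatively invoke a bounded-differences inequality on the number of pendant edges viewed as a function on $\mathcal{S}^g(n,m)$ together with the $o(1)$ bound on the mean deficiency.

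The main obstacle I anticipate is controlling the genus during edge \emph{insertion} while simultaneously controlling double-counting: I need a re-insertion rule that (i) provably keeps genus $\le g$ regardless of the embedding structure of $G$, (ii) creates a genuine pendant edge, and (iii) is canonical enough that the reverse map has $o(n)$ preimages. Insertions incident to a degree-$1$ vertex are safe for genus, so the cleanest route is: delete a non-bridge edge $xy$ (safe, and keeps the component connected) and then re-attach by adding an edge from $x$ to a chosen isolated-or-to-be-isolated vertex — but I must ensure such a ``spare'' vertex exists, which is where I would lean on Lemma~\ref{kmslemma} or on the crude bound that when $\limsup \frac mn < 3$ a linear number of vertices have degree $\le$ some constant, so that deleting an edge at one of them isolates an endpoint cheaply. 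Balancing these three requirements against each other — genus-safety, pendant-creation, canonicity — is the delicate part; everything else is routine.
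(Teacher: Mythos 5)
Your proposal takes a genuinely different route from the paper, which instead splits the range of $m$: for $1 < \liminf \frac{m}{n} \leq \limsup \frac{m}{n} < 3$ the paper simply applies Lemma~\ref{mainapps} with $H = K_1$ (an appearance of a single vertex is precisely a pendant edge), and for $0 < \liminf \frac{m}{n} \leq \limsup \frac{m}{n} \leq 1$ it cites the planar-case argument of Lemma 15 in \cite{dow}, which transfers verbatim. The exponential tail in the dense case is therefore inherited from the subadditivity-and-growth-constant machinery underlying Lemma~\ref{mainapps}, not from an in-place edge swap. You attempt instead a unified in-place double-counting at fixed $(n,m)$, and this is where the gaps lie.

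The central gap is the jump from $o(1)$ to $e^{-\alpha n}$. A single-pass double-counting with $f_1(n)=\Omega(n)$ and $f_2(n)=o(n)$ gives only a polynomially small ratio $|\mathcal{G}_n|/|\mathcal{S}^g(n,m)|=o(1)$, which is strictly weaker than the stated bound. Your two proposed ``upgrades'' do not close this: a bounded-differences inequality requires a product structure (or at least a filtration with uniformly bounded martingale increments), and $S_g(n,m)$ is a uniform measure on a constrained family with no such structure --- an edge-exposure or vertex-exposure martingale for this model is not something you can invoke off the shelf. Your partition-into-blocks idea gestures at the correct fix (a \emph{bulk} surgery that performs $\Theta(n)$ disjoint operations at once, yielding a ratio of the form $(c_1 n)^{\Theta(n)}/(c_2 n)^{\Theta(n)}$ with $c_1>c_2$), but you never actually set up this bulk argument, and it is precisely the part where the difficulties compound: one must simultaneously control genus-safety, edge balance, and double-counting over $\Theta(n)$ insertions, which is an order of magnitude harder than a single swap.

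There is also a smaller but real problem with the surgery itself. Your cleanest variant needs a ``spare'' isolated vertex to attach a pendant edge to (since that is the unambiguously genus-safe insertion), but by Lemma~\ref{fewisolated} there are only $o(n)$ isolated vertices when $\liminf \frac{m}{n}\geq 1$, so in exactly the regime where the sparse-case citation runs out you do not have $\Omega(n)$ spares. Creating one by deleting extra edges then unbalances the edge count, and you acknowledge but do not resolve the resulting three-way tension between genus-safety, pendant creation, and canonicity. Also note your side claim that inserting an edge incident to a degree-$1$ vertex never increases genus is false in general (such a vertex can be contracted along its unique edge, and the contracted graph plus the new edge can have higher genus); only insertion at a degree-$0$ vertex, or between distinct components, is unconditionally safe. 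As a whole the proposal identifies the right flavor of argument but leaves the decisive steps --- the exponential tail and the genus-safe bulk re-insertion --- unproved.
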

\begin{sketch}
The case when 
$1 < \liminf \frac{m}{n} \leq \limsup \frac{m}{n} < 3$
already follows from Lemma~\ref{mainapps},
with $H$ as an isolated vertex.
For the remaining region when
$0 < \liminf \frac{m}{n} \leq \limsup \frac{m}{n} \leq 1$,
we may use the same proof as given for the planar case
in Lemma 15 of~\cite{dow},
as the details actually generalise to any genus $g \geq 0$.
\qed
\end{sketch}

We finish this section with three results on the number of addable edges.
Firstly,
recall that inserting an edge between any two vertices in different components
cannot increase the genus,
and so for $\limsup \frac{m}{n} < 1$ we obtain the following
(since there are at least $n-m$ components):

\begin{Lemma} \label{add1}
Let $g \geq 0$ be a constant,
and let $m=m(n)$ satisfy $\limsup \frac{m}{n} < 1$.
Then
\begin{displaymath}
|\emph{add}_{g}(S_{g}(n,m))| = \Theta (n^{2}). 
\end{displaymath} 
\end{Lemma}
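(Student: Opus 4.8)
The plan is to establish the two inequalities $|\text{add}_g(S_g(n,m))| = O(n^2)$ and $|\text{add}_g(S_g(n,m))| = \Omega(n^2)$ separately; the first is trivial and the second is the substantive part. For the upper bound, simply note that any graph on $[n]$ has at most $\binom{n}{2} = O(n^2)$ non-edges in total, so certainly at most $O(n^2)$ of them can be $g$-addable. This holds deterministically, with no reference to the random model or to the genus.

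For the lower bound, the strategy is to exhibit $\Omega(n^2)$ guaranteed $g$-addable non-edges using the fact that an edge joining two vertices in distinct components never raises the genus (as remarked in the excerpt, and as follows from the additivity of genus over components). First I would recall that $S_g(n,m)$ has at least $n-m$ components, simply because each edge reduces the component count by at most one. Since $\limsup \frac{m}{n} < 1$, there is a constant $c > 0$ with $m \le (1-c)n$ for all large $n$, hence at least $cn$ components. Now I would count pairs of vertices lying in different components: with $k \ge cn$ components of sizes $n_1, \dots, n_k$ summing to $n$, the number of such cross-component vertex pairs is $\binom{n}{2} - \sum_i \binom{n_i}{2}$. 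Each such pair is a non-edge (vertices in different components are non-adjacent) and is $g$-addable. It remains to bound $\sum_i \binom{n_i}{2}$ away from $\binom{n}{2}$; the crude bound $\sum_i \binom{n_i}{2} \le \binom{n - k + 1}{2}$ (the sum of $\binom{n_i}{2}$ over a composition of $n$ into $k$ parts is maximised by one part of size $n-k+1$ and the rest singletons) gives $\binom{n}{2} - \binom{n-k+1}{2} = \Omega(kn) = \Omega(n^2)$, which is exactly what we need. This argument is entirely deterministic once we know the component count, so no high-probability language or appeal to Lemma~\ref{kmslemma} is required.

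The main (and only real) obstacle is making sure the counting of cross-component pairs is done cleanly: one must be careful that the extremal configuration for $\sum_i \binom{n_i}{2}$ under the constraints $\sum_i n_i = n$, $k \ge cn$, $n_i \ge 1$ really does concentrate the mass in a single large part, so that the complementary count $\binom{n}{2} - \sum_i \binom{n_i}{2}$ stays $\Omega(n^2)$. This is a standard convexity observation — $x \mapsto \binom{x}{2}$ is convex, so among integer compositions into a fixed number of positive parts the sum of $\binom{n_i}{2}$ is maximised at the "one big part, rest singletons" configuration — and once it is invoked the bound $\binom{n}{2} - \binom{(1-c)n+1}{2} \ge \frac{c(2-c)}{2}n^2 - O(n) = \Omega(n^2)$ falls out immediately. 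Combining with the $O(n^2)$ upper bound yields $|\text{add}_g(S_g(n,m))| = \Theta(n^2)$.
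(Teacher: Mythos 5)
Your proof is correct and is precisely the argument the paper leaves implicit: the paper's one-line justification is ``there are at least $n-m$ components, and edges between components are $g$-addable,'' and your write-up just fills in the convexity calculation showing that $\Omega(n)$ components force $\Omega(n^2)$ cross-component pairs. Nothing is missing, and the upper bound $O(n^2)$ is indeed trivial.
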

\qed

Secondly,
note that we can also always insert an edge 
between a vertex in a tree/unicyclic component
and any non-adjacent vertex,
and so Lemma~\ref{kmslemma} provides us with another useful result:

\begin{Lemma} \label{add2}
Let $g \geq 0$ be a constant,
and let $m=m(n)$ satisfy $m \leq n + o \left( \frac{n}{(\log n)^{2/3}} \right)$.
Then whp
\begin{displaymath}
|\emph{add}_{g}(S_{g}(n,m))| = \omega (n).
\end{displaymath}
\end{Lemma}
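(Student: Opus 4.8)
The plan is to combine the remark made just before the statement --- that every non-edge joining two distinct components is $g$-addable --- with the bounds of Lemma~\ref{kmslemma} on the number of vertices in non-multicyclic components.

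First I would let $A = A(S_g(n,m))$ denote the set of vertices lying in tree or unicyclic components, and write $k = |A|$. The crucial observation is that \emph{every} non-edge $e$ with at least one endpoint in $A$ is $g$-addable. If the endpoints of $e$ lie in different components this is exactly the case recalled before the statement. If instead both endpoints lie in a single component $C$ (necessarily a tree or unicyclic one), then $C+e$ is a connected graph on $c := |C|$ vertices with at most $c+1$ edges and is therefore planar (trivially for $c \le 3$, and since $c+1 \le 3c-6$ when $c \ge 4$); since $R := S_g(n,m) - V(C)$ is a subgraph of $S_g(n,m)$ and hence has genus at most $g$, we may take a minimum-genus embedding of $R$ and draw the planar graph $C+e$ inside one of its faces, showing that $S_g(n,m)+e$ still has genus at most $g$. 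Because no edge of $S_g(n,m)$ joins $A$ to its complement, there are at most $k$ edges inside $A$, and therefore, counting non-edges meeting $A$,
\begin{displaymath}
\bigl| \mathrm{add}_g(S_g(n,m)) \bigr| \;\ge\; \binom{n}{2} - \binom{n-k}{2} - k \;\ge\; \frac{kn}{8} \qquad \text{for all large } n ,
\end{displaymath}
the right-hand inequality being an elementary estimate valid for every value of $k \in \{1,\dots,n\}$.

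It then remains to show that $k = \omega(1)$ whp under the hypothesis $m \le n + o\!\left(n/(\log n)^{2/3}\right)$, and here I would distinguish three cases (possibly passing to subsequences of $n$). If $n - m = \omega(n^{3/5})$, then $k \ge n - m$ holds \emph{deterministically}: a graph with $n$ vertices and $m$ edges has at least $n-m$ components, and a count of cycle ranks shows that at least $n-m$ of them are trees. If $|m-n| = O(n^{3/5})$, then Lemma~\ref{kmslemma} gives $k = \Theta(n^{3/5}) = \omega(1)$ whp. Finally, if $n < m$ with $m - n = \omega(n^{3/5})$, then the hypothesis forces $m - n = o\!\left(n/(\log n)^{2/3}\right)$, so Lemma~\ref{kmslemma} applies and gives $k = \Theta\!\left((n/(m-n))^{3/2}\right)$ whp, which is $\omega(\log n) = \omega(1)$ because $n/(m-n) = \omega\!\left((\log n)^{2/3}\right)$. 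In every case $k = \omega(1)$ whp, and the displayed bound then yields $\bigl|\mathrm{add}_g(S_g(n,m))\bigr| \ge kn/8 = \omega(n)$ whp, as required.

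I expect the only real work to be bookkeeping: checking that the lower bound $\binom{n}{2} - \binom{n-k}{2} - k = \Omega(kn)$ is uniform over the whole range of $k$ --- in particular that the argument survives when $S_g(n,m)$ is almost entirely a pseudoforest and $k$ is close to $n$ --- and verifying that each of the three regimes above (including the one that Lemma~\ref{kmslemma} does not cover directly and for which the deterministic tree-count is used instead) really forces the relevant quantity to tend to infinity. Neither step looks genuinely difficult, so the lemma should follow cleanly.
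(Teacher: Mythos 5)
Your approach matches the paper's: the paper's proof is essentially the terse remark preceding the statement (any non-edge touching a vertex in a tree or unicyclic component is $g$-addable), combined with Lemma~\ref{kmslemma} to show that there are $\omega(1)$ such vertices whp. Your fleshed-out version, including the explicit bound $\binom{n}{2}-\binom{n-k}{2}-k \ge kn/8$, the subsequence trichotomy, and the deterministic tree-count for the regime $n-m=\omega(n^{3/5})$ (which is also what rescues the case $n-m=\Theta(n)$, where the ranges of Lemma~\ref{kmslemma} do not directly apply), is sound and amounts to the same argument carried out in detail.

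One small slip in the justification that $C+e$ is planar when both endpoints of $e$ lie in a tree or unicyclic component $C$: you invoke $c+1 \le 3c-6$, but the edge bound $e(G)\le 3|G|-6$ is a \emph{necessary}, not a sufficient, condition for planarity ($K_{3,3}$ satisfies it while being non-planar). The conclusion is still correct and the fix is immediate: $C+e$ is connected with at most $c+1$ edges, hence has cycle rank $e-v+1 \le 2$; cycle rank is minor-monotone, and $K_{3,3}$ and $K_5$ have cycle ranks $4$ and $6$, so $C+e$ has neither as a minor and is planar by Wagner's theorem. With that repair the proposal is complete.
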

\qed

For the planar case,
the number of addable edges is always at least $3n-6-m$,
since any planar graph can always be extended into a triangulation
without inserting any multi-edges.
This is not the case for graphs of higher genus,
but we may still obtain the following result 
as a corollary to Lemma~\ref{mainapps}
(e.g.~let $H=C_{4}$
and note that each appearance of $C_{4}$ provides us with two addable edges):

\begin{Lemma} \label{add3}
Let $g \geq 0$ be a constant,
and let $m=m(n)$ satsify 
$1 < \liminf \frac{m}{n} \leq \limsup \frac{m}{n} < 3$.
Then whp
\begin{displaymath}
|\emph{add}_{g}(S_{g}(n,m))| = \Omega (n).
\end{displaymath}
\end{Lemma}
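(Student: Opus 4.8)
The plan is to deduce Lemma~\ref{add3} directly from Lemma~\ref{mainapps} applied to a suitable small planar graph, as the parenthetical hint suggests. Take $H = C_4$, the $4$-cycle on the vertex set $[4]$. Since $C_4$ is a connected planar graph and $m=m(n)$ satisfies $1 < \liminf \frac{m}{n} \leq \limsup \frac{m}{n} < 3$, Lemma~\ref{mainapps} gives constants $\alpha > 0$ and $N$ such that, with probability at least $1 - e^{-\alpha n}$ for $n \geq N$, the graph $S_g(n,m)$ has at least $\alpha n$ vertex-disjoint appearances of $C_4$. In particular, with high probability there are at least $\alpha n$ such appearances, so it suffices to argue deterministically that a graph with $k$ vertex-disjoint appearances of $C_4$ has at least $k$ distinct $g$-addable non-edges, and then take $k = \lceil \alpha n \rceil$ to conclude $|\mathrm{add}_g(S_g(n,m))| \geq \alpha n = \Omega(n)$ whp.

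The key step is therefore the deterministic claim: each appearance of $C_4$ yields a $g$-addable non-edge, and non-edges coming from vertex-disjoint appearances are distinct. Fix an appearance of $C_4$ at a set $W = \{w_1 < w_2 < w_3 < w_4\}$, so $G[W]$ is a $4$-cycle and exactly one edge joins $W$ to the rest of $G$, incident with the root $w_1$. Label the cycle so that its edges are $w_1w_2$, $w_2w_3$, $w_3w_4$, $w_4w_1$; then the two diagonals $w_1w_3$ and $w_2w_4$ are non-edges of $G$. I claim at least one of them — indeed each of them — is $g$-addable. Adding a chord to a cycle is a planar (genus-$0$) operation performed entirely inside $W$: more carefully, fix any embedding of $G$ on the orientable surface of genus at most $g$ that witnesses $\mathrm{genus}(G) \leq g$. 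Because $G[W]$ is attached to the rest of $G$ by a single edge (a bridge), the $4$-cycle $G[W]$ bounds a region of the surface on at least one side into which nothing else of $G$ is drawn, or can be redrawn so; inside that region one may draw a diagonal of the $4$-cycle without creating a crossing and without changing the surface, so $G$ plus that diagonal still has genus at most $g$. Hence $w_1w_3 \in \mathrm{add}_g(G)$ (and likewise $w_2w_4$).

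Finally I would check distinctness: if two appearances are vertex-disjoint, their vertex sets $W$ and $W'$ are disjoint, so any non-edge with both endpoints in $W$ is different from any non-edge with both endpoints in $W'$. Thus the $k$ diagonal non-edges obtained from $k$ vertex-disjoint appearances of $C_4$ are pairwise distinct, giving $|\mathrm{add}_g(G)| \geq k$. Combining with Lemma~\ref{mainapps} as above completes the proof.

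The main obstacle is the embedding argument that a diagonal of the attached $C_4$ is genuinely $g$-addable. One must be slightly careful because an arbitrary genus-$g$ embedding of $G$ need not draw the $4$-cycle $G[W]$ as the boundary of an empty region. The clean way around this is to use that the single edge joining $W$ to the rest of $G$ is a cut-edge: one can embed $G$ by first embedding $G - (W \setminus \{w_1\})$ on the surface of genus at most $g$, and then embedding the planar piece $G[W]$ (together with its bridge to $w_1$) inside a small disc near $w_1$ in some face of that embedding; within that disc there is clearly room to add a diagonal of the $4$-cycle. This shows $G + w_1w_3$ has genus at most $g$, as required, and only uses the elementary fact that attaching a planar graph via a bridge into a face does not increase genus — exactly the kind of basic operation the authors flag as carrying over from the planar case.
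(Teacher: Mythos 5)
Your argument is correct and follows exactly the route the paper indicates: invoke Lemma~\ref{mainapps} with $H=C_4$ to get $\Omega(n)$ vertex-disjoint appearances whp, observe that each diagonal of an appearing $C_4$ is a $g$-addable non-edge (since the $C_4$ is attached by a bridge, one can re-embed it in a disc, or equivalently use block-additivity of genus), and note that vertex-disjointness makes the resulting non-edges distinct. The paper leaves this as a one-line parenthetical remark; your write-up just supplies the embedding details that the authors took for granted.
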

\qed

For those who are interested,
a very detailed account of the number of addable edges
for the planar case is given in Section 5 of~\cite{dow}.

\section{Components: proof of Theorem~\ref{maincpt}} \label{cptsect}

We now come to the first main section of this paper,
where we look at the probability that $S_{g}(n,m)$
will have a component isomorphic to $H$,
for various fixed $H$.
The main feature of this section will be a proof of Theorem~\ref{maincpt},
but we will also collect together various other results on this topic.

We shall let $C_{H}(S_{g}(n,m))$
denote the number of components in $S_{g}(n,m)$ isomorphic to $H$,
and we shall let 
\begin{displaymath}
\mathbb{P}^{H}_{cpt} := \mathbb{P}[C_{H}(S_{g}(n,m)) \geq 1],
\end{displaymath}
i.e.~the probability that 
$S_{g}(n,m)$ will have at least one component isomorphic to $H$.
An informal summary of the results is given in Table~\ref{cpttab},
which shows that the asymptotic behaviour depends on both 
the type of component $H$ and on the ratio of $m$ to $n$.

\begin{table} [ht]
\centering
\caption{
A summary of~
$\mathbb{P}^{H}_{cpt} :=~
\mathbb{P}[S_{g}(n,m) \textrm{ will have a component isomorphic to } H]$.
} 
\label{cpttab}
{\renewcommand{\arraystretch}{1.5}
\begin{tabular}{|c||c|c|c|c|}
\hline
&
\textrm{\small{$e(H)<|H|$}} &
\textrm{\small{$e(H)=|H|$}} &
\multicolumn{2}{|c|}
{\textrm{\small{$e(H) > |H|$}}} \\
\cline{4-5}
&
&
&
\textrm{\small{$H$ planar}} &
\textrm{\small{$H$ non-planar}} \\
\hline
\hline
\textrm{\small{$\liminf \frac{m}{n} > 0$}} &
\small{$\mathbb{P}^{H}_{cpt} \to 1$} &
&
\small{$\mathbb{P}^{H}_{cpt} \to 0$}
&
\\
\textrm{\small{\& $m \leq n + o \left( \frac{n}{(\log n)^{2/3}} \right)$}} &
\textrm{\small{(Thm.~\ref{cpttree})}} &
\textrm{\small{$\liminf \mathbb{P}^{H}_{cpt} > 0$}} &
\textrm{\small{(Thm.~\ref{cptmulti})}} &
\\
\cline{1-2}
\cline{4-4}
\textrm{\small{$m \geq n + \Omega \left( \frac{n}{(\log n)^{2/3}} \right)$}} &
\small{unknown} &
\textrm{\small{$\limsup \mathbb{P}^{H}_{cpt} < 1$}} &
\small{unknown} &
\\
\textrm{\small{\& $m \leq n + o(n)$}} &
&
\textrm{\small{(Thm.~\ref{unicpt})}} &
&
\small{$\mathbb{P}^{H}_{cpt} \to 0$} \\
\cline{1-4}
\textrm{\small{$1 < \liminf \frac{m}{n}$}} &
\multicolumn{3}{|c|}
{\small{$\liminf \mathbb{P}^{H}_{cpt} > 0$ \textrm{}}} &
\textrm{\small{(Thm.~\ref{maincpt})}} \\
\textrm{\small{\& $\limsup \frac{m}{n} < 3$}} &
\multicolumn{3}{|c|}
{\small{$\limsup \mathbb{P}^{H}_{cpt} < 1$}} &
\\
&
\multicolumn{3}{|c|}
{\textrm{\small{(Thm.~\ref{cpt1to3})}}} &
\\
\cline{1-4}
\small{$\frac{m}{n} \to 3$} &
\multicolumn{3}{|c|}
{\small{$\mathbb{P}^{H}_{cpt} \to 0$ (Thm.~\ref{conn1})}} &
\\
\hline 
\end{tabular}} 
\end{table}

We start with a useful lemma on the number of isolated vertices:

\begin{Lemma} \label{fewisolated}
Let $g \geq 0$ be a constant,
and let $m=m(n)$ satisfy $\liminf \frac{m}{n} \geq 1$.
Then whp
$S_{g}(n,m)$ has $o(n)$ components, and hence $o(n)$ isolated vertices.
\end{Lemma}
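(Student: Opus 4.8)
The plan is to use a double-counting argument of exactly the type outlined in Section~\ref{techniques}. Let $\mathcal{G}_n^\prime = \mathcal{S}^g(n,m)$, and for a parameter $k = k(n) \to \infty$ (chosen slowly, e.g.\ $k = \omega(1)$ but $k = o(n)$ and also $k = o(n-m)$ when $n > m$), let $\mathcal{G}_n \subseteq \mathcal{G}_n^\prime$ be the set of graphs with at least $k$ components. The goal is to show $|\mathcal{G}_n|/|\mathcal{G}_n^\prime| \to 0$, so that whp $S_g(n,m)$ has $o(n)$ components; since isolated vertices are components, the second assertion follows immediately. I would first reduce to the case $m \le n$: for $m \ge n$ one can delete a suitable set of edges to reach the $m=n$ (or $m \le n$) regime, but more cleanly one observes that a graph with $m$ edges on $n$ vertices has at least $n-m$ components only when $m < n$, and when $\liminf \tfrac{m}{n} \ge 1$ with $m > n$ the number of components is already controlled by Lemma~\ref{kmslemma} (non-multicyclic components number $o(n)$ whp) together with the trivial bound that multicyclic components are few because each consumes at least $|H|+1$ edges from the budget $m \le 3n-6+6g$ --- actually each multicyclic component has more edges than vertices, so there are at most $\sum(\text{excess}) \le m - (\text{something})$ of them, and in any case $O(n-\text{(stuff)})$; I would need to be a little careful here but this is routine. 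So the substantive case is $m = n(1+o(1))$ with $m \le n$, say, where $n - m = o(n)$.

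In that case the key is: a graph $G \in \mathcal{G}_n$ with at least $k$ components has many pairs of distinct components, and joining two of them by an edge incident to chosen vertices does not increase the genus (inserting an edge between different components never raises the genus, as noted before Lemma~\ref{add1}). Each such insertion produces a graph in $\mathcal{S}^g(n,m+1)$, not $\mathcal{S}^g(n,m)$, so to stay within a fixed edge count I would simultaneously delete a pendant edge: by Lemma~\ref{pendant}, whp $G$ has $\Omega(n)$ pendant edges (valid since $\liminf\tfrac mn \ge 1$ — well, $>0$ — and $\limsup\tfrac mn<3$, which holds in this regime), and deleting a pendant edge cannot increase the genus either. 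The number of ways to do this is roughly (number of ordered pairs of distinct components) $\times$ (number of vertices per component, giving where to attach) $\times$ ($\Omega(n)$ pendant edges to delete), which with $k$ components is at least $\Omega(k \cdot n)$ — in fact if the components are small this is $\Omega(kn)$, superlinear — while the reverse operation (to recover $G$ from the constructed graph $G'$, add back a pendant edge and delete the connecting edge) can be undone in at most $O(n \cdot \text{(some polynomial)})$ ways, because one must identify the added pendant edge among the pendant edges of $G'$ (at most $n$ of them) and the edge to delete among those that, when removed, disconnect a small piece — and we can control the latter since the ``component'' we reattached was one of the original small ones. The ratio then gives $|\mathcal{G}_n|/|\mathcal{G}_n^\prime| = O(n / (kn)) = O(1/k) \to 0$. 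One must take care that the constructed graphs are genuinely distinct and that double-counting across different $G \in \mathcal{G}_n$ is bounded — this is handled by tracking, for each $G'$, the at most linearly-many reconstructions.

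The main obstacle, as usual in these arguments, is keeping the double-counting under control --- specifically, when we reconstruct $G$ from $G'$ we must be able to identify both the edge that was inserted (a cut-edge of $G'$ separating a small piece, of which there are only $O(n)$ since each corresponds to a pendant-ish separation) and the pendant edge that was deleted (add it back: $O(n)$ choices). The subtlety is that a single $G'$ might arise from many different $G \in \mathcal{G}_n$, but the number of such $G$ is at most (choices of cut-edge to delete) $\times$ (choices of where to re-attach a pendant edge) $= O(n) \cdot O(n) = O(n^2)$, against the $\Omega(kn)$ constructions from each $G$; I would need to verify the construction count beats this, which forces the requirement $k = \omega(1)$ to be upgraded to, say, $k \to \infty$ being enough only if the per-$G$ construction count is genuinely $\omega(n^2)$ — and it is, because with $\ge k$ components there are $\Omega(k^2)$ ordered pairs, not just $\Omega(k)$, giving $\Omega(k^2 n)$ or more. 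So I would run the argument with ordered pairs of components, getting $\Omega(k^2 \cdot \Omega(n))$ constructions per $G$ versus $O(n^2)$ reconstructions per $G'$, hence $|\mathcal{G}_n|/|\mathcal{G}_n^\prime| = O(n^2 / (k^2 n)) = O(1/k^2) \to 0$. Finally, restricting to the high-probability event that $G$ has $\Omega(n)$ pendant edges is handled in the standard way: decompose $\mathcal{G}_n$ into the part with few pendant edges (probability $o(1)$ by Lemma~\ref{pendant}) and the part with many (handled by the count above).
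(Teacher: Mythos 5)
Your plan has the right surface features (double-counting, inserting an inter-component edge to use up a component, deleting a pendant edge to re-balance the edge count), but the reconstruction count is wrong in a way that makes the argument circular, and the reduction to the $m\le n$ case has a real gap. Let me go through both.

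\textbf{The double-counting does not close.} In your construction $G'=G+e_1-e_2$, the edge $e_1$ joins two components and $e_2=xy$ is a pendant edge with pendant endpoint $y$. After the deletion, $y$ is an \emph{isolated} vertex of $G'$. To reverse-engineer $(G,e_1,e_2)$ from $G'$ you must (i) pick the cut-edge $e_1$ to remove (at most $n-1$ choices), (ii) pick the isolated vertex $y$ of $G'-e_1$ to re-attach (as many choices as there are isolated vertices), and (iii) pick the attachment point $x$ (at most $n$ choices). Step (ii) is the problem: the number of isolated vertices of $G'$ is not $O(1)$ --- bounding it is exactly what the lemma is asking us to prove, so we may not assume it. With no a priori bound, the reconstruction count is $O(n^3)$, not the $O(n^2)$ you quote. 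Meanwhile, your construction count is not $\Omega(k^2n)$: the number of ways to place an edge between two different components of a graph on $n$ vertices with $k$ components is $\frac12\bigl(n^2-\sum n_i^2\bigr)$, which is minimised (subject to $\sum n_i=n$, $k$ parts) by taking one giant part and $k-1$ singletons, giving only $\Theta(kn)$ --- ordered pairs of components do not translate into a $k^2$ factor once you account for component sizes. So the construction count is $\Omega(kn)\cdot\Omega(n)=\Omega(kn^2)$, and the ratio of reconstruction to construction is $O(n^3/(kn^2))=O(n/k)$, which does \emph{not} tend to $0$ for any admissible $k=o(n)$. The proof as written therefore does not establish anything.

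\textbf{The reduction for $m>n$ also has a hole.} You propose to dispatch $m>n$ via Lemma~\ref{kmslemma} together with a ``trivial'' bound on multicyclic components. Two problems: first, Lemma~\ref{kmslemma} only covers the ranges $|m-n|=O(n^{3/5})$ and $0<m-n=o\bigl(n/(\log n)^{2/3}\bigr)$; it says nothing about, say, $m=2n$, which is squarely inside the hypothesis $\liminf m/n\ge1$ and is exactly where you still need the claim. Second, the natural bound on the number $c_2$ of multicyclic components is $c_2\le(m-n)+(V_1-E_1)$ where $V_1,E_1$ are the vertex/edge counts of non-multicyclic components; this yields $o(n)$ only when $m-n=o(n)$, so when $m-n=\Theta(n)$ it is not ``routine'' --- it is not even true as a deterministic statement, since every edge budget up to $3n-6+6g$ admits graphs with $\Theta(n)$ small multicyclic components. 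So this branch of your argument needs a genuinely different idea, not a bookkeeping check. (The paper itself does not give this argument explicitly; its proof of the lemma is a pointer to Lemma~41 and Proposition~50 of~\cite{dow}, which cover $m\ge n$ and $m=(1-o(1))n$ by different means, with $|\mathcal{S}^g(n,m)|\ge|\mathcal{S}^0(n,m)|$ used to transfer from the planar case.)

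Finally, a small but relevant point on the logic: for the ``$o(n)$ whp'' conclusion it does suffice, as you say, to exhibit a single $k(n)=o(n)$ with $\mathbb{P}[\text{more than }k(n)\text{ components}]\to0$, so the framing is fine; the issue is purely that the counting inequality you would need is off by a factor of $n/k$.
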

\begin{sketch}
The lemma follows from the same proofs 
(using 
$|\mathcal{S}^{g}(n,m)| \geq |\mathcal{S}^{0}(n,m)|$
at relevant points)
as for Lemma 41 and Proposition 50 of~\cite{dow},
which give upper bounds of type $o(n)$ 
on the number of components in a random planar graph
when $m \geq n$ and $m=(1-o(1))n$, respectively.
\qed
\end{sketch}

We shall now employ Lemma~\ref{fewisolated} 
in the proof of the main result of this section
(Theorem~\ref{maincpt}):

\begin{cptproof}
The result for the cases when 
$\limsup \frac{m}{n} < 1$ and $\frac{m}{n} \to 3$
will be covered separately by Theorems~\ref{cptmulti} and~\ref{conn1},
so we will assume here that we have 
$1 \leq \liminf \frac{m}{n} \leq \limsup \frac{m}{n} < 3$.

Let $\epsilon > 0$.
We will aim to show that
$\mathbb{P}^{H}_{cpt} < \epsilon$
for all sufficiently large $n$.
Let $\alpha$ be as given by Lemma~\ref{pendant},
and let $\mathcal{G}_{n}$ denote the set of graphs in $\mathcal{S}^{g}(n,m)$
with a component isomorphic to $H$,
at least $\alpha n$ pendant edges,
and at most $\frac{\alpha \epsilon n}{8g}$ isolated vertices.
By Lemmas~\ref{pendant} and~\ref{fewisolated},
it will suffice to show
$\frac{|\mathcal{G}_{n}|}{|\mathcal{S}^{g}(n,m)|} < \frac{\epsilon}{2}$
for all sufficiently large $n$.

For each graph in $\mathcal{G}_{n}$,
let us insert an edge between a component isomorphic to $H$
and any vertex outside this component.
Note that we have $|H|(n-|H|)$ ways to do this,
and the overall graph will still have genus at most $g$.
Let us then also delete a pendant edge to create an isolated vertex
(or perhaps two).
Note that we still have at least $\alpha n - 2$ choices for this pendant edge.
Thus, 
we find that we can construct at least 
$|\mathcal{G}_{n}| |H|(n-|H|) (\alpha n - 2)$ 
(not necessarily distinct) graphs in $\mathcal{S}^{g}(n,m)$.

Let us now consider the amount of double-counting.
Given one of our constructed graphs,
there are at most $\frac{\alpha \epsilon n}{8g} + 2$ possibilities 
for the newly created isolated vertex,
and at most $n$ possibilities for the other endpoint of the deleted pendant edge.
There are then at most $|H|g$ possibilities
for the edge that was inserted,
since there can only be at most $|H|g$ pendant copies of $H$ in the graph
(using the observation that 
there can only be at most $g$ vertex-disjoint copies of $H$ in the graph,
since $H$ is non-planar,
together with the fact that any pendant copy of $H$ 
can only have a vertex in common with at most $|H|-1$ others,
by Observation~\ref{appinter}).
Thus,
we find that we have built
each graph at most $|H|g \left( \frac{\alpha \epsilon n}{8g} + 2 \right)n$ times.

Hence,
the number of distinct graphs (in $\mathcal{S}^{g}(n,m)$)
that we have constructed
must be at least
\begin{eqnarray*}
\frac{|\mathcal{G}_{n}| |H|(n-|H|) (\alpha n - 2)}
{|H|g \left( \frac{\alpha \epsilon n}{8g} + 2 \right)n}
& \geq &
\frac{|\mathcal{G}_{n}| |H| \frac{n}{2} \alpha n}
{|H|g \frac{\alpha \epsilon n}{4g}n}
\textrm{ for all sufficiently large $n$} \\
& = & |\mathcal{G}_{n}| \frac{2}{\epsilon},
\end{eqnarray*}
and so
$\frac{|\mathcal{G}_{n}|}{|\mathcal{S}^{g}(n,m)|}
< \frac{\epsilon}{2}$,
as desired.
\qed 
\end{cptproof}

In the remainder of this section,
we collect together other interesting results on
$\mathbb{P}^{H}_{cpt}$
for various different cases.
The proofs all follow those given in~\cite{dow} and~\cite{dowejc}
for a random planar graph,
and so we shall just state these theorems 
without providing full details of the proofs.
The only difference to the planar case
is that we only have the addability result of Lemma~\ref{add2}
for $m \leq n + o \left( \frac{n}{(\log n)^{2/3}} \right)$ here,
whereas the analogous result for planar graphs
is known to hold for $m \leq (1+o(1))n$,
and so Theorems~\ref{cpttree} and~\ref{cptmulti}
will consequently only be stated for $m \leq n + o \left( \frac{n}{(\log n)^{2/3}} \right)$ too.

We start with a stronger result than given in Table~\ref{cpttab},
showing that whp there are actually \emph{linearly} many components
isomorphic to any given tree if $\limsup \frac{m}{n} < 1$:

\begin{Theorem} \label{ntree}
Let $H$ be a (fixed) tree,
let $g \geq 0$ be a constant,
and let $m=m(n)$ satisfy 
$0 < \liminf \frac{m}{n} \leq \limsup \frac{m}{n} < 1$.
Then there exist $\alpha > 0$ and $N$ such that
\begin{displaymath}
\mathbb{P} 
[C_{H}(S_{g}(n,m)) \geq \alpha n] 
> 1 - e^{- \alpha n} 
\textrm{ for all } n \geq N.
\end{displaymath}
\end{Theorem}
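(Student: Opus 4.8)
Write $\mathcal{B}_n$ for the set of graphs in $\mathcal{S}^g(n,m)$ having fewer than $\alpha n$ components isomorphic to $H$, so that $\mathbb{P}[C_H(S_g(n,m))\geq\alpha n]=1-|\mathcal{B}_n|/|\mathcal{S}^g(n,m)|$; the plan is to bound $|\mathcal{B}_n|$ by a double-counting argument. It suffices to establish $\mathbb{P}[C_H(S_g(n,m))\geq\alpha_0 n]>1-e^{-\alpha_1 n}$ for \emph{some} positive constants, since the statement then follows with $\alpha=\min(\alpha_0,\alpha_1)$. The key point—which reduces everything to the planar case of~\cite{dow}—is that the argument is insensitive to $g$: because $H$ is a \emph{tree}, a copy of $H$ can be built on a set of isolated vertices by inserting its edges in tree order, each edge then joining two distinct components, and inserting an edge between components, like deleting an edge, never raises the genus; so every graph we construct stays in $\mathcal{S}^g(n,m)$, and wherever a lower bound on the counting function is needed one may use $|\mathcal{S}^g(n,m)|\geq|\mathcal{S}^0(n,m)|$.

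Three facts supply the raw material. First, as $\limsup m/n<1$ we have $m\leq(1-c)n$ for a fixed $c>0$ and all large $n$, and then (from $\sum_C(|C|-e(C))=n-m$ with non-tree components contributing non-positively) \emph{every} graph in $\mathcal{S}^g(n,m)$ has at least $cn$ tree components, at least $cn/2$ of them with at most $\lceil 2/c\rceil$ vertices. Second, by Lemma~\ref{pendant}, outside an event of probability $e^{-\Omega(n)}$ there are at least $\alpha_1 n$ pendant edges, so it is enough to bound the number of graphs in $\mathcal{B}_n$ on which this holds. Third, by Lemma~\ref{add1} there are $\Theta(n^2)$ addable edges, in particular many choices of an edge joining two components, available for rebalancing.

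The substance of the proof, carried out as in~\cite{dow}, is a local modification. From such a $G$ one picks $\lfloor\alpha n\rfloor$ pairwise vertex-disjoint bounded-size ``slots'' among the tree components and pendant edges, and in each slot rearranges a bounded number of edges so as to create a fresh copy of $H$ as a component, reinserting an addable edge between two components when the slot's edge count would otherwise change (thereby keeping both the number of edges and, by the remark above, the genus fixed). Each such step increases $C_H$ by exactly one, so—choosing which $\lfloor\alpha n\rfloor$ of the (at least) $\gamma n$ available disjoint slots to modify, for a fixed $\gamma>0$—$G$ yields at least $\binom{\gamma n}{\alpha n}$ distinct graphs in $\mathcal{S}^g(n,m)$. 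Conversely, any graph so obtained has at most $2\alpha n$ components isomorphic to $H$; selecting which $\lfloor\alpha n\rfloor$ of them were inserted costs at most $\binom{2\alpha n}{\alpha n}\leq 4^{\alpha n}$, and—since the modification is designed so that an $H$-component, with its rigid ``no edge to the rest'' structure (cf.\ Definition~\ref{defapps}) and the overlap bound of Observation~\ref{appinter}, determines how it is to be undone—each undoing has only a bounded multiplicity $C$. Hence $|\mathcal{B}_n|\binom{\gamma n}{\alpha n}\leq|\mathcal{S}^g(n,m)|\,(4C)^{\alpha n}$, giving
\[
\frac{|\mathcal{B}_n|}{|\mathcal{S}^g(n,m)|}\;\leq\;\frac{(4C)^{\alpha n}}{\binom{\gamma n}{\alpha n}}\;\leq\;\Bigl(\tfrac{4C\alpha}{\gamma}\Bigr)^{\alpha n},
\]
which is below $e^{-\alpha n}$ as soon as $\alpha\leq\gamma/(4Ce)$; together with the $e^{-\Omega(n)}$ loss from Lemma~\ref{pendant} this proves the theorem.

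The genuine difficulty, and what~\cite{dow} really has to do, is twofold: to exhibit $\Omega(n)$ usable disjoint slots (here helped by the deterministic supply of bounded-size tree components above, but still needing the comparison $|\mathcal{S}^g|\geq|\mathcal{S}^0|$ and the planar counting estimates to pin down which slots can be turned into copies of $H$ while preserving the edge count), and to engineer the local modification so that its inverse has only $O(1)$ multiplicity per slot—this last being precisely what makes the forward count $\binom{\gamma n}{\alpha n}$ beat the reverse count $(4C)^{\alpha n}$ once $\alpha$ is small. For general $g$ nothing new is required beyond observing that every move deletes an edge or inserts an edge between components, hence is genus-neutral, and that the restricted addability range is irrelevant here because $\limsup m/n<1$; the rest is routine bookkeeping.
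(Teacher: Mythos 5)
Your overall strategy---create $\lfloor \alpha n \rfloor$ fresh $H$-components simultaneously and double-count---is a sensible route to an exponential bound, and you correctly identify the two ingredients that let the planar argument of~\cite{dowejc} carry over to genus $g$: every move either deletes an edge or inserts an edge between components, hence is genus-neutral, and Lemma~\ref{pendant} together with Lemma~\ref{add1} supplies the raw material. The gap is the claim that ``each undoing has only a bounded multiplicity $C$''; this is not justified and fails for the surgery you describe. A slot's modification deletes pendant edges (whose non-pendant endpoints lie \emph{outside} the slot) and inserts an addable edge between components; the resulting $H$-component carries no trace of where those edges were. Undoing one slot therefore requires guessing the original neighbours of the $|H|$ formerly pendant vertices (up to $n^{|H|}$ possibilities) and identifying which edge was inserted (up to $O(n^2)$ possibilities), so the per-slot reverse multiplicity is polynomial in $n$, not $O(1)$. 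Appealing to Definition~\ref{defapps} and Observation~\ref{appinter} does not help: those concern appearances and pendant copies, which by definition have exactly one edge into the rest of the graph and hence a recoverable attachment, whereas a component of $G$ isomorphic to $H$ has none.

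Correspondingly, your forward count $\binom{\gamma n}{\alpha n}$ is an undercount: it records only which slots are modified, ignoring the per-slot choice of which addable edge to insert (and of where the deleted pendant edges were, if the slot does not already contain those endpoints). In the proof the paper points to (Theorem~11 of~\cite{dowejc}, with Lemmas~\ref{pendant} and~\ref{add1} replacing the planar estimates), these polynomial per-step factors appear on both sides of the ledger and cancel---roughly $\Theta(n^{|H|+2})$ forward choices against $O(\alpha\, n^{|H|+2})$ preimages per step---and it is that cancellation, not an $O(1)$ bound on the reverse multiplicity, that beats $e^{-\alpha n}$ once $\alpha$ is small. Your accounting collapses the backward polynomial to $O(1)$ while dropping the forward one, which breaks the balance. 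Two smaller issues with the slot supply: the deterministic $\Omega(n)$ small tree components may all be isolated vertices, in which case no edges can be rearranged \emph{within} a slot (so the modification must reach outside it, which is exactly what makes the undoing costly); and vertex-disjointness of slots built from pendant edges is not automatic, since two pendant edges may share their non-pendant endpoint, and Lemma~\ref{pendant2} only bounds the number of pendant neighbours of a vertex by $O(\ln n/\ln\ln n)$, not by a constant.
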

\begin{sketch}
The (double-counting) proof involves deleting $|H|$ pendant edges,
building a component isomorphic to $H$ on the $|H|$ newly isolated vertices
(using $|H|-1$ edges),
and inserting an edge elsewhere in the graph
(see Theorem 11 of~\cite{dowejc} for the full proof of the analogous planar case).
We use Lemma~\ref{pendant} and Lemma~\ref{add1} to obtain the required numbers of pendant edges and $g$-addable edges.
\qed
\end{sketch}

By Lemma~\ref{fewisolated},
we certainly cannot expect to find linearly many components 
if $\frac{m}{n} \to 1$,
but 
it is still possible to show that whp there is at least one component
isomorphic to any given tree,
as long as $m \leq n + o \left( \frac{n}{(\log n)^{2/3}} \right)$:

\begin{Theorem} \label{cpttree}
Let $H$ be a (fixed) tree,
let $g \geq 0$ be a constant,
and let $m=m(n)$ satisfy $m \leq n + o \left( \frac{n}{(\log n)^{2/3}} \right)$.
Then 
\begin{displaymath}
\mathbb{P}^{H}_{cpt} 
\to 1 
\textrm{ as } n \to \infty.
\end{displaymath}
\end{Theorem}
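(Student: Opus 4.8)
The plan is to prove Theorem~\ref{cpttree} via a double-counting argument, following the structure of the analogous planar result (Theorem 13 of~\cite{dow}). Fix a tree $H$ and let $\epsilon>0$; we aim to show $\mathbb{P}^{H}_{cpt} > 1-\epsilon$ for all large $n$. Since the case $\limsup\frac{m}{n}<1$ is already covered much more strongly by Theorem~\ref{ntree}, we may assume $m = n - o(n)$, so in particular $\liminf\frac{m}{n}=1$ (if $m$ were substantially below $n$ along a subsequence we handle that subsequence by Theorem~\ref{ntree}); combined with the standing hypothesis $m \leq n + o\!\left(\frac{n}{(\log n)^{2/3}}\right)$ this means we are in the regime where both Lemma~\ref{pendant} (giving $\geq\alpha n$ pendant edges whp) and Lemma~\ref{add2} (giving $|\mathrm{add}_g(S_g(n,m))| = \omega(n)$ whp) apply. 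Let $\mathcal{B}_n \subseteq \mathcal{S}^g(n,m)$ be the ``bad'' set of graphs with \emph{no} component isomorphic to $H$; I want $|\mathcal{B}_n|/|\mathcal{S}^g(n,m)| < \epsilon$.

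Restrict attention to the subset $\mathcal{G}_n \subseteq \mathcal{B}_n$ of bad graphs that additionally have at least $\alpha n$ pendant edges and at least $Kn$ $g$-addable non-edges, where $K$ is a large constant to be chosen; by Lemma~\ref{pendant} and Lemma~\ref{add2} (the latter giving $\omega(n)$, hence $\geq Kn$ for any fixed $K$ whp) it suffices to bound $|\mathcal{G}_n|/|\mathcal{S}^g(n,m)|$. From each $G \in \mathcal{G}_n$ I construct graphs in $\mathcal{S}^g(n,m)$ as follows: delete a collection of $|H|-1$ pendant edges so as to free up $|H|$ vertices (choosing, say, $|H|-1$ pendant edges forming a subforest whose removal isolates $|H|$ specified vertices --- here one must be slightly careful, but Observation~\ref{appinter} and the abundance of pendant edges let one select $|H|$ distinct vertices that become isolated after removing at most $|H|-1$ edges, with $\Omega(n)$ choices), then build a fresh copy of $H$ on these $|H|$ isolated vertices (using $|H|-1$ edges), and finally insert one edge in $\mathrm{add}_g$ of the resulting graph to restore the edge count to $m$ while keeping the genus $\leq g$. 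The net edge change is zero, the genus stays $\leq g$, and the constructed graph \emph{has} a component isomorphic to $H$, so it lies in $\mathcal{S}^g(n,m)\setminus\mathcal{B}_n$. The number of constructions per graph is at least $c_1 n \cdot (Kn - O(1)) = \Omega(Kn^2)$.

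For the double-counting bound, given a constructed graph $G'$ one recovers $G$ by: identifying which component of $G'$ is the newly-built copy of $H$ and deleting its $|H|-1$ edges (at most $C_H(G') \leq$ some quantity many choices --- and crucially a constructed graph has only $O(1)$ many $H$-components unless $H$ is a single vertex or edge, which cases are trivial or follow from Lemma~\ref{fewisolated}/Theorem~\ref{ntree}; more carefully one bounds the number of $H$-components that could have arisen as the ``new'' one, and also must account for the deleted inserted edge, at most $e(G')=m$ choices, i.e.\ $O(n)$, and re-attaching the $|H|-1$ pendant edges to specified vertices, $O(n^{|H|-1})$ ways naively --- this is the delicate part). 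The key point, exactly as in~\cite{dow}, is that the pendant-edge deletion and re-insertion is \emph{localised}: each of the $|H|$ freed vertices had a unique neighbour in $G$, and in $G'$ these vertices are isolated, so recovering $G$ requires choosing, for each, one endpoint among $[n]$ --- giving a double-counting factor of order $n^{|H|}$ against a construction count of order $n \cdot n^{|H|-1} = n^{|H|}$ from the pendant side. The decisive gain comes from the extra factor $Kn$ from the addable edge versus only $O(n)$ ways to identify and delete it, so choosing $K$ large enough forces $|\mathcal{G}_n|/|\mathcal{S}^g(n,m)| < \epsilon$.

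The main obstacle is the bookkeeping around the pendant edges: one must organise the ``delete $|H|-1$ pendant edges to free $|H|$ vertices, then rebuild'' step so that the number of ways to \emph{undo} it in a constructed graph is not larger (up to constants) than the number of ways to \emph{do} it, despite $H$ possibly having internal structure and despite pendant copies of $H$ potentially overlapping. This is precisely where Observation~\ref{appinter} is used to cap overlaps, and where one leans on the fact that in the regime $m = n-o(n)$ the graph is sparse enough that the newly created $H$-component is essentially always uniquely identifiable (a constructed graph has $o(n)$ components by Lemma~\ref{fewisolated} when $\liminf\frac{m}{n}\geq1$, and in fact only $O(1)$ of them can be isomorphic to $H$ and arise as the new one, because the construction also deleted pendant edges elsewhere). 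Since the full argument is a verbatim transcription of the planar proof in~\cite{dow} --- the only genus-specific input being that edge deletion never raises the genus and that Lemma~\ref{add2} supplies the $\omega(n)$ addable edges --- I would simply state the theorem and cite~\cite{dow} for the details, as the paper announces it will do.
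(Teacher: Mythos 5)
Your overall strategy matches the paper's: delete pendant edges to free vertices, build a fresh copy of $H$ on them, insert a $g$-addable edge to restore the edge count, and let the $\omega(n)$ from Lemma~\ref{add2} beat the $O(n)$ needed to locate the inserted edge. However, your construction step contains a genuine error. You propose to delete $|H|-1$ pendant edges ``so as to free up $|H|$ vertices,'' then rebuild $H$ with $|H|-1$ edges, then insert one addable edge. The arithmetic does not close: this is a net change of $-(|H|-1)+(|H|-1)+1=+1$ edges, so the constructed graph lies in $\mathcal{S}^{g}(n,m+1)$, not $\mathcal{S}^{g}(n,m)$. Worse, the freeing step itself does not work: a set of $|H|-1$ pendant edges whose removal isolates $|H|$ specified vertices would have to be the full edge set of a tree component on those $|H|$ vertices in which every edge is pendant (i.e.\ a star), and the existence of $\Omega(n)$ such configurations is essentially what one is trying to prove; Observation~\ref{appinter} caps overlaps among pendant copies but does not manufacture such configurations. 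The correct construction, which is what the paper (via Theorem~\ref{ntree} and Theorem 11 of~\cite{dowejc}) uses, deletes $|H|$ pendant edges incident to $|H|$ distinct pendant \emph{vertices} (each deletion isolates its pendant vertex, with $\Omega(n^{|H|})$ ordered choices by Lemma~\ref{pendant}), builds $H$ on the $|H|$ newly isolated vertices using $|H|-1$ edges, and then inserts one $g$-addable edge; the net edge change is $-|H|+(|H|-1)+1=0$. With this fix the double-counting you outline — $O(n)$ to locate the inserted edge, $O(n^{|H|})$ to reattach the deleted pendant edges, $O(1)$ (or at worst $o(n)$) to identify the new $H$-component since the original graph had none — goes through and gives the required $o(1)$ ratio. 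So the plan is the paper's plan, but as written your construction does not produce graphs in $\mathcal{S}^{g}(n,m)$ at all.
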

\begin{sketch}
We adopt the same proof as with Theorem~\ref{ntree},
using Lemma~\ref{add2} instead of Lemma~\ref{add1}
(see Theorem 10 of~\cite{dowejc}
for details of the analogous planar case).
\qed
\end{sketch}

By contrast,
for the unicyclic case 
(i.e.~when $e(H)=|H|$)
we find that the probability is bounded away from $1$:

\begin{Theorem} \label{unicpt}
Let $H$ be a (fixed) connected unicyclic graph,
let $g \geq 0$ be a constant,
and let $m=m(n)$ satisfy
$0 < \liminf \frac{m}{n} \leq \limsup \frac{m}{n} < 3$.
Then 
\begin{eqnarray*}
\liminf \mathbb{P}^{H}_{cpt} & > & 0 \\
\textrm{and }
\limsup \mathbb{P}^{H}_{cpt} & < & 1.
\end{eqnarray*}
\end{Theorem}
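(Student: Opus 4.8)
The plan is to prove the two bounds by two separate double-counting arguments, in each case comparing the set of graphs in $\mathcal{S}^{g}(n,m)$ that possess a component isomorphic to $H$ with a suitable companion set.

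For the lower bound $\liminf \mathbb{P}^{H}_{cpt} > 0$, I would follow the strategy used for the unicyclic case in the planar paper~\cite{dowejc}. The idea is to go in the opposite direction from the component proofs: starting from a graph in $\mathcal{S}^{g}(n,m)$ that has \emph{no} component isomorphic to $H$ but has at least $\alpha n$ pendant edges (which holds whp by Lemma~\ref{pendant}, since $0<\liminf\frac mn\le\limsup\frac mn<3$), one deletes $|H|$ pendant edges to free up $|H|$ vertices, builds a copy of $H$ on those vertices as a new component (using $|H|-1$ edges), and then inserts one further edge elsewhere to restore the edge count. To insert that final edge without increasing the genus, one needs $g$-addable edges: when $\limsup\frac mn<1$ this is Lemma~\ref{add1}, and when $1<\liminf\frac mn$ it is Lemma~\ref{add3}; for the narrow intermediate band around $m/n=1$ one instead argues via Lemma~\ref{add2} (which requires $m\le n+o(n/(\log n)^{2/3})$) or directly uses a pendant edge as the insertion site (inserting between two of the freshly isolated vertices, or between a pendant vertex and another vertex, never increases the genus). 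The double-counting is controlled as usual: a constructed graph has one clearly identifiable $H$-component, bounded choices for where it came from, and the net operation is reversible up to polynomial multiplicity, giving $|\mathcal{G}_n^{H\text{-free}}| = O(\mathrm{poly}(n))\cdot|\mathcal{G}_n^{H\text{-component}}|$, hence $\mathbb{P}^{H}_{cpt}$ is bounded below by a positive constant.

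For the upper bound $\limsup \mathbb{P}^{H}_{cpt} < 1$, I would instead show that a positive fraction of graphs have no $H$-component, again by double-counting, now in the reverse direction: from a graph with a component isomorphic to $H$ (and, say, $o(n)$ isolated vertices, which holds whp when $\liminf\frac mn\ge1$ by Lemma~\ref{fewisolated}, and for which the analogous small-component bounds from~\cite{dow} apply when $\liminf\frac mn<1$), insert an edge between the $H$-component and the rest of the graph (there are $|H|(n-|H|)$ such edges, none increasing the genus since joining components is always $g$-addable), and delete one pendant edge to rebalance. A constructed graph then has: few choices for the newly created isolated endpoint, $n$ choices for the other end of the deleted edge, and only $O(1)$ choices for the re-inserted edge because a graph can contain only boundedly many pendant copies of $H$ — here the key structural fact is that $H$ is unicyclic, so by a counting argument on cycles and Observation~\ref{appinter} the number of pendant unicyclic copies of $H$ is $O(1)$ (each uses up one of the graph's independent cycles, of which there are $m-(\text{number of components})\le m-n+o(n)$... this needs care, see below). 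This yields $|\mathcal{G}_n^{H\text{-component}}| = o(|\mathcal{S}^{g}(n,m)|)$ up to the obstruction below, hence $\mathbb{P}^{H}_{cpt}$ bounded away from $1$.

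The main obstacle is the bound on the number of pendant copies of $H$ in the upper-bound argument. In the non-planar component case (Theorem~\ref{maincpt}) this was easy: non-planarity forces at most $g$ vertex-disjoint copies. For a unicyclic $H$ that bound fails — a sparse graph can have many disjoint unicyclic subgraphs — so one cannot simply cite planarity/genus. The resolution is that in the relevant regime the number of \emph{pendant} copies is still controlled, not by genus but by the structure of the double-counting construction combined with the few-isolated-vertices/few-small-components input: one should not literally delete an arbitrary pendant edge but a pendant edge of a prescribed flavour, so that the reverse map has bounded fibres regardless of how many pendant $H$-copies exist; alternatively, restrict $\mathcal{G}_n$ to graphs with $o(n)$ pendant copies of $H$ (provable by a separate first-moment-type counting bound) and argue that this loses only $o(|\mathcal{S}^{g}(n,m)|)$. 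Since the excerpt explicitly says the proof "follows those given in~\cite{dow} and~\cite{dowejc}," I expect the clean route is exactly the one used there, with the genus only entering through the benign facts that joining components and adding pendant edges never increase genus, and otherwise the planar argument transfers verbatim.
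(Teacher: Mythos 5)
Your lower-bound sketch miscounts the edges for a unicyclic $H$: you say you build the new component ``using $|H|-1$ edges'' and then ``insert one further edge elsewhere,'' but since $H$ is unicyclic we have $e(H)=|H|$, so the $|H|$ deleted pendant edges are used up exactly in building the $H$-component and \emph{no} further edge is inserted anywhere. (Building a connected graph on $|H|$ vertices with only $|H|-1$ edges gives a tree, not $H$.) Once this is corrected, the worry about $g$-addable edges and the narrow band around $\frac{m}{n}=1$ disappears entirely; Lemma~\ref{pendant} alone covers the whole range $0<\liminf\frac mn\le\limsup\frac mn<3$. This is exactly what the paper does: the same move as for Theorem~\ref{ntree} ``but without inserting an extra edge elsewhere in the graph.''

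For the upper bound you reuse the Theorem~\ref{maincpt} move (join the $H$-component to the rest, then delete a pendant edge) and you are right that this runs into a wall: for unicyclic $H$ the number of pendant copies of $H$ need not be $O(1)$, so the fibres of the reverse map are uncontrolled. The workarounds you list (deleting a pendant edge ``of a prescribed flavour,'' or restricting to graphs with $o(n)$ pendant $H$-copies by a separate first-moment bound) are not developed and do not obviously close the gap. The paper avoids the issue with a \emph{different} move: delete an edge from the unique cycle of the $H$-component, turning it into a tree, and then insert an edge joining that tree to the rest of the graph. This keeps the edge count balanced without disturbing any pendant edge elsewhere, and the reverse bookkeeping (identifying the inserted cut-edge and where the cycle edge was) is what the $g=0$ proof of Theorem~13 of~\cite{dowejc} controls. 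You should switch to the cycle-edge-deletion move rather than try to patch the insert-then-delete-pendant approach.
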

\begin{sketch}
To show $\liminf \mathbb{P}^{H}_{cpt} > 0$,
we again apply the same proof as with Theorem~\ref{ntree},
but without inserting an extra edge elsewhere in the graph
(see Theorem 9 of~\cite{dowejc} 
for details of the analogous planar case).
To show $\limsup \mathbb{P}^{H}_{cpt} < 1$,
we use the $g=0$ proof of Theorem 13 of~\cite{dowejc},
which involves deleting an edge from a cycle in $H$
and then inserting an edge to join this component to the rest of the graph.
\qed
\end{sketch}

For the multicyclic case
(i.e.~when $e(H)>|H|$),
we find that the probability actually converges to $0$ for $m \leq n + o \left( \frac{n}{(\log n)^{2/3}} \right)$:

\begin{Theorem} \label{cptmulti}
Let $H$ be a (fixed) connected multicyclic graph,
let $g \geq 0$ be a constant,
and let $m=m(n)$ satisfy $m \leq n + o \left( \frac{n}{(\log n)^{2/3}} \right)$.
Then 
\begin{displaymath}
\mathbb{P}^{H}_{cpt}
\to 0
\textrm{ as } n \to \infty.
\end{displaymath}
\end{Theorem}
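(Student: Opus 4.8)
The plan is to run a double-counting argument in the same spirit as the proof of Theorem~\ref{maincpt}, but tuned to the multicyclic setting where $m \leq n + o\!\left(\frac{n}{(\log n)^{2/3}}\right)$. Fix $\epsilon > 0$; I want to show $\mathbb{P}^H_{cpt} < \epsilon$ for all large $n$. The key structural fact here is that a connected multicyclic graph $H$ has $e(H) = |H| + k$ for some $k \geq 1$, so a copy of $H$ carries a \emph{surplus} of edges relative to its vertex count. This is what makes $H$-components scarce: by Lemma~\ref{kmslemma}, the total number of vertices in non-multicyclic components is $\Theta(\cdot)$ of something that is $o(n)$ when $|m-n| = o(n)$, so most of the graph lies in (at most a bounded number of, in fact) multicyclic components, and there is comparatively little ``room'' for a whole extra multicyclic component sitting off on its own.

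First I would set up the two good events on which I condition: that $S_g(n,m)$ has $|\mathrm{add}_g| = \omega(n)$ many $g$-addable edges (Lemma~\ref{add2}, valid precisely in this range of $m$), and — to control double-counting — a bound of $o(n)$ on the number of vertices lying in non-multicyclic components, equivalently on the number of components that are trees or unicyclic; this follows from Lemma~\ref{kmslemma} together with Lemma~\ref{fewisolated}. Let $\mathcal{G}_n$ be the set of graphs in $\mathcal{S}^g(n,m)$ that have a component isomorphic to $H$ and satisfy both good events. By the two lemmas it suffices to bound $|\mathcal{G}_n|/|\mathcal{S}^g(n,m)|$. For each $G \in \mathcal{G}_n$, I destroy an $H$-component by deleting one edge lying on a cycle of that component (there is at least one, since $e(H) > |H|$) — this disconnects nothing, since the edge is on a cycle, so the component stays connected but becomes unicyclic or lower-surplus — and then I re-insert an edge from one of the $\omega(n)$ globally $g$-addable non-edges. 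Since the deletion was inside a small fixed component, the re-insertion choices are essentially unaffected, giving at least (roughly) $e(H) \cdot |\mathrm{add}_g(G)| = \omega(n)$ constructed graphs in $\mathcal{S}^g(n,m)$ per graph in $\mathcal{G}_n$.

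For the double-counting: given a constructed graph $G'$, to recover $G$ I must identify which edge was inserted and which was deleted. The inserted edge is recovered by locating the component that ``should'' be an $H$-component-minus-a-cycle-edge; because $H$ is multicyclic, any copy of $H$ minus a cycle edge is still connected and has $e = |H|+k-1 \geq |H|$, hence is itself multicyclic (if $k \geq 2$) or unicyclic (if $k=1$) — and in either case there are few vertices available: if $k \geq 2$ the relevant component is multicyclic and there are only $O(1)$ such components (bounded number, or at least the surplus is bounded so their total size is controlled), while if $k = 1$ the component is unicyclic and by the good event there are only $o(n)$ vertices in unicyclic components, so at most $o(n)$ candidate locations. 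Either way the number of (inserted-edge, deleted-edge) pairs consistent with $G'$ is $o(n) \cdot O(1) = o(n)$. Hence $|\mathcal{S}^g(n,m)| \geq \frac{\omega(n)}{o(n)} |\mathcal{G}_n|$, forcing $|\mathcal{G}_n|/|\mathcal{S}^g(n,m)| \to 0$, which gives the theorem. I would expect the main obstacle to be the bookkeeping in the $k=1$ case, where deleting a cycle edge of $H$ produces a \emph{tree} component rather than a multicyclic one: there one must instead delete a cycle edge so as to keep it unicyclic, or, more robustly, argue via the $o(n)$-bound on non-multicyclic vertices that even this weaker recovery is enough; pinning down exactly which of Lemma~\ref{kmslemma}'s three regimes is in force and citing the correct $o(n)$ bound uniformly across $m \leq n + o(n/(\log n)^{2/3})$ is the delicate point. (This matches the planar argument in Theorem 8 of~\cite{dowejc}, which generalises directly once Lemma~\ref{add2} and Lemma~\ref{kmslemma} are substituted for their planar counterparts.)
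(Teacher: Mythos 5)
Your construction is not strong enough for the double-counting to close, and the gap is at exactly the step you flag as potentially delicate but then dismiss. You delete one cycle edge and insert one $g$-addable edge, so the gain per graph in $\mathcal{G}_n$ is $\omega(n)$ (from Lemma~\ref{add2}). But to invert this operation on a constructed graph $G'$ you must identify \emph{two} edges: the one you deleted (by locating the modified $H$-component) \emph{and} the one you inserted. Your cost estimate ``$o(n)\cdot O(1)=o(n)$'' only counts the first. The inserted edge was an arbitrary $g$-addable non-edge, and once it is in $G'$ nothing distinguishes it from the other $m = O(n)$ edges; this adds a factor of $O(n)$ to the reconstruction count. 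The resulting ratio $|\mathcal{G}_n|/|\mathcal{S}^g(n,m)|$ is bounded by roughly $o(n)\cdot O(n)/\omega(n)$, which is $o(n)$ and tells you nothing. (There is a secondary soft spot too: your claim that there are $O(1)$ multicyclic components in the range $m \leq n + o(n/(\log n)^{2/3})$ is not something Lemma~\ref{kmslemma} gives you; the cyclomatic number can be as large as $o(n/(\log n)^{2/3})$.)

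The paper's recipe (following Theorem~12 of~\cite{dowejc}, not Theorem~8) is built precisely to avoid this: delete \emph{two} edges from the $H$-component without disconnecting it (possible since $e(H)\geq |H|+1$ gives at least two non-tree edges), insert one edge \emph{joining} this component to the rest of the graph, and insert one further $g$-addable edge elsewhere. The joining edge contributes a fresh factor of $\Theta(n)$ to the gain, bringing it to $\omega(n^2)$, and it is recognisable in $G'$ as the cut-edge separating off a pendant copy of a specific $|H|$-vertex graph — this is what ties down the extra degree of freedom that the ``elsewhere'' edge introduces. With gain $\omega(n^2)$ against reconstruction cost $O(n^2)$, the ratio goes to zero. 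If you want to salvage your leaner one-delete/one-insert version, you would need either to make the inserted edge recognisable (so that there are $o(\omega(n))$ candidates for it in $G'$) or to boost the gain by at least a factor of $n$ as the paper does; as written, the argument does not go through.
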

\begin{sketch}
The proof involves deleting two edges from a component isomorphic to $H$ without disconnecting it,
inserting an edge to join this component to the rest of the graph,
and also inserting an edge elsewhere
(see Theorem 12 of~\cite{dowejc}
for the full proof of the analogous planar case).
Here, we use Lemma~\ref{add2} to show that
the number of $g$-addable edges is sufficiently large to obtain our result by double-counting.
\qed
\end{sketch}

Moving into the region when $\liminf \frac{m}{n} > 1$,
the probability of containing a given component $H$
is bounded away from $0$ for all planar $H$
(cf.~the non-planar case of Theorem~\ref{maincpt}):

\begin{Theorem} \label{cpt1to3}
Let $H$ be a (fixed) connected planar graph,
let $g \geq 0$ be a constant,
and let $m=m(n)$ satisfy
$1 < \liminf \frac{m}{n} \leq \limsup \frac{m}{n} < 3$.
Then 
\begin{eqnarray*}
\liminf \mathbb{P}^{H}_{cpt} & > & 0 \\
\textrm{and }
\limsup \mathbb{P}^{H}_{cpt} & < & 1.
\end{eqnarray*}
\end{Theorem}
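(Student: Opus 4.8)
The plan is to prove the two bounds separately, using double-counting arguments that mirror the planar case but invoking the genus-robust lemmas from Section~\ref{prelim}.

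\textbf{Lower bound ($\liminf \mathbb{P}^{H}_{cpt} > 0$).} For this direction I would use appearances rather than components directly. First I would apply Lemma~\ref{mainapps} to conclude that whp $S_{g}(n,m)$ has at least $\alpha n$ vertex-disjoint appearances of $H$ (the hypothesis $1 < \liminf \frac{m}{n} \leq \limsup \frac{m}{n} < 3$ is exactly what that lemma needs, and $H$ is planar). An appearance of $H$ at $W$ is almost a component isomorphic to $H$ --- it differs only by the single cut-edge joining $r_W$ to the rest of the graph. So the double-counting step is: take a graph with at least $\alpha n$ vertex-disjoint appearances of $H$ but \emph{no} component isomorphic to $H$; for each such appearance, delete the cut-edge (creating a component isomorphic to $H$) and reinsert an edge elsewhere to keep the edge count fixed, using Lemma~\ref{add3} to supply $\Omega(n)$ addable edges. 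This gives $\Omega(n^2)$ constructions; the double-counting is controlled because a graph with a component isomorphic to $H$ has few places where the deleted cut-edge could have been (the new component, times $n$ choices for the far endpoint) and the reinserted edge contributes at most another factor of $n$ or so --- I would need to check the bookkeeping so that the construction count $\Omega(\alpha n \cdot n)$ beats the overcount $O(n \cdot |H| \cdot n)$, which it does up to constants, yielding $|\mathcal{G}_n| / |\mathcal{S}^g(n,m)|$ bounded below. Alternatively, and probably more cleanly, one cites Theorem 9 (or the relevant one) of~\cite{dowejc} and notes the proof only uses edge-deletion, insertion between components, and the appearance/addability lemmas, all of which now hold for general $g$.

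\textbf{Upper bound ($\limsup \mathbb{P}^{H}_{cpt} < 1$).} Here the strategy is the reverse: take a graph \emph{with} a component isomorphic to $H$ and transform it into many graphs \emph{without} such a component. Since $H$ is connected and planar with $e(H) > |H|$ in the multicyclic subcase, $e(H) = |H|$ in the unicyclic subcase, or a tree --- wait, $H$ is merely \emph{connected planar} here, so it could be a tree. For a tree component one can delete a pendant edge of $H$ itself and join the resulting piece to the rest of the graph via a $g$-addable edge (Lemma~\ref{add3}); for unicyclic or multicyclic $H$ one deletes a cycle edge and then an appropriate addable edge, exactly as in Theorem 13 of~\cite{dowejc}. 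In each case a graph in $\mathcal{S}^g(n,m)$ with an $H$-component yields $\Omega(n)$ graphs without one (roughly: one choice of $H$-component, times $\Omega(n)$ reinsertion sites), while each target graph is hit $O(1)$ or $O(n)$ times depending on the variant --- the point being that the ratio forces $\mathbb{P}^H_{cpt}$ away from $1$. The cleanest write-up is to observe that the $g=0$ proofs of the relevant theorems in~\cite{dowejc} carry over verbatim because they only use basic genus-stable operations together with Lemmas~\ref{pendant} and~\ref{add3}.

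\textbf{Main obstacle.} The genuine subtlety, and the reason this is not entirely routine, is the same one flagged in the proof of Theorem~\ref{maincpt}: when we reinsert a balancing edge we must not increase the genus, and in the region $\liminf \frac{m}{n} > 1$ we no longer have the planar guarantee of $3n-6-m$ addable edges. This is precisely what Lemma~\ref{add3} is for, so the real work is just verifying that $\Omega(n)$ addable edges suffice to overcome the double-counting in each of the tree/unicyclic/multicyclic cases --- a constant-chasing exercise rather than a conceptual hurdle. A minor additional point: for non-planar $H$ one would instead be in the setting of Theorem~\ref{maincpt} and the $o(1)$ conclusion there supersedes this, so the statement here is implicitly about planar $H$, consistent with the hypothesis. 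I would therefore present the proof as a short reduction to~\cite{dowejc}, spelling out only the substitution of Lemma~\ref{add3} for the planar addability fact and of Lemma~\ref{mainapps} for the planar appearance count.
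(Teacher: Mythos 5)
Your lower bound matches the paper exactly: delete the cut-edge from an appearance of $H$ (Lemma~\ref{mainapps}), reinsert a $g$-addable edge elsewhere (Lemma~\ref{add3}), and follow the bookkeeping of Theorem~8 of~\cite{dowejc}. That part is fine.

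Your upper bound, however, takes a genuinely different route from the paper, and it is the less clean of the two. The paper simply observes that the upper bound is an immediate consequence of Theorem~\ref{conn0}: since $\liminf \mathbb{P}[S_g(n,m)\text{ connected}] > 0$ and, for $n > |H|$, a connected graph has no component isomorphic to $H$, one gets $\limsup \mathbb{P}^H_{cpt} \leq 1 - \liminf \mathbb{P}[\text{connected}] < 1$ with no further work. This is uniform over all connected planar $H$ and avoids casework. Your proposed direct double-counting (delete an edge from the $H$-component, reattach via a $g$-addable edge) is in the right spirit --- it is essentially the engine behind Theorem~\ref{conn0} itself --- but as sketched it has a gap: the constructed graph need not be $H$-component-free (the original may have had several such components), so you cannot count into the complement of $\mathcal{G}_n$. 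You then fall back on ``each target graph is hit $O(1)$ or $O(n)$ times,'' but with $\Omega(n)$ constructions and potentially $O(n)$ overcounts you get only $|\mathcal{G}_n|/|\mathcal{S}^g(n,m)| \leq B_0/A_0$, and whether that is strictly below $1$ is precisely the constant-chasing you defer. The paper's reduction to Theorem~\ref{conn0} sidesteps all of this; I would recommend adopting it rather than re-deriving the bound case by case. (Your final remark about non-planar $H$ is correct but moot, since the hypothesis already restricts to planar $H$.)
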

\begin{sketch}
For the lower bound,
we may delete the cut-edge from an appearance of $H$
and insert an edge elsewhere in the graph,
applying Lemma~\ref{mainapps} on the number of appearances
and Lemma~\ref{add3} on the number of $g$-addable edges
(see Theorem 8 of~\cite{dowejc} for details of a proof for the planar case).
The upper bound will follow from Theorem~\ref{conn0}.
\qed
\end{sketch}

The probability is also bounded away from $1$ for $\liminf \frac{m}{n} > 1$,
due to the following result on the connectivity of $S_{g}(n,m)$:

\begin{Theorem} \label{conn0}
Let $g \geq 0$ be a constant,
and let $m=m(n)$ satisfy $\liminf \frac{m}{n} > 1$.
Then
\begin{displaymath}
\liminf \mathbb{P}
[S_{g}(n,m) \textrm{ will be connected}] > 0.
\end{displaymath}
\end{Theorem}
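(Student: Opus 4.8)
The plan is to bound below the number of connected graphs in $\mathcal{S}^{g}(n,m)$ by a constant fraction of $|\mathcal{S}^{g}(n,m)|$ using a double-counting argument, in the spirit of the proof of Theorem~\ref{maincpt}. Fix $g$ and $m=m(n)$ with $\liminf \tfrac{m}{n}>1$; we may assume $\limsup\tfrac{m}{n}<3$ as well, since the range $\tfrac{m}{n}\to 3$ will be handled separately (and there connectivity is easy: a graph that dense on $[n]$ is forced to have a giant component, and in fact one expects connectivity to hold whp there). So assume $1<\liminf\tfrac{m}{n}\le\limsup\tfrac{m}{n}<3$. The idea is to start from an arbitrary graph $G\in\mathcal{S}^{g}(n,m)$, repeatedly insert an edge between two distinct components and simultaneously delete a pendant edge (to keep the edge count fixed), until the graph becomes connected, and to show this can be organised so that the resulting connected graph determines $G$ up to boundedly many choices.

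First I would set up the ingredients. By Lemma~\ref{mainapps} (with $H$ a single vertex, or directly Lemma~\ref{pendant}) there exist $\alpha>0$ and $N$ such that with probability $>1-e^{-\alpha n}$ the random graph $S_{g}(n,m)$ has at least $\alpha n$ pendant edges; by Lemma~\ref{fewisolated}, whp $S_{g}(n,m)$ has $o(n)$ components. Let $\mathcal{G}_{n}$ be the set of graphs in $\mathcal{S}^{g}(n,m)$ that have at least $\alpha n$ pendant edges and at most $\delta n$ components, where $\delta=\delta(n)\to 0$ is chosen so that these two events together have probability $\ge 1-o(1)$. It suffices to show $|\mathcal{G}_{n}\cap\{\text{disconnected}\}|/|\mathcal{S}^{g}(n,m)|\to 0$, since then the connected graphs form a positive-fraction (in fact $1-o(1)$) subset. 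Now for each disconnected $G\in\mathcal{G}_{n}$, let $k\le\delta n$ be its number of components; perform a sequence of $k-1$ ``merge'' operations, each consisting of inserting one edge between two current components — legal for the genus, since inter-component edges are always $g$-addable — and deleting one pendant edge to compensate. A mild point: one must keep $\alpha n - O(\delta n)$ pendant edges available throughout, which holds since each deletion removes at most one pendant edge and creates an isolated vertex, and $k=o(n)$; one should also insert the merging edge at a vertex that is not the pendant vertex just created, which loses only a bounded number of choices. The result is a connected graph $G'\in\mathcal{S}^{g}(n,m)$.

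To count constructions: from a fixed connected $G'$, how many pairs $(G,\text{sequence})$ produce it? Running the process backwards from $G'$ amounts to choosing, for each of the $k-1$ reverse steps, which edge of $G'$ to delete (at most $m\le 3n$ choices) and where to re-attach a pendant edge (at most $n$ choices). That gives at most $(3n^{2})^{k-1}$ reconstructions — far too many to beat the $(n^{2}/2)^{k-1}$-ish lower bound on the number of forward constructions from a single $G$, if $k$ grows. This is the main obstacle: a naive merge-and-compensate argument only gives a constant lower bound on $\liminf\mathbb{P}[\text{connected}]$ when the number of components is bounded, not merely $o(n)$. The fix I would pursue is to do the merging in one shot rather than step by step: from $G$, choose a single spanning structure on the component set — e.g. pick the smallest-labelled vertex $v_i$ in each component and add the $k-1$ edges of a fixed ``star'' joining all $v_i$ to $v_1$ (or a path through them in label order), while deleting $k-1$ pendant edges all at once. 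Then the reconstruction of $G$ from $G'$ is essentially forced: the re-attachment targets are determined (the $k-1$ newly isolated vertices, readable off as low-degree vertices, with $\binom{?}{k-1}$-type ambiguity bounded by a factor like $\binom{n}{k-1}$), and the deleted ``spanning'' edges form a very restricted set. Balancing $\binom{n}{k-1}\cdot(\text{poly})^{k-1}$ against $\big(\tfrac{n^2}{2}\big)^{k-1}/(k-1)!$ — the number of ways to have chosen the $k-1$ compensating pendant edges and their endpoints in $G$ — the factorials and the $n^{2}$ versus $n\cdot\mathrm{poly}$ gap should win, yielding $|\mathcal{G}_{n}\cap\{\text{disconnected}\}| = o(|\mathcal{S}^{g}(n,m)|)$; hence $\mathbb{P}[S_{g}(n,m)\text{ connected}]\to 1$, which is stronger than claimed. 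If that bookkeeping turns out to be delicate, a safe fallback is the weaker but sufficient route: condition on the (whp) event that the number of components is $o(n)$, split off the ``few isolated vertices plus one giant-ish remainder'' structure, and merge only a bounded expected number of components per unit probability mass — enough to get $\liminf\mathbb{P}[\text{connected}]>0$ as stated. I expect the counting of reconstructions in the one-shot merge to be the step requiring the most care; everything else (genus-legality of inter-component edges, availability of pendant edges, the whp bound on the number of components) is supplied directly by the lemmas already proved.
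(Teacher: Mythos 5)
There is a fatal flaw in your proposal: the ``merge-and-compensate'' operation you describe does not actually reduce the number of components. Inserting one edge between two components decreases the component count by one, but deleting a pendant edge always creates a new isolated vertex, which increases the component count by one. The net effect of one merge-and-compensate step is therefore \emph{zero}: the graph has exactly as many components afterwards as before. You already half-notice this (``each deletion \dots creates an isolated vertex''), but you do not follow the observation to its conclusion. Your one-shot variant has the same problem: adding $k-1$ spanning edges and deleting $k-1$ pendant edges produces $k-1$ fresh isolated vertices, so the resulting graph again has $k$ components, not $1$. No amount of clever bookkeeping of the double-counting will rescue the scheme, because the graphs $G'$ you construct are never connected.

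The paper avoids exactly this trap by compensating with a \emph{non}-cut-edge rather than a pendant edge. Lemma~\ref{mainapps} guarantees whp $\Omega(n)$ vertex-disjoint appearances of a fixed planar graph $H$ (say a cycle), and deleting a non-cut-edge inside such an appearance does not disconnect anything, so the subsequent insertion of an inter-component edge genuinely lowers the component count. This is the mechanism in Lemma 42 of~\cite{dow}, which the paper invokes (together with Lemma~\ref{fewisolated} to control the double-counting). Your plan substitutes Lemma~\ref{pendant} for Lemma~\ref{mainapps}, and that substitution is precisely what breaks the argument.

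A second, independent error: you speculate that the bookkeeping might yield $\mathbb{P}[S_{g}(n,m)\ \text{connected}]\to 1$, ``which is stronger than claimed.'' This conclusion is false in the range $1<\liminf\frac{m}{n}\le\limsup\frac{m}{n}<3$. Taking $H=K_{1}$ in Theorem~\ref{cpt1to3} (an isolated vertex is a connected planar graph), one sees $\liminf\mathbb{P}[S_{g}(n,m)\ \text{has an isolated vertex}]>0$, so $\mathbb{P}[S_{g}(n,m)\ \text{connected}]$ is bounded \emph{away} from $1$ here. The statement $\liminf\mathbb{P}[\text{connected}]>0$ is the best one can hope for in this regime; any proof strategy that would deliver convergence to $1$ must be wrong for that reason alone.
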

\begin{sketch}
The case when $\frac{m}{n} \to 3$ will follow from Theorem~\ref{conn1}.
For the case when $\limsup \frac{m}{n} < 3$,
we may use the proof of Lemma 42 of~\cite{dow},
which involves deleting a non-cut-edge from an appearance of a given planar graph (applying Lemma~\ref{mainapps}),
and inserting an edge between two components
(using Lemma~\ref{fewisolated} to bound the amount of double-counting).
\qed
\end{sketch}

Finally, if $\frac{m}{n} \to 3$,
we actually find that $S_{g}(n,m)$ is connected whp:

\begin{Theorem} \label{conn1}
Let $g \geq 0$ be a constant,
and let $m=m(n)$ satisfy $\frac{m}{n} \to 3$ as $n \to \infty$.
Then
\begin{displaymath}
\mathbb{P}
[S_{g}(n,m) \textrm{ will be connected}] \to 1
\textrm{ as } n \to \infty.
\end{displaymath}
\end{Theorem}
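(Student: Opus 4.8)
The plan is to show that $S_g(n,m)$ is \emph{dis}connected with probability $o(1)$, via a double count in which a disconnected graph with many triangulated appearances is turned into many graphs in $\mathcal{S}^g(n,m)$: we delete an edge from a triangulated appearance and re‑use it to glue two components together. Since $m/n\to 3$ gives $\liminf\frac mn=3>1$, Lemma~\ref{main6apps} applies with the fixed planar triangulation $T=K_4$: there are $\alpha>0$ and $N$ with $\mathbb P[S_g(n,m)\text{ has }\geq\alpha n\text{ totally edge-disjoint triangulated appearances of }K_4]>1-e^{-\alpha n}$ for $n\geq N$. The second ingredient is a purely extremal fact: \emph{every} $G\in\mathcal{S}^g(n,m)$ has at most $h(n)$ bridges (cut-edges), where $h(n)=O(3n-m)+O_g(1)$, which is $o(n)$ under our hypothesis. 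This follows from a bridge-block analysis: the maximal $2$-edge-connected subgraphs are pairwise vertex-disjoint (say $r$ of them, spanning $P$ vertices, each on at least $3$ vertices), contracting each to a point turns the bridge set into a forest on at most $r+(n-P)$ vertices so the number of bridges is at most $r+(n-P)$, and the edge count $m\leq(3P-6r+6g)+(\text{number of bridges})$ then forces both $r=o(n)$ and $n-P=o(n)$ whenever $3n-m=o(n)$.

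For the main argument, let $\mathcal{G}_n\subseteq\mathcal{S}^g(n,m)$ be the set of disconnected graphs having at least $\alpha n$ totally edge-disjoint triangulated appearances of $K_4$; since every disconnected graph either lies in $\mathcal{G}_n$ or has fewer than $\alpha n$ such appearances, the two ingredients above reduce the theorem to showing $\frac{|\mathcal{G}_n|}{|\mathcal{S}^g(n,m)|}\to 0$. Given $G\in\mathcal{G}_n$: pick one of the $\geq\alpha n$ triangulated appearances, at $W$ say; delete an internal edge $e_{\mathrm{del}}$ of $G[W]\cong K_4$ — the component containing $W$ stays connected because $K_4$ is $3$-connected, so $G-e_{\mathrm{del}}$ still has (at least) the two components that $G$ had; then add an edge $f$ between two of these components. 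The resulting $G'=G-e_{\mathrm{del}}+f$ lies in $\mathcal{S}^g(n,m)$, as deleting an edge and then adding an edge between components cannot raise the genus. There are $\geq\alpha n$ choices for the appearance, a bounded number of choices for $e_{\mathrm{del}}$, and — since a disconnected graph on $[n]$ always has at least $n-1$ pairs of vertices lying in different components — at least $n-1$ choices for $f$; hence each $G\in\mathcal{G}_n$ yields $\Omega(n^2)$ graphs (with multiplicity).

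To bound the overcount, suppose $G'=G-e_{\mathrm{del}}+f$ arose this way. Then $f$ joins two components of $G-e_{\mathrm{del}}$, so $G'-f$ has the endpoints of $f$ in distinct components, i.e.\ $f$ is a \emph{bridge} of $G'$, leaving at most $h(n)=o(n)$ possibilities for $f$. Next, in $G'$ every vertex of $W$ has degree at most $\deg_G(\cdot)+1\leq 6$, and $G'[W]\cong K_4-e_{\mathrm{del}}$ is connected, so $W$ is a connected $4$-set of vertices of $G'$-degree at most $6$, and there are at most $n\cdot 6^3=O(n)$ such sets; finally $e_{\mathrm{del}}$ is the unique non-edge inside $W$ whose addition turns $K_4-e_{\mathrm{del}}$ back into $K_4$, so it is determined (at worst $O(1)$ choices). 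Hence each $G'\in\mathcal{S}^g(n,m)$ is produced at most $o(n)\cdot O(n)\cdot O(1)=o(n^2)$ times, and comparing the two counts gives $|\mathcal{G}_n|\cdot\Omega(n^2)\leq|\mathcal{S}^g(n,m)|\cdot o(n^2)$, so $\frac{|\mathcal{G}_n|}{|\mathcal{S}^g(n,m)|}=o(1)$, completing the proof.

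The delicate point — and the one I would expect to be the main obstacle — is the extremal bound that graphs in $\mathcal{S}^g(n,m)$ with $m/n\to3$ have only $o(n)$ bridges: without it, recovering the glue-edge $f$ would cost a factor $\Theta(n)$, exactly matching the $\Omega(n^2)$ constructions and leaving the double count inconclusive. (In the planar case one can instead stay connected throughout by a different route, but in positive genus controlling the bridge count seems to be the cleanest way to make the argument go through.)
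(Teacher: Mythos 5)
Your proposal is correct, and its double-counting skeleton is the same as the paper's: delete an edge from a small, rigidly recognisable structure, re-use the edge budget to join two components, and then bound the overcount by identifying the inserted bridge (using the deterministic fact that every graph in $\mathcal{S}^g(n,m)$ has $o(n)$ cut-edges when $m/n\to 3$ --- your bridge-block derivation of this is essentially identical to the paper's) and the location of the modified structure. The genuine difference is in the \emph{choice of structure} and the lemma supplying it. The paper works with ``good'' facial triangles (triangles containing a vertex of degree at most~$6$): extending $G$ to a triangulation with $o(n)$ phantom edges and counting degrees shows \emph{deterministically} that every $G\in\mathcal{S}^g(n,m)$ has at least $\frac{n}{21}+o(n)$ such triangles, and deleting the side opposite the low-degree vertex leaves it identifiable in the target graph. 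You instead invoke Lemma~\ref{main6apps} with $T=K_4$ to get $\Omega(n)$ totally edge-disjoint triangulated appearances of $K_4$ whp, delete an internal edge of one, and recover $W$ as a connected $4$-set of vertices of degree at most $6$. Both work. The paper's version is more self-contained for this theorem --- an elementary Euler-formula argument rather than the heavy Lemma~\ref{main6apps} --- and is deterministic, so it avoids the extra conditioning step you need (namely, splitting off the exceptional event that there are few triangulated appearances). Since Lemma~\ref{main6apps} is proved in the paper anyway, your route incurs no real cost; its only mildly awkward point is that the low-degree bound you use to identify $W$ needs to be stated in $G'$ (degree $\leq 6$ after the possible $+1$ from the inserted bridge) rather than in $G$, which you handle correctly.
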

\begin{proof}
We may employ the method of proof of the $g=0$ case from Theorem 14 of~\cite{dowejc}
(with some small differences).
This involves first establishing that (when $\frac{m}{n} \to 3$)
every graph in $\mathcal{S}^{g}(n,m)$ must have
(i)~$\Omega (n)$ triangles containing a vertex with degree at most $6$
and (ii)~$o(n)$ cut-edges.

Hence, let $G \in \mathcal{S}^{g}(n,m)$, and let us start by considering how many triangles in $G$ contain at least one vertex with degree at most $6$.

First, note that (assuming $n \geq 3$)
$G$ may be extended to a triangulation of genus $g$ by inserting $3n-6+6g-m = o(n)$ `phantom' edges
(observe that such a triangulation may be a multi-graph, rather than a simple graph).
Let $d_{i}$ denote the number of vertices of degree $i$ in such a triangulation.
Then 
\begin{eqnarray*}
7 \sum_{i \geq 7} d_{i} & \leq & \sum_{i \geq 1} i d_{i} \\
& = & 2(3n-6+6g).
\end{eqnarray*}
Thus, $\sum_{i \geq 7} d_{i} \leq \frac{6n-12+12g}{7}$, and so $\sum_{i \leq 6} d_{i} \geq \frac{n+12-12g}{7}$.

Let us call a triangle `good' if it contains a vertex with degree at most $6$.
Since each (triangular) face contains at most three such vertices,
we find that our triangulation must have at least 
$\frac{n+12-12g}{21}$
faces that are good triangles.
Note that each of our $o(n)$ phantom edges is in at most two faces of the triangulation,
and so our original graph $G$ must also contain at least $\frac{n}{21} + o(n)$ of these good triangles
(note that these triangles will still be `good' in $G$,
since the degrees of the vertices will be at most what they were in the triangulation).

We will now consider how many cut-edges a graph in $\mathcal{S}^{g}(n,m)$ may have.
If we delete all $c$ cut-edges,
then the remaining graph will consist of $b$, say, components,
each of which is either $2$-edge-connected or is an isolated vertex.
Note that the graph formed by condensing each of these components to a single node and re-inserting the cut-edges must be acyclic,
so $c \leq b-1$.
Label these components $1,2, \ldots b$, let $n_{i}$ denote the number of vertices in component $i$,
and let $g_{i}$ denote the genus of component $i$
(note that the overall genus is equal to the sum of the genera of these components).
Observe that the number of edges in component $i$ is at most $3n_{i}-6+6g_{i}$ if $n_{i} \geq 3$ and is $0=3n_{i}-3$ otherwise
(since $n_{i} < 3$ implies that $n_{i} = 1$).
Thus, 
since $\max \{ 3n_{i} - 6 + 6g_{i}, 3 n_{i} - 3 \}
\leq 3n_{i} - 3 + 6g_{i}$,
we have
\begin{eqnarray*}
m & \leq & \sum_{i=1}^{b} (3n_{i}-3+6g_{i}) + c \\
& \leq & 3n-3b+6g+c \\
& < & 3n+6g-2c,
\end{eqnarray*}
and so $c < \frac{3n-m+6g}{2} = o(n)$.

We now come to the main part of the proof.
Let $\mathcal{G}_{n}$ denote the set of graphs in $\mathcal{S}^{g}(n,m)$ that are not connected,
and choose a graph $G \in \mathcal{G}_{n}$.
Choose a good triangle $uvw$ in $G$ 
with $\deg(u) \leq 6$
(at least $\frac{n}{21} + o(n)$ choices),
and delete the edge $vw$.
Then insert an edge between two vertices in different components
--- we have $a$, say, choices for this edge.

Note that
the number of possible edges between disjoint sets $X$ and $Y$ is $|X||Y|$, and
if $|X| \leq |Y|$ then $|X||Y| > (|X|-1)(|Y|+1)$.
Hence, it follows that the number of choices for the edge to insert is minimised when
we have one isolated vertex and
one component of $n-1$ vertices,
and so $a \geq n-1$.
Thus,
we find that we can construct at least
$|\mathcal{G}_{n}| \left( \frac{n}{21} + o(n) \right) (n-1)$
(not necessarily distinct) graphs in $\mathcal{S}^{g}(n,m)$.

Let us now consider the amount of double-counting.
Recall that we have shown that \emph{every} graph in $\mathcal{S}^{g}(n,m)$ has $o(n)$ cut-edges.
Hence, given one of our constructed graphs,
there are at most $o(n)$ possibilities for which edge was inserted,
since it must be a cut-edge.
There are then at most $\left( ^{6}_{2} \right)n$ possibilities for where the deleted edge was originally,
since it must have been between two neighbours of a vertex with degree at most $6$
(we have at most $n$ possibilities for this vertex, and then
at most $\left( ^{6}_{2} \right)$ possibilities for its neighbours).
Thus, we find that we have built each graph at most $o(n^{2})$ times.

Hence,
the number of distinct graphs (in $\mathcal{S}^{g}(n,m)$)
that we have constructed must be at least
$\frac{\frac{1}{21}n^{2} + o \left( n^{2} \right)}{ o \left( n^{2} \right)}
|\mathcal{G}_{n}|$,
and so
$\frac{|\mathcal{G}_{n}|}{|\mathcal{S}^{g}(n,m)|} \leq \frac{o \left( n^{2} \right)}{\Theta \left( n^{2} \right)}
\to 0$ as $n \to \infty$.
\end{proof}

\section{Subgraphs: proof of Theorem~\ref{mainsub}} \label{subsect}

In this section,
we look at the probability that $S_{g}(n,m)$ will have a copy of $H$
(i.e.~a \emph{subgraph} isomorphic to $H$),
for various fixed $H$.
We shall let $S_{H}(S_{g}(n,m))$ denote
the size of the largest set of vertex-disjoint copies of $H$ in $S_{g}(n,m)$,
and we shall let
\begin{displaymath}
\mathbb{P}^{H}_{sub} :=
\mathbb{P}[S_{H}(S_{g}(n,m)) \geq 1],
\end{displaymath}
i.e.~the probability 
that $S_{g}(n,m)$ will have at least one copy of $H$.

The main difference to the results on components is that
we find there is a copy of any given planar graph 
whp when $\liminf \frac{m}{n} > 1$,
as stated in Theorem~\ref{mainsub}.
In Table~\ref{subtab},
we again provide a summary.

\begin{table} [ht]
\centering
\caption{
A summary of 
$\mathbb{P}^{H}_{sub} :=
\mathbb{P}[S_{g}(n,m) \textrm{ will have a copy of } H]$.
} 
\label{subtab}
{\renewcommand{\arraystretch}{1.5}
\begin{tabular}{|c||c|c|c|c|}
\hline
&
\textrm{\small{$e(H)<|H|$}} &
\textrm{\small{$e(H)=|H|$}} &
\multicolumn{2}{|c|}{
\textrm{\small{$e(H) > |H|$}}
} \\
\cline{4-5}
&
&
&
\textrm{\small{$H$ planar}} &
\textrm{\small{$H$ non-planar}} \\
\hline
\hline
\textrm{\small{$0 < \liminf \frac{m}{n}$}} &
&
\textrm{\small{$\liminf \mathbb{P}^{H}_{sub} > 0$}} &
\multicolumn{2}{|c|}
{\small{$\mathbb{P}^{H}_{sub} \to 0$}} 
\\
\textrm{\small{\& $\limsup \frac{m}{n} < 1$}} &
&
\textrm{\small{$\liminf \mathbb{P}^{H}_{sub} < 1$}} &
\multicolumn{2}{|c|}
{\textrm{\small{(Thm.~\ref{multisub})}}} 
\\
&
\small{$\mathbb{P}^{H}_{sub} \to 1$}  &
\small{(Thms.~\ref{unicpt} \&~\ref{unisubto1})} &
\multicolumn{2}{|c|}{}
\\
\cline{1-1}
\cline{3-5}
\textrm{\small{$m \geq n - o(n)$ \&}} &
\small{(Thm.~\ref{cpttree})} &
&
&
\\
\textrm{\small{ $m \leq n + o \left( \frac{n}{(\log n)^{2/3}} \right)$}} &
&
\small{$\mathbb{P}^{H}_{sub} \to 1$} &
\small{unknown} &
\\
\cline{1-2}
\textrm{\small{$m \geq n + \Omega \left( \frac{n}{(\log n)^{2/3}} \right)$}} &
\textrm{\small{unknown}} &
\small{(Thm.~\ref{unisubat1})} &
&
\small{unknown} \\
\textrm{\small{\& $m \leq n + o(n)$}} &
&
&
&
\\
\cline{1-4}
\small{$\liminf \frac{m}{n} > 1$} &
\multicolumn{3}{|c|}
{\small{$\mathbb{P}^{H}_{sub} \to 1$ (Thm.~\ref{mainsub}) }} &
\\
\hline
\end{tabular}}
\end{table}

We start with a stronger result than specified in Theorem~\ref{mainsub},
namely that whp there are actually linearly many vertex-disjoint copies
of any given planar graph $H$ if $\liminf \frac{m}{n} > 1$
(note that this follows immediately from Lemma~\ref{main6apps},
by taking $T$ to be any planar triangulation containing a copy of $H$):

\begin{Theorem} \label{sub1to3}
Let $H$ be a (fixed) connected planar graph,
let $g \geq 0$ be a constant,
and let $m=m(n)$ satisfy 
$\liminf \frac{m}{n} > 1$.
Then there exist $\alpha > 0$ and $N$ such that
\begin{displaymath}
\mathbb{P}[S_{H}(S_{g}(n,m)) \geq \alpha n] 
> 1 - e^{- \alpha n} 
\textrm{ for all } n \geq N. 
\end{displaymath} 
\end{Theorem}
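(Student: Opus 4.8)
The plan is to deduce this directly from Lemma~\ref{main6apps}. First I would fix, once and for all, a planar triangulation $T$ that contains $H$ as a subgraph; such a $T$ exists because every planar graph can be extended to a planar triangulation by adding finitely many vertices and edges (for instance, take any plane embedding of $H$ and triangulate its faces, adding a vertex inside a face and joining it to the boundary whenever necessary). Here $|T|$ is a constant depending only on $H$, and the key point is that any graph isomorphic to $T$ automatically contains a copy of $H$.

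Next I would invoke Lemma~\ref{main6apps} with this fixed triangulation $T$: the hypothesis $\liminf \frac{m}{n} > 1$ is precisely the one required there (and no upper bound on $\frac{m}{n}$ is needed, since $m \leq 3n-6+6g$ holds by standing assumption). This yields constants $\alpha > 0$ and $N$ such that, for all $n \geq N$, with probability greater than $1 - e^{-\alpha n}$ the random graph $S_{g}(n,m)$ has at least $\alpha n$ totally edge-disjoint triangulated appearances of $T$. (If needed, I would enlarge $N$ so that $n > |T|$, which is required for appearances to be defined.)

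Finally I would translate triangulated appearances of $T$ into vertex-disjoint copies of $H$. If $T$ has a triangulated appearance at $W \subseteq V(S_{g}(n,m))$, then by Definition~\ref{6apps} the induced subgraph $S_{g}(n,m)[W]$ is isomorphic to $T$ and hence contains a copy of $H$; pick one such copy, say $H_{W}$, with $V(H_{W}) \subseteq W$. If $W_{1}, \dots, W_{k}$ with $k \geq \alpha n$ are the vertex sets of $k$ totally edge-disjoint triangulated appearances of $T$, then by the observation recorded after Definition~\ref{6apps} the sets $W_{1}, \dots, W_{k}$ are pairwise vertex-disjoint, so $H_{W_{1}}, \dots, H_{W_{k}}$ are pairwise vertex-disjoint copies of $H$. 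Hence $S_{H}(S_{g}(n,m)) \geq k \geq \alpha n$ on the event guaranteed by Lemma~\ref{main6apps}, and the theorem follows with the same $\alpha$ and $N$ (possibly after the harmless enlargement of $N$ above).

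There is no real obstacle here beyond Lemma~\ref{main6apps} itself: the only points requiring a word of care are the elementary fact that every planar graph embeds into some planar triangulation, and the already-noted implication that total edge-disjointness of triangulated appearances forces vertex-disjointness, so that the extracted copies of $H$ are genuinely vertex-disjoint. All of the genuine difficulty is deferred to the proof of Lemma~\ref{main6apps} in Section~\ref{6appsection}.
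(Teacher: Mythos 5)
Your proposal is correct and matches the paper's own (very brief) justification exactly: the paper notes just before the theorem statement that it follows immediately from Lemma~\ref{main6apps} by taking $T$ to be any planar triangulation containing a copy of $H$, which is precisely your argument. The extra care you take in spelling out why totally edge-disjoint triangulated appearances yield vertex-disjoint copies of $H$ is exactly the observation recorded after Definition~\ref{6apps}, so nothing is missing.
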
 
\qed

The other results in Table~\ref{subtab} 
follow either directly from Table~\ref{cpttab}
(note that the existence of a component isomorphic to $H$ 
implies the existence of a copy of $H$)
or from the same proofs as for the random planar graph 
(see~\cite{dow,dowejc}),
and so we shall again just state these theorems 
without providing full details of the proofs.

If $H$ is multicyclic
(i.e.~$e(H)>|H|$),
we find that whp we have no copies of $H$ when $\limsup \frac{m}{n} < 1$:

\begin{Theorem} \label{multisub}
Let $H$ be a (fixed) connected multicyclic graph,
let $g \geq 0$ be a constant,
and let $m=m(n)$ satisfy $\limsup \frac{m}{n} < 1$.
Then
\begin{displaymath}
\mathbb{P}^{H}_{sub} \to 0
\textrm{ as } n \to \infty.
\end{displaymath}
\end{Theorem}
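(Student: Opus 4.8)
This can be obtained by the same double-counting argument as for the planar case in~\cite{dow,dowejc}, so I only outline the plan. The idea is to compare $\mathcal{G}_{n}$, the set of graphs in $\mathcal{S}^{g}(n,m)$ that have a copy of $H$, against all of $\mathcal{S}^{g}(n,m)$, in the spirit of Subsection~\ref{techniques}. Two facts drive the estimate: (i) since $\limsup \frac{m}{n} < 1$, every graph in $\mathcal{S}^{g}(n,m)$ has at least $n-m = \Omega(n)$ components, hence $\Omega(n^{2})$ non-edges joining two distinct components, and inserting any of these keeps the genus at most $g$; and (ii) a copy of the multicyclic graph $H$ occupies only $|H|$ vertices but uses $e(H) \geq |H|+1$ edges, so removing it frees strictly more edge-budget than is needed to record where it was.

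First I would, for each $G \in \mathcal{G}_{n}$, fix a canonical copy of $H$ (say the lexicographically smallest one, ordering first by vertex set and then by edge set), with vertex set $U$ (so $|U| = |H|$) and edge set $F_{0}$ (so $|F_{0}| = e(H)$). Delete all edges of $F_{0}$, and then insert $e(H)$ new edges, one at a time, each joining two vertices that lie in different components of the current graph and are not both in $U$. Only a constant number of edges have been deleted and only a constant number of component-mergers performed, so there are still $\Omega(n)$ components at each step, hence $\Omega(n^{2})$ choices for each inserted edge; thus each $G \in \mathcal{G}_{n}$ yields at least $\Omega(n^{2e(H)})$ graphs, counted with multiplicity, in $\mathcal{S}^{g}(n,m)$.

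Next I would bound the double-counting. Given a constructed graph $G'$, any pre-image is recovered by choosing the set $U$ (at most $\binom{n}{|H|}$ ways); choosing which $e(H)$ edges of $G'$ were the inserted ones, which must be bridges of $G'$ not lying inside $U$ --- as $G'$ has at most $n-1$ bridges, this is $O(n^{e(H)})$ ways; deleting them; and finally re-inserting some subset of $\binom{U}{2}$ as $F_{0}$, at most $2^{\binom{|H|}{2}} = O(1)$ ways. Hence $G'$ is constructed at most $\binom{n}{|H|}\cdot O(n^{e(H)})\cdot O(1) = O(n^{|H|+e(H)})$ times, and since at most $|\mathcal{S}^{g}(n,m)|$ distinct graphs are constructed,
\[
\frac{|\mathcal{G}_{n}|}{|\mathcal{S}^{g}(n,m)|}
\;\leq\; \frac{O\!\left(n^{|H|+e(H)}\right)}{\Omega\!\left(n^{2e(H)}\right)}
\;=\; O\!\left(n^{|H|-e(H)}\right)
\;=\; O\!\left(n^{-1}\right) \to 0,
\]
using that $e(H) \geq |H|+1$ because $H$ is multicyclic; this proves the claim.

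The main obstacle is precisely the balance in this calculation, and it is where the multicyclic hypothesis is essential: relocating the copy of $H$ is worth a factor $n^{2e(H)}$ in the forward direction, whereas pinning it down again costs about $n^{|H|}$ for its vertex set and another $n^{e(H)}$ for the inserted edges (which can only be identified as ``some $e(H)$ of the at most $n-1$ bridges''), and these cancel to $O(n^{|H|-e(H)})$, which vanishes exactly because $H$ has more edges than vertices. The routine points to verify are that no inserted edge duplicates an existing edge or a deleted one (ensured by always inserting between distinct components, and never inside $U$), and that every constructed graph really lies in $\mathcal{S}^{g}(n,m)$ (it is obtained from the subgraph $G - F_{0}$ by repeatedly adding edges between components, so its genus stays at most $g$); note finally that there is no need to pass to induced copies of $H$, since $U$ and $F_{0}$ are recorded explicitly.
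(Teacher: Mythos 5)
Your proof is correct and follows essentially the same route as the paper's (which invokes Theorem~22 of~\cite{dowejc}): delete the $e(H)$ edges of a copy of $H$, reinsert them as cross-component (hence $g$-addable and bridge) edges, and balance $\Omega(n^{2e(H)})$ forward choices against the $O(n^{|H|})$ locations for $U$ times the $O(n^{e(H)})$ bridge choices, which vanishes precisely because $e(H)>|H|$. Your specialisation to cross-component insertions (rather than arbitrary $g$-addable non-edges, which is what Lemma~\ref{add1} counts) is a clean way to guarantee both that the genus never increases during the iterated insertion and that the inserted edges are identifiable as bridges in the reverse direction, but it is the same underlying argument.
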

\begin{sketch}
We may apply the proof of Theorem 22 of~\cite{dowejc}.
This involves deleting all $e(H)$ edges from a copy of $H$
and inserting them elsewhere in the graph
(using Lemma~\ref{add1} on the number of $g$-addable edges).
The double-counting is then limited by the fact that the original site of the copy had only $|H| < e(H)$ vertices.
\qed
\end{sketch}

If $H$ is unicyclic
(i.e.~$e(H)=|H|$),
we already know (from Theorem~\ref{unicpt})
that such a result would not be true.
However,
we do find that the probability is bounded away from $1$ 
when $\limsup \frac{m}{n} < 1$:

\begin{Theorem} \label{unisubto1}
Let $H$ be a (fixed) connected unicyclic graph,
let $g \geq 0$ be a constant,
and let $m=m(n)$ satisfy $\limsup \frac{m}{n} < 1$.
Then
\begin{displaymath}
\limsup \mathbb{P}^{H}_{sub} < 1.
\end{displaymath}
\end{Theorem}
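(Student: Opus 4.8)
The plan is a counting argument in the spirit of the earlier results and of~\cite{dow,dowejc}. Since $H$ is unicyclic, every copy of $H$ contains a copy of its unique cycle $C_k$, so $\mathbb P^H_{sub}\le\mathbb P^{C_k}_{sub}$ and it is enough to prove the statement with $H$ replaced by $C_k$; assume this reduction has been made. Writing $N=N(S_g(n,m))$ for the number of copies of $C_k$, the goal becomes $\liminf\mathbb P[N=0]>0$, equivalently $\limsup\mathbb P[N\ge 1]<1$.

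First I would compute the factorial moments $\mathbb E[(N)_r]$ for each fixed $r$ and deduce that $N$ is asymptotically Poisson with a bounded parameter. Fix pairwise vertex-disjoint copies $c_1,\dots,c_r$ of $C_k$ on prescribed vertex sets. A graph in $\mathcal S^g(n,m)$ containing all of them arises from a graph in $\mathcal S^g(n,m-rk)$ --- one in which the $rk$ prescribed edges happen to be absent, which holds for almost all of them since these graphs are very sparse --- by inserting those $rk$ edges, and conversely any graph in $\mathcal S^g(n,m)$ containing the $c_i$ yields a graph in $\mathcal S^g(n,m-rk)$ upon deleting the $rk$ edges of $c_1\cup\dots\cup c_r$; using also that inserting $O(1)$ edges into a graph with fewer than $n$ edges cannot raise the genus (Lemma~\ref{add1} and the abundance of inter-component slots), one finds that $\mathbb P[c_1,\dots,c_r\subseteq S_g(n,m)]$ is $(1+o(1))$ times an explicit quantity of order $(m/n^2)^{rk}$, which moreover factorises over $i$. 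Summing over the $\Theta(n^{kr})$ disjoint $r$-tuples and observing that tuples with overlapping copies contribute only a lower-order term (fewer vertices are involved, so there are asymptotically fewer of them), we get $\mathbb E[(N)_r]=(1+o(1))\lambda_n^{\,r}$ with $\lambda_n:=\mathbb E[N]$ of order $(m/n)^{k}$, which is bounded since $m/n$ is. Hence $N$ converges to a Poisson law of bounded parameter, so $\mathbb P[N=0]=(1+o(1))e^{-\lambda_n}$ is bounded away from $0$. (Throughout one restricts to graphs of maximum degree $O(\ln n)$, via Theorem~\ref{mainmaxdeg}, and with $\Omega(n)$ pendant edges, via Lemma~\ref{pendant}, at a cost of only $o(1)$, so that the auxiliary counts are well behaved.)

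The step I expect to be the main obstacle is the lower bound hidden in the deletion estimate: showing that almost every graph in $\mathcal S^g(n,m-rk)$ can indeed have the prescribed edges of $c_1,\dots,c_r$ added \emph{without the genus exceeding $g$}. The edges being inserted are dictated by the chosen copies rather than free, so one cannot simply invoke inter-component slots; one has to argue that, because $m<(1-\varepsilon)n$, the vertices underlying the $c_i$ typically lie in tree or unicyclic components (or can otherwise be realised inside a single face of a genus-$g$ embedding), so that creating the cycles costs no extra genus. This is precisely what separates the present case from the multicyclic one of Theorem~\ref{multisub}: there $e(H)>|H|$ makes the forward count beat the backward count by a polynomial factor and crude bounds suffice, whereas here $e(C_k)=|C_k|$ forces the two counts to the same polynomial order, so the conclusion rests on the leading constants --- equivalently, on $\lambda_n$ being a genuinely bounded quantity --- and hence on doing the insertions exactly. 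All of this is carried out for $g=0$ in~\cite{dowejc}, and since every operation used is genus-neutral it transfers to general $g$.
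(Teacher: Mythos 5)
Your reduction from $H$ to its unique cycle $C_k$ (via $\mathbb P^H_{sub}\le\mathbb P^{C_k}_{sub}$) is sound, but the main engine of your proof --- Poisson approximation via factorial moments --- is a genuinely different route from the paper's, and it contains a gap you cannot close with the tools available. To deduce that $N$ is asymptotically Poisson you need $\mathbb E[(N)_r]=(1+o(1))\lambda_n^r$, which in turn requires knowing $|\mathcal S^g(n,m-rk)|/|\mathcal S^g(n,m)|$ up to a factor $1+o(1)$. But in the regime $\limsup m/n<1$ the paper only has much coarser information: growth-constant results (Lemma~6.1, Theorem~\ref{newuniform}) are stated for $m/n\in(1,3)$, and what is available for $m<n$ (Lemma~\ref{pendant}, Lemma~\ref{add1}, Lemma~\ref{kmslemma}) yields at best $|\mathcal S^g(n,m)|/|\mathcal S^g(n,m-1)|=\Theta(n)$, an estimate with an undetermined multiplicative constant. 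You flag the genus-feasibility of the insertions as the ``main obstacle,'' but even granting that, the $(1+o(1))$ precision you assert for $\mathbb P[c_1,\dots,c_r\subseteq S_g(n,m)]$ is not established and is in fact the real obstacle: without a precise enumeration result for $\mathcal S^g(n,m)$ with $m/n$ bounded below $1$, the factorial moments cannot be pinned down beyond a $\Theta$ estimate, and $\Theta$ estimates do not yield Poisson convergence.

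The paper avoids this entirely. Its proof (following Theorem~18 of~\cite{dowejc}) is a two-stage double-counting: first it shows that having \emph{many} copies of $H$ is unlikely, by transferring the edges of a copy onto isolated vertices to build a \emph{component} isomorphic to $H$ and then invoking the upper bound $\limsup\mathbb P^H_{cpt}<1$ from Theorem~\ref{unicpt} to control the double-counting; second it destroys the (now boundedly many) copies one at a time, deleting their edges and re-inserting edges between components. Both stages only need the order-of-magnitude lemmas on pendant edges, isolated vertices, and $g$-addable edges --- never a $(1+o(1))$ count --- which is why the argument transfers to general genus. Your final sentence claiming that the Poisson computation ``is carried out for $g=0$ in~\cite{dowejc}'' is therefore not accurate: \cite{dowejc} carries out the swapping argument, not a moment computation, and it is precisely the swapping argument that sidesteps the missing precise asymptotics.
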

\begin{sketch}
We follow the proof of Theorem 18 of~\cite{dowejc}.
The first part of this proof involves showing that the probability of having many copies of $H$ is small, by transferring the edges of such a copy to some isolated vertices to construct a component isomorphic to $H$
(using Theorem~\ref{ntree} with $|H|=1$
to show that there are many isolated vertices,
and our upper bound on $\mathbb{P}^{H}_{cpt}$ for unicyclic $H$ from Theorem~\ref{unicpt} to bound the amount of double-counting).
The remainder of the proof then involves destroying copies of $H$ one-by-one by deleting the edges from them and inserting edges between components.
\qed
\end{sketch}

By contrast,
we find that the probability actually converges to $1$ 
when $\frac{m}{n} \to~1$:

\begin{Theorem} \label{unisubat1}
Let $H$ be a (fixed) connected unicyclic graph,
let $g \geq 0$ be a constant,
and let $m=m(n)$ satisfy $\frac{m}{n} \to 1$.
Then
\begin{displaymath}
\mathbb{P}^{H}_{sub} \to 1
\textrm{ as } n \to \infty.
\end{displaymath}
\end{Theorem}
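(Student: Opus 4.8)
The plan is to prove the statement by double counting, following the analogous planar argument in~\cite{dowejc}. Since the auxiliary results we invoke differ slightly, I would split $m/n\to 1$ into the two sub-ranges (a) $m\le n+o\!\left(\frac{n}{(\log n)^{2/3}}\right)$ and (b) $n+\Omega\!\left(\frac{n}{(\log n)^{2/3}}\right)\le m\le n+o(n)$, and treat them by the same scheme. As usual, it suffices to show that, for every fixed $\epsilon>0$, the probability that $S_g(n,m)$ has no copy of $H$ is below $\epsilon$ for all large $n$.

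Throughout, write $H=H^-+e^*$, where $H^-$ is a spanning tree of $H$ and $e^*=ab$ is the unique cycle-closing edge; the feature to exploit is $e(H)=|H|$, so that turning a copy of $H^-$ into a copy of $H$ costs exactly one edge, which we can pay for with pendant edges, of which Lemma~\ref{pendant} supplies $\Omega(n)$ whp (we are in the range $0<\liminf\frac{m}{n}\le\limsup\frac{m}{n}<3$). I would also record that $\liminf\frac{m}{n}=1$ lets us apply Lemma~\ref{fewisolated}, so whp there are $o(n)$ components and hence $o(n)$ isolated vertices; in sub-range (a) I would additionally use Theorem~\ref{cpttree} to get, whp, a component isomorphic to a prescribed tree $\tilde H$ built from $H^-$ (chosen so that a single edge deletion inside this component produces a copy of $H^-$), and Lemma~\ref{add2} to get $\omega(n)$ $g$-addable edges.

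Now fix $\epsilon>0$ and let $\mathcal{G}_n$ be the set of graphs in $\mathcal{S}^g(n,m)$ that contain no copy of $H$ but do satisfy all of the above high-probability features together with the bound ``fewer than $\epsilon n$ isolated vertices''; by the quoted lemmas it then suffices to prove $|\mathcal{G}_n|/|\mathcal{S}^g(n,m)|<\epsilon$ for all large $n$. From each $G\in\mathcal{G}_n$ I would construct many graphs of $\mathcal{S}^g(n,m)$ by, in sub-range (a): inside a canonically chosen $\tilde H$-component, deleting the appropriate edge and inserting $e^*$, which produces a copy of $H$ while leaving exactly one fresh isolated vertex — the genus does not increase, since a connected planar graph has genus $0$ — and then re-balancing the edge count and generating the required multiplicity by relocating pendant edges and inserting a $g$-addable edge. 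To bound the double counting one reverses these steps: since $G$ contains \emph{no} copy of $H$ at all, the copy of $H$ we created is essentially forced, the pair of altered edges inside the $\tilde H$-component is canonically recoverable from it, and the remaining freedom — which pendant edges were moved, and the identities of the new isolated vertices — is controlled by the $\Omega(n)$, $o(n)$ and $<\epsilon n$ counts above. In sub-range (b), where Theorem~\ref{cpttree} and Lemma~\ref{add2} are unavailable, I would instead use that $m\ge n$ already forces a cycle and build a copy of $H$ by surgery on a pendant substructure attached to the $2$-core, again re-balancing with pendant edges, following the corresponding planar argument of~\cite{dowejc}.

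The step I expect to be the main obstacle is precisely this double-counting bound. For \emph{planar} $H$ one cannot argue, as in Theorem~\ref{maincpt}, that the constructed graph contains only $O(1)$ copies of the pattern — that argument relied essentially on the non-planarity of $H$ — so instead one must show that a given target graph is produced in only $o\big(\text{(construction multiplicity)}\big)$ ways, which demands tight control over how many copies of $H$ (and of the intermediate pattern $\tilde H$) survive the surgery, and over how a deleted pendant edge can be re-attached. It is here that the hypothesis $m/n\to 1$ does the real work: via Lemma~\ref{fewisolated} it forces $o(n)$ components and $o(n)$ isolated vertices, making the created component nearly unique and the reverse map cheap — exactly the feature (and the conclusion) that breaks down once $\limsup\frac{m}{n}<1$.
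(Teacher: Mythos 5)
Your proposal is genuinely different from the paper's proof and, as written, has gaps that it does not close. The paper's proof sketch (following Theorem~21 of~\cite{dowejc}) requires only Lemma~\ref{pendant} and Lemma~\ref{fewisolated}, both of which hold uniformly whenever $\frac{m}{n}\to 1$; there is therefore no need for your split into sub-ranges~(a) and~(b), and no need to invoke Theorem~\ref{cpttree} or Lemma~\ref{add2} (whose restricted range of validity is precisely what forced your split). The paper's surgery is: delete $|H|$ pendant edges (giving $\Omega(n)^{|H|}$ ordered choices by Lemma~\ref{pendant}), then use $|H|-1$ of the freshly isolated vertices together with the pendant endpoint of yet another pendant edge to build an \emph{appearance} of $H$ — the existing pendant edge serves as the cut-edge, so adding the $e(H)=|H|$ internal edges of $H$ exactly balances the $|H|$ deletions, and one isolated vertex is left over. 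The double-counting is then cheap: identify the extra isolated vertex ($o(n)$ choices by Lemma~\ref{fewisolated}), the remaining $W$-vertices and cut-edge (a bounded number of appearances can involve that vertex set), and the former neighbours of the now-isolated vertices ($n^{|H|}$ choices), against a forward multiplicity of order $n^{|H|+1}$.

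By contrast, your sub-range~(a) scheme — locate a $\tilde H$-component, delete one edge, insert $e^{*}$ — is a balanced surgery producing essentially a \emph{single} target per source graph, so you tack on ``relocating pendant edges and inserting a $g$-addable edge'' to manufacture multiplicity. That addendum is underspecified and, as literally written, unbalanced (relocations are edge-neutral and the $g$-addable insertion adds one); more importantly, the reverse direction must then also identify the inserted $g$-addable edge, which you offer no mechanism for. You also leave sub-range~(b) entirely as a pointer to an unspecified ``surgery on a pendant substructure attached to the $2$-core.'' You are right that the double-counting is the crux and right that Lemma~\ref{fewisolated} is the key ingredient that makes $\frac{m}{n}\to 1$ the correct hypothesis; but your proposal does not actually carry the double-counting through, whereas the appearance-based surgery in the paper is set up precisely so that it does, without any case split and without any addable-edge lemma.
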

\begin{sketch}
The proof involves deleting $|H|$ pendant edges
(we have many choices for these, by Lemma~\ref{pendant}),
and then using $|H|-1$ of the newly isolated vertices
to convert another pendant edge into an appearance of $H$
(leaving one extra isolated vertex, and
applying Lemma~\ref{fewisolated} to bound the amount of double-counting).
See Theorem 21 of~\cite{dowejc} for full details of the analogous planar proof.
\qed
\end{sketch}

\section{Maximum degree: proof of Theorem~\ref{mainmaxdeg}} \label{maxdegsect}

In this section,
we investigate the maximum degree of $S_{g}(n,m)$
(recall that this is denoted by $\Delta (S_{g}(n,m))$),
providing the results stated earlier in Theorem~\ref{mainmaxdeg}.
For the case when 
$0 < \liminf \frac{m}{n} \leq \limsup \frac{m}{n} < \frac{1}{2}$,
we may simply use the fact that whp $\Delta (S_{g}(n,m)) = \Delta (G(n,m))$,
and the latter is already known to be 
$\Theta \left( \frac{\ln n}{\ln \ln n} \right)$ whp
(see, for example, Theorem 3.7 of~\cite{bol}).
Hence, we shall concentrate here on the region 
$\liminf \frac{m}{n} \geq \frac{1}{2}$.

For the random graph $S_{g}(n)$,
it is shown in~\cite{mcdr}
that the maximum degree is $\Theta (\ln n)$ whp,
and we find that we obtain the same result for our random graph $S_{g}(n,m)$
when $\liminf \frac{m}{n} > 1$.
The issue of finding tight bounds for the region 
$\frac{1}{2} \leq \liminf \frac{m}{n} \leq \limsup \frac{m}{n} \leq 1$
is left as an open problem.

This section will consist of two main results:
in Theorem~\ref{maxdeg},
we prove an upper bound of $O(\ln n)$ for all $m$;
and then,
in Theorem~\ref{maxdeg2},
we prove a lower bound of $\Omega (\ln n)$
for the case when $\liminf \frac{m}{n} > 1$.
A summary is given in Table~\ref{maxdegtab}.

\begin{table} [ht]
\centering
\caption{
A summary of the maximum degree of $S_{g}(n,m)$.
}
\label{maxdegtab}
{\renewcommand{\arraystretch}{1.5}
\begin{tabular}{|c||c|}
\hline
Range of $m$ &
$\Delta (S_{g}(n,m))$ \\
\hline
\hline
$0 < \liminf \frac{m}{n}$
&
$\Theta \left(\frac{\ln n}{\ln \ln n}\right)$ \textrm{whp}
\\
$\& \limsup \frac{m}{n} < \frac{1}{2}$
&
(from $G(n,m)$) \\
\hline
$\frac{1}{2} \leq \liminf \frac{m}{n}$ 
&
$O (\ln n)$ \textrm{whp} \\
$\& \limsup \frac{m}{n} \leq 1$
&
(Thm.~\ref{maxdeg}) \\
\hline
$\liminf \frac{m}{n} > 1$
&
$\Theta (\ln n)$ \textrm{whp} \\
&
(Thms.~\ref{maxdeg} \&~\ref{maxdeg2}) \\
\hline
\end{tabular}}
\end{table}

We start by stating a useful result on pendant vertices:

\begin{Lemma} \label{pendant2}
Let $g \geq 0$ be a constant,
and let $m=m(n)$ be any function.
Then whp 
each vertex of $S_{g}(n,m)$ is adjacent to
at most $\frac{2 \ln n}{\ln \ln n}$ pendant vertices.
\end{Lemma}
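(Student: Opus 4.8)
The plan is to prove this by a first-moment-type double-counting argument, bounding the expected number of vertices that violate the claimed bound. Fix a vertex $v \in [n]$ and let $k = \lceil \frac{2 \ln n}{\ln \ln n} \rceil$. I want to show that the probability that $v$ is adjacent to at least $k$ pendant vertices is $o(1/n)$, so that a union bound over all $n$ choices of $v$ finishes the argument. To estimate this probability, I would count graphs in $\mathcal{S}^{g}(n,m)$ in which $v$ has at least $k$ pendant neighbours and compare this count to $|\mathcal{S}^{g}(n,m)|$ via an edge-swapping construction: starting from such a graph $G$, pick $k$ of the pendant neighbours of $v$, delete the $k$ pendant edges joining them to $v$, and re-insert $k$ edges elsewhere. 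Since $G$ has at most $3n - 6 + 6g$ edges and we have deleted $k$ of them, there are at least roughly $\binom{n}{2} - 3n = \Omega(n^2)$ places to re-insert each edge (in fact, deleting edges only increases the number of addable positions, and even if we insist on re-inserting edges that keep the genus $\le g$, we can always place each new edge so as to create a component structure that does not increase the genus — for instance by making the $k$ formerly-pendant vertices pendant to other vertices, which never affects genus). This produces many distinct graphs in $\mathcal{S}^{g}(n,m)$.

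The key steps, in order, are: (i) fix $v$, set $k = \lceil \frac{2 \ln n}{\ln \ln n} \rceil$, and let $\mathcal{B}_{v}$ be the set of graphs in $\mathcal{S}^{g}(n,m)$ in which $v$ has at least $k$ pendant neighbours; (ii) from each $G \in \mathcal{B}_{v}$ construct many graphs in $\mathcal{S}^{g}(n,m)$ by deleting $k$ of the pendant edges at $v$ and re-inserting $k$ edges in genus-preserving positions, getting at least $|\mathcal{B}_{v}| \cdot \binom{\deg(v)}{k} \cdot (\text{number of ways to re-insert})$, where $\binom{\deg(v)}{k} \ge \binom{k}{k} = 1$ is a crude lower bound but we can do better by noting there are at least $k!$ ordered choices times $\Omega(n^2)^{k}/k!$ re-insertion tuples; (iii) bound the double-counting: given a constructed graph $G'$, to recover a candidate original we must identify which $\le k$ edges were re-inserted (at most $\binom{m}{k}$ ways) and which $k$ pendant vertices get re-attached to $v$ (at most $n^{k}$ ways for their old identities, but actually they are pendant in $G'$ too so at most $(\text{number of pendant vertices})^{k} \le n^k$ ways), giving a double-counting factor of at most $\binom{m}{k} n^{k} \le (3n)^k n^k = (3n^2)^k$; (iv) combine to get $|\mathcal{B}_{v}| \le \frac{(3n^2)^k}{\Omega(n^2)^k / k!^{\,?}} |\mathcal{S}^{g}(n,m)| \le k! \, C^{k} |\mathcal{S}^{g}(n,m)|$ for a suitable constant $C$, which for $k = \Theta(\ln n / \ln \ln n)$ is $o(1/n)$ since $k! \le k^k = e^{k \ln k} = e^{O(\ln n)} \cdot n^{-o(1)}$ — here I need the $\ln \ln n$ in the denominator of $k$ to make $k \ln k = (2+o(1)) \ln n$, hence $k! \le n^{2+o(1)}$, which is not yet $o(1/n)$, so the construction in step (ii) must gain more than $(3n^2)^k$: by re-inserting each edge in $\Omega(n^2)$ ways and being careful that the $\deg(v) \ge k$ gives an extra $\binom{\deg(v)}{k}$ factor, one gets a net gain; (v) union bound over $v$.

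\textbf{The main obstacle.} The delicate point is calibrating the constant in the exponent so that the bound is genuinely $o(1/n)$ rather than merely $o(1)$ per vertex — this is exactly why the threshold sits at $\frac{2 \ln n}{\ln \ln n}$ and not, say, $\frac{\ln n}{\ln \ln n}$. Concretely, the construction yields $|\mathcal{B}_{v}| \le |\mathcal{S}^{g}(n,m)| \cdot \frac{(\text{double-count})}{(\text{construction gain})}$, and with $\text{construction gain} = \Omega\bigl(\binom{\deg(v)}{k} n^{2k}\bigr) \ge \Omega(n^{2k}/k!)$ and $\text{double-count} = O((n^2)^k)$, the ratio is $O(k!)$. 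So I need $n \cdot k! = o(1)$, i.e.\ $k \ln k > \ln n$, and with $k = \frac{2\ln n}{\ln \ln n}$ we get $k \ln k = 2 \ln n \cdot \frac{\ln \ln n - \ln \ln \ln n + \ln 2}{\ln \ln n} = (2 + o(1)) \ln n > \ln n$, so $n \cdot k! \le n \cdot k^{k} = n \cdot e^{(2+o(1))\ln n} \to \infty$ — this is backwards, so in fact I must be more careful: $k! \approx (k/e)^k$, so $n \cdot k! \approx n \cdot (k/e)^{k}$ and $\ln(n (k/e)^k) = \ln n + k(\ln k - 1) = \ln n - (2+o(1))\ln n \to -\infty$, giving $o(1/n^{1-o(1)})$, which does suffice after the union bound. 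Getting this arithmetic exactly right — and making sure the genus constraint never costs more than a constant factor per re-inserted edge — is where the real care is needed; everything else (the existence of $\Omega(n^2)$ re-insertion sites, the crude recovery bounds) is routine. Alternatively, one could try to deduce this directly from Lemma~\ref{pendant2}'s planar analogue in~\cite{dow} if such a statement exists there, since the construction only uses edge deletion and genus-non-increasing re-insertion, but I would expect to spell out the short argument above to be safe.
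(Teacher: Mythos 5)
Your overall strategy — fix $v$, let $\mathcal{B}_v$ be the bad event, delete $k$ pendant edges at $v$, re-attach the freed vertices elsewhere as pendants, and compare by double-counting — is exactly the argument behind Lemma~2.2 of McDiarmid--Reed, which is all the paper's own sketch points to. So the \emph{approach} is the same as the paper's. However, the bookkeeping in your proposal is wrong in several compounding places, and the final arithmetic in your ``main obstacle'' paragraph contains a sign error, so the proof does not actually close as written.

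Concretely: $(1)$ the claim of ``$\Omega(n^2)$ re-insertion sites per edge'' is unavailable for genus-bounded graphs that are near edge-maximal; the genus-safe move you actually describe — making each freed vertex $u_i$ pendant on some other vertex — gives only $\Theta(n)$ choices per edge, so the construction gain is $(n-O(k))^k$, not $\Omega(n^2)^k$. $(2)$ The double-count estimate $\binom{m}{k}\,n^k$ has a spurious $n^k$: once you know which $k$ edges of $G'$ were the re-inserted ones, the vertices $u_i$ and hence $G$ are determined, so no further choice is needed. Moreover, writing $\binom{m}{k}\le(3n)^k$ discards the crucial $1/k!$. The sharpest form of the double-count is simply $\binom{n}{k}$: in $G'$ the set $\{u_1,\dots,u_k\}$ is a set of $k$ pendant vertices not adjacent to $v$, and there are at most $\binom{n}{k}$ candidate sets. $(3)$ With gain $\ge (n-k-1)^k$ and double-count $\le\binom{n}{k}\le n^k/k!$, the ratio is at most $\frac{n^k/k!}{(n-k-1)^k}\le 2^k/k!$ for large $n$. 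This is the quantity that must be $o(1/n)$, and it is: $2^k/k!\le(2e/k)^k$ and $\ln\bigl(n(2e/k)^k\bigr)=\ln n - k\ln k+O(k)=\ln n-(2-o(1))\ln n\to-\infty$, so $|\mathcal{B}_v|/|\mathcal{S}^g(n,m)|=n^{-2+o(1)}$ and the union bound over the $n$ vertices gives $o(1)$. Your own arithmetic instead produced a ratio of $O(k!)$ and then asserted $\ln n+k(\ln k-1)\to-\infty$, which is backwards since $k(\ln k-1)=(2-o(1))\ln n>0$. The factor $2$ in the threshold $\frac{2\ln n}{\ln\ln n}$ is precisely what makes $1/k!\approx n^{-2}$, which is needed because the union bound costs one factor of $n$.
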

\begin{sketch}
The proof is identical to that of the analogous result for $S_{g}(n)$
given in Lemma 2.2 of~\cite{mcdr}.
\qed
\end{sketch}

We may now proceed to our aforementioned upper bound
for the maximum degree:

\begin{Theorem} \label{maxdeg}
Let $g \geq 0$ be a constant,
and let $m=m(n)$ be any function.
Then whp
\begin{displaymath}
\Delta (S_{g}(n,m)) = O(\ln n).
\end{displaymath}
\end{Theorem}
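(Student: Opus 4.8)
The plan is to use a double-counting argument, comparing the number of graphs with a high-degree vertex against the total number of graphs in $\mathcal{S}^{g}(n,m)$. Fix a constant $C$ to be chosen later (large, in terms of $g$), and let $\mathcal{G}_{n}$ denote the set of graphs in $\mathcal{S}^{g}(n,m)$ that have a vertex $v$ of degree at least $C \ln n$. By Lemma~\ref{pendant2}, whp every vertex of $S_{g}(n,m)$ is adjacent to at most $\frac{2 \ln n}{\ln \ln n}$ pendant vertices, so it suffices to bound the number of graphs in $\mathcal{G}_{n}$ in which the high-degree vertex $v$ has at most $\frac{2\ln n}{\ln\ln n}$ pendant neighbours; equivalently, $v$ has at least $C \ln n - \frac{2\ln n}{\ln\ln n} \geq \frac{C}{2}\ln n$ non-pendant neighbours (for large $n$). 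The goal is to show that this restricted subset of $\mathcal{G}_{n}$ has size $o(|\mathcal{S}^{g}(n,m)|)$.

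First I would note a structural fact: if $v$ has many non-pendant neighbours, then many of the edges at $v$ can be deleted without disconnecting the component containing $v$ --- indeed, since each such neighbour $w$ has another neighbour besides $v$, one can delete $vw$ and, as long as one is careful, avoid disconnecting. More robustly, among the $\geq \frac{C}{2}\ln n$ non-pendant neighbours of $v$, at most $|v|-1 = n-1$ of the incident edges can be cut-edges, but since we only have $\frac{C}{2}\ln n$ of them this is not directly the right bound; instead the cleaner route is: the edges from $v$ to its non-pendant neighbours that lie in a cycle through $v$ are deletable, and a spanning-tree argument shows at most (number of neighbours in the same component minus one) of them are cut-edges, so at least $\frac{C}{2}\ln n - (\text{small})$ of them are safely deletable. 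Pick $k := \lfloor \frac{C}{4}\ln n \rfloor$ such deletable edges at $v$, delete them, and re-insert $k$ edges elsewhere using $g$-addable non-edges. The subtlety on the insertion side is that we need $\Omega(n)$ choices for \emph{each} re-inserted edge even in the worst case for $m$; here I would split into cases according to the range of $m$, invoking Lemma~\ref{add1} when $\limsup\frac{m}{n}<1$, Lemma~\ref{add2} when $m \leq n + o(n/(\log n)^{2/3})$, and Lemma~\ref{add3} (or more care near $m/n \to 3$) otherwise --- in every case we have at least $\Omega(n)$ addable edges, and since $k = O(\ln n) = o(n)$ we can insert $k$ of them one at a time, each time retaining $\Omega(n)$ choices (deleting $k$ edges from the running graph changes the addable set by at most $O(k \cdot n)$, which is $o(n^2)$, so this needs a small argument but goes through; alternatively insert them between distinct tree/unicyclic components).

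The construction thus yields, from each graph in $\mathcal{G}_{n}$ (with the pendant bound), at least $\binom{\Omega(n)}{k} / (\text{ordering})$ --- more simply, at least $\left(\Omega(n)\right)^{k} / k!$ ways to choose where to reinsert, times roughly $\binom{\frac{C}{2}\ln n}{k}$ ways to choose which edges at $v$ to delete, giving a huge multiplicity. For the double-counting (upper) bound: given a constructed graph $G' \in \mathcal{S}^{g}(n,m)$, to recover the original we must identify $v$ (at most $n$ choices), the $k$ reinserted edges (at most $\binom{e(G')}{k} \leq \binom{3n-6+6g}{k}$ choices), and the $k$ endpoints among former neighbours of $v$ to which to re-attach (at most $n^{k}$ choices). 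Comparing $\left(\Omega(n)\right)^{k}/k!$ against $n \cdot \binom{3n}{k} \cdot n^{k}$, the factorial in the denominator of the forward count and the comparable $\binom{3n}{k} \approx (3n)^k/k!$ in the backward count roughly cancel, and we are left needing $\left(\Omega(n)\right)^{k}$ to beat $n \cdot (3n)^{k} \cdot n^{k} \cdot$(stuff) --- which fails unless we are more clever. The fix, which is the \textbf{main obstacle} and the crux of the argument, is to delete the $k$ edges at $v$ but reinsert them as \emph{pendant} edges on $k$ specially chosen vertices (or to recreate them so that they are easily identifiable), so that in the reverse direction the reinserted edges are not arbitrary edges of $G'$ but pendant edges, of which there are few per vertex by Lemma~\ref{pendant2}, and the former neighbours of $v$ are constrained; getting this bookkeeping to yield ratio $\to 0$ is exactly where the constant $C$ (hence the $O(\ln n)$ rather than $o(\ln n)$) enters, via an inequality of the shape $\left(\frac{c_1 n}{k}\right)^{k} \geq \left(\frac{c_2 n}{k}\right)^{k} \cdot n^{O(1)}$ which holds once $k \geq C'\ln n$ with $C'$ large. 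I expect the honest approach to mirror the $S_g(n)$ argument of~\cite{mcdr} as closely as possible --- indeed, since the only genus-specific ingredients used are the ability to delete an edge from a cycle and the addability lemmas (all available here), the proof of the analogous bound for $S_g(n)$ should transfer with the edge-count constraint handled by the balanced delete/insert trick described above.
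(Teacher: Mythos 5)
Your high-level plan (double-counting, delete some edges at the high-degree vertex, reinsert elsewhere) matches the paper in spirit, and you correctly identify the central obstacle: after naive deletion and reinsertion you cannot control the double-counting, because in the reverse direction you have no handle on which vertices were the former neighbours of $v$ --- that $n^k$ factor kills the argument. Where the proposal breaks down is that the ``fix'' you sketch (reinsert the deleted edges as pendant edges, hoping the pendant count from Lemma~\ref{pendant2} limits the reverse multiplicity) is not the idea that actually closes the gap, and it does not obviously do so: even if the reinserted edges are pendant, you still have no way to recover the former neighbours of $v$, and those are what cost $n^k$.

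The paper's construction is genuinely different and turns the obstacle on its head. It does \emph{not} delete only $k$ of $v$'s edges and then push them elsewhere. Instead it fixes an embedding, lists \emph{all} $d$ neighbours of $v$ in cyclic order, chooses $a = \lfloor\frac{\alpha C\ln n}{6}\rfloor$ of them to mark arc boundaries, and chooses $a$ ordered pendant vertices $u_1,\dots,u_a$ elsewhere. It then deletes \emph{all} $d$ edges at $v$ together with the $a$ pendant edges, and rebuilds a two-level star: each $u_j$ is joined to one whole arc of $v$'s former neighbours, and each $u_j$ is joined to $v$. The edge count balances exactly ($d+a$ deleted, $d+a$ inserted), the genus cannot increase (the arcs respect the rotation at $v$), and --- this is the crucial point --- the construction is almost self-recovering: in the constructed graph, $v$ is identified ($n$ choices), the $u_j$'s are exactly the neighbours of $v$, the $v_i$'s are exactly the distance-$2$ neighbours of $v$, and the cyclic order of the $u_j$'s is read off the embedding. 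The only unknowns are the original (single) neighbours of each $u_j$, costing only $n^a$. Since the forward count is at least $(\frac{6}{\alpha})^a(\frac{\alpha n}{2})^a = (3n)^a$, the ratio is $3^a/n\to0$. The pendant vertices are not the destination of reinserted edges; they are the new internal vertices of a subdivided star, which is what makes the reverse count go through.

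There are two further issues your proposal glosses over. First, Lemma~\ref{add3} only gives $\Omega(n)$ addable edges for $\limsup\frac{m}{n}<3$, so your ``more care near $m/n\to3$'' remark hides a real case: when $\frac{m}{n}\to3$ pendant edges are scarce and the paper has to rerun the whole argument using $a$ ordered totally edge-disjoint triangulated appearances of $K_4$ from Lemma~\ref{main6apps} in place of pendant edges, with a mildly different bookkeeping that yields a $(\frac{9}{7})^a/n$ ratio. Second, the ``spanning-tree / cut-edge'' argument you sketch for finding deletable edges at $v$ is not needed in the paper's route (all $d$ edges at $v$ are deleted outright), but it is also not correct as written: the bound you give on cut-edges among edges at $v$ is not $|v|-1$ or the number-of-neighbours-minus-one; that step would need its own justification.
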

\begin{proof}
(i) Case when $\limsup \frac{m}{n} < \frac{1}{2}$:

This follows from standard results on $G(n,m)$. \\

(ii) Case when 
$\frac{1}{2} \leq \liminf \frac{m}{n} \leq \limsup \frac{m}{n} < 3$: 

Let $\alpha > 0$ be as given by Lemma~\ref{pendant},
let $C > \frac{6}{\alpha \ln 3}$,
and let $\mathcal{G}_{n}$
denote the set of graphs in $\mathcal{S}^{g}(n,m)$
where each vertex is adjacent 
to at most $\frac{2 \ln n}{\ln \ln n}$ pendant edges,
where there are at least $\alpha n$ pendant edges in total,
and where the maximum degree is at least $C \ln n$.
By Lemmas~\ref{pendant} and~\ref{pendant2},
it will suffice to show
$\frac{|\mathcal{G}_{n}|}{|\mathcal{S}^{g}(n,m)|} \to 0$ as $n \to \infty$.

Before we continue,
for each graph in $\mathcal{G}_{n}$
let us fix a particular embedding.

Now let us take one of these graphs,
and let $v$ be a vertex with $\deg (v) = d \geq C \ln n$.
Given our particular embedding of the graph,
let us denote the neighbours of $v$ in clockwise order
(in terms of how the edges leave $v$)
as $v_{1}, v_{2}, \ldots, v_{d}$,
where $v_{d}$ is the vertex with largest label.

Now let $a = \lfloor \frac{\alpha C \ln n}{6} \rfloor$,
and let us choose $a$ of the neighbours of $v$
(at least $\left( ^{d} _{a} \right) 
\geq \left( \frac{d}{a} \right)^{a}
\geq \left( \frac{6}{\alpha} \right)^{a}$ choices)
and $a$ \emph{ordered} pendant edges not adjacent to $v$
(at least $\left( \alpha n - \frac{2 \ln n}{\ln \ln n} \right)^{a}
\geq \left( \frac{\alpha n}{2} \right)^{a}$ choices for large $n$).
Let us denote the chosen neighbours of $v$ 
in our clockwise order as $v_{i_{1}}, v_{i_{2}}, \ldots, v_{i_{a}}$,
where $v_{i_{1}}$ is such that 
$v_{d} \in \{ v_{i_{1}}, v_{i_{1}+1}, \ldots, v_{i_{2}-1} \}$,
and let us denote the ordered chosen pendant vertices as
$u_{1}, u_{2}, \ldots, u_{a}$.
Delete all $d$ edges incident to $v$,
and also delete the $a$ chosen pendant edges.

Now consider the $a$ vertex sets
$\{ v_{i_{1}}, v_{i_{1}+1}, \ldots, v_{i_{2}-1} \},
\{ v_{i_{2}}, v_{i_{2}+1}, \ldots, v_{i_{3}-1} \}$,
$\ldots$,
$\{ v_{i_{a}}, v_{i_{a}+1}, \ldots, v_{i_{1}-1} \}$.
For all $j$,
let us join $u_{j}$ to all vertices in the $j$th set,
and let us then join each $u_{j}$ to $v$
(observe that we now have $m$ edges again in total,
and the genus cannot have increased).
Thus,
we find that we can construct at least
$|\mathcal{G}_{n}|(3n)^{a}$
(not necessarily distinct) graphs in $\mathcal{S}^{g}(n,m)$.
See Figure~\ref{maxdegfig1}.

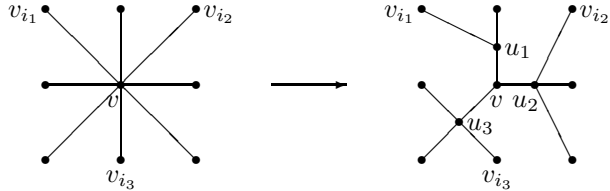
\begin{figure} [ht]
\setlength{\unitlength}{1cm}
\begin{picture}(10,2.3)(-1.5,-0.3)

\put(0,0){\line(1,1){2}}
\put(0,1){\line(1,0){2}}
\put(0,2){\line(1,-1){2}}
\put(1,0){\line(0,1){2}}
\put(0,0){\circle*{0.1}}
\put(0,1){\circle*{0.1}}
\put(0,2){\circle*{0.1}}
\put(1,0){\circle*{0.1}}
\put(1,1){\circle*{0.1}}
\put(1,2){\circle*{0.1}}
\put(2,0){\circle*{0.1}}
\put(2,1){\circle*{0.1}}
\put(2,2){\circle*{0.1}}
\put(-0.5,1.9){$v_{i_{1}}$}
\put(2.1,1.9){$v_{i_{2}}$}
\put(0.8,-0.3){$v_{i_{3}}$}
\put(0.8,0.7){$v$}

\put(3,1){\vector(1,0){1}}

\put(5,0){\line(1,1){1}}
\put(6,1){\line(1,0){1}}
\put(5,2){\line(2,-1){1}}
\put(6,1){\line(0,1){1}}
\put(5,0){\circle*{0.1}}
\put(5,1){\circle*{0.1}}
\put(5,2){\circle*{0.1}}
\put(6,0){\circle*{0.1}}
\put(6,1){\circle*{0.1}}
\put(6,2){\circle*{0.1}}
\put(7,0){\circle*{0.1}}
\put(7,1){\circle*{0.1}}
\put(7,2){\circle*{0.1}}
\put(4.5,1.9){$v_{i_{1}}$}
\put(7.1,1.9){$v_{i_{2}}$}
\put(5.8,-0.3){$v_{i_{3}}$}
\put(5.9,0.7){$v$}

\put(6,1.5){\circle*{0.1}}
\put(6.5,1){\circle*{0.1}}
\put(5.5,0.5){\circle*{0.1}}
\put(5,1){\line(1,-1){1}}
\put(6.5,1){\line(1,2){0.5}}
\put(6.5,1){\line(1,-2){0.5}}
\put(6.1,1.4){$u_{1}$}
\put(6.2,0.7){$u_{2}$}
\put(5.6,0.4){$u_{3}$}

\end{picture}
\caption{
Using former pendant vertices $u_{1}$, $u_{2}$ and $u_{3}$.} \label{maxdegfig1}
\end{figure}

Now let us consider the amount of double-counting.
We need to first identify $v$
(at most $n$ possibilities),
after which we can then determine 
the \emph{unordered} sets
$\{ u_{1}, u_{2}, \ldots, u_{a} \}$
and $\{ v_{1}, v_{2}, \ldots, v_{d} \}$
as being the neighbours and `distance 2 neighbours' of $v$.
We then also need to determine the original neighbours
of $u_{1}, u_{2}, \ldots, u_{a}$
(at most $n^{a}$ possibilities),
after which we then know the original graph
and hence the original embedding.
From this,
we can then determine the order of $v_{1}, v_{2}, \ldots, v_{d}$,
and hence also the order of $u_{1}, u_{2}, \ldots, u_{a}$.
Thus,
we find that we have built each graph at most $n^{a+1}$ times.

Hence,
the number of distinct graphs (in $\mathcal{S}^{g}(n,m)$)
that we have constructed must be at least
$|\mathcal{G}_{n}| \frac{(3n)^{a}}{n^{a+1}}
= |\mathcal{G}_{n}| \frac{3^{a}}{n}$,
and so 
\begin{eqnarray*}
\frac{|\mathcal{G}_{n}|}{|\mathcal{S}^{g}(n,m)|} 
& \leq & \frac{n}{3^{a}} \\
& = & n^{1- \frac{\alpha C \ln 3}{6} + o(1)} \\
& \to & 0 \textrm{ as } n \to \infty
\textrm{ since } C > \frac{6}{\alpha \ln 3}. \\
\end{eqnarray*}

(iii) Case when $\frac{m}{n} \to 3$: 

By Lemma~\ref{main6apps},
we know $S_{g}(n,m)$
has a set of
at least $\alpha n$ totally edge-disjoint triangulated appearances of $K_{4}$
with high probability.
The proof is then similar to the previous case,
but we choose 
$a$ ordered totally edge-disjoint triangulated appearances of $K_{4}$
instead of pendant edges,
and we use the degree $3$ vertex 
from each one of these chosen triangulated appearances
as our $u_{1}, u_{2}, \ldots, u_{a}$
(deleting all $3a$ edges incident to these).
Note that none of these degree $3$ vertices
can have been adjacent to $v$ as long as $d > 5$.
In order to maintain the correct number of edges overall,
we also insert additional edges 
$u_{1}u_{2}, u_{2}u_{3}, \ldots, u_{a}u_{1}$
and $u_{1}v_{i_{2}}, u_{2}v_{i_{3}}, \ldots, u_{a}v_{i_{1}}$.
See Figures~\ref{ujfig} and~\ref{maxdegfig2}.

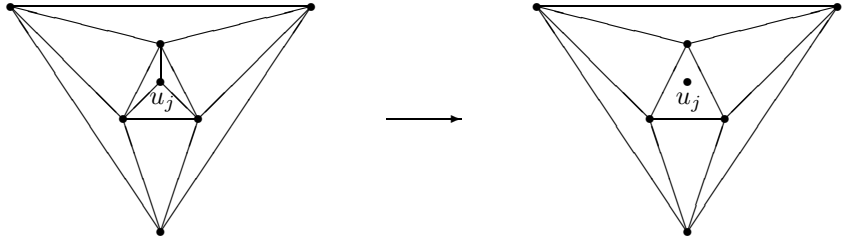
\begin{figure} [ht]
\setlength{\unitlength}{1cm}
\begin{picture}(10,3)(0.5,0)
\put(1.5,1.5){\line(1,0){1}}
\put(1.5,1.5){\line(1,1){0.5}}
\put(1.5,1.5){\line(1,2){0.5}}
\put(2,2){\line(0,1){0.5}}
\put(2.5,1.5){\line(-1,1){0.5}}
\put(2.5,1.5){\line(-1,2){0.5}}
\put(1.5,1.5){\circle*{0.1}}
\put(2.5,1.5){\circle*{0.1}}
\put(2,2){\circle*{0.1}}
\put(2,2.5){\circle*{0.1}}

\put(0,3){\line(1,0){4}}
\put(2,0){\line(-2,3){2}}
\put(2,0){\line(2,3){2}}
\put(2,0){\circle*{0.1}}
\put(0,3){\circle*{0.1}}
\put(4,3){\circle*{0.1}}

\put(2,2.5){\line(-4,1){2}}
\put(2,2.5){\line(4,1){2}}
\put(2,0){\line(-1,3){0.5}}
\put(2,0){\line(1,3){0.5}}
\put(0,3){\line(1,-1){1.5}}
\put(4,3){\line(-1,-1){1.5}}

\put(1.85,1.7){$u_{j}$}

\put(5,1.5){\vector(1,0){1}}

\put(8.5,1.5){\line(1,0){1}}
\put(8.5,1.5){\line(1,2){0.5}}
\put(9.5,1.5){\line(-1,2){0.5}}
\put(8.5,1.5){\circle*{0.1}}
\put(9.5,1.5){\circle*{0.1}}
\put(9,2){\circle*{0.1}}
\put(9,2.5){\circle*{0.1}}

\put(7,3){\line(1,0){4}}
\put(9,0){\line(-2,3){2}}
\put(9,0){\line(2,3){2}}
\put(9,0){\circle*{0.1}}
\put(7,3){\circle*{0.1}}
\put(11,3){\circle*{0.1}}

\put(9,2.5){\line(-4,1){2}}
\put(9,2.5){\line(4,1){2}}
\put(9,0){\line(-1,3){0.5}}
\put(9,0){\line(1,3){0.5}}
\put(7,3){\line(1,-1){1.5}}
\put(11,3){\line(-1,-1){1.5}}

\put(8.85,1.7){$u_{j}$}
\end{picture}
\caption{
Using a triangulated appearance of $K_{4}$ 
to find an appropriate vertex $u_{j}$.
} \label{ujfig}
\end{figure}

\begin{figure} [ht]
\setlength{\unitlength}{1cm}
\begin{picture}(10,2.3)(-1.375,-0.3)

\put(0,0){\line(1,1){2}}
\put(0,1){\line(1,0){2}}
\put(0,2){\line(1,-1){2}}
\put(1,0){\line(0,1){2}}
\put(0,0){\circle*{0.1}}
\put(0,1){\circle*{0.1}}
\put(0,2){\circle*{0.1}}
\put(1,0){\circle*{0.1}}
\put(1,1){\circle*{0.1}}
\put(1,2){\circle*{0.1}}
\put(2,0){\circle*{0.1}}
\put(2,1){\circle*{0.1}}
\put(2,2){\circle*{0.1}}
\put(-0.5,1.9){$v_{i_{1}}$}
\put(2.1,1.9){$v_{i_{2}}$}
\put(0.8,-0.3){$v_{i_{3}}$}
\put(0.8,0.7){$v$}

\put(3,1){\vector(1,0){1}}

\put(5,0){\line(1,1){1}}
\put(6,1){\line(1,0){1.25}}
\put(5,2){\line(2,-1){1}}
\put(6,1){\line(0,1){1}}
\put(5,0){\circle*{0.1}}
\put(5,1){\circle*{0.1}}
\put(5,2){\circle*{0.1}}
\put(6,0){\circle*{0.1}}
\put(6,1){\circle*{0.1}}
\put(6,2){\circle*{0.1}}
\put(7,0){\circle*{0.1}}
\put(7.25,1){\circle*{0.1}}
\put(7,2){\circle*{0.1}}
\put(4.5,1.9){$v_{i_{1}}$}
\put(7.1,1.9){$v_{i_{2}}$}
\put(5.8,-0.3){$v_{i_{3}}$}
\put(5.9,0.775){$v$}

\put(6,1.5){\circle*{0.1}}
\put(6.5,1){\circle*{0.1}}
\put(5.5,0.5){\circle*{0.1}}
\put(5,1){\line(1,-1){1}}
\put(6.5,1){\line(1,2){0.5}}
\put(6.5,1){\line(1,-2){0.5}}
\put(6.1,1.4){$u_{1}$}
\put(6.6,0.775){$u_{2}$}
\put(5.6,0.4){$u_{3}$}

\put(5.5,0.5){\line(-1,3){0.5}}
\put(5.5,0.5){\line(1,2){0.5}}
\put(6,1.5){\line(1,-1){0.5}}
\put(6,1.5){\line(2,1){1}}
\put(6.5,1){\line(-2,-1){1}}
\put(6.5,1){\line(-1,-2){0.5}}

\end{picture}
\caption{Using former degree $3$ vertices $u_{1}$, $u_{2}$ and $u_{3}$.
} \label{maxdegfig2}
\end{figure}
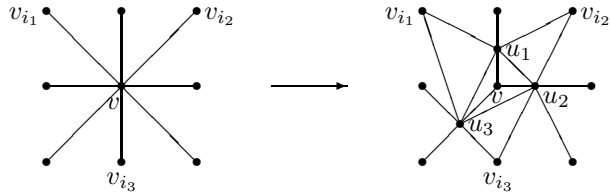

The calculations are then similar to before.
The only difference is that
when determining the original neighbours 
of $u_{2}, u_{3}, \ldots, u_{a}$,
we now have at most $\left( \frac{7}{3} n \right)^{a}$ possibilities
instead of $n^{a}$,
since we need to look for what will be triangulated appearances of $K_{3}$
(note there are at most 
$\frac{n}{3}$ vertex-disjoint triangulated appearances of $K_{3}$,
and each can have a vertex in common with at most $6$ others,
since there are only $6$ other triangles 
touching any triangulated appearance).
Hence,
we obtain
\begin{eqnarray*}
\phantom{wwwwwwwww}
\frac{|\mathcal{G}_{n}|}{|\mathcal{S}^{g}(n,m)|} 
& \leq & \frac{\left(\frac{7}{3}\right)^{a}n}{3^{a}} \\
& = & \frac{n}{\left(\frac{9}{7}\right)^{a}} \\
& = & n^{1- \frac{\alpha C \ln \left( \frac{9}{7} \right)}{6} + o(1)} \\
& \to & 0 \textrm{ as } n \to \infty
\textrm{ for } C > \frac{6}{\alpha \ln \left( \frac{9}{7} \right) }.
\phantom{wwwwwwwww}
\qedhere
\end{eqnarray*}
\end{proof}

We now also provide the corresponding lower bound 
for the case when $\liminf \frac{m}{n} > 1$:

\begin{Theorem} \label{maxdeg2}
Let $g \geq 0$ be a constant,
and let $m=m(n)$ satisfy $\liminf \frac{m}{n} >1$.
Then whp
\begin{displaymath}
\Delta (S_{g}(n,m)) = \Omega(\ln n).
\end{displaymath}
\end{Theorem}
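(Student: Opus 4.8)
The plan is to prove the equivalent statement that $\mathbb{P}[\Delta(S_g(n,m)) < c\ln n]\to 0$ for a suitably small constant $c>0$, by a double-counting argument that is essentially dual to the one used for the upper bound in Theorem~\ref{maxdeg}. Fix such a $c$ and let $\mathcal{G}_n$ be the set of graphs $G\in\mathcal{S}^g(n,m)$ with $\Delta(G)<c\ln n$ that also enjoy the relevant whp structural properties: $G$ has at least $\alpha n$ totally edge-disjoint triangulated appearances of a fixed planar triangulation $T$ with $|T|$ large (Lemma~\ref{main6apps}, which covers the whole range $\liminf\frac mn>1$, including $\frac mn\to 3$ where pendant edges are scarce), $G$ has $o(n)$ isolated vertices (Lemma~\ref{fewisolated}), and each vertex of $G$ has at most $\frac{2\ln n}{\ln\ln n}$ pendant neighbours (Lemma~\ref{pendant2}). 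By these lemmas it then suffices to show $|\mathcal{G}_n|/|\mathcal{S}^g(n,m)|\to 0$.

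The construction I would use: for each $G\in\mathcal{G}_n$ fix a designated embedding of genus at most $g$, and let $v$ be the (unique, after a labelling tie-break) maximum-degree vertex of $G$. I would choose a set of $k=\Theta(\ln n)$ of the triangulated appearances of $T$, avoiding the $O(\ln n)$ of them that involve $v$, and relocate each chosen one so as to raise $\deg(v)$: delete the edges joining that appearance to the rest of $G$ so that its $|T|$-vertex part becomes a separate component, re-embed that component inside a face incident to $v$ (this does not change the genus, and such a face always remains available because inserting edges at $v$ only creates further faces at $v$), then insert edges from $v$ into the relocated piece while deleting an appropriate number of its internal edges so that $e(G)$ is unchanged. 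Doing this for all $k$ chosen appearances yields a graph $G'\in\mathcal{S}^g(n,m)$ in which $v$ is still the unique maximum-degree vertex, since $\deg_{G'}(v)=\deg_G(v)+\Theta(k)$ while no other vertex's degree went up.

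For the double-counting estimate, given a constructed $G'$ one recovers $v$ for free (it is the unique maximum-degree vertex of $G'$); the relocated pieces all hang off $v$, and $G$ contributes only $O(\ln n)$ "native" such configurations because $\deg_G(v)<c\ln n$, so there are few candidates for which pieces were relocated; and one recovers where each piece was attached in $G$ by exploiting that a relocated, internally-stripped copy of $T$ becomes a triangulated appearance of a smaller triangulation, of which any graph has only $O(n)$ — exactly the device from case~(iii) of the proof of Theorem~\ref{maxdeg}. Assembling these, each $G'$ is obtained at most $n^{O(1)}$ times a bounded-base exponential factor, and comparing with the number of $G'$ built from a fixed $G$ should give $|\mathcal{G}_n|/|\mathcal{S}^g(n,m)|\to 0$ once $c$ is small and $k$ is large enough relative to $c$.

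The step I expect to be the main obstacle is precisely this last estimate. Relocating a structure inevitably "forgets" the vertex it was previously attached to, and recovering that vertex naively costs a factor of about $n$ per relocated piece, which the raw number of choices (roughly $\binom{\alpha n}{k}$ ways to pick the $k$ pieces, i.e.\ about $(\alpha n/k)^k$) cannot absorb — indeed a plain "move pendant edges onto $v$" version of the argument fails for exactly this reason. As in the upper bound, the cure must be an extra bounded-base exponential gain per piece; there it was $\binom{d}{a}\ge(6/\alpha)^a$, coming from $v$ already having large degree $d$, which is unavailable here, so one instead has to make the relocated pieces themselves supply enough reconstructible choices (for instance several genuinely different ways of wiring $v$ into each relocated copy of $T$, all of them visible in $G'$), and arranging such choices while keeping the genus at most $g$ is the delicate technical heart of the proof.
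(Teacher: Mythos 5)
Your framework (a double-counting argument powered by Lemma~\ref{main6apps}) matches the paper's starting point, and you correctly diagnose the central obstacle: when a piece is detached from its old attachment point and re-attached to $v$, the double-counting must reconstruct that old attachment point, costing roughly a factor of $n$ per piece, and the forward count $\approx (\alpha n)^k$ cannot beat $n^k$ times a bounded exponential. But your chosen construction --- inflating the degree of the \emph{existing} maximum-degree vertex $v$ --- has no way out of this: since $\deg_G(v) < c\ln n$, the vertex $v$ offers no extra reconstructible choices (unlike in the upper bound, where $\binom{d}{a}$ with $d$ large was the rescue), and your proposed fix of ``several genuinely different ways of wiring $v$ into each piece'' is left vague and does not in fact recover a factor of $n$.

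The paper's construction is genuinely different. It uses $K_4$ as the base triangulation (each piece contributing one degree-$3$ vertex $u_i$), selects $h+1$ \emph{ordered} pieces with $h = \lceil c\ln n\rceil$ --- one more than the number of $n$-factors the double-counting will cost --- deletes all edges at $u_1,\dots,u_{h+1}$, and then builds a brand-new high-degree hub: a wheel centred at $u_1$ with rim $u_2,\dots,u_{h+1}$. The decisive step is that the original neighbours $v_1, v_2, v_3$ of $u_1$ (the other three vertices of that $K_4$) are not discarded but woven \emph{into} the new structure: each of $u_2,\dots,u_{h+1}$ is joined to $v_1$, and $u_2, u_3$ get the extra edges to $v_2, v_3$ needed to balance the edge count. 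In the constructed graph, $u_1$ is the unique vertex of degree $h$, $v_1$ has degree $h+4$, $u_2$ and $u_3$ have degrees $6$ and $5$, and the wheel's cyclic order fixes $u_4,\dots,u_{h+1}$; so $u_1$'s old attachment is read off directly and only the original neighbours of $u_2,\dots,u_{h+1}$ need be guessed --- at most $(\tfrac73 n)^{h}$ choices. That one ``free'' piece supplies exactly the missing factor of $n$, giving $|\mathcal{G}_n|/|\mathcal{S}^g(n,m)| \le (2/\alpha)(14/(3\alpha))^h/n \to 0$ once $c < 1/\ln(14/(3\alpha))$. This is the idea you would need to find to close the gap you identified.

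Two smaller remarks: Lemma~\ref{fewisolated} and Lemma~\ref{pendant2} are not needed for this lower bound (they serve Theorem~\ref{maxdeg}); and no ``re-embed inside a face at $v$'' argument is required, because after deleting all edges at the $u_i$'s those vertices are isolated, and attaching a planar gadget on isolated vertices cannot raise the genus.
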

\begin{proof}
Let $\alpha > 0$ be as given by Lemma~\ref{main6apps},
let $c \in \left( 0, \frac{1}{\ln \left( \frac{14}{3 \alpha} \right)} \right)$,
and let $\mathcal{G}_{n}$
denote the set of graphs in $\mathcal{S}^{g}(n,m)$
with at least $\alpha n$ 
totally edge-disjoint triangulated appearances of $K_{4}$
and with maximum degree less than $h = \lceil c \ln n \rceil$.
By Lemma~\ref{main6apps},
it will suffice to show
$\frac{|\mathcal{G}_{n}|}{|\mathcal{S}^{g}(n,m)|} \to 0$ as $n \to \infty$.
Consequently,
throughout the remainder of this proof,
we may assume that $n$ is sufficiently large that $h>6$.

Take a graph in $\mathcal{G}_{n}$.
Choose $h+1$ 
\emph{ordered} totally edge-disjoint triangulated appearances of $K_{4}$
(at least $\left( \frac{\alpha n}{2} \right)^{h+1}$ choices for large $n$),
and denote the degree $3$ vertices from each of these (in order)
as $u_{1}, u_{2}, \ldots, u_{h+1}$.
Let us then delete all edges incident to these vertices,
and let us denote the original neighbours of $u_{1}$ 
as $v_{1}, v_{2}$ and $v_{3}$.

Now form a wheel with $u_{1}$ as the central vertex
and with the vertices $u_{2}, u_{3}, \ldots,$ $u_{h+1}$
arranged in clockwise order around it.
Then also join each of $u_{2}, u_{3}, \ldots, u_{h+1}$ to $v_{1}$,
join $u_{2}$ to $v_{2}$ and $v_{3}$,
and join $u_{3}$ to $v_{3}$
(observe that we now have $m$ edges again in total,
and the genus cannot have increased).
Thus,
we find that we can construct at least
$|\mathcal{G}_{n}| \left( \frac{\alpha n}{2} \right)^{h+1}$
(not necessarily distinct) graphs in 
$\mathcal{S}^{g}(n,m)$.
See Figure~\ref{maxdegfig3}.

\begin{figure} [ht]
\setlength{\unitlength}{1cm}
\begin{picture}(10,3.3)(-0.5,-0.4)
\put(0,0){\line(1,0){3}}
\put(0,0){\line(3,2){1.5}}
\put(0,0){\line(3,5){1.5}}
\put(1.5,1){\line(0,1){1.5}}
\put(3,0){\line(-3,2){1.5}}
\put(3,0){\line(-3,5){1.5}}
\put(0,0){\circle*{0.1}}
\put(3,0){\circle*{0.1}}
\put(1.5,1){\circle*{0.1}}
\put(1.5,2.5){\circle*{0.1}}

\put(1.6,1){$u_{1}$}
\put(1.6,2.4){$v_{1}$}
\put(3.1,-0.05){$v_{2}$}
\put(-0.45,-0.05){$v_{3}$}

\put(4,1.25){\vector(1,0){1}}

\put(6,0){\line(1,0){3}}
\put(6,0){\line(1,1){1}}
\put(6,0){\line(3,5){1.5}}
\put(7.5,2.5){\line(0,-1){2}}
\put(9,0){\line(-3,1){1.5}}
\put(9,0){\line(-3,5){1.5}}
\put(6,0){\circle*{0.1}}
\put(9,0){\circle*{0.1}}
\put(7.5,1){\circle*{0.1}}
\put(7.5,2.5){\circle*{0.1}}

\put(7,1){\line(1,-1){0.5}}
\put(7,1){\line(1,1){0.5}}
\put(8,1){\line(-1,-1){0.5}}
\put(8,1){\line(-1,1){0.5}}

\put(7,1){\line(1,0){1}}
\put(6,0){\line(3,1){1.5}}
\put(7,1){\circle*{0.1}}
\put(8,1){\circle*{0.1}}
\put(7.5,0.5){\circle*{0.1}}
\put(7.5,1.5){\circle*{0.1}}
\put(7.5,2.5){\line(-1,-3){0.5}}
\put(7.5,2.5){\line(1,-3){0.5}}
\put(7.5,2.5){\line(1,-2){0.75}}
\put(7.5,1){\oval(1.5,1)[br]}

\put(7.525,1.05){\footnotesize{$u_{1}$}}
\put(7.6,2.4){$v_{1}$}
\put(9.1,-0.05){$v_{2}$}
\put(5.55,-0.05){$v_{3}$}
\put(7.3,0.3){\footnotesize{$u_{2}$}}
\put(6.85,0.7){\footnotesize{$u_{3}$}}

\put(0,0){\line(-1,1){0.4}}
\put(0,0){\line(-1,-1){0.4}}
\put(3,0){\line(1,1){0.4}}
\put(3,0){\line(1,-1){0.4}}
\put(1.5,2.5){\line(-1,1){0.4}}
\put(1.5,2.5){\line(1,1){0.4}}

\put(6,0){\line(-1,1){0.4}}
\put(6,0){\line(-1,-1){0.4}}
\put(9,0){\line(1,1){0.4}}
\put(9,0){\line(1,-1){0.4}}
\put(7.5,2.5){\line(-1,1){0.4}}
\put(7.5,2.5){\line(1,1){0.4}}

\end{picture}
\caption{Using a triangulated appearance of $K_{4}$ 
to construct our new graph.} \label{maxdegfig3}
\end{figure}
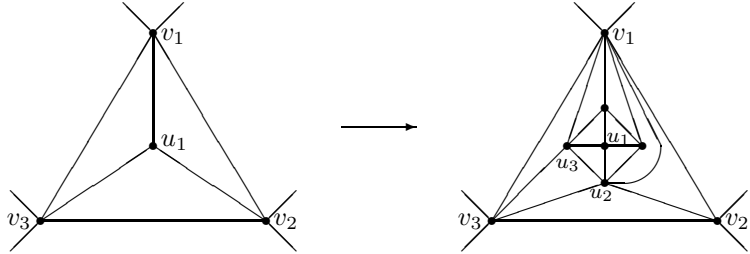

Now let us consider the amount of double-counting.
Note that 
$\deg(u_{1})=h$, $\deg(u_{2})=6$, $\deg(u_{3})=5$,
$\deg(u_{i})=4$ for all $i \geq 4$,
$\deg(v_{1})=h+4$, $\deg(v_{2})=5$, and $\deg(v_{3})=6$,
and recall that all other vertices have degree less than $h$.
Hence,
we can identify $u_{1}$ 
(the only vertex with degree exactly $h$),
after which we can then determine $u_{2}$
(the only neighbour of $u_{1}$ with degree $6$)
and $u_{3}$ (the only neighbour of $u_{1}$ with degree $5$),
and then $u_{4}, u_{5}, \ldots, u_{h+1}$
(using the clockwise ordering).
We then just need to also determine the original neighbours 
of $u_{2}, u_{3}, \ldots, u_{h+1}$
(at most $\left( \frac{7}{3} n \right)^{h}$ possibilities,
as in the previous proof,
since we again need to look 
for what are now triangulated appearances of $K_{3}$).
Thus,
we find that we have built each graph at most
$\left( \frac{7}{3} n \right)^{h}$ times.

Hence,
the number of distinct graphs
(in $\mathcal{S}^{g}(n,m)$)
that we have constructed must be at least
$\frac{|\mathcal{G}_{n}| \left( \frac{\alpha n}{2} \right)^{h+1}}
{\left( \frac{7}{3} n \right)^{h}}
= |\mathcal{G}_{n}| \frac{\alpha}{2} \left( \frac{3 \alpha}{14} \right)^{h} n$,
and so
\begin{eqnarray*}
\phantom{wwwwwwwwq}
\frac{|\mathcal{G}_{n}|}{|\mathcal{S}^{g}(n,m)|}
& \leq & \frac{2}{\alpha} \left( \frac{14}{3 \alpha} \right)^{h} \frac{1}{n} \\
& = & 
n^{-1 + c \ln \left( \frac{14}{3 \alpha} \right) + o(1)} \\
& \to & 0 
\textrm{ as } n \to \infty 
\textrm{ since } c < \frac{1}{\ln \left( \frac{14}{3 \alpha} \right)}. 
\phantom{wwwwwwwwq}
\qedhere
\end{eqnarray*}
\end{proof}

\section{Largest face size: proof of Theorem~\ref{mainface}} \label{facesize}

In this section,
we shall now look at $F(S_{g}(n,m))$,
the size of the largest face of $S_{g}(n,m)$
(maximised over all possible embeddings
with genus at most $g$).
In particular,
we shall see
(as stated already in Theorem~\ref{mainface})
that $F(S_{g}(n,m)) = \Theta (n)$ whp
for $0 < \liminf \frac{m}{n} \leq \limsup \frac{m}{n} < 1$,
and that $F(S_{g}(n,m)) = \Theta (\ln n)$ whp
for $1 < \liminf \frac{m}{n} \leq \limsup \frac{m}{n} < 3$.
A summary of results,
including the cases when $\frac{m}{n} \to 1$ and $\frac{m}{n} \to 3$,
is given in Table~\ref{facetab}.

\begin{table} [ht]
\centering
\caption{
A summary of the size of the largest face of $S_{g}(n,m)$.
}
\label{facetab}
{\renewcommand{\arraystretch}{1.5}
\begin{tabular}{|c|c||c|}
\hline
\multicolumn{2}{|c||}{
Range of $m$
}
&
$F(S_{g}(n,m))$ \\
\hline
\hline
\multicolumn{2}{|c||}{
$0 < \liminf \frac{m}{n} \leq \limsup \frac{m}{n} < 1$
} &
$\Theta (n)$ \textrm{whp} (Thm.~\ref{face0to1})
\\
\hline
&
$0 < n-m = \Omega (n^{3/5})$ &
$\Omega (n-m)$ whp (Thm.~\ref{faceat1}) \\
\cline{2-3}
&
$|m-n| = O(n^{3/5})$ &
$\Omega (n^{3/5})$ whp (Thm.~\ref{faceat1})
\\
\cline{2-3}
$\frac{m}{n} \to 1$ &
$0 < m-n =
\left\{ \begin{array}{ll}
\vspace{0.1cm}
\Omega (n^{3/5}) \\
o \left( \frac{n}{(\log n)^{2/3}} \right) 
\end{array} \right.$ &
$\Omega \left( \left( \frac{n}{m-n} \right) ^{3/2} \right)$ whp 
(Thm.~\ref{faceat1}) \\
\cline{2-3}
&
$m-n = \Omega \left( \frac{n}{(\log n)^{2/3}} \right)$ &
unknown \\
\hline
\multicolumn{2}{|c||}{
$1 < \liminf \frac{m}{n} \leq \limsup \frac{m}{n} < 3$ 
} &
$\Theta (\ln n)$ \textrm{whp} (Thm.~\ref{face1to3} \&~\ref{faceat3}) \\
\hline
\multicolumn{2}{|c||}{
$\frac{m}{n} \to 3$
} &
$O(\ln n)$ \textrm{whp} (Thm.~\ref{faceat3}) \\
\hline
\end{tabular}}
\end{table}

We start with the region
$0 < \liminf \frac{m}{n} \leq \limsup \frac{m}{n} < 1$.
Here,
we know from Theorem~\ref{ntree}
that whp there are linearly many components
isomorphic to any given tree,
and so
(since these can all be placed in the same face,
and since no face can ever exceed linear size)
we obtain our result immediately:

\begin{Theorem} \label{face0to1}
Let $g \geq 0$ be a constant,
and let $m=m(n)$ satisfy 
$0 < \liminf \frac{m}{n} \leq \limsup \frac{m}{n} < 1$.
Then whp
\begin{displaymath}
F(S_{g}(n,m)) = \Theta (n).
\end{displaymath}
\end{Theorem}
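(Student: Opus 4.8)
The plan is to establish the two bounds $F(S_{g}(n,m)) = O(n)$ and $F(S_{g}(n,m)) = \Omega(n)$ separately, the former holding for every graph in $\mathcal{S}^{g}(n,m)$ and the latter holding whp.

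For the upper bound, I would use the elementary observation that in \emph{any} embedding of \emph{any} graph with $m$ edges, each edge contributes exactly $2$ to the sum of the sizes of all the faces (one unit for each of its two sides), so every individual face has size at most $2m$. Since we always assume $m \leq 3n-6+6g$, this gives $F(S_{g}(n,m)) \leq 2(3n-6+6g) = O(n)$ for every graph in $\mathcal{S}^{g}(n,m)$, and in particular whp.

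For the lower bound, I would apply Theorem~\ref{ntree} with $H = K_{2}$ (a single edge, which is a tree): whp there is a constant $\alpha > 0$ such that $S_{g}(n,m)$ has at least $\alpha n$ components each isomorphic to $K_{2}$. Let $G'$ be the graph obtained from $S_{g}(n,m)$ by deleting all of these components. Deleting components cannot increase the genus, so $G'$ has genus at most $g$; fix an embedding of $G'$ on an orientable surface of genus at most $g$ and select any face $f$ of it. Now re-insert the $\alpha n$ deleted edges, drawing each one inside $f$. Since each such edge forms a planar component placed in the interior of $f$, the genus remains at most $g$, the face $f$ is not split, and its size increases by $2$ for each inserted edge (both sides of the edge lie in $f$). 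Thus we obtain a genus-$\leq g$ embedding of $S_{g}(n,m)$ in which some face has size at least $2 \alpha n$, so $F(S_{g}(n,m)) = \Omega(n)$ whp. Combining the two bounds yields $F(S_{g}(n,m)) = \Theta(n)$ whp.

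Since Theorem~\ref{ntree} already supplies the linearly many small tree components, there is really no serious obstacle: the only points needing care are the edge-side double-count behind the universal $O(n)$ bound and the verification that inserting all the small planar components into a single common face produces a valid embedding of genus at most $g$ whose face sizes behave as claimed — both of these are routine. (One could just as well take $H$ to be any fixed tree with at least one edge in place of $K_{2}$.)
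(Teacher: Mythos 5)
Your proof is correct and takes essentially the same approach as the paper's: the paper likewise invokes Theorem~\ref{ntree} to obtain linearly many small tree components, observes that these can all be placed in a single face to get the $\Omega(n)$ lower bound, and notes that no face can ever exceed linear size for the $O(n)$ upper bound. The paper gives this only as a brief parenthetical remark; your write-up simply spells out the same two observations (the edge-side double count for the upper bound and the re-embedding of the deleted components into one face for the lower bound) in full detail.
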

\qed

Similarly,
we may use Lemma~\ref{kmslemma}
on the number of edges in trees and unicyclic components
to obtain lower bounds for the largest face size when $\frac{m}{n} \to 1$:

\begin{Theorem} \label{faceat1}
Let $g \geq 0$ be a constant,
and let $m=m(n)$ satisfy 
$\frac{m}{n} \to 1$.
Then whp
\begin{eqnarray*}
F(S_{g}(n,m)) =
\left\{ 
{\renewcommand{\arraystretch}{1.5}
\begin{array}{lll}
\Omega (n-m)
& \textrm{for } 
0 < n-m = \Omega (n^{3/5}) \\
\Omega (n^{3/5})
& \textrm{for } 
|m-n| = O(n^{3/5}) \\
\Omega \left( \left( \frac{n}{m-n} \right) ^{3/2} \right)
& \textrm{for } 
0 < m-n =
\left\{ \begin{array}{ll}
\Omega (n^{3/5}) \\
o \left( \frac{n}{(\log n)^{2/3}} \right).
\end{array} \right. \\
\end{array}} \right.
\end{eqnarray*} 
\end{Theorem}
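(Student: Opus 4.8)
The plan is to exhibit a large face directly by constructing, for each graph $G \in \mathcal{S}^{g}(n,m)$ lying in a suitable high-probability set, an embedding of genus at most $g$ in which one face is incident with many edges. The starting point is Lemma~\ref{kmslemma}: whp the number of edges of $S_{g}(n,m)$ lying in non-multicyclic (i.e.\ tree or unicyclic) components is $\Theta(n-m)$, $\Theta(n^{3/5})$, or $\Theta\big((n/(m-n))^{3/2}\big)$ in the three respective regimes. Call this quantity $E^\ast$; it will be the source of our lower bound. So it suffices to show that, given any embedding of $G$ of genus at most $g$, we can re-embed so that a single face contains all the edges of all the non-multicyclic components.

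The key structural observation is that a tree or unicyclic component $C$ always admits a \emph{planar} embedding with a face incident with \emph{every} edge of $C$: for a tree, the (unique) face of any plane embedding is incident twice with each edge; for a unicyclic component, pick the embedding in which the unique cycle bounds a face containing nothing, so the \emph{outer} face sees every edge (each tree-edge twice, each cycle-edge once). Now take the given genus-$\le g$ embedding of $G$. Pick any face $f$ of that embedding of the complementary part $G' := G - (\text{non-multicyclic components})$; embedding $G'$ on the same surface uses genus $\le g$, and $f$ is a disc. Into this disc we successively insert the non-multicyclic components, each drawn in its "all-edges-on-one-face" plane embedding, placing each new component inside the current big face and connecting nothing — since the components are vertex-disjoint and we add no edges, the genus does not increase, and after all insertions there is one face incident with every edge of every non-multicyclic component. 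Hence $F(S_g(n,m)) \ge E^\ast$, which is exactly the claimed bound in each regime (with the extra $o(n/(\log n)^{2/3})$ restriction in the third case inherited directly from Lemma~\ref{kmslemma}).

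The one point requiring a little care is the bookkeeping of "size of a face" under the Definition given: an edge with both sides in the face is counted twice. Tree-edges of the inserted components automatically have both sides in the big face, so they each contribute $2$; cycle-edges contribute at least $1$. Thus the face size is at least the number of edges in the non-multicyclic components, i.e.\ at least $E^\ast$, and in fact at least $E^\ast$ plus the number of tree-edges — either way $\Omega(E^\ast)$ suffices for the statement, so no sharp accounting is needed. One should also note that the construction is purely combinatorial on rotation systems, so "embedding of genus at most $g$" is preserved rigorously; the insertions are the standard operation of placing a connected planar piece inside an open disc face.

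The main obstacle, such as it is, is not in the re-embedding argument (which is routine once the plane "one-face" embeddings of trees and unicyclic components are identified) but in correctly invoking Lemma~\ref{kmslemma}: one must check that its hypotheses cover each of the three subcases of $\frac{m}{n} \to 1$ exactly as stated, and in particular that the regime $0 < n-m = \Omega(n^{3/5})$ here is handled by the first line of Lemma~\ref{kmslemma} (whose stated hypothesis is $n-m = \omega(n^{3/5})$ and $o(n)$) together with the trivial boundary case $n - m = \Theta(n^{3/5})$ absorbed into the second line — so the three lines of the theorem match the three lines of the lemma after this minor reconciliation. Everything else is immediate, and the upper bounds (not asked for here) are deferred to the later theorems referenced in Table~\ref{facetab}.
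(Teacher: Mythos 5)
Your argument is correct and is exactly the one the paper intends: the paper gives no details beyond ``Similarly, we may use Lemma~\ref{kmslemma} on the number of edges in trees and unicyclic components,'' so you have simply filled in the re-embedding step that Theorem~\ref{face0to1} also relies on, and your reconciliation of the three regimes of this theorem with the three regimes of Lemma~\ref{kmslemma} (the $\Omega$ vs.\ $\omega$ boundary at $n^{3/5}$ being absorbed by the middle case) is right. One small inaccuracy: after deleting the non-multicyclic components, a face $f$ of the induced embedding of $G'$ on a genus-$\le g$ surface need \emph{not} be a disc (e.g.\ if the embedding of $G'$ is not cellular); but this does no harm, since every face is an open subset of the surface and therefore contains an open disc, and it is inside such a disc that you nest the planar components so that a single sub-face is incident with all of their edges.
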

\qed

For the random graph $S_{g}(n)$,
it is shown in~\cite{mcdr}
that whp the largest face size is $\Theta (\ln n)$.
We shall now see that this also holds for our graph $S_{g}(n,m)$
for the region when
$1 < \liminf \frac{m}{n} \leq \limsup \frac{m}{n} < 3$.
We start with the lower bound:

\begin{Theorem} \label{face1to3}
Let $g \geq 0$ be a constant,
and let $m=m(n)$ satisfy 
$1 < \liminf \frac{m}{n} \leq \limsup \frac{m}{n} < 3$.
Then whp
\begin{displaymath}
F(S_{g}(n,m)) = \Omega (\ln n).
\end{displaymath}
\end{Theorem}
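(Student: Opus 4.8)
The plan is to adapt the contraction (double-counting) method used for the maximum-degree lower bound, Theorem~\ref{maxdeg2}. Fix a small constant $c>0$ (to be determined) and set $h=\lceil c\ln n\rceil$; I want to show $\mathbb{P}[F(S_g(n,m))<h]\to 0$. Let $H$ be a fixed connected planar graph chosen below, let $\alpha>0$ be the constant provided by Lemma~\ref{mainapps} for this $H$ (which applies since $1<\liminf\frac mn\le\limsup\frac mn<3$), and let $\mathcal{G}_n\subseteq\mathcal{S}^g(n,m)$ consist of the graphs that have at least $\alpha n$ vertex-disjoint appearances of $H$ and whose largest face (over all embeddings of genus at most $g$) has size less than $h$. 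By Lemma~\ref{mainapps} it suffices to prove that $|\mathcal{G}_n|/|\mathcal{S}^g(n,m)|\to 0$.

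From each $G\in\mathcal{G}_n$ I would build many graphs of $\mathcal{S}^g(n,m)$ that \emph{do} have a face of size at least $h$. Choose $A=\Theta(\ln n)$ ordered vertex-disjoint appearances of $H$ in $G$ --- there are at least $(\alpha n/2)^A$ ways --- delete each chosen copy together with its (unique) cut-edge, and reassemble the freed vertices and edge-slots into a new planar gadget $R$ that (a) has a face of size $\Theta(\ln n)$, (b) has exactly the right number of edges so that the total stays $m$, and (c) carries an anchor (a vertex of degree $\Theta(\ln n)$, say a hub of a long wheel) that can be recognised in the result without a search. The natural template is a wheel --- a rim cycle of length $\Theta(\ln n)$ plus a hub --- whose outer face supplies the long face; to make the edge budget match I would take $H$ to be a small planar graph for which deleting a pendant copy together with its cut-edge releases (close to) twice as many edge-slots as vertices, for instance a planar triangulation on five vertices (nine edges: each destroyed copy then yields ten edge-slots and five vertices), since a wheel consumes edges and vertices in ratio about $2:1$ whereas pendant edges ($1:1$) or triangulated $K_4$-appearances ($3:1$) do not balance. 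Because $R$ is planar it may be inserted inside any face of $G$ minus the deleted edges without raising the genus, so the resulting graph lies in $\mathcal{S}^g(n,m)$ and has a face of size $\Theta(\ln n)\ge h$.

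The double-count then runs as in Theorem~\ref{maxdeg2}. A graph with largest face size below $h$ contains no copy of $R$ of the relevant size, so the result $G'$ has exactly one such copy, which together with its hub (the unique high-degree vertex of that copy) and the cyclic order of its rim is determined \emph{for free}; recovering where each destroyed appearance was originally attached costs only $O(n)$ apiece (it now looks like an appearance of a fixed graph, of which there are $O(n)$, exactly the bound used in Theorem~\ref{maxdeg2}). Arranging the gadget so that one of the $A$ choices is likewise recovered for free --- as $u_1$ is in Theorem~\ref{maxdeg2} --- then makes the numerator of the count carry one more factor of $n$ than the denominator, and choosing $c$ small enough yields $|\mathcal{G}_n|/|\mathcal{S}^g(n,m)|\le n^{-\Omega(1)}\to 0$. (The matching upper bound $F(S_g(n,m))=O(\ln n)$ for this range comes from Theorem~\ref{faceat3}, completing the $\Theta(\ln n)$ claim of Theorem~\ref{mainface}.)

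The delicate point is the construction of the gadget: unlike in the maximum-degree argument, it must have a genuinely long face, so the all-triangular-faces object used there will not do; simultaneously its edge count must be matched exactly to what is freed up (hence the special choice of $H$), its anchor must be identifiable at $O(1)$ cost, and the reassembly must be rigid enough that permuting the chosen appearances does not collapse many constructions onto the same graph (which would reintroduce a factorial overcount). Verifying that re-inserting the deleted cut-edges never forces the genus above $g$, and securing the ``one free recovery'' that provides the extra factor of $n$ while keeping the face long, are the parts that require real care; the rest is bookkeeping parallel to Theorem~\ref{maxdeg2}.
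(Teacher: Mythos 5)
Your overall strategy --- construct, via double-counting, a large-face gadget from an abundant local structure supplied by one of the key lemmas --- is the right genre, and the computation you sketch is parallel to Theorem~\ref{maxdeg2}. But the gadget you chose is considerably more complicated than it needs to be, and the complications you honestly flag at the end are precisely where the proof would stall. The paper uses a much simpler gadget: a \emph{pendant path}. The observation that a pendant copy of $P_h$ automatically gives a face of size at least $2(h-1)$ (the face must trace both sides of the dangling path) lets one avoid wheels entirely. One then feeds in pendant edges from Lemma~\ref{pendant} rather than appearances from Lemma~\ref{mainapps}: choose $h$ ordered pendant edges, delete the $h-1$ of them incident to $v_2,\dots,v_h$, and reattach these $h-1$ edges as the path $v_1v_2\cdots v_h$. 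This makes the edge budget match \emph{identically} (delete $h-1$, insert $h-1$), while the vertex set is untouched --- none of the ratio calibration (your ``$2:1$'' bookkeeping and the consequent choice of a $5$-vertex triangulation) is required. The double-count is then resolved by the cute observation that a graph can have at most $2h$ ``straight'' pendant copies of $P_h$ (two per endpoint, and each construction step creates at most two new ones), so the newly created path costs at most $2h$ to locate; after that the $h-1$ old attachment vertices are guessed at cost $n^{h-1}$, giving exactly the spare factor of $n$ that you describe, without needing a high-degree hub or a symmetry-breaking rim decoration.

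On your own route: the unresolved issues you list are genuine. The $10A$ freed edge-slots versus $10A-2$ wheel edges do not balance, and the two leftover edges still have to go somewhere in a genus-safe way; worse, a plain wheel has a cyclic and dihedral symmetry, so you cannot recover for free the cyclic order of the $A$ appearance sites without decorating the gadget, and an undecorated reassembly multiplies the construction count by roughly $A!$ which destroys the argument. Theorem~\ref{maxdeg2} deliberately perturbs the degrees of $u_2,u_3$ to break exactly this symmetry, and an analogous fix is not trivial to import here while also keeping the large face large. In short, the idea is salvageable but the pendant-path route is both easier and complete; you should read the proof of Theorem~3.1 of McDiarmid--Reed, which the paper cites as the model.
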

\begin{proof}
We follow the method of proof of Theorem 3.1 of~\cite{mcdr},
noting that it suffices to show that whp $S_{g}(n,m)$ contains a pendant copy of a path with $\Omega (\ln n)$ vertices.

Let $\alpha$ be as given by Lemma~\ref{pendant},
let $c \in \left(0, \frac{1}{\ln \left( \frac{2}{\alpha} \right)} \right)$,
let $h = \lceil c \ln n \rceil$,
and let $\mathcal{G}_{n}$ denote the set of graphs in $\mathcal{S}^{g}(n,m)$
with~(i) at least $\alpha n$ pendant edges
and~(ii) no pendant copy of $P_{h}$
(where $P_{h}$ denotes a path with $h$ vertices).
By Lemma~\ref{pendant},
it will suffice to show
$\frac{|\mathcal{G}_{n}|}{|\mathcal{S}^{g}(n,m)|} \to 0$ as $n \to \infty$.

Take a graph in $\mathcal{G}_{n}$.
Let us choose $h$ \emph{ordered} pendant edges
(at least $\left( \frac{\alpha n}{2} \right)^{h}$ choices for large $n$),
and let us denote the ordered chosen pendant vertices as $v_{1}, v_{2}, \ldots, v_{h}$.
Delete all $h-1$ (pendant) edges incident to $v_{2}, v_{3}, \ldots, v_{h}$,
and insert edges 
$v_{1}v_{2},v_{2}v_{3}, \ldots, v_{h-1}v_{h}$
to create a pendant copy of $P_{h}$.
Thus, we find that we can construct at least
$|\mathcal{G}_{n}| \left( \frac{\alpha n}{2} \right)^{h}$
(not necessarily distinct) graphs in $\mathcal{S}^{g}(n,m)$.

Now let us consider the amount of double-counting.
Firstly, note that if a new pendant copy of a graph $H$ is produced by deleting an edge $uv$,
then either $u$ or $v$ (or both) must belong to this pendant copy.
Thus, if $v$ was originally a pendant vertex,
then it must be that $u$ belongs to this new pendant copy.
Secondly, let us call a pendant copy of the path $P_{h}$ `straight'
if it is joined to the rest of the graph at an \emph{end-point} of the path,
and let us note that any vertex can only ever be in at most two straight pendant copies of $P_{h}$
(at most one in each direction,
possibly both ways if an entire component is a path).
Hence, each time we deleted a pendant edge,
we can have only increased the number of straight pendant copies of $P_{h}$ by at most two.
Similarly, when we inserted the path $v_{1} v_{2} \ldots v_{h}$,
we can also have only increased the number of straight pendant copies of $P_{h}$ by at most two
(since any new pendant copy of $P_{h}$ would contain $v_{1}$).

Thus, given one of our constructed graphs,
there will be at most $2h$ straight pendant copies of $P_{h}$.
Hence, there are at most $2h$ possibilities for $v_{h}$,
after which we can then determine the ordered vertices $v_{h-1}, v_{h-2}, \ldots, v_{1}$.
We then also need to determine the original neighbours of $v_{2}, v_{3}, \ldots, v_{h}$ (at most $n^{h-1}$ possibilities).
Thus, we find that we have built each graph at most $2h n^{h-1}$ times.

Hence, the number of distinct graphs (in $\mathcal{S}^{g}(n,m)$) that we have constructed must be at least
$\frac{|\mathcal{G}_{n}| \left( \frac{\alpha n}{2} \right)^{h}}{2h n^{h-1}}
= |\mathcal{G}_{n}| \frac{1}{2h} \left( \frac{\alpha}{2} \right)^{h} n$,
and so
\begin{eqnarray*}
\phantom{wwwwwwwww}
\frac{|\mathcal{G}_{n}|}{|\mathcal{S}^{g}(n,m)|} & \leq & 2h \left( \frac{2}{\alpha} \right)^{h} \frac{1}{n} \\
& = & 
n^{-1 + c \ln \left( \frac{2}{\alpha} \right) + o(1)} \\
& \to & 0 
\textrm{ as } n \to \infty 
\textrm{ since } c < \frac{1}{\ln \left( \frac{2}{\alpha} \right)}. 
\phantom{wwwwwwwww}
\qedhere
\end{eqnarray*}
\end{proof}

We also now provide a corresponding upper bound:

\begin{Theorem} \label{faceat3}
Let $g \geq 0$ be a constant,
and let $m=m(n)$ satisfy $\liminf \frac{m}{n} > 1$.
Then whp
\begin{displaymath}
F(S_{g}(n,m)) = O(\ln n).
\end{displaymath}
\end{Theorem}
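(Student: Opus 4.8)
The plan is to prove the contrapositive by a double-counting argument in the spirit of the proofs of Theorems~\ref{maxdeg} and~\ref{maxdeg2}, using the large face as a ``reservoir'' into which we can attach many new edges cheaply without increasing the genus. Fix a constant $C'$ large enough that $\Delta(S_g(n,m)) < C'\ln n$ whp (possible by Theorem~\ref{maxdeg}), let $\alpha>0$ be the constant from Lemma~\ref{main6apps}, and let $C$ be a large constant to be chosen at the end. Let $\mathcal{G}_n$ be the set of graphs in $\mathcal{S}^g(n,m)$ that (a) have maximum degree less than $C'\ln n$, (b) possess at least $\alpha n$ totally edge-disjoint triangulated appearances of $K_4$, and (c) admit an embedding of genus at most $g$ with a face of size at least $C\ln n$. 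By Theorem~\ref{maxdeg} and Lemma~\ref{main6apps}, it then suffices to show that $|\mathcal{G}_n|/|\mathcal{S}^g(n,m)| \to 0$.

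First I would record two elementary facts. Since the distinct edges lying on the boundary of a face of size $\ell$ form a subgraph of genus at most $g$, a standard Euler-formula estimate (using $e \le 3v-6+6g$) shows that such a face has at least $\ell/7$ distinct boundary vertices once $\ell$ is large. Secondly, a counting argument exactly as in the proofs of Theorems~\ref{maxdeg} and~\ref{maxdeg2} (bounding the number of vertex-disjoint such structures and the number of others meeting a given one) shows that any $n$-vertex graph contains only $O(n)$ triangulated appearances of $K_4$, and likewise only $O(n)$ copies of the structure obtained from a triangulated appearance of $K_4$ by deleting one of its six external edges.

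For the construction, take $G\in\mathcal{G}_n$, fix an embedding of genus at most $g$ with a face $f_0$ of size at least $C\ln n$, and set $k=\lceil C'\ln n\rceil$. Choose $k+1$ of the triangulated appearances of $K_4$ (these may share vertices but not edges), designate one of them, and let $u$ be its vertex incident with no external edge (so $\deg_G(u)=3$, with neighbours $r_1,r_2,r_3$). Delete the three edges at $u$ and delete one external edge from each of the other $k$ chosen appearances, freeing exactly $k+3$ edges in total. Then choose $k+3$ vertices on the boundary of $f_0$, avoiding $r_1,r_2,r_3$ and all vertices lying in the $k$ chosen appearances, and join $u$ to all of them; since at most $O(\ln n)$ vertices are forbidden there are still at least $\ell/16$ available, giving at least $\binom{\alpha n/2}{k+1}\binom{\ell/16}{k+3}$ such constructions. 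The number of edges is again $m$, and the new edges form a star drawn inside the face $f_0$, so the genus has not increased and we land in $\mathcal{S}^g(n,m)$. For the double-counting: in the new graph $u$ is the \emph{unique} vertex of degree $k+3$ (every other vertex had degree $<C'\ln n$ and gained at most one edge), so $u$ and its neighbourhood are recovered for free; the triple $\{r_1,r_2,r_3\}$ is one of the $O(n)$ three-sets that (after the deletion) form a triangulated appearance of $K_3$, so the three deleted edges at $u$ are determined up to $O(n)$ choices; and each of the $k$ modified appearances is one of the $O(n)$ ``$K_4$-appearance-minus-an-external-edge'' structures, from which the deleted edge is determined. Hence each constructed graph arises at most $O(n)\binom{O(n)}{k}$ times, so that
\begin{displaymath}
\frac{|\mathcal{G}_n|}{|\mathcal{S}^g(n,m)|}\ \le\ \frac{O(n)\binom{O(n)}{k}}{\binom{\alpha n/2}{k+1}\binom{\ell/16}{k+3}}\ =\ O(\ln n)\left(\frac{O(1)\,k}{\ell}\right)^{k},
\end{displaymath}
which tends to $0$ once $C$ is chosen large enough (relative to $C'$, $\alpha$ and the implied constants) that $O(1)\,k/\ell<\tfrac12$; this gives the desired bound $F(S_g(n,m))<C\ln n$ whp.

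The main obstacle is the same constant tension that already appears in Theorem~\ref{maxdeg2}: a naive port of that argument fails because the uniquely-high-degree ``hub'' vertex one wants to create must have degree comparable to the (large) maximum-degree threshold, yet deleting only three edges per triangulated $K_4$ does not free enough edges to pay for so many spokes, and deleting proportionally more edges inflates the double-counting beyond what the purely $n^{\Theta(1)}$ gain from choosing the appearances can absorb. The point of the argument is that the large face resolves this: it contributes the extra multiplicity $\binom{\Omega(\ln n)}{\Theta(\ln n)}$ for placing the star of $u$, which is a growing power of $n$ whose exponent we can make as large as we like by taking the face-size constant $C$ large, thereby dominating the double-counting loss. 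The routine points needing care are that the deletions only merge faces (so all the chosen boundary vertices of $f_0$ really do remain available on a common face of the modified graph), that the $O(\ln n)$ forbidden vertices do not exhaust that boundary, and that the modified local structures are genuinely recognisable so that the stated $O(n)$ recovery bounds hold.
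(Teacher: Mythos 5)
Your proposal is essentially correct, but it takes a genuinely different route from the paper's proof. The paper's argument dismantles all $a+1$ triangulated $K_4$-appearances completely (deleting $3(a+1)$ edges at the degree-$3$ vertices $u_0,\dots,u_a$), then reassembles them into a wheel with spokes attached to $a$ chosen face vertices, so that all $3(a+1)$ edges are repaid by the wheel plus spokes plus three extra joins. It does \emph{not} invoke Theorem~\ref{maxdeg}: instead it pays an $n$ factor in the double-counting to locate the hub $u_0$, and compensates with the $(6/\alpha)^a$ extra multiplicity from the binomial choice of face vertices, arriving at a ratio of roughly $n\cdot(9/7)^{-a}$. Your version is more surgical: you delete only $3$ edges at the designated hub $u$ and one external edge from each of the other $k$ appearances (so the appearances are damaged rather than demolished), rewire $u$ as a pure star into $f_0$, and then recover $u$ for free in the double-counting by invoking the $O(\ln n)$ maximum-degree bound to make $u$ the unique vertex of degree $k+3$. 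What each buys: the paper's approach is self-contained in that it needs only Lemma~\ref{main6apps}; your approach imports Theorem~\ref{maxdeg} (which is proved earlier and is itself driven by Lemma~\ref{main6apps}, so there is no circularity), but in exchange it replaces the paper's somewhat delicate wheel bookkeeping with a conceptually cleaner ``create one recognisable high-degree hub'' step. The remaining hand-waved points in your write-up --- that a bounded-genus $n$-vertex graph has only $O(n)$ triangulated $K_4$-appearances and $O(n)$ of the ``$K_4$-appearance minus one external edge'' structures, and that the genus does not increase --- are all sound. (For the genus, the cleanest argument is to order the operations: delete the three edges at $u$, then embed $u$ in the merged face containing $\partial f_0\setminus\{u\}$ and draw the star, then delete the $k$ external edges last, so that one never has to worry about deletions splitting $f_0$. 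You should also explicitly exclude $u$ itself from the set of $k+3$ chosen boundary vertices, which costs only one more forbidden vertex.)
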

\begin{proof}
First,
note that the size of a face with $k$ vertices and $l$ edges
is at most $2l$,
and hence at most $2(3k-6+6g)$.
Thus,
it will suffice for us to show that 
the largest number of \emph{vertices} in any face of $S_{g}(n,m)$
is $O(\ln n)$ with high probability.

Let $\alpha >0$ be as given by Lemma~\ref{main6apps},
let $C > \frac{6}{\alpha \ln \left( \frac{9}{7} \right)}$,
and let $\mathcal{G}_{n}$ denote the set of graphs in $\mathcal{S}^{g}(n,m)$
with at least $\alpha n$ 
totally edge-disjoint triangulated appearances of $K_{4}$
and with a face (in some embedding with genus at most $g$)
containing at least $C \ln n$ vertices.
By Lemma~\ref{main6apps},
it will suffice to show
$\frac{|\mathcal{G}_{n}|}{|\mathcal{S}^{g}(n,m)|} \to 0$ as $n \to \infty$.

Before we continue,
for each graph in $\mathcal{G}_{n}$
let us fix a particular embedding
from among those which maximise the number of vertices in a single face
(over all embeddings with genus at most $g$),
let us fix a particular face 
from among those with the largest number of vertices in this embedding,
and let us fix a particular `clockwise' ordering
of the vertices in this face
(this should be done by placing an imaginary vertex at some point inside the face,
inserting exactly one edge from here to every vertex in the face
in such a way that no crossing edges are introduced,
and then taking a particular clockwise ordering 
in terms of how these edges leave the imaginary vertex).

Now let us take one of our graphs,
and recall that the number of vertices in our chosen face is $d \geq C \ln n$.
Let us denote these vertices,
using our given ordering,
as $v_{1}, v_{2}, \ldots, v_{d}$.

Let $a = \lfloor \frac{\alpha C \ln n}{6} \rfloor$,
and let us choose $a$ of these vertices
(at least $\left( ^{d} _{a} \right) 
\geq \left( \frac{d}{a} \right)^{a}
\geq \left( \frac{6}{\alpha} \right)^{a}$ choices).
Let us denote the chosen vertices 
in clockwise order as $v_{i_{1}}, v_{i_{2}}, \ldots,$ $v_{i_{a}}$,
where $v_{i_{1}}$ is such that 
$v_{d} \in \{ v_{i_{1}}, v_{i_{1}+1}, \ldots, v_{i_{2}-1} \}$.

Let us also choose $a+1$ 
\emph{ordered} totally edge-disjoint triangulated appearances of $K_{4}$ 
that do not contain any of these chosen vertices as part of the $K_{4}$
(at least $\left( \frac{\alpha n}{2} \right)^{a+1}$ choices for large $n$).
For these chosen triangulated appearances,
let us denote (in order) the $a$ degree $3$ vertices as
$u_{0}, u_{1}, u_{2}, \ldots, u_{a}$.

Now delete all $3(a+1)$ edges incident to
$u_{0}, u_{1}, u_{2}, \ldots, u_{a}$,
and form a wheel with $u_{0}$ as the central vertex
and with the vertices $u_{1}, u_{2}, \ldots, u_{a}$
arranged in clockwise order around it.
For all $j \in \{ 1,2, \ldots, a \}$,
join $u_{j}$ to $v_{i_{j}}$,
and let us then also join 
$u_{1}$ to $v_{i_{2}}$, $u_{2}$ to $v_{i_{3}}$, and $u_{3}$ to $v_{i_{4}}$
(observe that we now have $m$ edges again in total,
and the genus cannot have increased).
Thus,
we find that we can construct at least
$|\mathcal{G}_{n}| \left( \frac{6}{\alpha} \right)^{a}
\left( \frac{\alpha n}{2} \right)^{a+1}
= |\mathcal{G}_{n}| (3n)^{a+1} \frac{\alpha}{6}$
(not necessarily distinct) graphs in $\mathcal{S}^{g}(n,m)$.
See Figure~\ref{facesizefig}.

\begin{figure} [ht]
\setlength{\unitlength}{1cm}
\begin{picture}(10,2.4)(-0.5,-0.2)
\put(1.5,1){\oval(3,2)}
\put(0,1.5){\circle*{0.1}}
\put(0,0.5){\circle*{0.1}}
\put(1,2){\circle*{0.1}}
\put(3,1){\circle*{0.1}}
\put(3,0.5){\circle*{0.1}}
\put(3,1.5){\circle*{0.1}}
\put(1,0){\circle*{0.1}}
\put(2,0){\circle*{0.1}}
\put(2,2){\circle*{0.1}}
\put(1.8,2.2){$v_{i_{1}}$}
\put(3,1.6){$v_{i_{2}}$}
\put(2,-0.2){$v_{i_{3}}$}
\put(-0.4,0.4){$v_{i_{4}}$}

\put(4,1){\vector(1,0){1}}

\put(7.5,1){\oval(3,2)}
\put(7.5,1){\circle{1}}
\put(6,1.5){\circle*{0.1}}
\put(6,0.5){\circle*{0.1}}
\put(7,2){\circle*{0.1}}
\put(9,1){\circle*{0.1}}
\put(9,0.5){\circle*{0.1}}
\put(9,1.5){\circle*{0.1}}
\put(7.5,1){\circle*{0.1}}
\put(7.5,0.5){\circle*{0.1}}
\put(7.5,1.5){\circle*{0.1}}
\put(7,1){\circle*{0.1}}
\put(8,1){\circle*{0.1}}
\put(7,0){\circle*{0.1}}
\put(8,0){\circle*{0.1}}
\put(8,2){\circle*{0.1}}
\put(7,1){\line(1,0){1}}
\put(7.5,0.5){\line(0,1){1}}
\put(7.5,1.5){\line(1,0){1.5}}
\put(8,0){\line(0,1){1}}
\put(6,0.5){\line(2,1){1}}
\put(6,0.5){\line(1,0){1.5}}
\put(8,1){\line(2,1){1}}
\put(8,0){\line(-1,1){0.5}}
\put(8,2){\line(-1,-1){0.5}}

\put(7.8,2.2){$v_{i_{1}}$}
\put(9,1.6){$v_{i_{2}}$}
\put(8,-0.2){$v_{i_{3}}$}
\put(5.6,0.4){$v_{i_{4}}$}

\put(7.5,1.05){$u_{0}$}
\put(7.1,1.55){$u_{1}$}
\put(6.6,1.1){$u_{4}$}
\put(7.2,0.3){$u_{3}$}
\put(8,0.8){$u_{2}$}
\end{picture}
\caption{Using a large face to construct our new graph.} \label{facesizefig}
\end{figure}
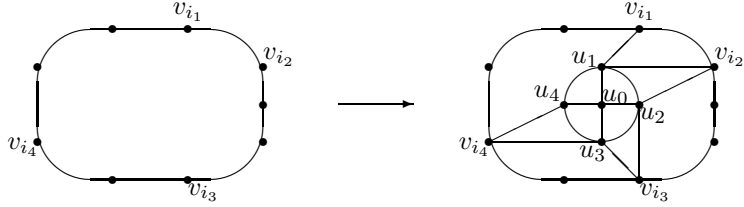

Now let us consider the amount of double-counting.
We need to first identify $u_{0}$
(at most $n$ possibilities),
after which we can then determine 
the \emph{unordered} set
$\{ u_{1}, u_{2}, \ldots, u_{a} \}$
as being the neighbours of $u_{0}$.
We then also need to determine the original neighbours
of $u_{0}, u_{1}, u_{2}, \ldots, u_{a}$
(at most $\left( \frac{7}{3} n \right)^{a+1}$ possibilities,
as in the proof of Theorem~\ref{maxdeg},
since we need to look for triangulated appearances of $K_{3}$).
After this,
we then know the original graph,
and hence the original embedding,
our chosen face,
and the order of $v_{1}, v_{2}, \ldots, v_{d}$.
Hence,
we can then determine the order of $u_{1}, u_{2}, \ldots, u_{a}$.
Thus,
we find that we have built each graph at most
$\left( \frac{7}{3} n \right)^{a+1} n$ times.

Hence,
the number of distinct graphs (in $\mathcal{S}^{g}(n,m)$)
that we have constructed
must be at least
$\frac{|\mathcal{G}_{n}| (3n)^{a+1} \frac{\alpha}{6}}
{\left( \frac{7}{3} n \right)^{a+1} n}
= |\mathcal{G}_{n}| \frac{\alpha}{6n} \left( \frac{9}{7} \right)^{a+1}$,
and so 
\begin{eqnarray*}
\phantom{wwwwwwwww}
\frac{|\mathcal{G}_{n}|}{|\mathcal{S}^{g}(n,m)|} 
& \leq & \frac{6}{\alpha} \frac{n}{\left( \frac{9}{7} \right)^{a+1}} \\
& = & 
n^{1- \frac{\alpha C \ln \left( \frac{9}{7} \right)}{6} + o(1)} \\
& \to & 0 
\textrm{ as } n \to \infty 
\textrm{ since } C > \frac{6}{\alpha \ln \left( \frac{9}{7} \right)}.
\phantom{wwwwwwwww}
\qedhere
\end{eqnarray*}
\end{proof}

\section{Appearances: proof of Lemma~\ref{mainapps}} \label{appsection}

In this section,
we shall investigate the number of appearances in $S_{g}(n,m)$
of given subgraphs (see Definition~\ref{defapps}).
The main feature here will be a proof of Lemma~\ref{mainapps},
but we will also derive
(in Theorem~\ref{newuniform})
a new result on the uniform convergence of $|\mathcal{S}^{g}(n,m)|$
to the relevant growth constant.

We start with a lemma that defines the growth constant function $\gamma (q)$:

\begin{Lemma} [\cite{chap}, Theorem 1.1]
There exists a continuous function $\gamma (q)$ such that,
given any constants $g \geq 0$ and $q \in (1,3)$,
we have
\begin{displaymath}
\left( \frac{|\mathcal{S}^{g}(n,\lfloor qn \rfloor)|}{n!} \right)^{1/n} 
\to \gamma (q) \textrm{ as } n \to \infty.
\end{displaymath} 
For all $q \in (1,3)$,
we have $e < \gamma (q) \leq \gamma_{l} \approx 27.23$,
where $\gamma_{l}$ is the labelled planar graph growth constant.
\end{Lemma}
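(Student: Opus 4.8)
The plan is to deduce everything from the planar case $g=0$, using two structural facts: that extra genus is combinatorially cheap, and that the disjoint-union operation is essentially free for planar building blocks. Write $s_g(n,m):=|\mathcal{S}^g(n,m)|$.

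\emph{Step 1 (reduction to $g=0$).} Take a minimum-genus (hence cellular) embedding of a graph of genus $h\le g$, and let $X$ be the set of edges lying in neither a fixed spanning tree of the graph nor a fixed spanning tree of the dual built from the remaining edges; by Euler's formula $|X|=2h\le 2g$, and the tree--cotree decomposition shows that deleting $X$ leaves a planar graph. (For disconnected graphs do this componentwise.) Thus every genus-$\le g$ graph on $[n]$ with $m$ edges is a planar graph on $[n]$ with $m-2h$ edges together with $\le 2g$ further edges, so
\begin{equation*}
s_0(n,m)\;\le\;s_g(n,m)\;\le\;\sum_{h=0}^{g}\binom{\binom n2}{2h}\,s_0(n,m-2h).
\end{equation*}
A unit change in the edge count changes $s_0(n,\cdot)$ by at most a factor $O(n)$ (add one of the $\ge 3n-6-m$ addable edges of a planar graph; or delete one of the $m$ edges), so $s_0(n,m-2h)\le n^{O(g)}s_0(n,m)$ and hence $s_0(n,m)\le s_g(n,m)\le n^{O(g)}s_0(n,m)$. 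As $g$ is constant, $(s_g(n,\lfloor qn\rfloor)/n!)^{1/n}$ and $(s_0(n,\lfloor qn\rfloor)/n!)^{1/n}$ have the same limiting behaviour, so it suffices to treat $g=0$.

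\emph{Step 2 (existence of $\gamma(q)$).} Restrict first to the connected subclass: since $q>1$ every such graph has a cyclic component, and an edge-switch (delete an edge of a cycle, add an edge between two components) shows it has $o(n)$ components, so the connected subclass has the same exponential growth rate as all of $\mathcal{S}^0(n,\lfloor qn\rfloor)$. For connected planar graphs, relabelling a disjoint union of a graph on $n_1$ vertices and one on $n_2$ vertices in $\binom{n_1+n_2}{n_1}$ ways (each union arising $O(1)$ times, as its two components are recoverable) yields, after absorbing the $O(1)$ discrepancy between $\lfloor qn_1\rfloor+\lfloor qn_2\rfloor$ and $\lfloor q(n_1+n_2)\rfloor$ via the factor from Step~1,
\begin{equation*}
\frac{s_0^{\mathrm{conn}}(n_1,\lfloor qn_1\rfloor)}{n_1!}\cdot\frac{s_0^{\mathrm{conn}}(n_2,\lfloor qn_2\rfloor)}{n_2!}\;\le\; n^{O(1)}\cdot\frac{s_0(n_1+n_2,\lfloor q(n_1+n_2)\rfloor)}{(n_1+n_2)!}.
\end{equation*}
So $\log\!\big(s_0(n,\lfloor qn\rfloor)/n!\big)$ is super-additive up to an $O(\log n)$ error, and Fekete's lemma gives convergence of $\big(s_0(n,\lfloor qn\rfloor)/n!\big)^{1/n}$ to $\gamma(q):=\sup_n(\cdots)^{1/n}$.

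\emph{Steps 3--4 (the bounds and continuity).} Every graph counted by $s_0(n,\lfloor qn\rfloor)$ is planar, so $s_0(n,\lfloor qn\rfloor)\le\sum_{m}s_0(n,m)=n!\,\gamma_l^{\,n+o(n)}$ by the planar growth-constant theorem, giving $\gamma(q)\le\gamma_l$; finiteness also follows directly from bounding planar graphs by subgraphs of planar triangulations. For $\gamma(q)>e$ one produces an explicit family — e.g.\ start from the $n^{n-2}$ labelled trees and repeatedly insert addable edges (each planar graph on fewer than $3n-6$ edges having $\Omega(n)$ of them) to reach $\lfloor qn\rfloor$ edges, obtaining $n!\,(c_q)^n$ graphs with $c_q>e$; or take disjoint unions of a fixed dense planar block and invoke Step~2. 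Finally, running the disjoint-union construction with two distinct densities — a connected planar graph of density $q_1$ on $\lceil\lambda n\rceil$ vertices together with one of density $q_2$ on the rest gives a planar graph of density $\lambda q_1+(1-\lambda)q_2+o(1)$ — yields $\gamma(\lambda q_1+(1-\lambda)q_2)\ge\gamma(q_1)^{\lambda}\gamma(q_2)^{1-\lambda}$ for all rational $\lambda\in[0,1]$; hence $q\mapsto\log\gamma(q)$ is concave on $(1,3)$ and therefore continuous there.

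\emph{Main obstacle.} The crux is Step~1: the content of the statement is precisely that the exponential count does not feel the genus, and the only route I see is the topological tree--cotree observation that a genus-$\le g$ graph differs from a planar one by merely $2g$ edges, combined with the (easy but essential) fact that nudging the edge count by $O(1)$ costs only a polynomial factor. After that, Steps~2 and~4 are the standard super-multiplicativity/Fekete and concavity-from-disjoint-unions machinery, and the only place needing genuine ingenuity is the construction giving $\gamma(q)>e$ uniformly in $q$, which is delicate as $q$ nears the endpoints $1$ and $3$, where the supply of addable edges is, respectively, barely super-linear in the relevant count or close to its minimum.
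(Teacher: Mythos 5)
This lemma is quoted by the paper directly from Chapuy--Fusy--Gim\'enez--Mohar--Noy (reference~\cite{chap}); the paper itself gives no proof, and the proof in~\cite{chap} is a long analytic-combinatorics argument (decomposition into $3$-connected components, counting of rooted maps on surfaces, and singularity analysis of the resulting generating functions). Your proposal attempts an elementary counting route instead, but it fails already at Step~1, which is the step you yourself identify as the crux.

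The claim in Step~1 --- that deleting the $2h$ edges $X$ left over from a tree--cotree partition of a cellularly embedded genus-$h$ graph yields a planar graph --- is false. The tree--cotree theorem says that $T\cup X$ is a cut graph (cutting the surface along it produces a disk), not that $G\setminus X=T\cup C$ is planar. Indeed $T\cup C$ has $(V-1)+(F-1)$ edges, and this can exceed the planar maximum $3V-6$. Concretely, take $G=K_7$, which has genus $1$: a cellular toroidal embedding has $V=7$, $E=21$, $F=14$, so $|T\cup C|=6+13=19>15=3\cdot 7-6$, and $T\cup C$ cannot be planar. More generally, $K_7$ has genus $1$ but skewness at least $6$, so no $2h$-edge deletion planarises it, and constant genus does not bound the skewness by any constant. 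Consequently your displayed inequality $s_g(n,m)\le\sum_{h\le g}\binom{\binom n2}{2h}s_0(n,m-2h)$ is unjustified, and with it the whole reduction $s_g(n,m)\le n^{O(g)}s_0(n,m)$ on which Steps~2--4 rest. (The polynomial comparison between $s_g$ and $s_0$ is in fact true --- it is part of what~\cite{chap} proves --- but it is a nontrivial output of their analysis, not something one can extract from a tree--cotree argument.) Steps~2--4 also contain points that would need real work --- the $o(n)$-components claim, the absorption of the floor-function discrepancy, and especially the uniform bound $\gamma(q)>e$ near $q=1$ --- but those are secondary: the argument does not get past Step~1.
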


For planar graphs,
the following useful uniform convergence result is known:

\begin{Lemma} [\cite{ger}, Lemma 2.9] \label{uniform}
Let $a \in (1,3)$ and $\eta>0$ be constants.
Then there exists $n_{0}$ such that, 
for all $n \geq n_{0}$ and all $m \in [an,3n-6]$,
we have
\begin{displaymath}
\left| \left( \frac{|\mathcal{S}^{0}(n,m)|}{n!} \right)^{1/n} - 
\gamma \left( \frac{m}{n} \right) \right| < \eta.
\end{displaymath} 
\end{Lemma}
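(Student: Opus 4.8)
The plan is to upgrade the \emph{pointwise} convergence $\left(|\mathcal{S}^{0}(n,\lfloor qn\rfloor)|/n!\right)^{1/n}\to\gamma(q)$ (and the continuity of $\gamma$, both from \cite{chap}) to the stated uniform estimate, by means of one elementary edge-insertion count showing that the quantity varies slowly with $m$. Write $\psi_{n}(q):=\tfrac1n\ln\!\left(|\mathcal{S}^{0}(n,\lfloor qn\rfloor)|/n!\right)$ and $\psi(q):=\ln\gamma(q)$; then $\psi_{n}(q)\to\psi(q)$ for each fixed $q\in(1,3)$, $\psi$ is continuous there, and $e<\gamma\le\gamma_{l}$ makes all these functions bounded on compact subintervals of $(1,3)$. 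The engine is the following comparison, valid for $0\le m\le m'\le 3n-6$: from any planar graph on $[n]$ with $m$ edges, insert $m'-m$ edges one at a time, using that an $n$-vertex planar graph with $j<3n-6$ edges admits at least $3n-6-j$ planar edge-additions (the fact recalled just before Lemma~\ref{add3}); since a planar graph with $m'$ edges has exactly $\binom{m'}{m}$ planar spanning subgraphs with $m$ edges, double-counting the containment pairs yields
\[
\frac{|\mathcal{S}^{0}(n,m')|}{|\mathcal{S}^{0}(n,m)|}\ \ge\ \frac{(3n-6-m)!\;m!}{(3n-6-m')!\;m'!}\,.
\]
By Stirling's formula, for $m=\lfloor qn\rfloor$, $m'=\lfloor q'n\rfloor$ with $q\le q'$ in a fixed compact interval $I\subset(1,3)$, the $n$-th root of the right-hand side equals $\exp\!\bigl(\phi(q,q')+O(\tfrac{\ln n}{n})\bigr)$, where $\phi(q,q'):=(3-q)\ln(3-q)+q\ln q-(3-q')\ln(3-q')-q'\ln q'$ is continuous with $\phi(q,q)=0$; thus $\psi_{n}(q')-\psi_{n}(q)\ge\phi(q,q')+O(\tfrac{\ln n}{n})$, with the error uniform over $q,q'\in I$.

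The key observation is that this \emph{single} one-sided bound already controls $\psi_{n}$ from both sides at an interior point, by comparing against a reference point on the appropriate side of $q$. Fix $a\in(1,3)$, $\eta>0$, and a small $\epsilon>0$ with $[\,a-\epsilon,\,3-\tfrac{\epsilon}{2}\,]\subset(1,3)$. By uniform continuity of $\psi$ and of $\phi$ on this interval, pick $\delta>0$ so that $|\psi(q)-\psi(q')|<\tfrac{\eta}{4}$ and $|\phi(q,q')|<\tfrac{\eta}{4}$ whenever $|q-q'|\le\delta$. Choose finitely many points $q_{1}<\dots<q_{N}$ in $(1,3)$ with $q_{1}<a$, $q_{N}>3-\epsilon$, and consecutive gaps at most $\delta$, so that every $q\in[a,3-\epsilon]$ lies between consecutive net points $q_{i}\le q\le q_{j}$ with $q-q_{i},\,q_{j}-q\le\delta$; by pointwise convergence there is $n_{0}$ with $|\psi_{n}(q_{i})-\psi(q_{i})|<\tfrac{\eta}{4}$ for all $i$ and all $n\ge n_{0}$, with also the $O(\tfrac{\ln n}{n})$ error above below $\tfrac{\eta}{4}$. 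Then for $n\ge n_{0}$ and $q\in[a,3-\epsilon]$, the comparison applied to the pair $(q_{i},q)$ gives $\psi_{n}(q)\ge\psi_{n}(q_{i})+\phi(q_{i},q)-\tfrac{\eta}{4}>\psi(q)-\eta$, while applied to the pair $(q,q_{j})$ it gives $\psi_{n}(q)\le\psi_{n}(q_{j})-\phi(q,q_{j})+\tfrac{\eta}{4}<\psi(q)+\eta$; hence $|\psi_{n}(q)-\psi(q)|<\eta$. Exponentiating and using boundedness of $\psi_{n},\psi$ (to pass from an additive $\ln$-bound to a bound on $n$-th roots, after rescaling $\eta$) gives the lemma on $[a,3-\epsilon]$.

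It remains to treat the thin strip $m\in[(3-\epsilon)n,\,3n-6]$, where $q=\tfrac mn$ may be too close to $3$ to have a net point above it. Here I would apply the same edge-insertion inequality with the \emph{moving} right endpoint $m'=3n-6$ to bound $\psi_{n}(q)$ from above, keep a fixed lower reference point for the bound from below (using that $\phi$ and $\gamma$ are nearly constant as $q\to3$, so shrinking $\epsilon$ suffices), and invoke the classical asymptotic enumeration of labelled planar triangulations, which furnishes $\left(|\mathcal{S}^{0}(n,3n-6)|/n!\right)^{1/n}\to\gamma(3^{-})$ and the continuous extension of $\gamma$ to $q=3$; since $\phi(q,3)\to0$ and $\gamma(q)\to\gamma(3^{-})$ as $q\to3$, choosing $\epsilon$ small then closes the gap. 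I expect this boundary strip to be the only genuinely delicate point: over the bulk range the argument needs nothing beyond the pointwise limit of \cite{chap}, the elementary edge count, and continuity of $\gamma$ --- in particular \emph{no} sub-multiplicativity or reverse (edge-deletion) estimate is required, because a reference point lying on the far side of $q$ supplies exactly the direction of control that a one-sided edge count cannot give by itself.
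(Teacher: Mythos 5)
The paper itself gives no proof of this lemma --- it is imported from~\cite{ger} --- so there is no in-house proof to compare against. The closest the paper comes is Theorem~\ref{newuniform}, the arbitrary-genus analogue, which is proved by exactly your bootstrapping strategy: pointwise convergence of $\left(|\mathcal{S}^{g}(n,\lfloor qn\rfloor)|/n!\right)^{1/n}$, continuity of $\gamma$, and a slow-variation estimate in $m$ (Lemma~\ref{foruniform}). In the bulk range $m\in[an,(3-\epsilon)n]$ your argument is correct and is in fact cleaner than the paper's Lemma~\ref{foruniform} specialised to $g=0$: you use the deterministic bound of at least $3n-6-j$ planar edge-additions together with the exact spanning-subgraph count $\binom{m'}{m}$ to get a sharp factorial ratio, whereas the paper uses only ``$\Omega(n)$ $g$-addable edges whp'' and loses a fixed factor per edge. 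Your key observation --- that the \emph{one-sided} comparison controls $\psi_{n}$ from both sides once one compares against net points lying on either side of $q$ --- is sound, and it is genuinely enough in the bulk.

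The gap is the boundary strip, which is precisely why the paper's own Theorem~\ref{newuniform} stops at $m\le An$ with $A<3$ while the cited lemma reaches $m=3n-6$. You ``invoke the classical asymptotic enumeration of labelled planar triangulations, which furnishes $\left(|\mathcal{S}^{0}(n,3n-6)|/n!\right)^{1/n}\to\gamma(3^{-})$''. The existence of a triangulation growth constant is classical, but the \emph{identification} of that constant with $\gamma(3^{-})$ is not available from what you have built: your comparison (take $m'=3n-6$, any $q<3$, then $q\to3^{-}$) gives only $\liminf_{n}\tfrac1n\ln\!\left(|\mathcal{S}^{0}(n,3n-6)|/n!\right)\ge\ln\gamma(3^{-})$, and the matching $\limsup$ needs an inequality in the opposite direction --- exactly the reverse, edge-deletion estimate you declare unnecessary, or an external result proving the identification. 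So ``no reverse estimate is required'' is true in the bulk but not for closing the boundary; as written, that step is a citation you would still have to supply (presumably this is where~\cite{ger} does its real work), not a consequence of what precedes it.
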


As mentioned,
we shall later (in Theorem~\ref{newuniform}) generalise Lemma~\ref{uniform}
to non-zero genus.
However,
we first come to the main work of this section,
where we provide a proof of Lemma~\ref{mainapps},
showing that whp $S_{g}(n,m)$
will have linearly many appearances of any given planar graph.
As the full proof is quite long,
we also give a sketch of the proof:

\begin{appsketch}
Recall that 
$\left( \frac{| \mathcal{S}^{g} (n, \lfloor qn \rfloor) |}{n!} \right)^{1/n}
\to \gamma (q)$ for $q \in (1,3)$.
Although we have not yet shown that this convergence is uniform,
we can still certainly choose any large but \emph{finite} number of values
$q_{1}^{*}, q_{2}^{*}, \ldots, q_{T}^{*}$
and then find an $N$ such that
\begin{displaymath}
(1 - \epsilon)^{n} n! (\gamma (q_{i}^{*}))^{n} 
\leq |\mathcal{S}^{g} (n, \lfloor q_{i}^{*}n \rfloor)|
\leq (1 + \epsilon)^{n} n! (\gamma (q_{i}^{*}))^{n}
\end{displaymath}
for all \emph{these} $q_{i}^{*}$ for all $n \geq N$ (for a given $\epsilon > 0$).
The upper bound here will be of particular importance to us.

We then suppose
(aiming for a contradiction)
that the statement of the theorem is false for some $n=k$,
and we find a value $q_{j}^{*}$ close to $\frac{m(k)}{k}$.
Using $|\mathcal{S}^{g}(k,m(k))| \geq |\mathcal{S}^{0}(k,m(k))|$,
Lemma~\ref{uniform},
and the continuity of $\gamma (q)$,
we may then obtain
$|\mathcal{S}^{g}(k,m(k))| \geq (1 - \epsilon)^{k} k! (\gamma (q_{j}^{*}))^{k}$.

We then take graphs in $|\mathcal{S}^{g}(k,m(k))|$
without $\alpha k$ vertex-disjoint appearances of $H$,
and to each of these 
we attach many appearances of carefully selected graphs $H_{1}$ and $H_{2}$,
which both contain appearances of $H$.
By choosing $H_{1}$ and $H_{2}$
to have the appropriate ratio of edges to vertices,
we may consequently construct many graphs
in $\mathcal{S}^{g} ((1+\delta)k, \lfloor q_{j}^{*} (1+\delta)k \rfloor)$
for some $\delta > 0$.

The fact that the original graphs were assumed to contain few appearances of $H$
is then used to bound the amount of double-counting,
and so we find that we obtain a contradiction to our earlier upper bound
on $|\mathcal{S}^{g} (n, \lfloor q_{j}^{*}n \rfloor)|$ when $n=(1+\delta)k$.
\end{appsketch}
\begin{appfull}
Let $b>1$ denote $\liminf \frac{m}{n}$,
let $B<3$ denote $\limsup \frac{m}{n}$,
and let $h$ denote $|H|$.
Let $l \geq 3$ then be an integer chosen to satisfy both
\begin{equation}
\frac{e(H)+l+1}{h+l} < b \label{l1}
\end{equation}
and 
\begin{equation}
\frac{e(H)+3l-4}{h+l} > B, \label{l2}
\end{equation}
let 
\begin{equation}
\beta = e^{2} ( \gamma_{l} )^{h+l} 2(2h+l+1) (h+l)!, \label{beta}
\end{equation}
and let $\alpha$ be a fixed constant in $\left( 0, \frac{1}{\beta} \right)$.
Since $\alpha \beta <1$,
we may then also choose a value $\epsilon \in \left( 0, \frac{1}{3} \right)$
such that 
\begin{equation}
(\alpha \beta)^{\alpha} = 1 - 3 \epsilon. \label{epsilon}
\end{equation}

By continuity of $\gamma (q)$,
it is possible to find $d>0$
such that $|\gamma (q_{1}) - \gamma (q_{2})| < \frac{\epsilon}{2}$
whenever $q_{1}, q_{2} \in [b,B]$ and $|q_{1}-q_{2}| \leq d$.
We may assume that $d$ is small enough that is also satisfies both
\begin{equation}
\frac{\alpha (e(H)+l+1) + d}{\alpha (h+l)} \leq b \label{b}
\end{equation}
and 
\begin{equation}
\frac{\alpha (e(H)+3l-4) - d}{\alpha (h+l)} \geq B. \label{B}
\end{equation}
Let us then split $[b,B]$ into a finite number of intervals of length at most $d$,
and for each interval $i$
let us select a value $q_{i}^{*}$ in that interval.

Let $N$ then be chosen so that
$|\mathcal{S}^{g}(n, \lfloor q_{i}^{*} n \rfloor)| 
\leq (1 + \epsilon)^{n}n! (\gamma (q_{i}^{*}))^{n}$ 
for all $i$ for all $n \geq N$,
and let us suppose that the statement of the theorem doesn't hold 
for some $k \geq N$
(throughout the remainder of this proof,
we will assume that $N$ is large enough that various inequalities involving 
$l, \beta, \alpha, \epsilon$ and $k$ are satisfied).
Let $M$ denote $m(k)$,
and let $\mathcal{G}_{k}$ denote the set of graphs in $\mathcal{S}^{g}(k,M)$
which contain at most $\alpha k$ appearances of $H$
(so $|\mathcal{G}_{k}| \geq 
e^{- \alpha k} |\mathcal{S}^{g}(k,M)|$).

Without loss of generality,
suppose $\frac{M}{k}$ is in interval $j$ of our subdivision of $[b,B]$.
Thus,
we have 
\begin{eqnarray*}
|\mathcal{S}^{g}(k,M)| 
& \geq & |\mathcal{S}^{0}(k,M)| \\
& \geq & \left(1- \frac{\epsilon}{2} \right)^{k} 
\left( \gamma \left( \frac{M}{k} \right) \right)^{k} k!
\textrm{ by Lemma~\ref{uniform} (for large $k$)} \\
& \geq & \left( 1- \frac{\epsilon}{2} \right)^{k} 
\left( \gamma (q_{j}^{*}) - \frac{\epsilon}{2} \right)^{k} k! \\
& \geq & (1- \epsilon)^{k} (\gamma (q_{j}^{*}))^{k} k!
\textrm{ (since $\gamma (q) > 1$ for all $q$)},
\end{eqnarray*}
and so
$|\mathcal{G}_{k}| \geq 
e^{- \alpha k} (1- \epsilon)^{k} (\gamma (q_{j}^{*}))^{k} k!$.

Recall
\begin{displaymath}
\frac{\alpha (e(H)+l+1) + d}{\alpha (h+l)} 
\stackrel{\eqref{b}}{\leq} b 
\leq q_{j}^{*}
\end{displaymath}
and 
\begin{displaymath}
\frac{\alpha (e(H)+3l-4) - d}{\alpha (h+l)} 
\stackrel{\eqref{B}}{\geq} B 
\geq q_{j}^{*}.
\end{displaymath}
Hence,
\begin{displaymath}
\frac{q_{j}^{*} + \alpha (e(H)+l+1) + d}{1 + \alpha (h+l)} 
\leq q_{j}^{*}
\leq \frac{q_{j}^{*} + \alpha (e(H)+3l-4) - d}{1 + \alpha (h+l)},
\end{displaymath}
so
\begin{displaymath}
\frac{(q_{j}^{*}+d)k + \alpha k (e(H)+l+1)}
{k + \alpha k (h+l)} 
\leq q_{j}^{*}
\leq \frac{(q_{j}^{*}-d)k + \alpha k (e(H)+3l-4)}
{k + \alpha k (h+l)},
\end{displaymath}
and so
\begin{displaymath}
\frac{(q_{j}^{*}+d)k + \lceil \alpha k \rceil (e(H)+l+1)}
{k + \lceil \alpha k \rceil (h+l)} 
\stackrel{\eqref{l1}}{\leq} q_{j}^{*}
\stackrel{\eqref{l2}}{\leq} 
\frac{(q_{j}^{*}-d)k + \lceil \alpha k \rceil (e(H)+3l-4)}
{k + \lceil \alpha k \rceil (h+l)}.
\end{displaymath}
Thus,
since $q_{j}^{*}-d \leq \frac{M}{k} \leq q_{j}^{*}+d$,
we have
\begin{displaymath}
\frac{M+ \lceil \alpha k \rceil (e(H)+l+1)}{k+\lceil \alpha k \rceil (h+l)} 
\leq q_{j}^{*} \leq
\frac{M+ \lceil \alpha k \rceil (e(H)+3l-4)}{k+\lceil \alpha k \rceil (h+l)}.
\end{displaymath}

Hence,
we can find integers $r^{-}$ and $r^{+}$ in $\{l-1,l,\ldots,3l-6\}$
satisfying $r^{+}=r^{-}+1$ and
\begin{displaymath}
\frac{M+ \lceil \alpha k \rceil (e(H)+2+r^{-})}{k+\lceil \alpha k \rceil (h+l)} 
\leq q_{j}^{*} \leq
\frac{M+ \lceil \alpha k \rceil (e(H)+2+r^{+})}{k+\lceil \alpha k \rceil (h+l)}.
\end{displaymath}
Since $r^{-},r^{+} \in \{l-1,l,\ldots,3l-6\}$,
we can thus find 
connected planar graphs $H^{-}$ and $H^{+}$ on $\{1,2,\ldots,l\}$
with $e(H^{-})=r^{-}$
and $e(H^{+})=r^{+}=r^{-}+1$.

Now let $H^{\prime}$ be an order-preserving copy of $H$ 
on $\{l+1,l+2,\ldots,l+h\}$,
let $H_{1}$ be formed from $H^{-}$ and $H^{\prime}$
by adding an edge between vertex $l$ and vertex $l+1$,
and let $H_{2}$ be the analogous graph formed from $H^{+}$ and $H^{\prime}$.
Note that $H_{1}$ and $H_{2}$
(and indeed any appearances of $H_{1}$ and $H_{2}$)
both contain appearances of $H$.
Also,
note that we have 
\begin{displaymath}
\frac{M+ \lceil \alpha k \rceil (e(H_{1})+1)}{k+\lceil \alpha k \rceil (h+l)} 
\leq q_{j}^{*} \leq
\frac{M+ \lceil \alpha k \rceil (e(H_{2})+1)}{k+\lceil \alpha k \rceil (h+l)},
\end{displaymath}
and hence 
\begin{displaymath}
M + \lceil \alpha k \rceil (e(H_{1})+1)
\leq \lfloor q_{j}^{*} (k + \lceil \alpha k \rceil (h+l)) \rfloor
\leq M + \lceil \alpha k \rceil (e(H_{2})+1)
\end{displaymath}
(using the integrality of 
$M + \lceil \alpha k \rceil (e(H_{1})+1)$
to obtain the left-hand inequality).

Now let 
\begin{equation}
\delta = \frac{ \lceil \alpha k \rceil (h+l) }{k}. \label{delta}
\end{equation}
Starting with graphs in $\mathcal{G}_{k}$,
we shall construct graphs 
in $\mathcal{S}^{g}((1+\delta)k, \lfloor q_{j}^{*}(1+\delta)k \rfloor)$
by attaching 
$k_{1}$ appearances of $H_{1}$ 
and $k_{2} = \lceil \alpha k \rceil - k_{1}$ appearances of $H_{2}$.

Note that we shall need to achieve the correct balance of $k_{1}$ and $k_{2}$
so that our constructed graphs will indeed have 
$\lfloor q_{j}^{*}(1+\delta)k \rfloor) 
= \lfloor q_{j}^{*} ( k + \lceil \alpha k \rceil ) (h+l) \rfloor$ edges.
But observe that (since $e(H_{2})=e(H_{1})+1$)
$k_{1}$ and $k_{2}$ can be chosen so that
the number of edges in our constructed graph is any desired integer from
$M + \lceil \alpha k \rceil (e(H_{1})+1)$ to
$M + \lceil \alpha k \rceil (e(H_{2})+1)$,
and so this is okay. \\

Having obtained the appropriate values of $k_{1}$ and $k_{2}$,
let us now construct our graphs 
in $\mathcal{S}^{g}((1+\delta)k, \lfloor q_{j}^{*}(1+\delta)k \rfloor)$:

Choose $\delta k$ special vertices 
(we have $\left( ^{(1+\delta)k} _{\phantom{qq} \delta k} \right)$ 
choices for these),
and partition them into $\lceil \alpha k \rceil$ unordered blocks of size $h+l$
(we have 
$\left( ^{\phantom{wwq}\delta k}_{h+l, \ldots, h+l} \right) 
\frac{1}{\left \lceil \alpha k \right \rceil !}$ 
choices for this).
Divide the blocks into two sets of size $k_{1}$ and $k_{2}$.
On each of the first $k_{1}$ blocks,
we put a copy of $H_{1}$ 
such that the increasing bijection from $V(H_{1})$ to the block 
is an isomorphism between $H_{1}$ and this copy.
We do the same for the set of $k_{2}$ blocks, 
except with $H_{2}$ instead of $H_{1}$.

On the remaining (i.e.~non-special) vertices,
choose a graph $G \in \mathcal{G}_{k}$.
We may then attach our copies of $H_{1}$ and $H_{2}$ 
to any vertices in $V(G)$
($k^{\lceil \alpha k \rceil}$ choices)
to create appearances of $H_{1}$ and $H_{2}$.

Thus, for each choice of special vertices and each choice of $G$, 
the number of graphs in
$\mathcal{S}^{g}((1+\delta)k, \lfloor q_{j}^{*}(1+\delta)k \rfloor)$
that we may construct is at least
\begin{eqnarray*}
\left( ^{\phantom{wwq}\delta k}_{h+l, \ldots, h+l} \right)
\frac{1}{\lceil \alpha k \rceil !} k^{\lceil \alpha k \rceil} 
& = & \frac{(\delta k)! k^{\lceil \alpha k \rceil}}
{((h+l)!)^{\lceil \alpha k \rceil} \lceil \alpha k \rceil !} \\
& \geq & \frac{(\delta k)!} {((h+l)! \alpha)^{\lceil \alpha k \rceil}} 
\end{eqnarray*}
(for $k$ large enough that 
$\lceil \alpha k \rceil ! \leq (\alpha k)^{\lceil \alpha k \rceil}$).

Hence,
we may construct at least 
\begin{displaymath}
\left( ^{(1+\delta)k}_{\phantom{qq} \delta k} \right) 
e^{- \alpha k} (1 - \epsilon)^{k} (\gamma(q_{j}^{*}))^{k} k! 
\frac{(\delta k)!}{((h+l)! \alpha)^{\lceil \alpha k \rceil}}
\end{displaymath}
(not necessarily distinct) graphs 
in $\mathcal{S}^{g}((1+\delta)k, \lfloor q_{j}^{*}(1+\delta)k \rfloor)$ 
in total. \\

We shall now consider the amount of double-counting:

Recall that $G$ did not contain
$\alpha k$ vertex-disjoint appearances of $H$,
and so contained fewer than $h \alpha k$ appearances of $H$ in total
(by Observation~\ref{appinter}).
Recall also that each appearance of $H_{1}$ 
contains an appearance of $H$,
and so $G$ contained fewer than $h \alpha k$ appearances of $H_{1}$.
When we deliberately attach an appearance of $H_{1}$ or $H_{2}$,
the number of `accidental' appearances of $H_{1}$ 
that we create in the graph
will be at most $h+l$
(considering the number of cut-edges).
Thus, our created graph will have at most
$(2h+l+1) \lceil \alpha k \rceil$
appearances of $H_{1}$.
Similarly,
our created graph will have at most
$(2h+l+1) \lceil \alpha k \rceil$
appearances of $H_{2}$.

Let $x=2(2h+l+1)$.
Then, given one of our constructed graphs, we have at most
$\left( ^{x \lceil \alpha k \rceil}_{\phantom{i}\lceil \alpha k \rceil} \right)
\leq (xe)^{\lceil \alpha k \rceil}$
choices for which were the special vertices.
Once we have identified these, we then know what $G$ was.
Thus, each graph is constructed at most $(xe)^{\lceil \alpha k \rceil}$ times. \\

Therefore, 
we find that
the number of distinct graphs that we have created in
$\mathcal{S}^{g}((1+\delta)k, \lfloor q_{j}^{*}(1+\delta)k \rfloor)$
is at least 
\begin{eqnarray*}
& & \left( ^{(1+\delta)k}_{\phantom{qq} \delta k} \right) 
e^{- \alpha k} (1 - \epsilon)^{k} (\gamma(q_{j}^{*}))^{k} k! 
\frac{(\delta k)!}{((h+l)! \alpha)^{\lceil \alpha k \rceil}} 
(xe)^{- \lceil \alpha k \rceil} \\
& \stackrel{\eqref{delta}}{\geq} & 
((1+\delta)k)! (\gamma(q_{j}^{*}))^{(1+\delta)k} (1-\epsilon)^{k} 
\left( e^{2} (\gamma(q_{j}^{*}))^{(h+l)}x(h+l)! \alpha \right) 
^{- \lceil \alpha k \rceil} \\
& \stackrel{\eqref{beta}}{\geq} & 
((1+\delta)k)! (\gamma(q_{j}^{*}))^{(1+\delta)k} (1-\epsilon)^{k} 
(\alpha \beta)^{- \lceil \alpha k \rceil} \\
& \stackrel{\eqref{epsilon}}{\geq} & 
|\mathcal{S}^{g}((1+\delta)k, \lfloor q_{j}^{*}(1+\delta)k \rfloor)| 
(1+\epsilon)^{-(1+\delta)k} (1-\epsilon)^{k} (1-3\epsilon)^{-k} \\
& \geq & 
|\mathcal{S}^{g}((1+\delta)k, \lfloor q_{j}^{*}(1+\delta)k \rfloor)| 
\left( \frac{(1-\epsilon)}{(1-3\epsilon)(1+\epsilon)^{2}} \right) ^{k} \\
& & \textrm{(since we may assume $k$ is large enough that } \delta<1) \\
& > & |\mathcal{S}^{g}((1+\delta)k, \lfloor q_{j}^{*}(1+\delta)k \rfloor)| \\
& & \textrm{(since } 
(1-3\epsilon)(1+\epsilon)^{2}=1-\epsilon -5\epsilon^{2} - 3\epsilon^{3}).
\end{eqnarray*}
Thus,
we have obtained our desired contradiction. \qed 
\end{appfull}

We now look to conclude this section by using Lemma~\ref{mainapps}
to obtain a uniform convergence result similar to Lemma~\ref{uniform}
for arbitrary genus.
Recall that we have already utilised Lemma~\ref{mainapps} 
to produce Lemma~\ref{add3} on the number of $g$-addable graphs.
This will be the crucial ingredient
in modifying the original proof of Lemma~\ref{uniform},
via the following lemma:

\begin{Lemma} \label{foruniform}
Let $g \geq 0$, $a>1$, $A<3$ and $\eta^{\prime} > 0$ be constants.
Then there exist $n_{1}$ and $\delta_{1} > 0$ such that,
for all $n \geq n_{1}$ and all constants $m_{1}, m_{2}$ satisfying
$an \leq m_{1} < m_{2} \leq An$
and $m_{2} - m_{1} \leq \delta_{1} n + 1$,
we have
\begin{displaymath}
|\mathcal{S}^{g}(n,m_{1})| 
\leq (1 + \eta^{\prime})^{n} |\mathcal{S}^{g}(n,m_{2})|.
\end{displaymath} 
\end{Lemma}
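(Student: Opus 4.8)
The plan is to deduce this from Lemma~\ref{mainapps} by double-counting: I would pass from a graph with $m_1$ edges to one with $m_2$ edges by inserting the $s:=m_2-m_1$ extra edges one apiece inside $s$ appearances of a fixed small planar graph. Concretely I would take $H=P_3$, the path on $\{1,2,3\}$ with edges $\{1,2\},\{2,3\}$, whose unique non-edge is $\{1,3\}$. Inspecting the proof of Lemma~\ref{mainapps}, the constants $\alpha$ and $N$ it produces depend only on $H$, $g$, and the values of $\liminf\tfrac mn$ and $\limsup\tfrac mn$; so, running it with those last two equal to $a$ and $A$, I get a constant $\alpha\in(0,1)$ and a threshold so that for all large $n$ and all $m$ with $an\le m\le An$, at least a $(1-e^{-\alpha n})$-fraction of the graphs in $\mathcal S^g(n,m)$ have $\lceil\alpha n\rceil$ vertex-disjoint appearances of $P_3$. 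I would then set $\delta_1:=\min\{\tfrac\alpha2,\ \tfrac{\ln(1+\eta')}{2\ln(4/\alpha)}\}>0$ and choose $n_1$ large enough that all the estimates below hold (in particular $s\le\delta_1n+1<\tfrac34\alpha n$).

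Fix such $n$ and $m_1<m_2$, and let $\mathcal G_n$ be the graphs in $\mathcal S^g(n,m_1)$ having $\lceil\alpha n\rceil$ vertex-disjoint appearances of $P_3$, so $|\mathcal G_n|\ge\tfrac12|\mathcal S^g(n,m_1)|$. For each $G\in\mathcal G_n$ I fix such a family of appearances; for each size-$s$ subset of it I form a new graph by inserting, inside each chosen appearance $W$, the edge between the smallest and largest vertices of $W$ (the image of $\{1,3\}$). The result has $m_2$ edges and still has genus at most $g$ --- inserting chords inside an appearance of a planar graph cannot raise the genus, since one may take a genus-$\le g$ embedding of $G$ with the modified appearances removed and re-insert a planar drawing of each modified (now $K_3$) block into an incident face through its single pendant edge; this is the same elementary fact already used for Lemma~\ref{add3}. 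Distinct subsets give distinct graphs because the appearances are vertex-disjoint, so each $G\in\mathcal G_n$ produces at least $\binom{\lceil\alpha n\rceil}{s}$ graphs in $\mathcal S^g(n,m_2)$.

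For the multiplicity bound: each modified appearance is now an appearance of $K_3$, and $G$ is recovered from the constructed graph together with the set of these $s$ appearances, simply by deleting, inside each, the edge between its smallest and largest vertex. By Observation~\ref{appinter} the graph whose vertices are the $K_3$-appearances of a given graph, with two joined when they share a vertex, has maximum degree at most $2$, hence is $3$-colourable, hence has an independent set --- a vertex-disjoint subfamily --- of size at least a third of the total; since such a subfamily has size at most $\lfloor n/3\rfloor$, any graph has at most $n$ appearances of $K_3$. So each graph in $\mathcal S^g(n,m_2)$ is built at most $\binom ns$ times, giving $|\mathcal S^g(n,m_1)|\le 2\binom ns\big/\binom{\lceil\alpha n\rceil}{s}\cdot|\mathcal S^g(n,m_2)|$.

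It remains to estimate $\binom ns\big/\binom{\lceil\alpha n\rceil}{s}=\prod_{i=0}^{s-1}\tfrac{n-i}{\lceil\alpha n\rceil-i}$: since $\alpha<1$ the factors increase in $i$, so each is at most $\tfrac{n}{\lceil\alpha n\rceil-s}\le4/\alpha$ (using $s<\tfrac34\alpha n$), whence $|\mathcal S^g(n,m_1)|\le 2(4/\alpha)^s|\mathcal S^g(n,m_2)|\le(8/\alpha)\big((4/\alpha)^{\delta_1}\big)^n|\mathcal S^g(n,m_2)|\le(8/\alpha)(1+\eta')^{n/2}|\mathcal S^g(n,m_2)|\le(1+\eta')^n|\mathcal S^g(n,m_2)|$ for $n_1$ large enough, by the choice of $\delta_1$ and $s\le\delta_1n+1$. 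The step I expect to need the most care is establishing the uniformity of Lemma~\ref{mainapps} across the whole range $m\in[an,An]$ --- this is what allows a single $\alpha$, hence a single $\delta_1$, to serve --- together with keeping the multiplicity bound clean; the genus-preservation of the insertions is routine in light of how appearances are already used elsewhere in the paper.
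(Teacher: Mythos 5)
Your proof is correct, and it reaches the same inequality by a genuinely different (though related) route. The paper's proof iterates a single-edge step: invoking Lemma~\ref{add3} (itself a corollary of Lemma~\ref{mainapps} applied with $H=C_4$), a typical graph in $\mathcal{S}^g(n,m-1)$ has $\Omega(n)$ $g$-addable edges; adding one such edge and noting that the resulting graph has fewer than $3n$ edges gives $|\mathcal{S}^g(n,m-1)| \le C\,|\mathcal{S}^g(n,m)|$ for a constant $C$, and chaining this from $m_1$ up to $m_2$ yields a factor $C^{m_2-m_1}$, which is absorbed by taking $\delta_1$ small. You instead apply Lemma~\ref{mainapps} once with $H=P_3$, insert all $s=m_2-m_1$ edges in a single step inside $s$ of the vertex-disjoint appearances, and control the double-counting by bounding the total number of $K_3$-appearances of any graph by $n$ via Observation~\ref{appinter}. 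Both routes rest on the same unstated uniformity in $m$ --- that the $\alpha$ and $N$ of Lemma~\ref{mainapps} can be chosen once for all $m\in[an,An]$, not merely for each fixed function $m(n)$ --- and you were right to flag this; a check of the proof of Lemma~\ref{mainapps} confirms it, since all its constants are determined solely by $g$, $H$, and the endpoints $b,B$ of the range, and the contradiction there is derived from any single large pair $(k,m(k))$ with $\frac{m(k)}{k}\in[b,B]$. The paper's route requires less bookkeeping in the multiplicity step (one only needs the trivial bound $m<3n$); yours avoids the iteration and exposes more directly how the appearances enter the estimate.
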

\begin{proof}
By Lemma~\ref{add3},
there exists a constant $c$ such that,
for all sufficiently large $n$,
the (vast) majority of graphs in $\mathcal{S}^{g}(n,m_{2}-1)$
have at least $cn$ $g$-addable edges.
Note that we can obtain a graph in $\mathcal{S}^{g}(n,m_{2})$
by adding such an edge,
and that any such graph will be built at most $m_{2} < 3n$ times.
Thus,
\begin{eqnarray*}
|\mathcal{S}^{g}(n,m_{2})| & \geq &
\frac{cn \frac{1}{2} |\mathcal{S}^{g}(n,m_{2}-1)|}{3n} \\
& = & \frac{c}{6} |\mathcal{S}^{g}(n,m_{2}-1)|.
\end{eqnarray*}

Proceeding in this manner,
we may consequently obtain
\begin{eqnarray*}
|\mathcal{S}^{g}(n,m_{1})| 
& < & \left( \frac{6}{c} \right)^{\delta_{1}n+1} |\mathcal{S}^{g}(n,m_{2})| \\
& = & \left( \left( \frac{6}{c} \right)^{\delta_{1}+\frac{1}{n}} \right)^{n}
|\mathcal{S}^{g}(n,m_{2})|, 
\end{eqnarray*}
from which the result follows.
\end{proof}

We may now obtain our uniform convergence result:

\begin{Theorem} \label{newuniform}
Let $g \geq 0$, $a > 1$, $A < 3$ and $\eta>0$ be constants.
Then there exists $n_{0}$ such that, 
for all $n \geq n_{0}$ and all $m \in [an,An]$,
we have
\begin{displaymath}
\left| \left( \frac{|\mathcal{S}^{g}(n,m)|}{n!} \right)^{1/n} - 
\gamma \left( \frac{m}{n} \right) \right| < \eta.
\end{displaymath} 
\end{Theorem}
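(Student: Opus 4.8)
The plan is to deduce Theorem~\ref{newuniform} from the pointwise convergence $\left(|\mathcal{S}^{g}(n,\lfloor qn\rfloor)|/n!\right)^{1/n}\to\gamma(q)$, the continuity of $\gamma$, and the one-edge interpolation estimate of Lemma~\ref{foruniform}, via a standard finite-net argument; this mirrors the proof of the planar case (Lemma~\ref{uniform}) in \cite{ger}, with Lemma~\ref{foruniform} (itself built on Lemma~\ref{add3}, and hence on Lemma~\ref{mainapps}) supplying the genus-$g$ ingredient that replaces "every planar graph extends to a triangulation".

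First I would fix $\eta>0$ and, using that $\gamma$ is uniformly continuous on the compact interval $[a,A]\subset(1,3)$, choose $\delta_c>0$ so that $|\gamma(q_1)-\gamma(q_2)|<\eta/4$ whenever $q_1,q_2\in[a,A]$ with $|q_1-q_2|\le\delta_c$; recall also $e<\gamma(q)\le\gamma_l$ on this range. Next I would apply Lemma~\ref{foruniform} with a parameter $\eta'>0$ chosen small enough that $\eta'\gamma_l<\eta/4$, obtaining $n_1$ and $\delta_1>0$, and set $\delta=\min(\delta_c,\delta_1)$. Subdivide $[a,A]$ into finitely many subintervals of length at most $\delta$, with endpoints $a=p_0<p_1<\dots<p_k=A$; since $\{p_0,\dots,p_k\}$ is finite, the pointwise convergence lemma gives $N_0$ such that $\left|\left(|\mathcal{S}^{g}(n,\lfloor p_i n\rfloor)|/n!\right)^{1/n}-\gamma(p_i)\right|<\eta/4$ for all $n\ge N_0$ and all $i$. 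Put $n_0=\max(N_0,n_1)$, enlarging it so that (for $n\ge n_0$) all the edge counts below lie in $[a'n,A'n]$ for some fixed $1<a'<a$ and $A<A'<3$, which is where Lemma~\ref{foruniform} genuinely applies.

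Given $n\ge n_0$ and $m\in[an,An]$, pick $i$ with $p_i\le m/n\le p_{i+1}$ (if $m/n$ itself equals some $p_i$, then $m=\lfloor p_i n\rfloor$ and both bounds below are immediate). The key point is that Lemma~\ref{foruniform} is \emph{one-directional} — it bounds the count with fewer edges by $(1+\eta')^n$ times the count with more edges — so I would use net points on opposite sides of $m/n$ for the upper and lower bounds. For the \emph{upper} bound, since $p_{i+1}n\ge m$ and $m\in\mathbb{Z}$ we have $m\le\lfloor p_{i+1}n\rfloor$ and $\lfloor p_{i+1}n\rfloor-m\le\delta n\le\delta_1 n+1$, so Lemma~\ref{foruniform} gives $|\mathcal{S}^{g}(n,m)|\le(1+\eta')^n|\mathcal{S}^{g}(n,\lfloor p_{i+1}n\rfloor)|$; taking $n$-th roots and combining with the net estimate at $p_{i+1}$, with $|p_{i+1}-m/n|\le\delta\le\delta_c$, and with $\gamma\le\gamma_l$ yields $\left(|\mathcal{S}^{g}(n,m)|/n!\right)^{1/n}<\gamma(m/n)+\eta$. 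For the \emph{lower} bound, $\lfloor p_i n\rfloor\le m$ and $m-\lfloor p_i n\rfloor\le\delta n+1\le\delta_1 n+1$, so Lemma~\ref{foruniform} applied with $m_1=\lfloor p_i n\rfloor$, $m_2=m$ gives $|\mathcal{S}^{g}(n,\lfloor p_i n\rfloor)|\le(1+\eta')^n|\mathcal{S}^{g}(n,m)|$, and combining with the net estimate at $p_i$ and continuity yields $\left(|\mathcal{S}^{g}(n,m)|/n!\right)^{1/n}>\gamma(m/n)-\eta$. Together these give the claimed uniform bound, and this holds for all $n\ge n_0$ and all $m\in[an,An]$, as required.

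Since the substantive work — producing linearly many appearances (Lemma~\ref{mainapps}), hence linearly many $g$-addable edges (Lemma~\ref{add3}), hence the interpolation estimate (Lemma~\ref{foruniform}) — is already in place, the only real care needed here is the bookkeeping of the several $\eta$-sized error terms and the observation that the one-sidedness of Lemma~\ref{foruniform} forces us to select the comparison net point on the correct side of $m/n$ in each of the two bounds. I do not anticipate any genuine obstacle beyond that.
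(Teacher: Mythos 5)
Your proposal is correct and takes essentially the same approach as the paper, which simply states that the proof follows that of Lemma~2.9 in~\cite{ger} with Lemma~\ref{foruniform} substituted at the relevant point; you have spelled out the finite-net argument (pointwise convergence at net points, uniform continuity of $\gamma$, one-sided interpolation via Lemma~\ref{foruniform}) that this deferral encapsulates, including the correct observation that the one-directionality of Lemma~\ref{foruniform} forces the comparison net point to be chosen on opposite sides of $m/n$ for the upper and lower bounds.
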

\begin{proof}
The proof follows that of Lemma 2.9 in~\cite{ger},
using Lemma~\ref{foruniform} at the relevant point.
\end{proof}

\section{Triangulated appearances: proof of Lemma~\ref{main6apps}}
\label{6appsection}

In this section,
we shall now turn our attention to triangulated appearances
(see Definition~\ref{6apps}).
The main feature here will be the proof of Lemma~\ref{main6apps}.

Throughout this section,
we shall often be interested in the region
when $\frac{m}{n}$ is close to $3$,
and so we start with a lemma that defines the relevant growth constant
$\gamma (3)$:

\begin{Lemma} [\cite{ger}, Theorem 2.1]
There exists a constant $\gamma (3)>0$ such that
\begin{displaymath}
\gamma (q) \to \gamma (3) \textrm{ as } q \to 3^{-},
\end{displaymath}
i.e.~$\gamma (q) \to \gamma (3)$ as $q \to 3$ from below.
\end{Lemma}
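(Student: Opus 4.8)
The plan is to reduce the statement to two sub-claims: that the limit $\lim_{q\to 3^-}\gamma(q)$ exists, and that it is positive. The second is immediate, since the preceding lemma of~\cite{chap} gives $\gamma(q)>e$ for every $q\in(1,3)$, so whatever the limit is, it is at least $e>0$ (and at most $\gamma_l$). All the work is therefore in showing that the bounded function $\gamma$ cannot oscillate as $q\to 3^-$. The cleanest way I know to rule this out is to prove that $\gamma$ is \emph{log-concave} on $(1,3)$: a concave function on an open interval which is also bounded (here $1<\log\gamma(q)\le\log\gamma_l$) automatically has finite one-sided limits at the endpoints, because its right derivative is non-increasing and hence the function is eventually monotone near $q=3$, and a bounded eventually-monotone function has a limit.

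To establish log-concavity of $\gamma$ I would use a super-multiplicativity argument. Since the lemma of~\cite{chap} tells us that the limit defining $\gamma(q)$ does not depend on the genus $g$, it suffices to argue with $g=0$, i.e.\ with ordinary planar graphs, where disjoint unions and one-edge merges preserve planarity. Fix $q_1,q_2\in(1,3)$ and a rational $\lambda=a/(a+b)\in(0,1)$; for large $n'$ put $n_1=an'$, $n_2=bn'$, $n=n_1+n_2$, $m_i=\lfloor q_i n_i\rfloor$. From a connected planar graph on a prescribed $n_1$-subset of $[n]$ with $m_1$ edges together with a connected planar graph on the complementary $n_2$-set with $m_2$ edges, joined by a single edge across the two parts, we obtain a connected planar graph on $[n]$ with $m_1+m_2+1$ edges; conversely any such graph arises in this way for at most $n_1n_2$ choices of the joining edge and is produced at most $n-1$ times, since the joining edge must be one of its (at most $n-1$) bridges. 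Writing $c_n(m)$ for the number of labelled connected planar graphs on $[n]$ with $m$ edges, this yields
\begin{displaymath}
\frac{c_n(m_1+m_2+1)}{n!}\;\ge\;\frac{n_1 n_2}{n}\cdot\frac{c_{n_1}(m_1)}{n_1!}\cdot\frac{c_{n_2}(m_2)}{n_2!}.
\end{displaymath}
Taking $n$-th roots and letting $n'\to\infty$, the polynomial factor and the single extra edge disappear, the edge densities converge to $q_1$, $q_2$ and $\lambda q_1+(1-\lambda)q_2$, and one obtains $\gamma(\lambda q_1+(1-\lambda)q_2)\ge\gamma(q_1)^{\lambda}\gamma(q_2)^{1-\lambda}$ for rational $\lambda$, hence for all $\lambda$ by continuity of $\gamma$. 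Thus $\log\gamma$ is concave on $(1,3)$, and combined with $\log\gamma\le\log\gamma_l$ this gives the existence of $\gamma(3):=\lim_{q\to 3^-}\gamma(q)\in[e,\gamma_l]$.

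The step I expect to be the main obstacle is the bookkeeping at the boundary of this argument. First, the super-multiplicative inequality above is naturally phrased for \emph{connected} graphs (so that the joining edge is forced to be a bridge and the overcount stays polynomial), so one needs the standard fact that connected planar graphs of edge density $q\in(1,3)$ have the same growth constant $\gamma(q)$ as all planar graphs (cf.~\cite{gim}), together with a uniform-convergence input of the type of Lemma~\ref{uniform} (or Theorem~\ref{newuniform}) and the continuity of $\gamma$ to absorb the discrepancies between $\lfloor q_i n_i\rfloor$ and $q_i n_i$, and between $m_1+m_2+1$ and $\lfloor q^* n\rfloor$. It is worth noting why the more hands-on approach---comparing $|\mathcal{S}^0(n,\lfloor q_1 n\rfloor)|$ and $|\mathcal{S}^0(n,\lfloor q_2 n\rfloor)|$ directly by adding or deleting edges---is less convenient: for $q_2$ close to $3$ the graphs are near-triangulations, and the number of edges whose addition (in combination) keeps planarity is awkward to bound, so the super-multiplicative route is preferable. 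Everything else is routine.
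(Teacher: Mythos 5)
The paper does not give its own proof of this statement: it is quoted directly from~\cite{ger} (Theorem~2.1 there), so there is no in-paper argument to compare against. Your proposal is a self-contained reconstruction, and its outline is plausible. Proving that $\log\gamma$ is concave on $(1,3)$ via a bridge-joining super-multiplicativity inequality for connected planar graphs, and then using that a bounded concave function on an open interval has one-sided limits at its endpoints, is the standard McDiarmid--Steger--Welsh-type smoothness argument adapted to the edge-constrained setting; the positivity of the limit is indeed immediate from the bound $\gamma(q)>e$ quoted from~\cite{chap}. The bridge count is the right overcounting bound, the polynomial factors wash out under $n$-th roots, and the concavity-to-limit step is correct.

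The load-bearing ingredient you label as ``standard'' --- that connected planar graphs of edge density $q\in(1,3)$ have growth constant $\gamma(q)$, together with a uniform-convergence statement of the type of Lemma~\ref{uniform} --- is true but not free, and you should be explicit about its provenance. The equality of the connected and unrestricted growth constants for $q>1$ follows from the fact that a random planar graph with $\lfloor qn\rfloor$ edges is connected with probability bounded away from zero (Theorem~\ref{conn0}), but the machinery this paper uses to prove such connectivity results (Lemma~\ref{mainapps}) itself invokes $\gamma(q)$ and Lemma~\ref{uniform}, both of which come from~\cite{ger}. If your goal is a proof that could stand in for~\cite{ger}'s own Theorem~2.1, you must check that your inputs are established there \emph{before}, not after, the result you are reproving; otherwise the argument is circular. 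If instead you only want a derivation for a reader who already accepts the planar enumeration results of~\cite{ger,gim,msw}, the argument is sound, and the remaining work is the careful treatment of the $O(1)$ discrepancy between $\lfloor q_1 n_1\rfloor + \lfloor q_2 n_2\rfloor + 1$ and $\lfloor(\lambda q_1 + (1-\lambda)q_2)n\rfloor$, which is straightforward given uniform convergence.
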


We next give two useful lemmas on the size of $\mathcal{S}^{g}(n,m)$
when $\frac{m}{n}$ is close to~$3$.
The first gives a lower bound:

\begin{Lemma} \label{lower}
Let $g \geq 0$ be a constant,
and let $m=m(n)$ satisfy
$\frac{m}{n} \to 3$ as $n \to \infty$.
Then,
given any $\epsilon > 0$,
there exists $N_{1}$ such that 
\begin{displaymath}
|\mathcal{S}^{g}(n,m)| \geq (1- \epsilon)^{n} n! (\gamma (3))^{n}
\textrm{ for all } n \geq N_{1}.
\end{displaymath}
\end{Lemma}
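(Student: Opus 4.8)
The plan is to reduce everything to the \emph{planar} uniform convergence result (Lemma~\ref{uniform}), exploiting the fact that it is valid for the \emph{entire} range $m\in[an,3n-6]$ available to planar graphs, and then to patch the small leftover range $3n-6<m\le 3n-6+6g$ by a local surgery. Fix $\epsilon>0$. Since $\gamma(q)>1$ for all $q$ and $\gamma(q)\to\gamma(3)$ as $q\to3^-$, we may fix a constant $\eta>0$ with $\gamma(3)-2\eta\ge(1-\epsilon)\gamma(3)$; it then suffices to show $|\mathcal{S}^{g}(n,m)|\ge(\gamma(3)-2\eta)^{n}n!$ for all large $n$.

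\textbf{Case $m\le 3n-6$} (the only case when $g=0$). Here I would simply use $|\mathcal{S}^{g}(n,m)|\ge|\mathcal{S}^{0}(n,m)|$. Since $m/n\to3$, eventually $m\in[an,3n-6]$ for any fixed $a\in(1,3)$, so Lemma~\ref{uniform} gives $\bigl(|\mathcal{S}^{0}(n,m)|/n!\bigr)^{1/n}>\gamma(m/n)-\eta$; and since $m/n\to3^-$ we get $\gamma(m/n)>\gamma(3)-\eta$ for large $n$. Combining, $|\mathcal{S}^{0}(n,m)|>(\gamma(3)-2\eta)^{n}n!$, which is the required bound.

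\textbf{Case $3n-6<m\le 3n-6+6g$} (possible only for $g\ge1$; for each $n$ only the $6g$ values $m=3n-6+t$, $1\le t\le6g$). Now planar graphs do not suffice, and I would build members of $\mathcal{S}^{g}(n,m)$ from planar triangulations on $[n]$ by a surgery that raises both the genus and the edge count. Starting from a planar triangulation $P$ on $[n]$, perform $\lceil t/6\rceil\le g$ ``handle operations'': each removes two triangular faces of the current embedding and replaces them by a triangulated annulus on the same six boundary vertices, which inserts exactly six new edges, keeps the graph a triangulation, and raises the genus by exactly one; afterwards delete $6\lceil t/6\rceil-t\le5$ of the newly inserted edges. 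The result lies on $[n]$, has genus $\le g$, and has exactly $3n-6+t=m$ edges. Every planar triangulation on $[n]$ (for $n$ large) admits this: either it has $2g$ pairwise vertex-disjoint triangular faces to operate on, or it has a vertex $v$ of degree $\Omega(n)$, in which case all $g$ handles can instead be built by deleting $v$ together with its spokes and refilling the resulting polygonal hole by a triangulated genus-$g$ surface with one boundary cycle and $v$ as sole interior vertex (a computation of the kind $E=3V-k-3+6g$ shows this again adds $6g$ edges and no vertices). By Lemma~\ref{uniform} at $m=3n-6$ there are at least $(\gamma(3)-\eta)^{n}n!$ planar triangulations on $[n]$, each yielding at least one such graph, and any fixed constructed graph arises in at most $n^{c'}$ ways for a constant $c'=c'(g)$ (reversing the $O_g(1)$ handles and the $\le5$ edge deletions involves only polynomially many choices). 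Hence $|\mathcal{S}^{g}(n,m)|\ge(\gamma(3)-\eta)^{n}n!/n^{c'}\ge(\gamma(3)-2\eta)^{n}n!$ for $n$ large, since $\bigl((\gamma(3)-\eta)/(\gamma(3)-2\eta)\bigr)^{n}$ dominates any polynomial.

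The main obstacle is entirely in this second case: verifying the topology of the handle operation (that a triangulated annulus spanning two triangular faces really adds a handle, and that the disk-refilling alternative is available when $P$ is near-regular fails and instead has a dominating vertex), and pinning down the polynomial overcounting bound. This is elementary bookkeeping with constants and does not affect the exponential rate; one could equally reserve a constant-size vertex set and glue on a fixed genus-$g$ gadget through a triangulated annular collar, at the cost of a slightly longer but equally routine edge-count check. The first case, by contrast, is immediate, and the genuine content of this part of the paper sits in the companion \emph{upper} bound stated next, whose proof must actually use the structure of (triangulated) appearances.
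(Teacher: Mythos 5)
Your first case is fine and matches the spirit of the paper's base case. Your second case, however, takes a genuinely different route from the paper's, and it has a real gap that you wave away as ``elementary bookkeeping.'' The paper proves Lemma~\ref{lower} by \emph{induction on $g$}: in the inductive step it places a genus-$(k-1)$ triangulation on $\lceil n/2\rceil$ vertices and a \emph{disjoint} planar triangulation on $\lfloor n/2\rfloor$ vertices, joins them by six edges forming a tube between two outer faces to produce a simple genus-$(k-1)$ triangulation on $[n]$, and then adds up to six more edges across two carefully chosen inner faces to raise the genus to $k$ and hit the right edge count. Because the two blocks live on disjoint vertex sets, the six connecting edges are automatically non-edges; and the paper explicitly chooses inner faces with at most one common vertex each so that the six extra edges also avoid multi-edges. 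The double-counting is then a crude $O(n^{12})$, since everything is local. Your approach instead performs $\lceil t/6\rceil$ handle surgeries on a single planar triangulation $P$ on $[n]$, and then bounds the overcounting by reversing $O_g(1)$ local moves.

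The gap: vertex-disjointness of the two triangular faces used for a handle does \emph{not} guarantee that the six annulus edges you want to add are non-edges of $P$. Concretely, if $F_1=a_1a_2a_3$ and $F_2=b_1b_2b_3$ are vertex-disjoint faces whose six vertices induce an octahedron in $P$, then \emph{every} triangulated annulus on these six boundary vertices uses exactly the six cross-edges that are already present, so the surgery yields a multigraph, not a simple graph, and there is no cyclic relabelling that avoids this. Thus ``$2g$ pairwise vertex-disjoint faces'' is not sufficient; you need the faces to be pairwise at graph distance $\geq 2$, which requires a sharper dichotomy (roughly: either $\Delta(P)=O(\sqrt{n/g})$, in which case a greedy count over $2n-4$ faces finds $2g$ faces at mutual distance $\geq 2$, or there is a vertex of degree $\Omega(\sqrt{n/g})$, which is then large enough for your refilling alternative). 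Your ``$\Omega(n)$'' degree claim as currently stated also does not follow from the failure of the vertex-disjointness condition alone. None of this is fatal — the approach is salvageable along the lines just sketched, and the polynomial overcounting is correct — but it is more than bookkeeping, and it is precisely the kind of simplicity issue that the paper's disjoint-blocks gluing sidesteps for free. It is worth noting that the paper's inductive route is also conceptually cleaner in that the only topological fact it needs is that a tube between a genus-$(k-1)$ surface and a sphere, then a second tube between two interior faces, gives genus $k$; your direct-surgery argument needs that \emph{plus} a degree/face-packing dichotomy in planar triangulations.
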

\begin{proof}
The proof is by induction on $g$.

Base case ($g=0$):
This follows from Lemma~\ref{uniform}
and the continuity of $\gamma (q)$ as $q \to 3^{-}$.

Inductive step:
Now suppose the result is true for all $g<k$ and consider $g=k$.
It suffices to deal with the cases when
(a) $m \leq 3n-6+6(k-1)$ for all large $n$
and (b) $m > 3n-6+6(k-1)$ for all large $n$.

Case (a): 
We have 
$|\mathcal{S}^{k}(n,m)| \geq |\mathcal{S}^{k-1}(n,m)| \geq
(1- \epsilon)^{n} n! (\gamma (3))^{n}$
by the induction hypothesis,
and so we are done.

Case (b): 
We shall construct graphs in $\mathcal{S}^{k}(n,m)$
by using triangulations in 
$\mathcal{S}^{k-1}(\lceil \frac{n}{2} \rceil, 
3 \lceil \frac{n}{2} \rceil - 6 + 6(k-1))$
and 
$\mathcal{S}^{0}(\lfloor \frac{n}{2} \rfloor, 
3 \lfloor \frac{n}{2} \rfloor - 6)$:

Take a graph $T_{1}$ in 
$\mathcal{S}^{k-1}(\lceil \frac{n}{2} \rceil, 
3 \lceil \frac{n}{2} \rceil - 6 + 6(k-1))$
(we may assume that we have at least
$(1- \frac{\epsilon}{2})^{\lceil \frac{n}{2} \rceil} 
\lceil \frac{n}{2} \rceil ! 
(\gamma (3))^{\lceil \frac{n}{2} \rceil}$
choices for this,
by the induction hypothesis).
Choose $\lceil \frac{n}{2} \rceil$ vertices from $1$ to $n$
($\left( ^{\phantom{q}n}_{\lceil \frac{n}{2} \rceil} \right)
= \frac{n!}{\lceil \frac{n}{2} \rceil ! \lfloor \frac{n}{2} \rfloor !}$
choices),
and use these to create a copy of $T_{1}$
such that the increasing bijection
from $\{ 1,2, \ldots, \lceil \frac{n}{2} \rceil \}$
to these vertices gives an isomorphism.

Now fix a specific embedding of this graph
on a surface of genus $k-1$,
and select one of the faces to be the `outer' face.
Note that it is also certainly possible to select an `inner' face 
that has at most one vertex in common with this outer face
(since if two triangular faces share two common vertices,
then they must share a common edge,
but an edge can only be in at most two faces
and so this rules out only at most three inner faces)
--- let us call such a vertex `exceptional'.

Similarly,
we may take a graph $T_{2}$ in
$\mathcal{S}^{0}(\lfloor \frac{n}{2} \rfloor, 
3 \lfloor \frac{n}{2} \rfloor - 6)$
(we have at least
$(1- \frac{\epsilon}{2})^{\lfloor \frac{n}{2} \rfloor} 
\lfloor \frac{n}{2} \rfloor ! 
(\gamma (3))^{\lfloor \frac{n}{2} \rfloor}$
choices),
put an isomorphic copy of this on the remaining vertices,
fix a specific embedding of this on the plane,
and find an inner face
that has at most one vertex 
in common with the outer face
(again,
we shall call such a vertex `exceptional').

Let us now join the two graphs by adding six edges 
between the two outer faces, 
thus creating a triangulation in 
$\mathcal{S}^{k-1}(n,3n-6+6(k-1))$
(if the two chosen inner faces
each contain an exceptional vertex,
$v_{1}$ and $v_{2}$ say,
then we should do this in such a way 
that $v_{1}v_{2}$ is not one of the edges added).

We may then also add up to six edges between vertices 
in the two chosen inner faces
to create a (simple) graph in $\mathcal{S}^{k}(n,m)$.
See Figure~\ref{lemmafig}
(where we imagine that a tunnel/handle joins the `holes' at $A$ and $B$).

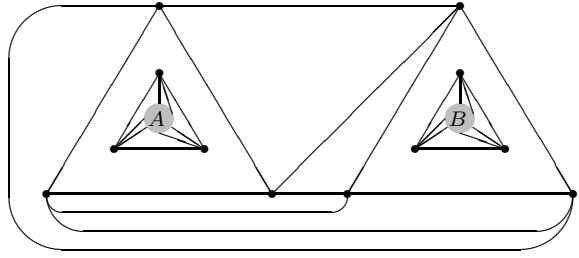
\begin{figure} [ht]
\setlength{\unitlength}{1cm}
\begin{picture}(10,3.25)(-1.75,-0.75)

\put(1.175,0.8){
\begin{tikzpicture}
\fill[lightgray] (0,0) circle (0.2);
\end{tikzpicture}
}

\put(5.175,0.8){
\begin{tikzpicture}
\fill[lightgray] (0,0) circle (0.2);
\end{tikzpicture}
}

\put(0,0){\line(1,0){4}}
\put(1.5,2.5){\line(3,-5){1.5}}
\put(1.5,2.5){\line(-3,-5){1.5}}
\put(0.9,0.6){\line(1,0){1.2}}
\put(1.5,1.6){\line(3,-5){0.6}}
\put(1.5,1.6){\line(-3,-5){0.6}}
\put(0,0){\circle*{0.1}}
\put(3,0){\circle*{0.1}}
\put(1.5,2.5){\circle*{0.1}}
\put(0.9,0.6){\circle*{0.1}}
\put(2.1,0.6){\circle*{0.1}}
\put(1.5,1.6){\circle*{0.1}}

\put(4,0){\line(1,0){3}}
\put(5.5,2.5){\line(3,-5){1.5}}
\put(5.5,2.5){\line(-3,-5){1.5}}
\put(4.9,0.6){\line(1,0){1.2}}
\put(5.5,1.6){\line(3,-5){0.6}}
\put(5.5,1.6){\line(-3,-5){0.6}}
\put(4,0){\circle*{0.1}}
\put(7,0){\circle*{0.1}}
\put(5.5,2.5){\circle*{0.1}}
\put(4.9,0.6){\circle*{0.1}}
\put(6.1,0.6){\circle*{0.1}}
\put(5.5,1.6){\circle*{0.1}}

\put(3,0){\line(1,1){2.5}}
\put(1.5,2.5){\line(1,0){4}}
\put(2,0){\oval(4,0.5)[b]}
\put(3.5,0){\oval(7,1)[b]}
\put(3.25,0){\oval(7.5,1.5)[b]}
\put(1.5,0){\oval(4,5)[tl]}

\put(1.35,0.9){\footnotesize{$A$}}
\put(5.35,0.9){\footnotesize{$B$}}

\put(1.5,1.6){\line(0,-1){0.4}}
\put(5.5,1.6){\line(0,-1){0.4}}

\put(1.5,1.6){\line(1,-3){0.18}}
\put(5.5,1.6){\line(1,-3){0.18}}

\put(2.1,0.6){\line(-3,2){0.43}}
\put(6.1,0.6){\line(-3,2){0.43}}

\put(2.1,0.6){\line(-3,1){0.6}}
\put(6.1,0.6){\line(-3,1){0.6}}

\put(0.9,0.6){\line(3,2){0.43}}
\put(4.9,0.6){\line(3,2){0.43}}

\put(0.9,0.6){\line(1,1){0.4}}
\put(4.9,0.6){\line(1,1){0.4}}
\end{picture}
\caption{
Joining two triangulations to create a graph in $\mathcal{S}^{k}(n,m)$.
} \label{lemmafig}
\end{figure}

Now let us consider the amount of double-counting.
We have at most 
$\left( ^{m}_{\phantom{.}6} \right) \left( ^{m}_{\phantom{.}6} \right) 
\leq (4n)^{12}$
possibilities for the inserted edges,
and then at most $2$ possibilities 
for which triangulation is $T_{1}$ and which is $T_{2}$.
Hence,
each graph is constructed at most $2\cdot4^{12} n^{12}$ times.

Thus,
\begin{eqnarray*}
\phantom{wwwwww}
|\mathcal{S}^{k}(n,m)|
& \geq &
\frac{(1- \frac{\epsilon}{2})^{n} n! (\gamma (3))^{n}}{2\cdot4^{12} n^{12}} \\
& \geq & (1- \epsilon)^{n} n! (\gamma (3))^{n}
\textrm{ for sufficiently large $n$.} 
\phantom{wwwwww}
\qedhere
\end{eqnarray*}
\end{proof}

We now also obtain a useful upper bound:

\begin{Lemma} \label{upper}
Let $g \geq 0$ be a constant.
Then, given any $c>0$ and any $\epsilon > 0$,
there exist $q \in (3-c,3)$ and $N_{2}$ such that 
\begin{displaymath}
|\mathcal{S}^{g}(n,\lfloor qn \rfloor)| 
\leq (1+ \epsilon)^{n} n! (\gamma (3))^{n}
\textrm{ for all } n \geq N_{2}.
\end{displaymath}
\end{Lemma}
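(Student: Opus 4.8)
The plan is to deduce this directly from the two growth-constant facts already available in this section: that $\left( |\mathcal{S}^{g}(n,\lfloor qn \rfloor)| / n! \right)^{1/n} \to \gamma(q)$ as $n \to \infty$ for each \emph{fixed} $q \in (1,3)$ (\cite{chap}, Theorem 1.1), and that $\gamma(q) \to \gamma(3)$ as $q \to 3^{-}$ (\cite{ger}, Theorem 2.1). The key point is that the quantifiers in the statement let us choose $q$ \emph{after} seeing $c$ and $\epsilon$: so I would first move $q$ close enough to $3$ that $\gamma(q)$ is within a small factor of $\gamma(3)$, and only then appeal to the (non-uniform) convergence of $\left( |\mathcal{S}^{g}(n,\lfloor qn \rfloor)| / n! \right)^{1/n}$ to $\gamma(q)$ for that particular $q$.

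Concretely, given $c>0$ and $\epsilon>0$, I would first choose $\epsilon_{1}, \epsilon_{2} > 0$ with $(1+\epsilon_{1})(1+\epsilon_{2}) \leq 1+\epsilon$, and (replacing $c$ by $\min\{c,1\}$ if necessary) assume $(3-c,3) \subset (1,3)$. Since $\gamma(q) \to \gamma(3) > 0$ as $q \to 3^{-}$, there is some $q \in (3-c,3)$ with $\gamma(q) \leq (1+\epsilon_{1})\gamma(3)$; fix such a $q$. Applying the growth-constant lemma to this fixed $q$, there is $N_{2}$ so that $\left( |\mathcal{S}^{g}(n,\lfloor qn \rfloor)| / n! \right)^{1/n} \leq (1+\epsilon_{2})\gamma(q)$ for all $n \geq N_{2}$. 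Raising to the $n$th power and multiplying through by $n!$ then gives
\[
|\mathcal{S}^{g}(n,\lfloor qn \rfloor)|
\;\leq\; \big((1+\epsilon_{2})\gamma(q)\big)^{n} n!
\;\leq\; \big((1+\epsilon_{1})(1+\epsilon_{2})\gamma(3)\big)^{n} n!
\;\leq\; (1+\epsilon)^{n} n!\,(\gamma(3))^{n}
\]
for all $n \geq N_{2}$, which is exactly the required bound, with this $q$ and this $N_{2}$.

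I do not expect any genuine obstacle here: the whole content is that we are permitted to choose $q$ depending on $\epsilon$, so that the non-uniform convergence for a single fixed $q$ already suffices (in contrast with the situation of Lemma~\ref{uniform}/Theorem~\ref{newuniform}, where uniformity over a range of $q$ is needed and is genuinely harder). The only minor care needed is that $\gamma(q)$ is defined, and the two convergences hold, only for $q \in (1,3)$ --- hence the harmless shrinking of $c$ --- and that $\gamma(3) > 0$, so that the inequalities are preserved on passing to $n$th powers; both are immediate from the cited results.
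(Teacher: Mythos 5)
Your proposal is correct and is exactly the argument the paper gives, only spelled out in more detail: the paper's proof likewise invokes the per-fixed-$q$ convergence of $\left( |\mathcal{S}^{g}(n,\lfloor qn \rfloor)| / n! \right)^{1/n}$ to $\gamma(q)$ together with the continuity $\gamma(q) \to \gamma(3)$ as $q \to 3^{-}$. Your observation that the order of quantifiers makes non-uniform convergence suffice is precisely the point, and your bookkeeping with $\epsilon_1, \epsilon_2$ is a clean way to make it explicit.
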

\begin{proof}
This follows from the fact that
$\left( \frac{|\mathcal{S}^{g}(n,\lfloor qn \rfloor)|}{n!} \right)^{1/n}
\to \gamma (q)$ for $q \in (1,3)$,
together with the continuity of $\gamma (q)$ as $q \to 3^{-}$.
\end{proof}

Before we proceed with the proof of Lemma~\ref{main6apps},
we shall find it very helpful to first establish bounds
on the number of possible intersections of different triangulated appearances:

\begin{Lemma} \label{intersections}
The total edge set of a triangulated appearance of a connected graph of order $t$
will intersect (i.e.~have an edge in common with)
the total edge set of at most 
$\left( ^{t+3} _{\phantom{w}3} \right)$ other triangulated appearances 
of connected graphs of order~$t$.
\end{Lemma}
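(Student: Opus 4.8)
The plan is to show that any triangulated appearance $\mathcal{A}'$ of a connected order-$t$ graph whose total edge set shares an edge with that of our fixed appearance $\mathcal{A}$ must use only vertices from one fixed set of size $t+3$, after which a crude count of subsets finishes the proof. Two facts underlie this. First, a triangulated appearance of a connected order-$t$ graph in a host graph $G$ is completely determined by its vertex set $W'$: once $W'$ is known, $E_{W'}$ must be the set of all edges of $G$ between $W'$ and $V(G)\setminus W'$, and this pins down the roots $r_i'$, the external vertices $v_j'$ and the external triangle, so distinct triangulated appearances have distinct vertex sets. Second, writing $\mathcal{A}$ for our fixed appearance, at $W$ with roots $r_1,r_2,r_3$, external vertices $v_1,v_2,v_3$ and total edge set $S=E(G[W])\cup E_W$, the set of vertices incident to an edge of $S$ is exactly $X:=W\cup\{v_1,v_2,v_3\}$ (note that a triangulated appearance needs three distinct roots, so $t\geq 3$, whence the connected graph $G[W]$ has no isolated vertex and all $t$ vertices of $W$ appear in $S$), and $|X|=t+3$. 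Hence, if we can show that every triangulated appearance $\mathcal{A}'\neq\mathcal{A}$ of a connected order-$t$ graph whose total edge set meets $S$ satisfies $W'\subseteq X$, then $W'$ is one of the $\binom{t+3}{t}=\binom{t+3}{3}$ many $t$-subsets of $X$, and the bound follows.

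To establish $W'\subseteq X$ I would propagate through $G[W']$, which is connected, using the rigidity of the requirement that $\mathcal{A}'$ has exactly six crossing edges. Since the two total edge sets meet, fix a common edge $e$; every edge of $S'$ has an endpoint in $W'$ and both endpoints of $e$ lie in $X$, so $W'\cap X\neq\emptyset$. The key local observation is that every vertex of $W$ has all of its $G$-neighbours in $X$: for an interior (non-root) vertex of $W$ this is immediate, since all of its neighbours lie in $W$; and for a root $r_a$ its $G$-neighbourhood is its $H$-neighbours in $W$ together with the two external vertices of $\mathcal{A}$ that it meets in $E_W$, all of which lie in $X$ (the six crossing edges of $\mathcal{A}$ being distributed two per root). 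Since an interior vertex of $\mathcal{A}'$ has all its $G$-neighbours in $W'$ and a root of $\mathcal{A}'$ has all but two of them in $W'$, walking along edges of $G[W']$ starting from a vertex of $W'\cap W$ keeps us inside $X$ so long as we only pass through vertices of $W$.

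The delicate point --- which I expect to be the main obstacle --- is controlling an external vertex $v_b$ of $\mathcal{A}$ in the case $v_b\in W'$, since a priori $v_b$ may have $G$-neighbours outside $X$ and so could carry $W'$ out of $X$. Here I would use that $v_b$ is adjacent to four vertices of $X$: the two roots of $\mathcal{A}$ it meets in $E_W$, and the two other external vertices of $\mathcal{A}$ via the external triangle. If $v_b$ is a root of $\mathcal{A}'$ it has only two crossing edges, so at least two of those four vertices must lie in $W'$, keeping the walk anchored in $X$; if $v_b$ is an interior vertex of $\mathcal{A}'$, then all of $N_G(v_b)$ lies in $W'$, which forces all three of $v_1,v_2,v_3$ into $W'$ and, since $|W'|=t$, leaves essentially no room for neighbours of $v_b$ outside $X$, so one argues that this configuration either cannot occur or still yields $W'\subseteq X$. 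Carefully disposing of these external-vertex cases is the technical core; once that is done, the subset count of the first paragraph gives the claimed bound $\binom{t+3}{3}$.
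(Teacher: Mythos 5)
Your argument hinges on the claim that every other triangulated appearance $\mathcal{A}'$ at $W'$ whose total edge set meets $S$ must satisfy $W' \subseteq X := W \cup \{v_1, v_2, v_3\}$, after which you count $t$-subsets of $X$. That claim is false. Take $t=5$, $W = \{r_1,r_2,r_3,w_1,w_2\}$ with $\{r_1,r_2,r_3\}$ inducing a triangle and $w_1,w_2$ attached to it only inside $W$, external triangle $\{v_1,v_2,v_3\}$, and crossing edges of $W$ exactly $E_W$. Now attach a pendant path $v_1 - z - z'$ where $z,z'$ have no other edges. Put $W' = \{v_1,v_2,v_3,z,z'\}$. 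Then $G[W']$ is connected of order $5$, and the edges leaving $W'$ are precisely the six edges of $E_W$ again (each $v_i$ has exactly two crossing edges, to roots $r_j$, while $z,z'$ have none), with external triangle $\{r_1,r_2,r_3\}$. So $W'$ carries a triangulated appearance whose total edge set shares $E_W$ with $S$, yet $z,z'\notin X$, so $W' \not\subseteq X$. This is exactly the loophole you flag as the ``delicate point'': $v_1$ is a root of $\mathcal{A}'$, and indeed two of the four vertices $\{r_1,r_2,v_2,v_3\}$ do land in $W'$, but nothing stops $v_1$'s \emph{further} neighbour $z$ (and hence $z'$) from also lying in $W'$. ``Anchoring the walk in $X$'' does not prevent it from also leaving $X$.

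The object to count is not $W'$ but the $3$-vertex cut $\{u_1,u_2,u_3\}$ of $\mathcal{A}'$. The paper first shows that if the total edge sets meet and $W'\neq W$, then some $u_i$ lies in $W$ (otherwise all of $W$, together with the surviving $v_j$, sits in one component of $G\setminus\{u_1,u_2,u_3\}$, forcing either $W'=W$ or $W'\cap X=\emptyset$). Since $u_1,u_2,u_3$ are pairwise adjacent and every $G$-neighbour of a vertex of $W$ lies in $X$ (your correct local observation), this forces $\{u_1,u_2,u_3\}\subseteq X$, giving at most $\binom{t+3}{3}$ choices of cut; and each cut determines $W'$ uniquely, as the component of $G\setminus\{u_1,u_2,u_3\}$ containing the surviving vertices of $\{v_1,v_2,v_3\}$. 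Your opening observation that a triangulated appearance is determined by its vertex set is fine, but the parametrisation must run through the cut rather than through $W'$ itself.
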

\begin{proof}
Suppose we have a triangulated appearance of a connected graph of order $t$ 
at $W \subset V(G)$, 
as in Definition~\ref{6apps} and Figure~\ref{6appfig}.
Note that the vertices $\{ v_{1}, v_{2}, v_{3} \}$ form a $3$-vertex-cut.
Suppose that $G$ also contains another triangulated appearance 
of a connected graph of order $t$,
at $W_{2}$ say,
and let $\{ u_{1}, u_{2}, u_{3} \}$ 
denote the associated $3$-vertex-cut in $V(G) \setminus W_{2}$. 
See Figure~\ref{interfig}.

\begin{figure} [ht]
\setlength{\unitlength}{1cm}
\begin{picture}(10,3.5)(-1,-0.2)

\put(0.375,1){
\begin{tikzpicture}
\fill[lightgray] (0,0) circle (0.5);
\end{tikzpicture}
}

\put(5.375,1){
\begin{tikzpicture}
\fill[lightgray] (0,0) circle (0.5);
\end{tikzpicture}
}

\put(0,0){\line(0,1){3}}
\put(0,0){\line(2,1){3}}
\put(0,3){\line(2,-1){3}}
\put(0,0){\line(1,3){0.5}}
\put(0,0){\line(1,1){1}}
\put(0,3){\line(1,-3){0.5}}
\put(0,3){\line(1,-1){1}}
\put(1,1){\line(4,1){2}}
\put(1,2){\line(4,-1){2}}
\put(0,0){\circle*{0.1}}
\put(0,3){\circle*{0.1}}
\put(3,1.5){\circle*{0.1}}
\put(0.5,1.5){\circle*{0.1}}
\put(1,1){\circle*{0.1}}
\put(1,2){\circle*{0.1}}
\put(0.8,1.4){\large{$W$}}
\put(-0.2,-0.3){$v_{3}$}
\put(-0.2,3.2){$v_{1}$}
\put(3.1,1.4){$v_{2}$}

\put(5,0){\line(0,1){3}}
\put(5,0){\line(2,1){3}}
\put(5,3){\line(2,-1){3}}
\put(5,0){\line(1,3){0.5}}
\put(5,0){\line(1,1){1}}
\put(5,3){\line(1,-3){0.5}}
\put(5,3){\line(1,-1){1}}
\put(6,1){\line(4,1){2}}
\put(6,2){\line(4,-1){2}}
\put(5,0){\circle*{0.1}}
\put(5,3){\circle*{0.1}}
\put(8,1.5){\circle*{0.1}}
\put(5.5,1.5){\circle*{0.1}}
\put(6,1){\circle*{0.1}}
\put(6,2){\circle*{0.1}}
\put(5.8,1.4){\large{$W_{2}$}}
\put(4.8,-0.3){$u_{3}$}
\put(4.8,3.2){$u_{1}$}
\put(8.1,1.4){$u_{2}$}

\end{picture}
\caption{Triangulated appearances at $W$ and $W_{2}$.} \label{interfig}
\end{figure}
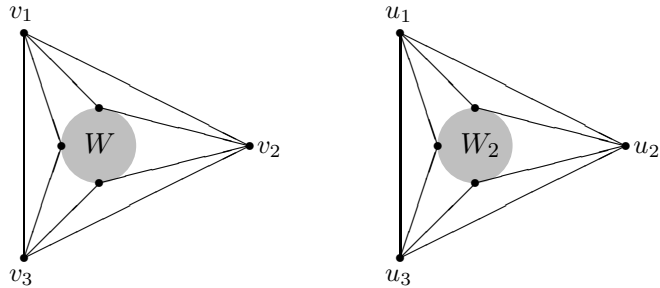

Firstly,
suppose $\{u_{1}, u_{2}, u_{3} \} \subset V(G) \setminus W$.
Note that all vertices in $W$ 
will then be contained within a single component of 
$G \setminus \{u_{1},u_{2},u_{3}\}$,
together with any vertex $v_{i}$ not in $\{ u_{1}, u_{2}, u_{3} \}$.
Thus,
since $W_{2}$ will be one of the components of 
$G \setminus \{ u_{1}, u_{2}, u_{3} \}$,
we must have
either $W_{2} \supset W$,
in which case $W_{2} = W$
(since $|W_{2}|=|W|=t$),
or $W_{2} \subset V(G) \setminus (W \cup \{ v_{1}, v_{2}, v_{3} \})$,
in which case the total edge set of $W_{2}$ 
does not intersect the total edge set of $W$.

Alternatively, 
suppose $u_{i} \in W$ for some $i$.
Then we have
$\{ u_{1}, u_{2}, u_{3} \} \subset (W \cup \{ v_{1}, v_{2}, v_{3} \} )$,
since $u_{1}$, $u_{2}$ and $u_{3}$ are all neighbours of each other,
and so there are then at most 
$\left( ^{t+3} _{\phantom{w}3} \right)$ possibilities 
for $\{ u_{1}, u_{2}, u_{3} \}$.
For each one of these,
let us consider the number of possibilities for $W_{2}$,
which we recall will be one of the components of 
$G \setminus \{ u_{1}, u_{2}, u_{3} \}$.

Since $u_{i} \in W$ and $u_{i} \notin W_{2}$,
we know that $W_{2}$ must contain at least one vertex in $V(G) \setminus W$
in order that $|W_{2}| = t$.
Note also that we can't have 
$W_{2} \subset V(G) \setminus (W \cup \{ v_{1}, v_{2}, v_{3} \} )$,
since two vertices in $W_{2}$ must be adjacent to $u_{i}$
(which is in $W$).
Hence,
$W_{2}$ must contain at least one vertex from $\{ v_{1}, v_{2}, v_{3} \}$.
But no two vertices from $\{ v_{1}, v_{2}, v_{3} \}$
can be in different components of $G \setminus \{ u_{1}, u_{2}, u_{3} \}$,
since $\{ v_{1}, v_{2}, v_{3} \}$ forms a triangle.
Hence,
$W_{2}$ is in fact the unique component of 
$G \setminus \{u_{1}, u_{2}, u_{3} \}$ 
containing vertices from $\{ v_{1}, v_{2}, v_{3} \}$,
and so we find that (given $\{ u_{1}, u_{2}, u_{3} \}$)
we have only one possibility for $W_{2}$.
\end{proof}

We are now ready to obtain the main result of this section
(Lemma~\ref{main6apps}),
showing that whp $\mathcal{S}^{g}_{n,m}$
will have linearly many triangulated appearances 
of any given planar triangulation.
Again,
we also provide a sketch of the proof:

\begin{6sketch}
Using Lemma~\ref{upper},
we find a value $q$ such that
$|\mathcal{S}^{g}(n, \lfloor qn \rfloor)|
\leq (1 + \epsilon)^{n} n! (\gamma (3))^{n}$
for all large $n$
(for a given $\epsilon > 0$).
We then suppose
(aiming for a contradiction)
that the statement of the theorem is false for some $n=k$,
and we recall
$|\mathcal{S}^{g}(k,m(k))| \geq (1 - \epsilon)^{k} k! (\gamma (3))^{k}$
from Lemma~\ref{lower}.

We then take graphs in $|\mathcal{S}^{g}(k,m(k))|$
without $\alpha k$ totally edge-disjoint triangulated appearances of $T$,
and to each of these 
we attach many triangulated appearances 
of carefully selected graphs $T_{1}$ and $T_{2}$,
which both contain triangulated appearances of $T$.
By choosing $T_{1}$ and $T_{2}$
to have the appropriate ratio of edges to vertices,
we may consequently construct many graphs
in $\mathcal{S}^{g} ((1+\delta)k, \lfloor q (1+\delta)k \rfloor)$
for some $\delta > 0$.

The fact that the original graphs 
were assumed to contain few triangulated appearances of $T$
is then used to bound the amount of double-counting,
and so we find that we obtain a contradiction to our earlier upper bound
on $|\mathcal{S}^{g} (n, \lfloor qn \rfloor)|$ when $n=(1+\delta)k$.
\end{6sketch}
\begin{6full}
Due to Lemma~\ref{mainapps}
(taking $H$ to be any graph containing a triangulated appearance of $T$ 
that doesn't involve vertex $1$),
it only remains to deal with the case when $\frac{m}{n} \to 3$.

Let 
\begin{equation}
\beta = e^{2} ( \gamma (3) )^{|T|+4} 
\left( 4 \left(^{|T|+7}_{\phantom{qw}3} \right) +4 \right) (|T|+4)!,
\label{tribeta}
\end{equation}
and let $\alpha$ be a fixed constant in $\left( 0, \frac{1}{\beta} \right)$.
Then we have $\alpha \beta <1$,
and so there exists $\epsilon \in \left( 0, \frac{1}{3} \right)$
such that 
\begin{equation}
(\alpha \beta)^{\alpha} = 1 - 3 \epsilon. \label{triepsilon}
\end{equation}
Given this $\epsilon$,
let $q \in (3 - \frac{\alpha}{1 + \alpha (|T|+4)}, 3)$ 
be as defined by Lemma~\ref{upper}.

Now let us take some large $N$,
and let us suppose that the statement of the theorem doesn't hold 
for some $k \geq N$
(throughout the remainder of this proof,
we will assume that $N$ is large enough that various inequalities involving 
$\beta, \alpha, \epsilon, q$ and $k$ are satisfied).
Let $M$ denote $m(k)$,
and let $\mathcal{G}_{k}$ denote the set of graphs in $\mathcal{S}^{g}(k,M)$
which do not contain
at least $\alpha k$ totally edge-disjoint triangulated appearances of $T$
(so $|\mathcal{G}_{k}| \geq 
e^{- \alpha k} (1 - \epsilon)^{k} (\gamma (3))^{k} k!$).

Let $T_{1}$ be the triangulation produced 
by taking an order-preserving copy of $T$ 
on the vertex set $\{5,6, \ldots, |T|+4\}$
and attaching the vertices $1$, $2$, $3$ and $4$
in the manner shown in Figure~\ref{T1fig}.
Let $T_{2}$ be the graph formed from $T_{1}$ by deleting the edge between $1$ and $4$
(again, see Figure~\ref{T1fig}).
Note that any rooted triangulated appearance of $T_{1}$ or $T_{2}$
will thus contain a triangulated appearance of $T$.

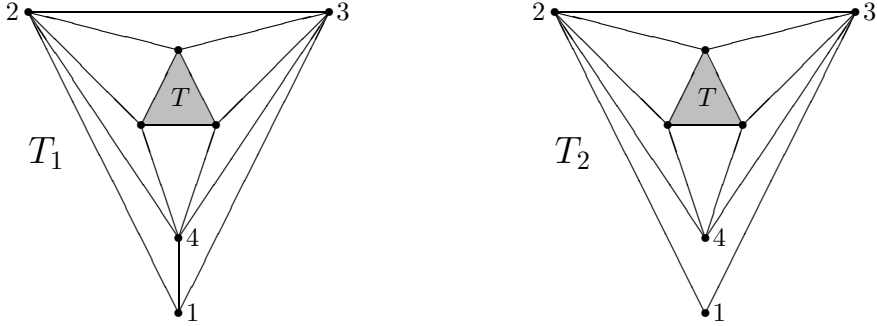
\begin{figure} [ht]
\setlength{\unitlength}{1cm}
\begin{picture}(10,4.1)(0.5,-1)

\put(1.375,1.5){
\begin{tikzpicture}
\fill[lightgray] (1.5,1.5) -- (2.5,1.5) -- (2,2.5) -- cycle;
\end{tikzpicture}
}

\put(8.375,1.5){
\begin{tikzpicture}
\fill[lightgray] (1.5,1.5) -- (2.5,1.5) -- (2,2.5) -- cycle;
\end{tikzpicture}
}

\put(1.5,1.5){\line(1,0){1}}
\put(1.5,1.5){\line(1,2){0.5}}
\put(2.5,1.5){\line(-1,2){0.5}}
\put(1.5,1.5){\circle*{0.1}}
\put(2.5,1.5){\circle*{0.1}}
\put(2,2.5){\circle*{0.1}}

\put(0,3){\line(1,0){4}}
\put(2,0){\line(-2,3){2}}
\put(2,0){\line(2,3){2}}
\put(2,0){\circle*{0.1}}
\put(0,3){\circle*{0.1}}
\put(4,3){\circle*{0.1}}

\put(2,2.5){\line(-4,1){2}}
\put(2,2.5){\line(4,1){2}}
\put(2,0){\line(-1,3){0.5}}
\put(2,0){\line(1,3){0.5}}
\put(0,3){\line(1,-1){1.5}}
\put(4,3){\line(-1,-1){1.5}}

\put(2,-1){\line(-1,2){2}}
\put(2,-1){\line(1,2){2}}
\put(2,-1){\line(0,1){1}}
\put(2,-1){\circle*{0.1}}

\put(2.1,-1.1){$1$}
\put(2.1,-0.1){$4$}
\put(-0.3,2.9){$2$}
\put(4.1,2.9){$3$}
\put(0,1){\LARGE{$T_{1}$}}

\put(1.9,1.75){$T$}

\put(8.5,1.5){\line(1,0){1}}
\put(8.5,1.5){\line(1,2){0.5}}
\put(9.5,1.5){\line(-1,2){0.5}}
\put(8.5,1.5){\circle*{0.1}}
\put(9.5,1.5){\circle*{0.1}}
\put(9,2.5){\circle*{0.1}}

\put(7,3){\line(1,0){4}}
\put(9,0){\line(-2,3){2}}
\put(9,0){\line(2,3){2}}
\put(9,0){\circle*{0.1}}
\put(7,3){\circle*{0.1}}
\put(11,3){\circle*{0.1}}

\put(9,2.5){\line(-4,1){2}}
\put(9,2.5){\line(4,1){2}}
\put(9,0){\line(-1,3){0.5}}
\put(9,0){\line(1,3){0.5}}
\put(7,3){\line(1,-1){1.5}}
\put(11,3){\line(-1,-1){1.5}}

\put(9,-1){\line(-1,2){2}}
\put(9,-1){\line(1,2){2}}
\put(9,-1){\circle*{0.1}}

\put(9.1,-1.1){$1$}
\put(9.1,-0.1){$4$}
\put(6.7,2.9){$2$}
\put(11.1,2.9){$3$}
\put(7,1){\LARGE{$T_{2}$}}

\put(8.9,1.75){$T$}
\end{picture}
\caption{The graphs $T_{1}$ and $T_{2}$.} \label{T1fig}
\end{figure}

Let us use $t$ to denote $|T|+4$
(so $|T_{1}| = |T_{2}| = t$),
and let 
\begin{equation}
\delta = \frac{t \lceil \alpha k \rceil}{k}. \label{tridelta}
\end{equation}
Starting with graphs in $\mathcal{G}_{k}$,
we shall construct graphs 
in $\mathcal{S}^{g}((1+\delta)k, \lfloor q(1+\delta)k \rfloor)$
by attaching 
$k_{1}$ rooted triangulated appearances of $T_{1}$ 
and $k_{2} = \lceil \alpha k \rceil - k_{1}$ 
rooted triangulated appearances of $T_{2}$.
Note that we shall need to achieve the correct balance of $k_{1}$ and $k_{2}$
so that our constructed graphs will indeed have 
$\lfloor q(1+\delta)k \rfloor) 
= \lfloor q ( k + t \lceil \alpha k \rceil ) \rfloor$ edges.

Observe that the total edge set of a triangulated appearance of $T_{1}$ 
will have size $3t$
and the total edge set of a triangulated appearance of $T_{2}$ 
will have size $3t-1$.
Thus,
$k_{1}$ can be chosen so that the number of edges in our constructed graphs
is any desired integer from $M + (3t-1) \lceil \alpha k \rceil$
to $M + 3t \lceil \alpha k \rceil$.

Now recall that
$q > 3 - \frac{\alpha}{1 + \alpha t}$,
and hence that we may assume that $k$ is large enough that
$q > 3 - \frac{ \lceil \alpha k \rceil + 6-6g}{k+t \lceil \alpha k \rceil}$.
Thus,
\begin{eqnarray*}
q (k + t \lceil \alpha k \rceil) & \geq & 
3 (k + t \lceil \alpha k \rceil) - ( \lceil \alpha k \rceil + 6 - 6g ) \\
& = & 3k - 6 + 6g + \lceil \alpha k \rceil (3t-1) \\
& \geq & M + (3t-1) \lceil \alpha k \rceil,
\end{eqnarray*}
and hence
(since the right-hand-side is an integer)
we have 
$\lfloor q ( k + t \lceil \alpha k \rceil ) \rfloor
\geq M + (3t-1) \lceil \alpha k \rceil$.

Also,
since $q<3$ and $\frac{m}{n} \to 3$,
we may assume that $k$ is large enough that
$\lfloor q ( k + t \lceil \alpha k \rceil ) \rfloor
\leq M + 3t \lceil \alpha k \rceil$.
Hence,
we find that we may indeed select a suitable $k_{1}$. \\

Having obtained the appropriate values of $k_{1}$ and $k_{2}$,
let us now construct our graphs 
in $\mathcal{S}^{g}((1+\delta)k, \lfloor q(1+\delta)k \rfloor)$:

Choose $\delta k$ special vertices 
(we have $\left( ^{(1+\delta)k} _{\phantom{qq} \delta k} \right)$ 
choices for these),
and partition them into $\lceil \alpha k \rceil$ unordered blocks of size $t$
(we have 
$\left( ^{\phantom{w}\delta k}_{t, \ldots, t} \right) 
\frac{1}{\left \lceil \alpha k \right \rceil !}$ 
choices for this).
Divide the blocks into two sets of size $k_{1}$ and $k_{2}$.
On each of the first $k_{1}$ blocks,
we put a copy of $T_{1}$ 
such that the increasing bijection from $V(T_{1})$ to the block 
is an isomorphism between $T_{1}$ and this copy.
We do the same for the set of $k_{2}$ blocks, 
except with $T_{2}$ instead of $T_{1}$.

On the remaining (i.e.~non-special) vertices,
choose a graph $G \in \mathcal{G}_{k}$,
and embed $G$ on a surface of genus $g$.
Note that $G$ may be extended to a triangulation by inserting 
$3k-6+6g-M$ `phantom' edges
(such a triangulation may now have multi-edges),
and observe that this triangulation 
will contain $2k-4+4g$ triangles that are faces.
Each of our phantom edges is in exactly two faces of this triangulation,
so when we remove these phantom edges 
we find that our original embedding of $G$ must have contained at least
$2k-4+4g-2(3k-6+6g-M) = 2M-4k+8-8g$
triangles that are faces.

We may attach our copies of $T_{1}$ and $T_{2}$ 
inside $\lceil \alpha k \rceil$ of these triangles
in such a way that we create rooted triangulated appearances 
of $T_{1}$ and $T_{2}$.
See Figure~\ref{6appprooffig}.
Note that we have at least 
$\left( ^{2M-4k+8-8g}_{\phantom{www}\lceil \alpha k \rceil} \right)$
choices for these triangles,
and that we then have $\left \lceil \alpha k \right \rceil !$
choices for which copies of $T_{1}$ and $T_{2}$ to attach within which triangles.

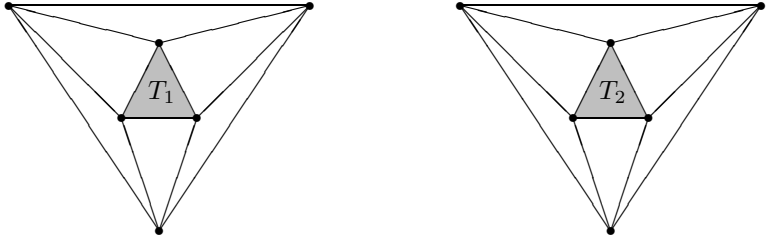
\begin{figure} [ht]
\setlength{\unitlength}{1cm}
\begin{picture}(10,3)(0,0)

\put(1.375,1.5){
\begin{tikzpicture}
\fill[lightgray] (1.5,1.5) -- (2.5,1.5) -- (2,2.5) -- cycle;
\end{tikzpicture}
}

\put(7.375,1.5){
\begin{tikzpicture}
\fill[lightgray] (1.5,1.5) -- (2.5,1.5) -- (2,2.5) -- cycle;
\end{tikzpicture}
}

\put(1.5,1.5){\line(1,0){1}}
\put(1.5,1.5){\line(1,2){0.5}}
\put(2.5,1.5){\line(-1,2){0.5}}
\put(1.5,1.5){\circle*{0.1}}
\put(2.5,1.5){\circle*{0.1}}

\put(2,2.5){\circle*{0.1}}

\put(0,3){\line(1,0){4}}
\put(2,0){\line(-2,3){2}}
\put(2,0){\line(2,3){2}}
\put(2,0){\circle*{0.1}}
\put(0,3){\circle*{0.1}}
\put(4,3){\circle*{0.1}}

\put(2,2.5){\line(-4,1){2}}
\put(2,2.5){\line(4,1){2}}
\put(2,0){\line(-1,3){0.5}}
\put(2,0){\line(1,3){0.5}}
\put(0,3){\line(1,-1){1.5}}
\put(4,3){\line(-1,-1){1.5}}

\put(1.85,1.75){$T_{1}$}

\put(7.5,1.5){\line(1,0){1}}
\put(7.5,1.5){\line(1,2){0.5}}
\put(8.5,1.5){\line(-1,2){0.5}}
\put(7.5,1.5){\circle*{0.1}}
\put(8.5,1.5){\circle*{0.1}}

\put(8,2.5){\circle*{0.1}}

\put(6,3){\line(1,0){4}}
\put(8,0){\line(-2,3){2}}
\put(8,0){\line(2,3){2}}
\put(8,0){\circle*{0.1}}
\put(6,3){\circle*{0.1}}
\put(10,3){\circle*{0.1}}

\put(8,2.5){\line(-4,1){2}}
\put(8,2.5){\line(4,1){2}}
\put(8,0){\line(-1,3){0.5}}
\put(8,0){\line(1,3){0.5}}
\put(6,3){\line(1,-1){1.5}}
\put(10,3){\line(-1,-1){1.5}}

\put(7.85,1.75){$T_{2}$}
\end{picture}
\caption{Creating triangulated appearances of $T_{1}$ and $T_{2}$
inside facial triangles.} \label{6appprooffig}
\end{figure}

Thus, for each choice of special vertices and each choice of $G$, 
the number of graphs in
$\mathcal{S}^{g}((1+\delta)k, \lfloor q(1+\delta)k \rfloor)$
that we may construct is at least
\begin{eqnarray*}
\left( ^{\phantom{w}\delta k}_{t, \ldots, t} \right) 
\left( ^{2M-4k+8-8g}_{\phantom{www}\lceil \alpha k \rceil} \right)
& \geq &
\left( ^{\phantom{w}\delta k}_{t, \ldots, t} \right) 
\left( ^{k+ \lceil \alpha k \rceil}_{\phantom{w}\lceil \alpha k \rceil} \right) 
\textrm{ for large $k$ (since $m/n \to 3$)} \\
& \geq & \left( ^{\phantom{w}\delta k}_{t, \ldots, t} \right)
\frac{k^{\lceil \alpha k \rceil}}{\lceil \alpha k \rceil !} \\
& = & \frac{(\delta k)! k^{\lceil \alpha k \rceil}}
{(t!)^{\lceil \alpha k \rceil} \lceil \alpha k \rceil !} \\
& \geq & \frac{(\delta k)!} {(t! \alpha)^{\lceil \alpha k \rceil}} 
\end{eqnarray*}
(for $k$ large enough that 
$\lceil \alpha k \rceil ! \leq (\alpha k)^{\lceil \alpha k \rceil})$.

Hence,
we may construct at least 
\begin{displaymath}
\left( ^{(1+\delta)k}_{\phantom{qq} \delta k} \right) 
e^{- \alpha k} (1 - \epsilon)^{k} (\gamma(3))^{k} k! 
\frac{(\delta k)!}{(t! \alpha)^{\lceil \alpha k \rceil}}
\end{displaymath}
(not necessarily distinct) graphs 
in $\mathcal{S}^{g}((1+\delta)k, \lfloor q(1+\delta)k \rfloor)$ in total. \\

We shall now consider the amount of double-counting:

Recall that $G$ did not contain 
$\alpha k$ totally edge-disjoint triangulated appearances of $T$,
and that each rooted triangulated appearance of $T_{1}$ 
contains a triangulated appearance of $T$.
Hence, 
$G$ did not contain
$\alpha k$ totally edge-disjoint rooted triangulated appearances of $T_{1}$,
and so (by Lemma~\ref{intersections})
contained fewer than
$\left( \left( ^{t+3} _{\phantom{w}3} \right) +1 \right) \alpha k$  
rooted triangulated appearances of $T_{1}$ in total.

When we deliberately attach a triangulated appearance of $T_{1}$ or $T_{2}$,
the number of `accidental' rooted triangulated appearances of $T_{1}$ 
that we create in the graph
will be at most $\left( ^{t+3} _{\phantom{w}3} \right)$, 
again using Lemma~\ref{intersections},
and so the number of rooted triangulated appearances of $T_{1}$ 
will increase by at most 
$\left( ^{t+3} _{\phantom{w}3} \right) + 1$ each time.
Thus, our created graph will have at most
$\left( \left( ^{t+3} _{\phantom{w}3} \right) +1 \right) \alpha k + 
\left( \left( ^{t+3} _{\phantom{w}3} \right) + 1 \right) \lceil \alpha k \rceil
\leq \left( 2 \left( ^{t+3} _{\phantom{w}3} \right) 
+ 2 \right) \lceil \alpha k \rceil$
rooted triangulated appearances of $T_{1}$.

Similarly,
our created graph will have at most
$\left( 2 \left( ^{t+3} _{\phantom{w}3} \right) 
+ 2 \right) \lceil \alpha k \rceil$
rooted triangulated appearances of $T_{2}$.

Let $x=4\left( ^{t+3} _{\phantom{w}3} \right) + 4$.
Then, given one of our constructed graphs, we have at most
$\left( ^{x \lceil \alpha k \rceil}_{\phantom{i}\lceil \alpha k \rceil} \right)
\leq (xe)^{\lceil \alpha k \rceil}$
choices for which were the special vertices.
Once we have identified these, we then know what $G$ was.
Thus, each graph is constructed at most $(xe)^{\lceil \alpha k \rceil}$ times. \\

Therefore, 
we find that
the number of distinct graphs that we have created in
$\mathcal{S}^{g}((1+\delta)k, \lfloor q(1+\delta)k \rfloor)$
is at least 
\begin{eqnarray*}
& & \left( ^{(1+\delta)k}_{\phantom{qq} \delta k} \right) 
e^{- \alpha k} (1 - \epsilon)^{k} (\gamma(3))^{k} k! 
\frac{(\delta k)!}{(t! \alpha)^{\lceil \alpha k \rceil}} 
(xe)^{- \lceil \alpha k \rceil} \\
& \stackrel{\eqref{tridelta}}{\geq} & 
((1+\delta)k)! (\gamma(3))^{(1+\delta)k} (1-\epsilon)^{k} 
\left( e^{2} (\gamma(3))^{t}xt! \alpha \right) ^{- \lceil \alpha k \rceil} \\
& \stackrel{\eqref{tribeta}}{\geq} & 
((1+\delta)k)! (\gamma(3))^{(1+\delta)k} (1-\epsilon)^{k} 
(\alpha \beta)^{- \lceil \alpha k \rceil} \\
& \geq & |\mathcal{S}^{g}((1+\delta)k, \lfloor q(1+\delta)k \rfloor)| 
(1+\epsilon)^{-(1+\delta)k} (1-\epsilon)^{k} (1-3\epsilon)^{-k} \\
& & \textrm{(by Lemma~\ref{upper} and~\eqref{triepsilon}}) \\
& \geq & 
|\mathcal{S}^{g}((1+\delta)k, \lfloor q(1+\delta)k \rfloor)| 
\left( \frac{(1-\epsilon)}{(1-3\epsilon)(1+\epsilon)^{2}} \right) ^{k} \\
& & \textrm{(since we may assume $k$ is large enough that } \delta<1) \\
& > & |\mathcal{S}^{g}((1+\delta)k, \lfloor q(1+\delta)k \rfloor)| \\
& & \textrm{(since } 
(1-3\epsilon)(1+\epsilon)^{2}=1-\epsilon -5\epsilon^{2} - 3\epsilon^{3}).
\end{eqnarray*}
Thus,
we have obtained our desired contradiction. \qed 
\end{6full}

As an interesting corollary
(by making appropriate choices for $T$),
we also obtain the following new result:

\begin{Corollary}
Let $g \geq 0$ be a constant,
and let $m=m(n)$ satisfy
$\liminf \frac{m}{n} > 1$.
Then,
given any constant $k \geq 3$,
there exist $\alpha > 0$ and $N$ such that
\begin{displaymath}
\mathbb{P}[S_{g}(n,m) 
\textrm{ will have at least $\alpha n$ vertices of degree $k$}] 
> 1 - e^{- \alpha n} 
\textrm{ for all } n \geq N.
\end{displaymath}
\end{Corollary}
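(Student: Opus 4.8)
The plan is to derive this directly from Lemma~\ref{main6apps}, choosing the planar triangulation $T$ appropriately. The crucial observation is that if $T$ has a triangulated appearance at a set $W$ in a graph $G$ (in the sense of Definition~\ref{6apps}), then every vertex of $W$ other than the three roots $r_{1}, r_{2}, r_{3}$ has \emph{all} of its incident edges inside $G[W]$, and so its degree in $G$ equals its degree in $T$. Consequently, if $T$ can be chosen to have at least four vertices of degree exactly $k$, then in \emph{any} triangulated appearance of $T$ at least one such vertex is not among the three roots, and hence has degree exactly $k$ in the host graph.

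First I would verify that, for every $k \geq 3$, there is a planar triangulation $T = T_{k}$ with at least four vertices of degree exactly $k$. For $k = 3, 4, 5$ one may simply take $K_{4}$, the octahedron, and the icosahedron, all of whose vertices have the required degree. For $k \geq 6$ one can start from any sufficiently large planar triangulation possessing four pairwise vertex-disjoint triangular faces and, inside each such face $abc$, insert a new vertex $w$ together with a short path of $k-2$ further auxiliary vertices joined to $w$, retriangulating the face so that $w$ ends up with exactly $k$ neighbours; this produces four vertices of degree $k$ while keeping the graph a planar triangulation. (This construction is elementary, so I would only sketch it.)

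Next I would apply Lemma~\ref{main6apps} with $T = T_{k}$ to obtain $\alpha > 0$ and $N$ such that, for all $n \geq N$, with probability greater than $1 - e^{-\alpha n}$ the graph $S_{g}(n,m)$ has at least $\alpha n$ totally edge-disjoint triangulated appearances of $T_{k}$. By the observation above, each of these appearances yields a vertex of degree exactly $k$ in $S_{g}(n,m)$, namely any non-root vertex of the appearance that has degree $k$ in $T_{k}$. It then remains to check that the vertices obtained from distinct appearances are distinct. If some vertex $w$ were the degree-$k$ witness of two of the appearances, then, being a non-root of each, all $k \geq 1$ edges of $S_{g}(n,m)$ incident to $w$ would lie simultaneously in both total edge sets, contradicting the fact that the two appearances are totally edge-disjoint. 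Hence with probability greater than $1 - e^{-\alpha n}$ the graph $S_{g}(n,m)$ has at least $\alpha n$ distinct vertices of degree $k$, which is precisely the assertion of the corollary.

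The only point requiring any care is this last distinctness step: it is exactly here that the \emph{totally edge-disjoint} family supplied by Lemma~\ref{main6apps} is used (a merely vertex-disjoint family would also suffice, but total edge-disjointness is what the lemma provides), and it is the reason we insist on $T$ having \emph{four} rather than three vertices of degree $k$, since the three roots of an appearance must be absorbed without destroying the witness. Everything else --- the choice of $T_{k}$ and the degree bookkeeping inside a triangulated appearance --- is routine.
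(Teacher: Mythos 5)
Your proposal is correct and fills in exactly the argument the paper elides with ``by making appropriate choices for $T$'': you apply Lemma~\ref{main6apps} to a planar triangulation $T_{k}$ with at least four vertices of degree $k$, so that in any triangulated appearance at least one such vertex escapes being a root $r_{i}$ and hence retains degree $k$ in the host graph, and distinctness of the resulting $\alpha n$ witnesses follows since total edge-disjointness implies vertex-disjointness. The one non-obvious point --- that three degree-$k$ vertices in $T$ would not suffice because the roots are not under our control --- is precisely what you identify and handle, and the constructions of $T_{k}$ (including $K_{4}$, octahedron, icosahedron, and the stellation argument for $k \geq 6$) are all sound.
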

\qed

\section{Discussion} \label{discussion}

In this section,
we shall now discuss some of the more interesting unresolved issues.

Two of the most intriguing questions involve the topic of subgraphs.
We see from Table~\ref{subtab} that we have left open the case
when $H$ is a planar multicyclic graph and $\frac{m}{n} \to 1$.
This is in fact an open problem even for $g=0$,
although some results are known if $\frac{m}{n}$ converges to $1$ slowly
(see Theorems 68 and 70 of~\cite{dow}
for the planar case,
the proofs of which actually generalise to any $g$).
The case when $H$ is non-planar and $\liminf \frac{m}{n} \geq 1$
also remains unresolved.
Is it always true that the probability of having such a subgraph 
converges to $0$?

In all of our results,
we note that the value of $g$ seems to have little impact.
It would be interesting to know how this would change
if we were to allow $g$ to grow with $n$,
rather than just being a fixed constant.
Certainly, we would obtain very different behaviour if $g \geq m(n)$,
since then our random graph $S_{g}(n,m)$ would be the same as the standard
Erd\H{o}s-R\'enyi random graph $G(n,m)$.
It would consequently be useful to know more about the typical genus of $G(n,m)$.

\section*{Acknowledgements}

We are very grateful to the referees for their comments.

\end{document}